\newtheorem{theorem}{Theorem}[section]
\newtheorem{proposition}[theorem]{Proposition}
\newtheorem{corollary}[theorem]{Corollary}
\newtheorem{lemma}[theorem]{Lemma}
\theoremstyle{definition}
\newtheorem{definition}[theorem]{Definition}
\newtheorem{example}[theorem]{Example}
\newtheorem{remark}[theorem]{Remark}
\newtheorem{point}[theorem]{}
\newcommand{\A}{{\mathcal A}}
\newcommand{\M}{{\mathcal M}}
\newcommand{\C}{{\mathcal C}}
\newcommand{\D}{{\mathcal D}}
\newcommand{\B}{{\mathcal B}}
\newcommand{\E}{{\mathcal E}}
\renewcommand{\L}{{\mathcal L}}
\newcommand{\K}{{\mathcal K}}
\newcommand{\T}{{\mathcal T}}
\newcommand{\W}{{\mathcal W}}
\newcommand{\Z}{{\mathbb Z}}
\newcommand{\I}{{\mathcal I}}
\newcommand{\ch}{{\mathrm{Ch}}}
\renewcommand{\O}{{\mathcal O}}
\newcommand{\Hom}{\mathrm{Hom}}
\newcommand{\downarrowright}[1]{\downarrow
\rlap{\raise0.1cm\hbox{$\scriptstyle{#1}$}}}
\newcommand{\downarrowleft}[1]{\rlap{\kern-0.2cm
\raise0.1cm\hbox{$\scriptstyle{#1}$}}\downarrow}
\newcommand{\uparrowright}[1]{\uparrow
\rlap{\lower0.1cm\hbox{$\scriptstyle{#1}$}}}
\newcommand{\uparrowleft}[1]{\rlap{\kern-0.2cm
\lower0.1cm\hbox{$\scriptstyle{#1}$}}\uparrow}
\newcommand{\ra}{\rightarrow}
\newcommand{\wmono}{\ar@{>->}[r]}
\def\umono{\ar@{_{(}->}[u]}
\def\uumono{\ar@{_{(}->}[uu]}
\def\lmono{\ar@{_{(}->}[l]}
\def\llmono{\ar@{_{(}->}[ll]}
\numberwithin{equation}{section}
\title[Relative homological algebra via truncations]{Relative homological algebra via truncations}
\author[W. Chach\'olski]{Wojciech Chach\'olski}
\author[A. Neeman]{Amnon Neeman}
\author[W. Pitsch]{Wolfgang Pitsch}
\author[J. Scherer]{J\'er\^{o}me Scherer}
\thanks{This research was partly supported by the Australian Research Council, Vetenskapsr{\aa}det and 	 G\"oran Gustafssons Stiftelse, and
FEDER/MEC grants MTM2013-42293 and MTM2016-80439-P}
\subjclass[2000]{Primary 55S45; Secondary 55R15, 55R70, 55P20,
22F50}
\keywords{relative homological algebra, relative resolution, injective class, model category, model approximation, truncation, 
Noetherian ring, Krull dimension, local cohomology}
\begin{document}

\begin{abstract}
To do homological algebra with unbounded chain complexes one needs to first find a way of constructing resolutions.
Spaltenstein solved this problem for chain complexes of $R$-modules by truncating further and further to the left, 
resolving the pieces, and gluing back the partial resolutions. Our aim is to give a homotopy theoretical interpretation
of this procedure, which may be extended to a relative setting. We work in an arbitrary abelian category  
$\mathcal{A}$ and fix a class of ``injective objects'' $\mathcal{I}$. We show that Spaltenstein's construction can be captured
by a pair of adjoint functors between unbounded chain complexes and towers of non-positively graded ones. This pair
of adjoint functors forms what we call a Quillen pair and the above process of truncations, partial resolutions, and gluing,
gives a meaningful way to resolve complexes in a relative setting \emph{up to a split error term}. In order to do homotopy
theory, and in particular to construct a well behaved relative derived category $D(\mathcal{A};\mathcal{I})$, we need more: the split error term must vanish.
This is the case when $\mathcal I$ is the class of all injective $R$-modules but not in general, not even for
certain classes of injectives modules over a Noetherian ring. The key property is a relative analogue of Roos's AB4*-$n$ axiom 
for abelian categories. Various concrete examples such as Gorenstein homological algebra and purity are also discussed.
\end{abstract}

\maketitle

%%%%%%%%%%%%%%%%%%%%%%%%%%%%%%%%%%%%%%%%%%%%%%%%%%%%%%%%%%%%%%%%%%%
%%%%%%%%%%%%%%%%%%%%%%%%%%%%%%%%%%%%%%%%%%%%%%%%%%%%%%%%%%%%%%%%%%%
\section*{Introduction}
%%%%%%%%%%%%%%%%%%%%%%%%%%%%%%%%%%%%%%%%%%%%%%%%%%%%%%%%%%%%%%%%%%%
%%%%%%%%%%%%%%%%%%%%%%%%%%%%%%%%%%%%%%%%%%%%%%%%%%%%%%%%%%%%%%%%%%%
Our aim in this work is to present a framework to do relative homological algebra. If homological algebra is understood as a means 
to study objects and functors in abelian categories through invariants determined by projective or injective resolutions, then relative
homological algebra should give us more flexibility in constructing resolutions, meaning we would like to be allowed to use a priori 
\emph{any}  object  as an injective. This idea goes back at least to Adamson \cite{MR0065546} for group
cohomology and Chevalley-Eilenberg \cite{MR0024908} for Lie algebra homology. Both were then subsumed in 
a general theory by Hochschild \cite{MR0080654}. The most complete reference for the classical point of view is 
Eilenberg--Moore \cite{MR0178036}.

Analogously, in homotopy theory one would traditionally use spheres to ``resolve spaces'' by constructing a CW-approximation, but
it has become very common nowadays to replace them by some other spaces and do $A$-homotopy theory, as developed for instance 
by Farjoun \cite{Farjoun95}. In fact, homotopical methods have already been applied to do relative homological algebra.
Christensen and Hovey \cite{MR1912401} show that, in many cases, one can equip the category of unbounded chain
complexes with a model category structure where the weak equivalences reflect a choice of new projective objects. It is
their work, and the relationship to Spaltenstein's explicit construction of a resolution for unbounded chain complex \cite{MR932640},
that motivated us originally. We wish to stress the point that, for us, it is as important to have a constructive method to build
relative resolutions as to know that there exists a formal method to invert certain relative quasi-isomorphisms (because
there is a relative model structure or a relative derived category for example).

More precisely we fix in an abelian category $\A$ a class $\I\subset\A$ of objects, called the \emph{relative injectives}, that will play the role of
usual injectives. 
This determines in turn two classes of maps: a class of \emph{relative monomorphisms} and 
%: one can extend a map to an object in $\I$ along any relative monomorphism; 
%these are as usual a basic tool to constructs resolutions of objects. 
a class of \emph{relative quasi-isomorphisms}. If $\I$ is the class of injective
objects these reduce to ordinary monomorphisms and ordinary quasi-isomorphisms.
Denote by $\ch(\A)$ the category of chain complexes over $\A$ and by $\W_\I$ the class of relative quasi-isomorphisms. Our aim is 
to construct the localized category $\ch(\A)[\W_\I]^{-1}$, in particular we would like to find a way to 
resolve chain complexes.
%, i.e. given a complex $X \in \ch(\A)$, we would like to find a way to construct a relative quasi-isomorphism $X \rightarrow Y$ to a complex 
%$Y$ on which one can actually compute the invariants attached to $X$.

Disregarding set-theoretical problems, one could formally add inverses of the elements in $\W_\I$ to get $D(\A;\I) = \ch(\A)[\W_\I]^{-1}$.
With a little more care, for instance using the theory of null systems, one can construct $\ch(\A)[\W_\I]^{-1}$ by the calculus of fractions and 
endow it with a natural triangulated structure; this is done at the end of  Section~\ref{sec framework}. It is unwise though to completely 
disregard set-theoretic problems and Quillen devised in the late sixties the notion of a model category, see \cite{MR0223432},  which
provides a technique for overcoming this difficulty. 
%We use the term model category as defined in~\cite{MR1361887}. 
On the category of left bounded chain complexes Bousfield \cite{MR2026537}
showed how to use Quillen's machine to construct 
the relative derived category $D_{\leq 0}(\A;\I)$.
An elementary exposition of Bousfield's relative model structure, including explicit methods to construct factorizations (and hence resolutions), 
can be found in Appendix~\ref{app left}. 
%The crucial point here is that for left bounded complexes, i.e.  of the form $\cdots 0 \ra X_0 \ra X_{-1} \ra X_{-2} \cdots$, 
%there is a rather explicit way to construct a resolution $Y$. 

Our objective is to extend this construction and the model structure to unbounded complexes, but this is a more delicate issue, 
even  in the classical setting, see Spaltenstein~\cite{MR932640} or Serp\'e~\cite{serpe}. A relative model structure on $\ch(\A)$ 
would be nice, but we cannot apply homotopical localization techniques in a straightforward way since there is no obvious
\emph{set} of maps to invert. Anyway, we need less. Therefore we introduce a more flexible framework, 
namely that of a model approximation \cite{MR1879153}.
Our idea in this work is to approximate a complex by the tower of its truncations, just as Spaltenstein did. 
For this we observe first in Proposition~\ref{prop modelontowers} that a relative model structure on left bounded complexes 
induces a model structure on towers of left bounded complexes.
Diagrams of model categories have been studied by Greenlees and Shipley \cite{MR3254575} and play an important role in
equivariant stable homotopy theory, see for example \cite{BGKS}.
Recent work of Harpaz and Prasma \cite{MR3366868} proposes another viewpoint on such diagrams and model structures.
%which is level-wise defined. I

%More generally we prove in section~\ref{sec modelcat} that if we define a tower $\T$ of model categories to consist of a sequence of model categories
%$\{\T_n\}_{n\geq 0}$ and a sequence of Quillen functors $\{l:\T_{n+1}\rightleftarrows \T_n:r\}_{n\geq 0}$
%(for any $n$, $l$ is left adjoint to $r$ and
%$r$ preserves fibrations and acyclic fibrations), then
%
%%%%%%%%%%%%%%%%%%%%%%%%%%%%%%%%%%%%%%%%%%%%%%%%%%%
%\begin{proposition}
%%%%%%%%%%%%%%%%%%%%%%%%%%%%%%%%%%%%%%%%%%%%%%%%%%%
%Endow the category of towers $\T$ with the following classes of maps: a morphism$\{f_n:a_n\ra b_n\}_{n\geq 0}$ in $\text{Tow}(\T)$ is  a weak equivalence 
%(cofibration) if for any $n$, $f_n$ is a weak equivalence 
%(cofibration) in $\T_n$. This morphism is called a fibration if, for any $n\geq 0$, $\alpha_n:a_{n}\rightarrow p_n$ is a fibration in $\T_{n}$.
%
%Then this  choice of weak equivalences, cofibrations, and fibrations is  a model category structure on $\text{\rm Tow}(\T)$.
%\end{proposition}
%%%%%%%%%%%%%%%%%%%%%%%%%%%%%%%%%%

Second, we package the relationship between unbounded chain complexes and the category of towers $\text{\rm Tow}(\A,\I)$ equipped with
the relative model structure into what we call a Quillen pair. It consists of a pair of adjoint functors
\[
\text{\rm tow}:\text{\rm Ch}(\A)\rightleftarrows \text{\rm Tow}(\A,\I):\text{\rm lim}
\]
where the ``tower functor'' associates to a complex the tower given by truncating it further and further to the left, 
and the limit functor takes limits degreewise, see Proposition~\ref{prop rqp}. The left hand side is not a model category 
but its homotopical features
are reflected in the right hand side. To do homotopy theory with unbounded chain complexes we need this
Quillen pair to form a \emph{model approximation}, i.e.~to verify some extra compatibility condition of the adjoint pair
with resolutions, see Definition~\ref{def approx}. When this is the case resolutions of complexes are provided by an 
explicit recipe. Thus we need to understand when the Quillen pair is a model approximation. Our first answer concerns
rings with finite Krull dimension.

%To be very explicit,  $\text{Tow}(\A,\I)$ is the category of the following commutative diagrams in $A$:
%\[\xymatrix{
%\vdots\ar[d] &  \vdots\ar[d] &  \vdots\ar[d] & \vdots\ar[d] & \vdots\ar[d] & \vdots\ar[d] \\
%0\ar[r]& X_{2,2}\ar[r]^{d_{2,2}}\ar[d]^{t_{2,2}} & X_{2,1}\ar[r]^{d_{2,1}}\ar[d]^{t_{2,1}}& X_{2,0}\ar[r]^{d_{2,0}}\ar[d]^{t_{2,0}}& X_{2,-1}\ar[r]^{d_{2,-1}}
%\ar[d]^{t_{2,-1}}
%& X_{2,-2}\ar[r]^{d_{2,-2}}\ar[d]^{t_{2,-2}}&
%\cdots\\
% &  0\ar[r]& X_{1,1}\ar[r]^{d_{1,1}}\ar[d]^{t_{1,1}} & X_{1,0}\ar[r]^{d_{1,0}}\ar[d]^{t_{1,0}}& X_{1,-1}\ar[r]^{d_{1,-1}}\ar[d]^{t_{1,-1}}& X_{1,-2}\ar[r]^{d_{1,-2}}
%\ar[d]^{t_{1,-2}}
%&\cdots \\
% & & 0\ar[r]& X_{0,0}\ar[r]^{d_{0,0}} & X_{0,-1}\ar[r]^{d_{0,-1}}& X_{0,-2}\ar[r]^{d_{0,-2}}&  \cdots
%}\]
%where, for any $n\geq 0$ and $i\leq n$ , $d_{n,i-1}d_{n,i}=0$,
%i.e., horizontal lines are chain complexes.

\medskip

\noindent
{\bf Theorem~\ref{theorem finite Kdim}.}
%%%%%%%%%%%%%%%%%%%%%%%%%%%%%%%%%%%%%%%%%%%%%%%%%%%%%%%%%%
\emph{Let $R$ be a Noetherian ring of finite Krull dimension~$d$, and
$\I$ an injective class of injective modules. Then the category of towers
forms a model approximation for $\text{Ch}(R)$ equipped with
$\I$-equivalences.}

\medskip

When the Krull dimension is infinite it depends on the chosen class of injectives whether or not one can
resolve unbounded complexes by truncation. For Nagata's ring \cite{MR0155856} we construct in
Theorem~\ref{theorem counterexample}
an injective class $\I$
which fails to yield a model approximation. Concretely this means
that we exhibit an unbounded complex which is not relatively quasi-isomorphic to the limit of the (relative) injective
resolutions of its truncations. Our methods rely on local cohomology computations, see \cite{MR2355715}.
The failure of being a model approximation is nevertheless rather well behaved, as we never
lose any information about the original complex:

\medskip

%%%%%%%%%%%%%%%%%%%%%%%%%%%%%%%%%% %%%%%%%%%%%%%%%%%
\noindent
{\bf Proposition~\ref{prop defaultissplit}}
%%%%%%%%%%%%%%%%%%%%%%%%%%%%%%%%%% %%%%%%%%%%%%%%%%%
\emph{Let  $f:\text{\rm tow}(X)\ra Y_{\bullet}$ be a weak equivalence in $\text{\rm Tow}(\A,\I)$
and $g:X\ra \text{\rm lim}(Y_{\bullet})$ be its adjoint. Then,   for any $W\in \I$,
$\A(g,W)$ induces a split epimorphism on homology.}

\medskip

The failure of the standard Quillen pair to be a model approximation is closely related to the ``non-left completeness'' 
of the derived category of some abelian categories, observed by Neeman~\cite{MR2875857}.
To solve this difficulty we introduce in Section~\ref{sec ab4*rel} a relative version of Roos axiom AB4*-$n$, \cite{MR2197371}.

\medskip

%%%%%%%%%%%%%%%%%%%%%%%%%%%%%%%%%% %%%%%%%%%%%%%%%%%
\noindent
{\bf Theorem~\ref{thm ab4igivesapprox}}
%%%%%%%%%%%%%%%%%%%%%%%%%%%%%%%%%%
\emph{Let $\I$ be an injective class and assume that the abelian category $\mathcal{A}$ satisfies axiom {\rm{AB4*}}-$\mathcal{I}$-$n$.
Then the standard Quillen pair
\[
\text{\rm tow}:\text{\rm Ch}(\A)\rightleftarrows \text{\rm Tow}(\A,\I):\text{\rm lim}
\]
is a model approximation.
}
\medskip

For many classes of injective modules this axiom is satisfied. We construct some for Nagata's ring in Subsection~\ref{subsec:noproblem},
the finite Krull dimension case can be understood from this point of view, and Spaltenstein's classical construction also works
for this reason, see Corollary~\ref{cor:Spaltenstein}.

\medskip

\noindent {\bf Acknowledgments.} We would like to thank Michel van
den Bergh for pointing out the relevance of axiom AB4* at a time
when three authors were still thinking that towers approximate unbounded chain
complexes in any relative setting. The fourth author would like to
thank the Mathematics departments at the Universitat Aut\'onoma de
Barcelona and the Australian
National University for providing terrific conditions for a sabbatical.

%%%%%%%%%%%%%%%%%%%%%%%%%%%%%%%%%%%%%%%%%%%%%%%%%%%
%%%%%%%%%%%%%%%%%%%%%%%%%%%%%%%%%%%%%%%%%%%%%%%%%%%
\section{Chain complexes and  relative weak equivalences}
\label{sec framework}
%%%%%%%%%%%%%%%%%%%%%%%%%%%%%%%%%%%%%%%%%%%%%%%%%%%
%%%%%%%%%%%%%%%%%%%%%%%%%%%%%%%%%%%%%%%%%%%%%%%%%%%
In this section we recall briefly the definition of an abelian category, introduce the notion of an injective
class, and study the relative weak equivalences that arise in the category of chain complexes in an abelian
category once an injective class has been chosen.

%%%%%%%%%%%%%%%%%%%%%%%%%%%%%
\subsection{Abelian categories}
\label{subsec abelian}
%%%%%%%%%%%%%%%%%%%%%%%%%%%%%
Throughout the paper we work with an \emph{abelian category} $\A$, for
example the category of left modules over
a ring. By an abelian catgory we mean a category with the following
structure~\cite{tohoku}:
\begin{itemize}
\item[(AB0)] {\bf Additivity.} The category $\A$ is additive: finite products and coproducts exist; 
there is a zero object (an object which is both initial and terminal); given two objects $X,Y \in \A$, 
the morphism set $\mathcal{A}(X,Y)$  has an abelian group structure with the zero given by the 
unique morphism that factors through the zero object; the composition of maps  is a bilinear operation.

\item[(AB1)] {\bf Kernels and cokernels.} Any morphism has a kernel and cokernel as defined in~\cite{MR1344215}.

\item[(AB2)] Every monomorphism is the kernel of its cokernel and every epimorphism is the cokernel of its kernel.

\item[(AB3)] {\bf Limits and colimits.} Arbitrary  limits and colimits exist in $\mathcal{A}$.
\end{itemize}

At first we do not ask for any
further properties of products beyond their existence, although
later on we will make a crucial assumption.
Grothendieck's axiom, which we will use, is:

\begin{itemize}
\item[(AB4*)] A countable product of epimorphisms in $\A$ is an epimorphism.
\end{itemize}

% For our purposes the key property of an abelian category that satisifes the AB4$^{\ast}$-axiom is:
% \begin{proposition}
% Consider the following commutative diagram in $\A$ with exact raws:
% \[\xymatrix{
%  \vdots\ar[d]& \vdots\ar[d] & \vdots\ar[d]& \\
% X_{2}\oplus X_{1}\oplus X_{0}\ar[r]\ar[d]^{\text{pr}} & Y_2\ar[r]\ar[d] & Z_2\ar[r]\ar[d] & 0\\
% X_{1}\oplus X_{0}\ar[r]\ar[d]^{\text{pr}} & Y_1\ar[r]\ar[d] & Z_1\ar[r]\ar[d] & 0\\
% X_{0}\ar[r]& Y_0\ar[r]& Z_0\ar[r] & 0
% }\]
% Then the following induced sequence is also exact:
% \[
% \prod X_{n}\ra \text{\rm lim}(Y_n)\ra \text{\rm lim}(Z_n)\ra 0
% \]
% 
% \end{proposition}

Let $R$ be a possibly non-commutative unitary ring.  The category of  left $R$-modules, which we call simply $R$-modules  
and denote $R$-Mod, is an abelian category that satisfies axiom AB4*. However, if $X$ is a topological space then the category of 
sheaves of abelian groups on $X$, which is also an abelian category, does not satisfy AB4* in general~\cite[Proposition~3.1.1]{tohoku}.

%%%%%%%%%%%%%%%%%%%%%%%%%%%%%
\subsection{Injective classes}
\label{subsec injective}
%%%%%%%%%%%%%%%%%%%%%%%%%%%%%
Given   an abelian category $\A$ we are interested in understanding relative analogues of monomorphisms  and
injective objects in~$\A$.

%%%%%%%%%%%%%%%%%%%%%%%%%%%%%%
\begin{definition} \label{def W-monoepi}
%%%%%%%%%%%%%%%%%%%%%%%%%%%%%%
Let $\I$ be a collection of objects in $\mathcal{A}$. A
morphism $f: M \rightarrow N$ in $\A$ is said to be an \emph{$\I$-monomorphism} if,
for any $W \in \I$,  $f^\ast: \mathcal{A}(N,W) \rightarrow
\mathcal{A}(M,W)$ is a surjection of sets.  We say that $\A$ has enough $\I$-injectives if, for any
object $M$, there is an $\I$-monomorphism  $M\ra W$ with $W\in \I$.
\end{definition}
%%%%%%%%%%%%%%%%%%%%%%%%%%%%%%

%%%%%%%%%%%%%%%%%%%%%%%%%%%%%%
\begin{remark} \label{rem mono properties}
%%%%%%%%%%%%%%%%%%%%%%%%%%%%%%
It is clear that a composite of $\I$-monomorphisms  is also an
$\I$-mono\-morphism. 
We say that a morphism $f$ has a retraction if there exists a morphism $r$ such that $rf=\text{id}$.
Any morphism that has a retraction is an $\I$-monomorphism
for any collection~$\I$. Observe also that $\I$-monomorphisms are
preserved under base change: if $f: M \rightarrow N$ is an $\I$-monomorphism, 
then so is its push-out along any morphism $M \rightarrow M'$, by the universal property of a push-out. 
Similarly an  arbitrary coproduct of $\I$-monomorphisms is
an $\I$-monomorphism. 
In general however limits and products of $\I$-monomorphisms may fail to be $\I$-monomorphisms. 
%(see e.g. Example~\ref{ex tensor}). 
\end{remark}
%%%%%%%%%%%%%%%%%%%%%%%%%%%%%%

Given a class of objects $\mathcal{I}$ denote by $\overline{\I}$ the class of retracts  of arbitrary products  of elements of $\I$. 
Since a morphism is an $\I$-monomorphism if and only if it is an $\overline{\I}$-monomorphism, 
without loss of generality
we may assume
that $\I$ is closed under retracts and products so that  $\I = \overline{\I}$.

%%%%%%%%%%%%%%%%%%%%%%%%%%%%%
\begin{definition}\label{def injclass}
%%%%%%%%%%%%%%%%%%%%%%%%%%%%%
A collection of objects $\I$ in  $\A$ is called an \emph{injective class} if $\I$ is closed under retracts and products and 
if $\A$ has  enough $\I$-injectives. 
\end{definition}
%%%%%%%%%%%%%%%%%%%%%%%%%%%%%

It should be pointed out that general products have considerably more retracts  than direct sums. 

%%%%%%%%%%%%%%%%%%%%%%%%%%%%%
\begin{example}\label{ex basicexinj}
%%%%%%%%%%%%%%%%%%%%%%%%%%%%%
The largest injective class $\I$ in $\A$ consists of all the objects in $\A$. Here  $\I$-monomorphisms are morphisms $f:M\ra N$ that have 
retractions.
%(there is $g:N\ra M$ such that $gf=\text{id}_{M}$). 
It is clear that there are enough $\I$-injectives since for any object $N$ the identity $Id_N : N \ra N$ is an $\I$-monomorphism. 

Recall that an object $W$ in an abelian category $\A$ is called injective if, for any monomorphism $f$, $\A(f,W)$ is an epimorphism. 
Assume that any object of $\A$ admits a monomorphism into an injective object, which is the case for example in the category
of left $R$-modules. Then the collection
$\I$ of injective objects in $\A$ is an injective class and $\I$-monomorphisms are the ordinary monomorphisms.
The same holds for the category of $\O_X$-modules for a scheme $X$:
any $\O_X$-module is a submodule of an injective  $\O_X$-module. 
\end{example}

Adjoint functors allow us to construct new injective classes
out of old ones, an idea that goes back to 
Eilenberg-Moore~\cite[Theorem~2.1]{MR0178036}.

%%%%%%%%%%%%%%%%%%%%%%%%%%%%%
\begin{proposition}\label{prop adjfunctor}
%%%%%%%%%%%%%%%%%%%%%%%%%%%%%
Let $l : \B \leftrightarrows \A : r$ be a pair of functors between abelian categories such that $l$ is left adjoint to $r$. 
Let $\I$ be a collection of objects in $\A$.
\begin{enumerate}
\item
A morphism $f$ in $\B$ is an $r(\I)$-monomorphism if and only if $lf$ is  an $\I$-monomorphism in $\A$. 
\item If $lM%\rightarrowtail 
\ra W$ is an $\I$-monomorphism in $\A$, then
its adjoint $M\ra rW$ is an $r(\I)$-monomorphism in $\B$.
\item 
If there are enough $\I$-injectives in  $\A$, then there are enough $r(\I)$-injectives in $\B$.
\item If $\I$ is an injective class in $\A$, then the collection of retracts of objects of the form $r(W)$, for $W\in \I$, is an injective class in $\B$.
\end{enumerate}
\end{proposition}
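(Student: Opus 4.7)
The plan is to use throughout the bijection provided by the adjunction, $\B(M,rV)\cong \A(lM,V)$, natural in both variables, together with the unit $\eta$ and counit $\epsilon$.

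For part (1), I would observe that for any $f\colon M\to N$ in $\B$ and any $W\in\I$, the map $\B(f,rW)\colon \B(N,rW)\to\B(M,rW)$ is, via the natural adjunction bijection, identified with $\A(lf,W)\colon \A(lN,W)\to \A(lM,W)$. Surjectivity of one is therefore equivalent to surjectivity of the other, which gives the claim. For part (2), starting from an $\I$-monomorphism $\phi\colon lM\to W$ with adjoint $\widetilde{\phi}\colon M\to rW$, the triangle identity yields the factorization $\phi=\epsilon_W\circ l\widetilde{\phi}$. Thus for any $V\in\I$ and any $g\colon lM\to V$, the $\I$-monomorphism property of $\phi$ produces $k\colon W\to V$ with $k\phi=g$, and then $(k\epsilon_W)\circ l\widetilde{\phi}=g$, so $l\widetilde{\phi}$ is itself an $\I$-monomorphism. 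By part (1), this means $\widetilde{\phi}$ is an $r(\I)$-monomorphism.

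For part (3), given $M\in\B$, I would invoke the existence of enough $\I$-injectives in $\A$ to pick an $\I$-monomorphism $lM\to W$ with $W\in\I$, and then apply (2) to conclude that its adjoint $M\to rW$ is an $r(\I)$-monomorphism, with $rW$ in the prescribed class. For part (4), let $\J$ denote the class of retracts of objects of the form $rW$ with $W\in\I$. Closure of $\J$ under retracts is immediate because a retract of a retract is a retract. For closure under products, I would use that the right adjoint $r$ preserves arbitrary products, so that $\prod_i rW_i=r(\prod_i W_i)$; since $\I$ is an injective class and hence closed under products, $\prod_i W_i$ lies in $\I$, and a product of retracts $X_i$ of $rW_i$ is a retract of $r(\prod_i W_i)$, hence in $\J$. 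Finally, since $r(\I)\subseteq \J$, the existence of enough $r(\I)$-injectives from part (3) yields enough $\J$-injectives.

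The main conceptual step is part (2); once that is in place, (3) and (4) are short consequences. The one point that requires minor care is the verification that $\J$ in (4) is closed under products, which hinges on the observation that $r$ preserves products and that $\I$ is already closed under products, so that retracts of single objects $rW$ suffice and one does not need to enlarge the class further.
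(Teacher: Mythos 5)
The paper actually omits a proof of Proposition~\ref{prop adjfunctor} (it only credits the idea to Eilenberg--Moore), so there is nothing to compare directly. Your argument is correct and is essentially the canonical one: (1) follows from the naturality square relating $\B(f,rW)$ and $\A(lf,W)$ through the adjunction isomorphism; (2) uses the factorization $\phi=\epsilon_W\circ l\widetilde{\phi}$ of a map out of $lM$ through the counit, which shows that $l\widetilde{\phi}$ inherits the $\I$-monomorphism property, and then appeals to (1); (3) is immediate from (2); and (4) combines closure of $\I$ under products with the fact that right adjoints preserve products.

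One small point worth making explicit in (4): from $r(\I)\subseteq\J$ you get that every $\J$-monomorphism is an $r(\I)$-monomorphism, which is the wrong direction for concluding ``enough $\J$-injectives'' directly from part (3). What rescues the argument is that $\J=\overline{r(\I)}$ (retracts of products of objects $rW$, which, as you observe, collapses to retracts of single objects $r(\prod W_i)$ since $r$ preserves products and $\I$ is product-closed), and the paper has already noted, just before Definition~\ref{def injclass}, that $\I$-monomorphisms and $\overline{\I}$-monomorphisms coincide. So the $r(\I)$-monomorphism $M\to rW$ from (3) \emph{is} a $\J$-monomorphism, and its target $rW$ lies in $\J$. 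Your proof is fine; it would just benefit from one sentence citing this equivalence rather than leaving the reader to supply it.
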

%%%%%%%%%%%%%%%%%%%%%%%%%%%%%

%%%%%%%%%%%%%%%%%%%%%%%%%%%%%
%\begin{example}\label{ex Hom}
%%%%%%%%%%%%%%%%%%%%%%%%%%%%%
%{\bf Modules of homomorphisms.} This has been originally considered by
%Hochschild in his foundational article on relative homological algebra \cite{MR0080654}.
%A ring homomorphism $S \rightarrow R$ induces a functor $R\text{\rm-Mod}\rightarrow S\text{\rm-Mod}$ that is left adjoint to 
%the functor $\Hom_{S}(R,-):S\text{\rm-Mod}\ra R\text{\rm-Mod}$. We can then apply  Proposition~\ref{prop adjfunctor}  to  
%injective classes in $R\text{\rm-Mod}$ as those given for instance in Example~\ref{ex basicexinj} to get new injective classes in $S\text{\rm-Mod}$.  
%In particular, the collection of $R$-modules which are retracts of modules of the form
%$\Hom_{S}(R,N)$, for all $S$-modules $N$, forms an injective class of $R$-modules. 
%Similarly, the collection of $R$-modules which are retracts  of modules of the same form $\Hom_{S}(R,N)$, but only for injective 
%$S$-modules $N$, is also an injective class of $R$-modules. 
%\end{example}

%%%%%%%%%%%%%%%%%%%%%%%%%%%%%
\begin{example}\label{ex tensor}
%%%%%%%%%%%%%%%%%%%%%%%%%%%%%
{\bf Modules of tensor products.} 
Assume now that  $S$ is a commutative ring and $S\ra R$ is a ring homorphism whose image lies in the center of $R$,
hence turns $R$ into  an $S$-algebra. The forgetful functor
$R\text{\rm-Mod}\rightarrow S\text{\rm-Mod}$ is right adjoint to 
$R\otimes_{S}-:S\text{\rm-Mod}\ra  R\text{\rm-Mod}$. By
Example~\ref{ex basicexinj} and
Proposition~\ref{prop adjfunctor}, both the collection of 
$S$-linear summands  of $R$-modules
and the collection of  $S$-linear summands of all injective $R$-modules form 
injective classes of $S$-modules.  A monomorphism relative to the first collection is a homomorphism $f$  
for which $f\otimes_{S}R$ is a split monomorphism.
A monomorphism relative to the second collection is an homomorphism $f$ for which $f\otimes_{S}R$ is a monomorphism.
%In general monomorphisms with respect to  these  classes are not preserved by  infinite products.
%Consider for instance  the rational numbers as an algebra over the integers via the inclusion $\mathbf Z\rightarrow \mathbf Q$. 
%Let $\I$ be the class of abelian groups consisting of $\mathbf Q$-vector spaces.
%Then, for any prime number $p$, ${\mathbf Z}/p\rightarrow 0$ is an $\I$-monomorphism. However the product 
%$\prod_{p\text{ prime}}{\mathbf Z}/p\rightarrow 0$ is not an $\I$-monomorphism since $\prod_{p\text{ prime}}{\mathbf Z}/p$ 
%is rationally non-trivial.
\end{example}

%%%%%%%%%%%%%%%%%%%%%%%%%%%%%
\begin{example}\label{ex scheme}
%%%%%%%%%%%%%%%%%%%%%%%%%%%%%
{\bf Schemes.} 
Let $f:X\ra Y$ be a morphisms of schemes. The functor $f^{\ast}:\O_Y\text{-Mod}\ra
\O_X\text{-Mod}$ is left adjoint to $f_{\ast}:O_X\text{-Mod}\ra
\O_Y\text{-Mod}$.  It follows that the two collections:  $\O_Y$-modules which are retracts of  $\O_Y$-modules of the form
$f_{\ast}(N)$, for any $\O_X$-module~$N$, and retracts of $\O_Y$-modules of the same form,  but for all injective 
$\O_X$-module~$N$, are injective classes in $\O_Y\text{-Mod}$.
\end{example}

We wish to see to what extent objects in $\mathcal{I}$
behave like usual injective objects, that is
when it is possible to do
homological algebra relative to the class $\mathcal{I}$. We therefore turn to the category $\ch(\A)$ of chain complexes over $\A$ and
to its homotopy category $\K(\A)$.  

%%%%%%%%%%%%%%%%%%%%%%%%%%%%%
\subsection{Relative weak equivalences in $\ch(\A)$}
\label{subsec relativewe}
%%%%%%%%%%%%%%%%%%%%%%%%%%%%%
In this work we mostly consider \emph{homological} complexes  (i.e. differentials lower degree by one) in $\A$:
$X =(\cdots\ra X_{i}\xrightarrow{d_i} X_{i-1}\ra\cdots)$.
The category of such chain complexes in $\mathcal{A}$ is denoted by $\text{Ch}(\mathcal{A})$. 
We identify $\A$ with the full subcategory of $\ch(\A)$ of those complexes concentrated in degree $0$
and will use the topologist's suspension symbol $\Sigma X$ for the shifted complex sometimes denoted by~$X[1]$.

The only examples of  \emph{cohomological} complexes that we consider are
complexes of abelian groups of the form  $\A(X,W)$ for some $X\in \text{Ch}(\mathcal{A})$ and $W\in \A$. 
As usual, if $X_k$ is in homological degree $k \in \Z$, we put $\A(X_k,W)$ in cohomological degree $-k$. 
The key definition for doing relative homological algebra is the following.

\begin{definition}\label{def Imono}
%%%%%%%%%%%%%%%%%%%%%%%%%%%%%%%%%%%%
Let $k \in \Z$ be an integer. A morphism $f:X\ra Y$ in $\ch(\A)$ is called  a \emph{$k$-$\I$-weak equivalence} if and only if,
for any $W\in \I$, the induced morphism of cochain complexes $\A(f,W):\A(Y,W)\ra \A(X,W)$ induces an isomorphism in cohomology in 
degrees $n\geq -k$ and a monomorphism in degree $-k-1$. A morphism that is a $k$-$\I$-weak equivalence for all $k \in \Z$ is called an 
\emph{$\I$-weak equivalence}. 
\end{definition}
%%%%%%%%%%%%%%%%%%%%%%%%%%%%%%%%%%%%

%%%%%%%%%%%%%%%%%%%%%%%%%%%%%%%%%%%%
\begin{definition}\label{def Itrivial}
%%%%%%%%%%%%%%%%%%%%%%%%%%%%%%%%%%%%
An object $X$ in $\ch(\A)$  is called {\em $\I$-trivial} when $X\ra 0$ is an $\I$-weak equivalence, i.e. when
$\A(X,W)$ is an acyclic complex of  abelian groups for all $W\in \I$.  It is called
\emph{$k$-$\I$-connected} if $X\ra 0$ is an $k$-$\I$-weak equivalence, i.e., when
 $\A(X,W)$ has trivial cohomology in degrees $n\geq -k$ for all $W\in\I$.
\end{definition}
%%%%%%%%%%%%%%%%%%%%%%%%%%%%%%%%%%%%%%

Let us see what these definitions mean for the examples we introduced in the previous subsection.

%%%%%%%%%%%%%%%%%%%%%%%%%%%%%%%%%%%%%%
\begin{example}\label{exs relweq}
%%%%%%%%%%%%%%%%%%%%%%%%%%%%%%%%%%%%%%
We study first the case when $\I$ is the injective class of  all objects of $\A$.
For  an object $ M \in \A$ and an integer  $k$ denote by $D_k(M)$  the ``disc'' chain complex
\[
\xymatrix{
\cdots  0 \ar[r] & M \ar@{=}^{Id_M}[r] & M \ar[r] & 0 \ar[r] & \cdots
}
\]
where the two copies of $M$ are in homological degrees $k$ and $k-1$ respectively.
Complexes of the form $D_{k}(M)$  are prototypical examples of contractible complexes. 

A morphism of chain complexes  $f:X\ra Y$ is an $\I$-weak equivalence if and only if it is a homotopy equivalence. 
A chain complex is $\I$-trivial if and only if it is isomorphic to  $\bigoplus_{i} D_{k_i}(M_i)$
for some sequence of objects $M_i\in \A$ and integers $k_i \in \mathbf Z$.
\end{example}
%%%%%%%%%%%%%%%%%%%%%%%%%%%%%%%%%%%%%%

%%%%%%%%%%%%%%%%%%%%%%%%%%%%%%%%%%%%%%
\begin{example}\label{exs injectiverel}
%%%%%%%%%%%%%%%%%%%%%%%%%%%%%%%%%%%%%%
Let us assume that classical injective objects form an injective class, i.e. any object in $\A$ is a subobject of an injective object. 
As the functors $\A(-,W)$ are exact when $W$ is injective,  a morphism of complexes $f:X\ra Y$  in $\ch(\A)$ is an $\I$-weak equivalence 
if and only if it is a quasi-isomorphism. A chain complex is $\I$-trivial if and only if it has trivial homology.
\end{example}
%%%%%%%%%%%%%%%%%%%%%%%%%%%%%%%%%%%%%%

%%%%%%%%%%%%%%%%%%%%%%%%%%%%%%%%%%%%%%
\begin{example}\label{exs adjointrel}
%%%%%%%%%%%%%%%%%%%%%%%%%%%%%%%%%%%%%%
Consider a pair of adjoint functors $l : \B \leftrightarrows \A : r$ between abelian categories and $\I$ an injective class in $\A$. 
According to Proposition~\ref{prop adjfunctor}.(4), the collection $\mathcal{J}$ of retracts of objects of the form $r(W)$, 
for $W\in \I$, forms an injective class in $\B$. 
By applying $l$ and $r$ degree-wise, we get an induced pair of adjoint functors, denoted by the same symbols:
$l : \ch(\B) \leftrightarrows \ch(\A) : r$.
A morphism $f:X\ra Y$ in $\ch(\B)$ is a $\mathcal{J}$ -weak equivalence if and only if
$l(f):l(X)\ra l(Y)$ is an $\I$-weak equivalence in $\ch(\A)$.
\end{example}
%%%%%%%%%%%%%%%%%%%%%%%%%%%%%%%%%%%%%%

Our next example is based on the classification of injective classes of injective objects in a module category given in \cite{CPSinj}, 
to which we refer for more details. Let us recall however that given an ideal $I$ in $R$ and an element $r$ outside of $I$, then
$(I\colon r)$ denotes the ideal $\{ s \in R \, | \, sr \in I\}$. This example will play an important role in the final sections of this article.

%%%%%%%%%%%%%%%%%%%%%%%%%%%%%%%%%%%%%%
\begin{example}\label{ex injcall-stideal}
%%%%%%%%%%%%%%%%%%%%%%%%%%%%%%%%%%%%%%
Let $R$ be a commutative ring and $\L$ be a saturated set of ideals in~$R$. This means that $\L$ is a set of proper ideals of $R$ closed 
under intersection and the construction $(I\colon r)$; moreover if an ideal $J$ has the property that $(J\colon r)$ is contained in some
ideal in $\L$ for any element $r \notin J$, then $J$ itself must belong to $\L$.

Consider the injective class $\E(\L)$ that  consists of retracts of products of injective envelopes $E(R/I)$ for $I\in\L$.
A morphism $f:X\ra Y$ in $\ch(R)$ is an $\E(\L)$-weak equivalence if and only if $\text{Hom}(\text{H}_n(f),E(R/I))$ is a bijection for any $n$ and 
$I\in\L$. This happens if and only if the  annihilator of  any element in either $\text{ker}(\text{H}_n(f))$ or 
$\text{coker}(\text{H}_n(f))$ is not included in any ideal that belongs to $\L$.
\end{example}
%%%%%%%%%%%%%%%%%%%%%%%%%%%%%%%%%%%%%%

We  denote the class of $\I$-weak equivalences by $\W_\I$ or simply $\W$ if there is no ambiguity for the choice of the ambient injective 
class~$\I$. Isomorphisms are always  $\I$-weak equivalences and $\I$-weak equivalences satisfy the ``2 out of 3'' property, as the stronger
``2 out of 6'' property from \cite[Definition~4.5]{MR2102294} holds.

%%%%%%%%%%%%%%%%%%%%%%%%%%%%%%%%%%%%%%
\begin{lemma}\label{lem 2outof6}
%%%%%%%%%%%%%%%%%%%%%%%%%%%%%%%%%%%%%%
The class $\mathcal{W}_\I$ of $\I$-weak equivalences satisfies the $2$ out of $6$ property: given any three composable maps 
\[
\xymatrix{ X \ar[r]^u & Y \ar[r]^v & Z \ar[r]^w & T}
\]
if $vu$ and $wv$ are in $\mathcal{W}$ then so are $u,v,w$ and $wvu$.
\end{lemma}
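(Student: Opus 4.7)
The plan is to reduce the statement to the elementary 2-out-of-6 property for isomorphisms in an arbitrary category, applied to the cohomology of the induced cochain complexes.

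First I would fix an arbitrary object $W \in \I$ and apply the contravariant functor $\A(-, W)$ to the sequence $X \xrightarrow{u} Y \xrightarrow{v} Z \xrightarrow{w} T$, obtaining a sequence of cochain complexes of abelian groups
\[
\A(T,W) \xrightarrow{w^{*}} \A(Z,W) \xrightarrow{v^{*}} \A(Y,W) \xrightarrow{u^{*}} \A(X,W).
\]
By Definition~\ref{def Imono} (and the paragraph after Definition~\ref{def Imono} about $k$-$\I$-weak equivalences for all $k$), a map $f$ in $\ch(\A)$ is an $\I$-weak equivalence precisely when $\A(f,W)$ induces an isomorphism in cohomology in every degree, for every $W \in \I$. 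Hence, writing $\bar u = H^{*}(u^{*})$, $\bar v = H^{*}(v^{*})$, $\bar w = H^{*}(w^{*})$, the hypotheses that $vu$ and $wv$ are $\I$-weak equivalences translate into the statement that $\bar u \circ \bar v$ and $\bar v \circ \bar w$ are both isomorphisms of graded abelian groups.

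Second, I would invoke the purely categorical 2-out-of-6 property for isomorphisms: if in a category two composable arrows $ab$ and $bc$ are isomorphisms, then $a$, $b$, $c$, and $abc$ are all isomorphisms. The short verification is that $\bar v$ admits $(\bar u \bar v)^{-1} \bar u$ as a right inverse and $\bar w (\bar v \bar w)^{-1}$ as a left inverse, so $\bar v$ is an isomorphism; consequently $\bar u = (\bar u \bar v) \bar v^{-1}$ and $\bar w = \bar v^{-1}(\bar v \bar w)$ are compositions of isomorphisms, and $\bar u \bar v \bar w$ is too.

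Third, since this argument is carried out uniformly in $W \in \I$, it shows that each of $\A(u,W)$, $\A(v,W)$, $\A(w,W)$, and $\A(wvu,W)$ induces isomorphisms on cohomology in all degrees for every $W \in \I$. By the characterization recalled above, $u$, $v$, $w$, and $wvu$ are all $\I$-weak equivalences, which is the desired conclusion. No step is a genuine obstacle here: the whole content of the lemma is the observation that ``$\I$-weak equivalence'' is detected by isomorphisms in an auxiliary category (graded cohomology of the Hom-cochain complexes), together with the standard 2-out-of-6 property for isomorphisms. The only mild care needed is to note that the reversal of arrows induced by the contravariant functor $\A(-,W)$ turns the hypotheses $vu, wv \in \W_\I$ into compositions of the correct shape to apply 2-out-of-6, which they do.
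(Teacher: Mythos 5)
Your proposal is correct and follows essentially the same route as the paper: fix $W \in \I$, apply $\A(-,W)$, pass to cohomology, and run the elementary two-out-of-six argument for isomorphisms (the paper phrases the key step as $\A(v,W)$ being simultaneously epi and mono on cohomology, which is the same content as your left-inverse/right-inverse computation). One small slip: in your verification that $\bar v$ is an isomorphism, the roles are swapped --- $(\bar u \bar v)^{-1}\bar u$ is a \emph{left} inverse of $\bar v$ (since $(\bar u\bar v)^{-1}\bar u\cdot\bar v = \mathrm{id}$), and $\bar w(\bar v\bar w)^{-1}$ is a \emph{right} inverse --- but of course having one of each still forces $\bar v$ to be invertible, so the conclusion is unaffected.
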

%%%%%%%%%%%%%%%%%%%%%%%%%%%%%%%%%%%%%%

\begin{proof}
Fix an object $W \in \I$. Then $\A(vu,W ) = \A(u,W) \circ \A(v,W)$ is a quasi-isomorphism, hence $\A(v,W)$ induces an 
epimorphism in cohomology. Similarly, from the fact that $\A(wv,W)$ is a quasi-isomorphism we get that $\A(v,W)$ induces 
a monomorphism in cohomology, hence $v$ belongs to~$\mathcal{W}$. Since quasi-isomorphisms satisfy the $2$ out of $3$ property 
we get that $\A(u,W)$ and $\A(w,W)$ are quasi-isomorphisms and $v,w$ is in $\mathcal{W}$. By closure under composition so is~$wvu$. 
\end{proof}

Here are some elementary properties of $\I$-weak equivalences:

%%%%%%%%%%%%%%%%%%%%%%%%%%%%%%%%%%%%%%
\begin{proposition}\label{prop propunch}
%%%%%%%%%%%%%%%%%%%%%%%%%%%%%%%%%%%%%%
Let $I$ be an injective class in an abelian category $\A$.
\begin{enumerate}
\item A chain homotopy equivalence in $\ch(\A)$  is an $\I$-weak equivalence.
\item  A morphism $f:X\ra Y$ in $\ch(\A)$ is an $\I$-weak equivalence if and only if the cone
$\text{\rm Cone}(f)$ is $\I$-trivial.
\item Coproducts of $\I$-weak equivalences are $\I$-weak equivalences.
\item A contractible chain complex in $\ch(\A)$ is $\I$-trivial.
\item  Coproducts of $\I$-trivial complexes are $\I$-trivial.
\item A complex $X$ is $k$-$\I$-connected  if and only if, for any $i\leq k$, $d_i:\text{\rm coker}(d_{i+1})\ra X_{i-1}$ is an $\I$-monomorphism.
\item A complex $X$ is $\I$-trivial if and only if, for any $i$,  $d_i:\text{\rm coker}(d_{i+1})\ra X_{i-1}$ is an $\I$-monomorphism.
\item Let $X$ be a complex such that, for all $i$, $\text{\rm coker}(d_{i+1})\in\I$. Then  $X$ is $\I$-trivial if and only if
$X$ is isomorphic to $\bigoplus D^{i}(W_i)$.
\end{enumerate}
\end{proposition}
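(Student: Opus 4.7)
The plan rests on the defining observation from Definitions~\ref{def Imono} and~\ref{def Itrivial}: a map $f$ is an $\I$-weak equivalence precisely when $\A(f,W)$ is a quasi-isomorphism of cochain complexes of abelian groups for every $W\in\I$, and $X$ is $\I$-trivial precisely when each $\A(X,W)$ is acyclic. Since $\A(-,W)$ is an additive contravariant functor, this lets us transport classical statements about quasi-isomorphisms to our relative setting.

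Parts (1)--(5) are routine consequences of this principle. For (1), a chain homotopy is sent by $\A(-,W)$ to a cochain homotopy of the same shape, so a chain homotopy equivalence becomes a cochain homotopy equivalence, hence a quasi-isomorphism. For (2), the degreewise-split short exact sequence $0\ra Y\ra \text{Cone}(f)\ra \Sigma X\ra 0$ is preserved by $\A(-,W)$ and yields a long exact cohomology sequence; the desired equivalence is then a diagram chase. Part (3) follows from $\A(\coprod X_{\alpha},W)\cong \prod \A(X_{\alpha},W)$ together with the fact that products are exact in $\text{Ab}$, so cohomology commutes with products of cochain complexes. Part (4) is the case of (1) where the chain homotopy equivalence is $X\ra 0$, and (5) is (3) applied to the family $X_{\alpha}\ra 0$.

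For (6) and (7), the key observation comes from the universal property of cokernels:
\[
\A(\text{coker}(d_{i+1}),W) = \ker\bigl(\A(d_{i+1},W)\colon \A(X_i,W)\ra \A(X_{i+1},W)\bigr).
\]
Combined with the factorization of $d_i$ through $\bar d_i\colon \text{coker}(d_{i+1})\ra X_{i-1}$, this identifies the cohomology of $\A(X,W)$ at position $\A(X_i,W)$, which sits in cohomological degree $-i$, with $\text{coker}(\A(\bar d_i,W))$. This cokernel vanishes for every $W\in\I$ exactly when $\bar d_i$ is an $\I$-monomorphism. Part (6) is then a direct translation of Definition~\ref{def Itrivial}, and (7) is the unbounded case.

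For (8), one direction is immediate: each $D^i(W_i)$ is contractible, so $\I$-trivial by (4), and (5) passes to the coproduct. For the converse, assume $X$ is $\I$-trivial with $C_i:=\text{coker}(d_{i+1})\in\I$ for every $i$. Part (7) makes each $\bar d_i\colon C_i\ra X_{i-1}$ an $\I$-monomorphism, so testing with $W=C_i\in\I$ lifts $\id_{C_i}$ to a retraction $r_i\colon X_{i-1}\ra C_i$. The main obstacle is assembling these pointwise splittings into a global isomorphism. Writing $q_i\colon X_i\twoheadrightarrow C_i$ for the canonical quotient, the pair $(r_{i+1},q_i)$ gives a degreewise isomorphism $X_i\cong C_{i+1}\oplus C_i$. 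Since $q_i\circ d_{i+1}=0$ by definition of $C_i$, and $r_{i+1}\circ d_{i+1}=r_{i+1}\circ \bar d_{i+1}\circ q_{i+1}=q_{i+1}$, the differential $d_{i+1}$ acquires the matrix form $(c_{i+2},c_{i+1})\mapsto(c_{i+1},0)$ in these coordinates---precisely the differential of $\bigoplus_j D^j(C_j)$. The resulting map $X\to\bigoplus_i D^i(C_i)$ is therefore an isomorphism of chain complexes.
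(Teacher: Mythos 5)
Your proposal is correct and follows essentially the same route as the paper: parts (1)--(5) transport classical facts about quasi-isomorphisms through the additive functor $\A(-,W)$; parts (6)--(7) rest on the identification $\ker\A(d_{i+1},W)=\A(\text{coker}(d_{i+1}),W)$ so that the cohomology in degree $-i$ is the cokernel of $\A(\bar d_i,W)$; and part (8) uses the retraction of $\bar d_{i+1}$ together with the quotient $q_i$ to produce the degreewise splitting $X_i\cong C_{i+1}\oplus C_i$ assembling into the required isomorphism of complexes. Your treatment of (2) via the long exact sequence rather than via the isomorphism $\text{Cone}(\A(f,W))\cong\Sigma\A(\text{Cone}(f),W)$, and the more explicit verification of the matrix form of the differential in (8), are cosmetic variations on the paper's argument.
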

%%%%%%%%%%%%%%%%%%%%%%%%%%%%%%%%%%%%%%

\begin{proof}
Point (1) is a consequence of the fact that $\A(-,W)$ is an additive functor.

(2)  The cone of $\A(f,W):\A(Y,W)\ra \A(X,W)$ is isomorphic to the shift of the
complex $\A(\text{\rm Cone}(f),W)$, for any $W\in\A$. Thus $\A(f,W)$ is a quasi-isomorphism if and only if $\A(\text{\rm Cone}(f),W)$ is acyclic. 

Point (3) is a consequence of two facts. First,  $\A(-,W)$ takes coproducts in $\A$ into products of abelian groups. 
Second, products of quasi-isomorphisms of chain complexes of abelian groups are quasi-isomorphisms.

Point (4) is a special instance of Point (1), and given (4), Point (5) is a special case  of Point (3).

(6) The kernel of $\A(d_{i+1},W)$ is $\A(\text{coker}(d_{i+1}),W)$. 
Thus the $i$-th cohomology of  $\A(X,W)$ is trivial if and only if the morphism $\A(X_{i-1},W)\ra \A(\text{coker}(d_{i+1}),W)$ 
induced by $d_i$ is an epimorphism. By definition this happens if and only if the morphism $d_i:\text{coker}(d_{i+1})\ra X_{i-1}$ is an $\I$-monomorphism.

(7) This is a consequence of (6).

(8) If $X$ can be expressed as a direct sum $\bigoplus D_i(W_i)$, then $X$ is contractible and according to (4) it is $\I$-trivial. 
Assume now that $X$ is $\I$-trivial.  Define $W_i:=\text{coker}(d_{i+1})$. According to (6), the morphism 
$d_i:\text{coker}(d_{i+1})\ra X_{i-1}$ is an $I$-monomorphism. As $\text{coker}(d_{i+1})$ is assumed to belong to $\I$, 
it follows that the morphism $d_i:\text{coker}(d_{i+1})\ra X_{i-1}$   has a retraction.  This retraction can be used to define  
a morphism of chain complexes $X\ra D_{i}(W_i)$.  By assembling these morphisms together we get the desired isomorphism 
$X\ra \bigoplus  D_{i}(W_i)$.
\end{proof}

%%%%%%%%%%%%%%%%%%%%%%%%%%%%%%%%%%%%%%%%%%%%%%%%%%
%%%%%%%%%%%%%%%%%%%%%%%%%%%%%%%%%%%%%%%%%%%%%%%%%%
\section{The relative derived category as a large category}
%%%%%%%%%%%%%%%%%%%%%%%%%%%%%%%%%%%%%%%%%%%%%%%%%%
%%%%%%%%%%%%%%%%%%%%%%%%%%%%%%%%%%%%%%%%%%%%%%%%%%
Doing homological algebra relative to an injective class $\mathcal{I}$ amounts to inverting  the morphisms in $\W$ to form the 
relative derived category $D(\A;\I) = \ch(\A)[\W_{\I}^{-1}]$.
The formalities of inverting a class of morphisms in a category 
are well understood.
But there is a problem that, without some extra structure,
the resulting category turns 
out to be a large category in general, i.e. with classes of morphisms between two objects instead of sets of morphisms. This becomes an 
issue if one wants to further localize in this category or study its quotients. Let us nevertheless put this set-theoretical issue 
aside for the moment, and remind the reader
of the classical construction of the relative derived category $D(\A;\I)$.
In particular we recall that the classical results endow $D(\A;\I)$ with a
canonical triangulated structure.

As chain homotopy equivalences are in particular $\I$-equivalences the localization functor $\ch(\A) \rightarrow \ch(\A)[\W_{\I}^{-1}]$, if it exists, 
factors through the canonical localization functor $\ch(\A) \rightarrow \K(\A)$, where $\K(\A)$ is the homotopy category of chain complexes. 
The category  $\K(\A)$ is a triangulated category, and we exploit this fact and the theory of \emph{null systems}, 
\cite[Section 10.2]{MR2182076}, to construct the relative derived category.  
%We first recall the not so standard notion of a null system.

%%%%%%%%%%%%%%%%%%%%%%%%%%%%%%%%%%%%%%%%%%%%%%%%%%
\begin{definition}\label{def nullsystem}
%%%%%%%%%%%%%%%%%%%%%%%%%%%%%%%%%%%%%%%%%%%%%%%%%%
Let $\mathcal{T}$ be a triangulated category and $\mathcal{N}$ be a class of objects in $\mathcal{T}$ closed under isomorphisms. 
Then $ \mathcal{N}$ is a \emph{null system} if and only if the following axioms are satisfied:
\begin{enumerate}
 \item[(N0)] The zero object of $\mathcal{T}$ is in $\mathcal{N}$.
 \item[(N1)] For any $X \in \mathcal{T}$, $X \in \mathcal{N} \Leftrightarrow \Sigma X \in \mathcal{N}$.
 \item[(N2)] Given a triangle $\xymatrix{  X \ar[r]^u& Y \ar[r]^{v} &  Z  \ar[r]^w & \Sigma X}$ in $\mathcal{T}$, if $X,Z \in \mathcal{N}$ 
 then $Y \in \mathcal{N}$.
\end{enumerate}
\end{definition}
%%%%%%%%%%%%%%%%%

The main property of null systems is that it allows us to
construct the Verdier quotient $\mathcal{T}/\mathcal{N}$ by
a simple calculus of fractions 
(although recall that this quotient may have proper classes of morphisms). 
For a proof of the following proposition we refer the reader to \cite{MR2182076}.

%%%%%%%%%%%%%%%%%%%%%%%%%%%%%%%%%%%%%%%%%%%%%%%%%%
\begin{proposition}\label{prop mainusenullsyst}
%%%%%%%%%%%%%%%%%%%%%%%%%%%%%%%%%%%%%%%%%%%%%%%%%%
Given a triangulated category $\mathcal{T}$ and a null system $\mathcal{N}$ in $ \mathcal{T}$, set:
\[
\mathcal{S}(\mathcal{N}) = \{ f:X \rightarrow Y \in \mathcal{T} \ \vert \ \exists \ a \ triangle \ \xymatrix{ X \ar[r]^f & Y \ar[r]  &  Z  \ar[r] & \Sigma X} \textrm{ with } Z \in \mathcal{N} \}
\]
Then $\mathcal{S}(\mathcal{N})$ admits a left and right calculus of fractions. In particular:
\begin{enumerate}
\item The localization $\mathcal{T}/\mathcal{N} := \mathcal{T}[\mathcal{S}(\mathcal{N})^{-1}]$ exists.
\item Let us declare the
  isomorphs in $\mathcal{T}/\mathcal{N}$ of images of triangles in 
  $\mathcal{T}$, via the canonical quotient  functor $\mathcal{T} \rightarrow \mathcal{T}/\mathcal{N}$, to be the triangles in $\mathcal{T}/\mathcal{N}$.
  Then the category 
$\mathcal{T}/\mathcal{N}$ becomes triangulated and the canonical quotient functor is triangulated.
\end{enumerate}

\end{proposition}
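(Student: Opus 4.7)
The plan is to verify that $\mathcal{S}(\mathcal{N})$ is a multiplicative system admitting both a left and a right calculus of fractions, at which point the existence of the localization and its description via roofs is formal, and the transfer of the triangulated structure from $\mathcal{T}$ to the quotient is a routine but careful diagram chase using the octahedral axiom.

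First I would check the ``multiplicative system'' axioms. Identities belong to $\mathcal{S}(\mathcal{N})$ via the triangle $X\xrightarrow{\text{id}} X\to 0\to\Sigma X$ together with (N0). For closure under composition, given $f$ and $g$ in $\mathcal{S}(\mathcal{N})$ with cones $N_f,N_g\in\mathcal{N}$, the octahedral axiom produces a triangle $N_f\to N_{gf}\to N_g\to \Sigma N_f$; by (N2), $N_{gf}\in\mathcal{N}$, so $gf\in\mathcal{S}(\mathcal{N})$. Next I would verify the Ore condition on the right: given $s\colon X\to X'$ in $\mathcal{S}(\mathcal{N})$ and $f\colon X\to Y$, embed $s$ in a triangle $X\xrightarrow{s} X'\xrightarrow{u}N\xrightarrow{v}\Sigma X$ with $N\in\mathcal{N}$, then embed $-(\Sigma f)\circ v\colon N\to \Sigma Y$ in a triangle $Y\xrightarrow{t} Y'\to N\to\Sigma Y$; by (N1) and the fact that $N\in\mathcal{N}$, the cone of $t$ is in $\mathcal{N}$, so $t\in\mathcal{S}(\mathcal{N})$, and the morphism axiom (TR3) yields $g\colon X'\to Y'$ with $gs=tf$. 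The left Ore condition is dual. Finally, cancellation: if $sh=sh'$ with $s\in\mathcal{S}(\mathcal{N})$, then setting $k=h-h'$ one has $sk=0$, so $k$ factors through the object one step below the cone of $s$; completing that factorisation into a triangle produces the required $t\in\mathcal{S}(\mathcal{N})$ with $kt=0$, i.e.\ $ht=h't$.

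Once $\mathcal{S}(\mathcal{N})$ is a multiplicative system with two-sided calculus of fractions, the localization $\mathcal{T}/\mathcal{N}:=\mathcal{T}[\mathcal{S}(\mathcal{N})^{-1}]$ exists by the standard Gabriel--Zisman construction, with hom-sets described as equivalence classes of roofs, giving~(1). For~(2) I would declare the distinguished triangles in $\mathcal{T}/\mathcal{N}$ to be those isomorphic, in the quotient, to images of distinguished triangles of $\mathcal{T}$. Axioms (TR1) and (TR2) are immediate. For (TR3) and the octahedral axiom (TR4), the strategy is to rewrite morphisms of $\mathcal{T}/\mathcal{N}$ as roofs, use the Ore condition to bring pairs of composable morphisms to common sources, apply the corresponding axiom inside $\mathcal{T}$, and then push the result back through the quotient functor, checking that the resulting triangle depends only on the roof equivalence class.

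The main obstacle I anticipate is not any single step in isolation but the bookkeeping needed to verify the Ore and cancellation conditions and to make (TR3)/(TR4) descend: each time one completes an auxiliary map to a triangle one must invoke (N1), (N2) or the octahedral axiom to certify that the new cone again belongs to $\mathcal{N}$, and for well-definedness of the distinguished triangles in the quotient one must check that two different roof representatives of the same morphism yield isomorphic completions. These are the points where the argument genuinely uses that $\mathcal{N}$ is a \emph{null system} and not merely a class of objects.
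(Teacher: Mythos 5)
Your argument is correct, and it is essentially the standard Verdier-localization proof; the paper itself does not prove this proposition but simply cites Kashiwara--Schapira (Section~10.2 of~\cite{MR2182076}), which is precisely the argument you reconstruct: composition via the octahedron and (N2), the Ore conditions by extending $s$ to a triangle, shifting the connecting map by $f$ and completing, then invoking (TR3), cancellation by factoring $k=h-h'$ through $\Sigma^{-1}N$ and completing, with (N1) certifying the new cones. Your sketch of part~(2) -- declaring distinguished triangles to be those isomorphic to images of triangles from $\mathcal{T}$ and verifying (TR1)--(TR4) via roofs and the Ore condition -- is likewise the standard descent argument found in the reference. In short, you are supplying in full the proof the paper chose to outsource.
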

%%%%%%%%%%%%%%%%%

We apply this to our situation of interest, where we want to invert the relative equivalences, i.e. kill the cones of
$\mathcal{W}_\I$-equivalences, which are $\I$-trivial by Proposition~\ref{prop propunch}.(2).

%%%%%%%%%%%%%%%%%%%%%%%%%%%%%%%%%%%%%%%%%%%%%%%%%%
\begin{proposition}\label{prop Wnullsistem}
%%%%%%%%%%%%%%%%%%%%%%%%%%%%%%%%%%%%%%%%%%%%%%%%%%
In $\mathcal{K}(\A)$, the homotopy category of $\mathcal{A}$ with its standard triangulated structure, the class $\mathcal{WN}$ of 
$\I$-trivial objects
%objects isomorphic to cones of maps in $\mathcal{W}_\I$ 
forms a null system.
\end{proposition}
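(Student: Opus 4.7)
The plan is to verify the three axioms (N0), (N1), (N2) directly, exploiting the equivalent characterization that an object $X$ of $\K(\A)$ lies in $\mathcal{WN}$ if and only if $\A(X,W)$ is an acyclic cochain complex of abelian groups for every $W\in\I$. A preliminary remark is that $\mathcal{WN}$ is closed under isomorphism in $\K(\A)$: a chain homotopy equivalence $X\simeq Y$ is sent by the additive functor $\A(-,W)$ to a chain homotopy equivalence $\A(Y,W)\simeq\A(X,W)$, and acyclicity is a chain-homotopy invariant.

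Axioms (N0) and (N1) are essentially formal. The zero complex satisfies $\A(0,W)=0$, which is trivially acyclic, giving (N0). For (N1), additivity of $\A(-,W)$ combined with the computation recalled in the proof of Proposition~\ref{prop propunch}.(2) shows that $\A(\Sigma X,W)$ is a shift of $\A(X,W)$, so acyclicity of one is equivalent to acyclicity of the other.

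The substance of the argument is (N2). I would first reduce to the case where the distinguished triangle has the standard form $X\xrightarrow{f} Y\to\mathrm{Cone}(f)\to\Sigma X$ for some chain map $f$ in $\ch(\A)$, since every distinguished triangle in $\K(\A)$ is isomorphic to one of this shape and, by the preliminary remark, $\mathcal{WN}$ respects such isomorphisms. For a cone triangle the short sequence
\[
0\longrightarrow Y\longrightarrow \mathrm{Cone}(f)\longrightarrow \Sigma X\longrightarrow 0
\]
is degreewise split exact in $\ch(\A)$. Applying the contravariant additive functor $\A(-,W)$ preserves the degreewise splitting and therefore yields a short exact sequence of cochain complexes of abelian groups
\[
0\longrightarrow \A(\Sigma X,W)\longrightarrow \A(\mathrm{Cone}(f),W)\longrightarrow \A(Y,W)\longrightarrow 0.
\]
The associated long exact cohomology sequence, together with the hypothesis that $X$ and $\mathrm{Cone}(f)$ belong to $\mathcal{WN}$, forces $\A(Y,W)$ to be acyclic for every $W\in\I$, so $Y\in\mathcal{WN}$.

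I do not anticipate a genuine obstacle here. Conceptually, each representable functor $\A(-,W)$ is cohomological on $\K(\A)$, and $\mathcal{WN}$ is precisely the intersection of the kernels of these cohomological functors; such an intersection automatically satisfies the null-system axioms, and the argument above is simply the explicit unfolding of this observation.
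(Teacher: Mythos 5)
Your proof is correct and follows essentially the same route as the paper: in both cases the crux of (N2) is applying the contravariant representable functor $\A(-,W)$ and invoking the resulting long exact cohomology sequence to deduce acyclicity of the third term. The only cosmetic difference is that you unfold the argument by reducing to a cone triangle and its degreewise split short exact sequence, whereas the paper appeals directly to the fact that $\A(-,W)$ carries triangles in $\K(\A)$ to triangles in $\K(\mathrm{Ab})$.
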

%%%%%%%%%%%%%%%%

\begin{proof} 
Axioms (N0) and (N1) hold by definition of $\I$-triviality, see Definition~\ref{def Itrivial}.
%We simply check the axioms.
%
%(N0) Given a complex $X$, the identity map $\mathrm{id}_X: X \rightarrow X$ is in $\mathcal{W}_\mathcal{I}$, hence its cone 
%$\mathrm{Cone}(\mathrm{id}_X)$, which is contractible, is in $\mathcal{WN}$. As in $\mathcal{K(\A)}$ the complex 
%$\mathrm{Cone}(\mathrm{id}_X)$ is isomorphic to the zero complex, we conclude that $0$ belongs to $\mathcal{WN}$.
%
%(N1)  Follows immediately from the fact that $\I$-weak equivalences are closed under suspensions and the rotation axiom for triangles. 

(N2) Let $W$ be an object
in $\I\subset\A$.
Let $\xymatrix{ X \ar[r] & Y \ar[r] & Z \ar[r] & \Sigma X}$ be a triangle in $\mathcal{K}(\A)$, with $X,Z \in \mathcal{WN}$. 
Applying the 
functor $\A(-,W)$ to the triangle
we deduce a triangle, in the homotopy category $\mathcal{K}(\mathrm{Ab})$
where $\mathrm{Ab}$ is the abelian category of abelian groups,
\[
\xymatrix{ \A(\Sigma X,W) \ar[r] & \A(Z,W) \ar[r] & \A(Y,W) \ar[r] & \A(X,W) 
}
\]
Since $\A(Z,W)$ and $\A(X,W)$ are both acyclic so
is $\A(Y,W)$.
\end{proof}

From the general theory it follows that the class $\mathcal{W}_{\mathcal{I}}$ of $\I$-equivalences admits simple right and left calculuses of fractions. As a consequence we have:

%%%%%%%%%%%%%%%%%%%%%%%%%%%%%%%%%%%%%%%%%%%%%%%%%%
\begin{corollary}\label{cor D(A,I)exists}
%%%%%%%%%%%%%%%%%%%%%%%%%%%%%%%%%%%%%%%%%%%%%%%%%%
Let $\A$ be an abelian category, $\I$ a class of injective objects, and $\mathcal{W}_\I$ the associated class of $\I$-weak equivalences.
\begin{enumerate}
\item The localization $\ch(\A)[\mathcal{W}_\I^{-1}]=\colon D(\A;\I)$ exists and  has a natural  triangulated category structure 
which is  functorial with respect to inclusions of classes of relative weak equivalences.
\item The canonical functor $\mathcal{K}(\A) \rightarrow D(\A;\I)$ is triangulated.
\item The class $\mathcal{W}_\I$ is saturated: a map $f \in \mathcal{K}(\A)$ is an isomorphism in $D(\A;\I)$ if and only if $f \in \mathcal{W}_\I$.
\end{enumerate}

\end{corollary}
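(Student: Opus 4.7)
The plan is to realise $D(\A;\I)$ as a Verdier quotient of the homotopy category $\K(\A)$ by the null system of $\I$-trivial complexes, and then simply invoke the abstract machinery assembled in Proposition~\ref{prop mainusenullsyst}.

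First, since every chain homotopy equivalence is an $\I$-weak equivalence by Proposition~\ref{prop propunch}(1), any functor out of $\ch(\A)$ inverting $\W_\I$ factors uniquely through the canonical functor $\ch(\A)\to\K(\A)$. It therefore suffices to construct and analyse $\K(\A)[\W_\I^{-1}]$. The key bridge to the null-system machinery is Proposition~\ref{prop propunch}(2), which identifies $\W_\I$, viewed inside $\K(\A)$, with the class $\mathcal{S}(\mathcal{WN})$ of Proposition~\ref{prop mainusenullsyst}: a morphism $f$ belongs to $\W_\I$ if and only if its cone is $\I$-trivial, that is, belongs to $\mathcal{WN}$.

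Once this identification is in place, Proposition~\ref{prop Wnullsistem} supplies the remaining hypothesis and Proposition~\ref{prop mainusenullsyst} applies directly: the localisation $D(\A;\I)=\K(\A)/\mathcal{WN}$ exists, carries a canonical triangulated structure for which the quotient functor $\K(\A)\to D(\A;\I)$ is triangulated, and $\W_\I$ admits both left and right calculuses of fractions. This settles (1) and (2). For the functoriality clause of (1), an inclusion $\W_{\I'}\subseteq\W_{\I}$ means that the composite $\ch(\A)\to D(\A;\I)$ inverts $\W_{\I'}$, so the universal property of localisation supplies a canonical comparison functor $D(\A;\I')\to D(\A;\I)$ that is automatically triangulated (since both quotient functors are) and manifestly compatible with further inclusions.

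For the saturation statement (3), I would appeal to the standard consequence of the calculus-of-fractions description of $\mathcal{T}/\mathcal{N}$: a morphism in $\K(\A)$ becomes invertible in $\K(\A)/\mathcal{WN}$ precisely when its cone lies in $\mathcal{WN}$. Combined once more with Proposition~\ref{prop propunch}(2), this says exactly that $f$ is an isomorphism in $D(\A;\I)$ if and only if $f\in\W_\I$. The only genuinely delicate point in the whole argument is confirming that the identification $\W_\I\leftrightarrow\mathcal{S}(\mathcal{WN})$ descends soundly from $\ch(\A)$ to $\K(\A)$; but both membership conditions are homotopy invariants (the cone of $f$ is determined up to isomorphism in $\K(\A)$, and $\I$-triviality is invariant under chain homotopy equivalence by Proposition~\ref{prop propunch}(1) and~(4)), so this obstacle evaporates and the proof becomes an essentially formal application of the null-system formalism.
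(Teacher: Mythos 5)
Your proposal follows exactly the paper's strategy for points~(1) and~(2): realise $D(\A;\I)$ as the Verdier quotient $\K(\A)/\mathcal{WN}$, identify $\W_\I$ (viewed in $\K(\A)$) with $\mathcal{S}(\mathcal{WN})$ via Proposition~\ref{prop propunch}(2), check the null-system axioms by Proposition~\ref{prop Wnullsistem}, and invoke Proposition~\ref{prop mainusenullsyst}. That part is sound and matches the paper.

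The gap is in point~(3). You assert that ``a morphism in $\K(\A)$ becomes invertible in $\K(\A)/\mathcal{WN}$ precisely when its cone lies in $\mathcal{WN}$'' as a standard consequence of the calculus-of-fractions description. This is false for a general null system. The kernel of the quotient functor $Q:\mathcal{T}\to\mathcal{T}/\mathcal{N}$ is not $\mathcal{N}$ itself but the thick (i.e.\ summand-closed) closure $\overline{\mathcal{N}}$; if $Z\in\overline{\mathcal{N}}\setminus\mathcal{N}$ then $0\to Z$ is inverted in the quotient although its cone $Z$ is not in $\mathcal{N}$, so $\mathcal{S}(\mathcal{N})$ fails to be saturated. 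Hence the implication you need---that invertibility in the quotient forces membership in $\mathcal{S}(\mathcal{WN})$---requires an actual argument, and this is precisely what the paper flags as ``the only non-immediate consequence.'' The paper supplies it via the $2$-out-of-$6$ property of $\W_\I$ (Lemma~\ref{lem 2outof6}) together with \cite[Prop.~7.1.20]{MR2182076}, which characterises saturated multiplicative systems by exactly this closure condition. (Alternatively, one could argue that $\mathcal{WN}$ is thick---retracts of acyclic $\A(-,W)$-complexes are acyclic---and that the kernel of $Q$ equals the thick closure, but either way something beyond the bare calculus of fractions is needed.) You should replace the appeal to ``standard consequence'' by the explicit invocation of Lemma~\ref{lem 2outof6}.
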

%%%%%%%%%%%%%%%%

\begin{proof}
The only non-immediate consequence from Proposition~\ref{prop mainusenullsyst} is point (3), which is a consequence of the ``2 out of 6'' property,
Lemma~\ref{lem 2outof6}, see \cite[Prop. 7.1.20]{MR2182076}.
\end{proof}
% \begin{proof}
% \begin{enumerate}
%  \item[1+2.] The existence of the localization follows from right/left calculus of fractions. It is then straightforward to check that endowing $D(\A;\I)$ with the class of triangles isomorphic to the image of a triangle in $\mathcal{K}(\A)$ gives $D(\A,\I)$ a triangulated structure; the proof is exactly the same as for the usual derived category, where the null system is given by the class of acyclic complexes, for a proof see \cite{}. The fact that the quotient functor $\mathcal{K}(\A) \rightarrow D(\A;\I)$ is triangulated and the functoriality of the triangulated structure is also straightforward.
%  \item[3.] Saturation is a consequence of Lemma~\ref{lem 2outof6}, see \cite[Prop. 7.1.20]{MR2182076}.
% \end{enumerate}
%  
% 
% \end{proof}

%%%%%%%%%%%%%%%%%%%%%%%%%%%%%%%%%%%%%%%%%%%%%%%%%%
%%%%%%%%%%%%%%%%%%%%%%%%%%%%%%%%%%%%%%%%%%%%%%%%%%
\section{Model categories and model approximations}
\label{sec modelcat}
%%%%%%%%%%%%%%%%%%%%%%%%%%%%%%%%%%%%%%%%%%%%%%%%%%
%%%%%%%%%%%%%%%%%%%%%%%%%%%%%%%%%%%%%%%%%%%%%%%%%%
We now present our set-up for doing homotopical algebra. In homotopy theory a convenient framework for localizing categories 
and constructing derived functors is given by Quillen model categories; we use the term model category as defined in~\cite{MR1361887}. 
There are however situations in which, either it is very hard to construct a model structure, or one simply does not know whether such a 
structure does exist.  We will explain  how to localize and  construct right derived functors in a more  general context than model categories. 
We do not to try to impose a model structure on a given category with weak equivalences directly but rather use model categories to 
\emph{approximate} the given category. 

Let $\C$ be a category and $\W$ be a collection of morphisms in $\C$ which contains
all isomorphisms and satisfies the``2 out of 3'' property: if $f$ and $g$ are composable morphism in $\C$ 
and 2 out of $\{f,g,gf\}$ belong to $\W$ then so does the third. We call elements
of $\W$  weak equivalences and a pair $(\C,\W)$ a category with weak equivalences.  
The following definitions come from \cite[3.12]{MR1879153}.

%%%%%%%%%%%%%%%%%%%%%%%%%%%%%%%%%%%%%%%%%%%%%%%%%%
\begin{definition}\label{def quillen}
%%%%%%%%%%%%%%%%%%%%%%%%%%%%%%%%%%%%%%%%%%%%%%%%%%
A  \emph{right Quillen pair} for $(\C,\W)$ is a model category $\M$ and a pair of functors 
 $l:\C \rightleftarrows \M:r$ satisfying the
following conditions:
\begin{enumerate}
\item $l$ is left adjoint to $r$;
\item if $f$ is a weak equivalence in $\C$, then  $lf$ is a weak equivalence in $\M$;
\item if $f$ is a weak equivalence between fibrant objects in $\M$, then
$rf$ is a weak equivalence in $\C$.
\end{enumerate}
\end{definition}

%%%%%%%%%%%%%%%%%%%%%%%%%%%%%%%%%%%%%%%%%%%%%%%%%%
\begin{definition}\label{def approx}
%%%%%%%%%%%%%%%%%%%%%%%%%%%%%%%%%%%%%%%%%%%%%%%%%%
We say that an object $A$ in $\C$ is \emph{approximated} by a right Quillen pair $l:\C \rightleftarrows \M:r$ if the 
following condition is satisfied:
\begin{itemize}
\item[(4)] if $lA\ra X$ is a weak equivalence in $\M$ and $X$ is fibrant,
then its adjoint $A\ra rX$ is a weak equivalence in $\C$.
\end{itemize}

If all objects of $\C$ are approximated by $l:\C \rightleftarrows \M:r$,  then this Quillen pair is called a 
\emph{right model approximation} of $\C$.
\end{definition}

For an object $A$ to be approximated by a Quillen pair, we only need the existence of \emph{some} fibrant object $X$ in the model category
together with a weak equivalence $lA\ra X$ and such that the adjoint map is a weak equivalence. Condition (4) is then automatically satisfied 
for \emph{any} such fibrant object.

Let us fix a right Quillen pair  $l:\C \rightleftarrows \M:r$ and  choose a full subcategory $\D$ of $\C$ with the following properties: 
all objects in $\D$ are approximated by the Quillen pair and, for a weak equivalence $f:X\ra Y$, if one of $X$ and $Y$ belongs to 
$\D$ then so does the other ($\D$ is closed under weak equivalences). We are going to think of $\D$ as a category with weak 
equivalences given by the morphisms in $\D$ that belong to $\W$.
Here are some fundamental properties of  this category, whose proofs extend those for model approximations in~\cite[Section 5]{MR1879153}:

%%%%%%%%%%%%%%%%%%%%%%%%%%%%%%%%%%%%%%%%%%%%%%%%%%
\begin{proposition}\label{prop quillenpair}
%%%%%%%%%%%%%%%%%%%%%%%%%%%%%%%%%%%%%%%%%%%%%%%%%%
\begin{enumerate}
\item A morphism $f$ in $\D$ is a weak equivalence if and only if $lf$ is a weak equivalence in $\M$.
\item The localization $\text{\rm Ho}(\D)$  of $\D$ with respect to weak equivalences exists and can be constructed as follows:
objects of $\text{\rm Ho}(\D)$ are the same as objects of $\D$ and
$\text{\rm mor}_{\text{\rm Ho}(\D)}(X,Y)=\text{\rm mor}_{\text{\rm Ho}(\M)}(lX,lY)$.
\item A morphism in $\D$ is a weak equivalence if and only if it induces an isomorphism in  $\text{\rm Ho}(\D)$.
\item The class of weak equivalences in $\D$ is closed under retracts.
\item Let $F:\C\ra \T$ be a functor. Assume that the composition $Fr:\M\ra \T$ takes weak equivalences between 
fibrant objects in $\M$ to isomorphisms in $\T$. Then the right derived functor of the restriction
$F:D\ra \T$ exists and is given by $A\mapsto F(rX)$, where $X$ is a fibrant replacement of $lA$ in $\M$. 
\end{enumerate}
\end{proposition}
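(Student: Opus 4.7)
The plan is to deduce all five assertions from the approximation hypothesis, with part (1) as the linchpin. Throughout, the strategy is to push problems forward through $l$ into the model category $\M$, solve them using fibrant replacements and the model structure, then pull the answers back through $r$ using the approximation condition to land again in $\D$ (or more generally in $\C$).

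For part (1), the ``if'' direction is condition (2) in the definition of a Quillen pair. For the converse, given $f:X\to Y$ in $\D$ with $lf$ a weak equivalence, I would apply a functorial fibrant replacement in $\M$ to produce a weak equivalence $\widetilde{lf}:\widetilde{lX}\to\widetilde{lY}$ between fibrant objects (using 2-out-of-3 in $\M$). Taking adjoints yields a commutative square
\[
\xymatrix{X \ar[r]^{f} \ar[d] & Y \ar[d] \\ r\widetilde{lX} \ar[r]^{r\widetilde{lf}} & r\widetilde{lY}}
\]
in $\C$. The approximation property, applied separately to $X$ and $Y$, asserts that the two vertical maps are weak equivalences. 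Condition (3) of the Quillen pair makes $r\widetilde{lf}$ a weak equivalence in $\C$, and 2-out-of-3 in $\C$ then forces $f$ itself into $\W$.

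For part (2), I would build a candidate category $\mathcal{H}(\D)$ with the stated objects and morphism sets $\mathrm{Hom}_{\mathrm{Ho}(\M)}(lX,lY)$, together with the obvious functor $\gamma_{\D}:\D\to\mathcal{H}(\D)$ sending $X\mapsto X$ on objects and $f\mapsto [lf]$ on morphisms; condition (2) of the Quillen pair ensures $\gamma_{\D}$ inverts weak equivalences. For the universal property, given $F:\D\to\T$ inverting weak equivalences, define $\bar F$ on objects by $\bar F(X):=F(X)$; on a morphism $\phi\in\mathrm{Hom}_{\mathrm{Ho}(\M)}(lX,lY)$, represent $\phi$ as a zigzag in $\M$ through (cofibrant-)fibrant replacements, lift the intermediate objects into $\D$ via $r$ and the approximation hypothesis, and apply $F$ to the resulting zigzag in $\D$, noting that by hypothesis $F$ inverts the backwards weak equivalences. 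Well-definedness and functoriality reduce to checking that two representatives of the same morphism produce the same image, which follows because any such comparison zigzag consists of weak equivalences, all inverted by $F$. Parts (3) and (4) then follow at once: the chain ``$f\in\W\iff lf$ is an $\M$-weak equivalence $\iff [lf]$ is an isomorphism in $\mathrm{Ho}(\M)\iff \gamma_{\D}(f)$ is an isomorphism in $\mathrm{Ho}(\D)$'' proves (3), and (4) is automatic because isomorphisms in any category are stable under retracts.

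For part (5), define $\mathbb{R}F(A):=F(rX_A)$ using a functorial fibrant replacement $lA\to X_A$ in $\M$, with $\mathbb{R}F(f):=F(r\widetilde f)$ for $\widetilde f:X_A\to X_B$ the induced map of fibrant replacements. The hypothesis that $Fr$ inverts weak equivalences between fibrant objects, combined with part (1), ensures $\mathbb{R}F$ is independent of the chosen functorial replacement and carries $\W$ to isomorphisms in $\T$. The natural transformation $F\Rightarrow \mathbb{R}F$ arises from the unit $A\to rX_A$, and the universal property of the right derived functor is then a formal verification. The principal obstacle in the whole proposition lies in part (2): making $\bar F$ unambiguously defined on morphisms requires a careful lift of zigzags from $\M$ back to $\D$, and it is precisely here that the approximation condition (rather than mere Quillen-pair axioms) becomes indispensable; once this is handled, the remaining parts are essentially formal.
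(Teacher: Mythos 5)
Your proposal is correct and follows essentially the same route as the paper: part (1) by passing to a fibrant replacement in $\M$, invoking the approximation condition to get weak equivalences back in $\D$ (or $\C$), and applying 2-out-of-3; part (2) by representing a morphism in $\mathrm{Ho}(\M)$ as a zigzag through fibrant and cofibrant-fibrant objects, lifting it to $\D$ via $r$ and approximation, and checking the universal property; parts (3)--(4) as formal consequences; part (5) by choosing fibrant replacements $lA\to RA$ and setting $\mathbb{R}F(A)=F(rRA)$. The only cosmetic difference is that in part (1) the paper fibrantly replaces only $lY$ (the adjoint of $X\to rY$ is the composite $lX\to lY\to Y$, which is already a weak equivalence to a fibrant object), so it needs the approximation hypothesis alone and not condition~(3) of the Quillen pair, whereas you replace both $lX$ and $lY$; and in part (5) the paper does not assume a \emph{functorial} fibrant replacement (it makes coherent choices by hand), while you do, a harmless strengthening given that the construction is unique up to canonical isomorphism.
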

%%%%%%%%%%%%%%%%%%%%%%%%%%%%%%%%

\begin{proof}
(1)  Assume that $lf:lA\ra lB$ is a weak equivalence in $\M$. 
Choose a weak equivalence  $lB\ra Y$  with fibrant target $Y$. By taking adjoints we form the
following commutative diagram in $\D$:
\[
\xymatrix{
A\ar[r]^{f}\ar[dr] &B\ar[d]\\
& rY
}
\]
Since  $A$ and $B$ belong to $\D$,
the morphisms $A\ra rY$ and $B\ra rY$ are weak equivalences, as their
adjoints are so. By the ``two out of three'' property,  $f$ is then also a weak equivalence.

(2) Let $\alpha:\D\ra \T$ be a functor that sends weak equivalences to isomorphisms. We prove that there is a unique functor 
$\beta:\text{\rm Ho}(\D)\ra \T$ for which the composition
$\D\ra \text{\rm Ho}(D)\xrightarrow{\beta}\T$ equals $\alpha$.
On objects we have no choice, we define $\beta(A):=\alpha(A)$.

Let $A$ and $B$ be objects in $\D$. Since $\text{\rm mor}_{\text{\rm Ho}(\D)}(A,B)=\text{\rm mor}_{\text{\rm Ho}(\M)}(lA,lB)$, 
a morphism  $[f]:A\ra B$ in $\text{\rm Ho}(\D)$ is given  by a sequence of morphisms in $\M$:
\[
lA\xrightarrow{a_1} A_1\xleftarrow{a_2} A_2\xrightarrow{g} B_1\xleftarrow{b} lB
\]
where $a_1$ is a weak equivalence with fibrant target $A_1$, $a_2$ is a weak equivalence with fibrant and cofibrant 
domain $A_2$, and $b$ is a weak equivalence with fibrant target~$B_1$. By adjunction we get a sequence of morphisms in $\D$:
\[
A\xrightarrow{\overline{a}_1} rA_1\xleftarrow{ra_2}rA_2\xrightarrow{rg} rB_1\xleftarrow{\overline{b}} B
\]
Note that $\overline{a}_1$, $ra_2$, and $\overline{b}$ are weak equivalences. 
We define $\beta([f])$ to be the unique morphism in $\T$ for which the following diagram commutes:
\[
\xymatrix{
\alpha(A)\ar[rr]^{\beta([f])} \ar[d]_{\alpha(\overline{a_1})}\ar[dr]& & \alpha(B)\ar[d]^{\alpha(\overline{b})}\\
\alpha(rA_1) & \alpha(rA_{2})\ar[l]^{\alpha(ra_2)} \ar[r]_{\alpha(rg)}\ar[ur]&\alpha(rB_1)
}\]
Since $\alpha$ takes weak equivalences to isomorphisms such a morphism
$\beta([f])$ exists and is unique. One can finally check that this process defines the desired functor $\beta:\text{Ho}(\D)\ra \T$.

(3) is a consequence of (1) and (2). Point (4) follows from (3).

(5) For any object $A\in \D$ let us fix a fibrant replacement $lA\ra RA$ in $\M$. For any morphism
$f:A\ra B$ in $\D$ let us fix a morphism $Rf:RA\ra RB$ in $\M$ for which the following diagram commutes:
\[
\xymatrix{
lA\ar[r]^{lf}\ar[d] & lB\ar[d]\\
RA\ar[r]^{Rf} & RB
}
\]
Since $Fr$ takes weak equivalences between fibrant objects to isomorphisms,
the association $A\mapsto F(rRA)$ and $f\mapsto F(rRf)$ defines a functor $RF:\D\ra \T$.
We claim that $RF$ together with the natural transformation given by $F(A\ra rRA)$
is the right derived functor of $F:\D\ra \T$.
It is clear that $RF$ takes weak equivalences to isomorphisms. Let $G:\D\ra \T$ be a functor that takes weak equivalences to 
isomorphisms and let $\mu:F\ra G$ be a natural transformation. For any $A\in\D$ define $F(rRA)\ra G(A)$ to be the unique 
morphism that fits into the following commutative diagram in $\T$:
\[
\xymatrix{
F(A)\ar[r]^{\mu_{A}}\ar[d] & G(A)\ar[d]\\
F(rRA)\ar[r]^{\mu_{rRA}}\ar[ur] & G(rRA)
}
\] 
Such a morphism does exist since $G(A)\ra G(rRA)$ is an isomorphism as $A\ra rRA$ is a weak equivalence.
\end{proof}

%%%%%%%%%%%%%%%%%%%%%%%%%%%%%%%%%% %%%%%%%%%%%%%%%%%
%%%%%%%%%%%%%%%%%%%%%%%%%%%%%%%%%%%%%%%%%%%%%%%%%%
\section{Towers}\label{sec tower}
%%%%%%%%%%%%%%%%%%%%%%%%%%%%%%%%%% %%%%%%%%%%%%%%%%%
%%%%%%%%%%%%%%%%%%%%%%%%%%%%%%%%%%%%%%%%%%%%%%%%%%
For a given category with weak equivalences $(\C,\W)$ and a full subcategory $\D$ our strategy is to construct 
a right Quillen pair $l:\C\rightleftarrows \M:r$ which approximates objects of $\D$.
We can then use this Quillen pair to localize $\D$ with respect to weak equivalences and
construct right derived functors as explained in Proposition~\ref{prop quillenpair}. For this strategy to work we need 
adequate examples of model categories.  The purpose of this section is to show how to assemble model categories 
together to build new model categories that are suitable to approximate $\D$. Such diagrams of model categories 
have appeared meanwhile in work of Greenlees and Shipley, \cite{MR3254575}, see also Bergner's construction
of a homotopy limit model category for a diagram of left Quillen functors, \cite{MR2914609}.
We include the following definitions and results to fix notation and so as to be able to refer to specific
constructions in the next sections.
%, their objective being not only to approximate a given category, but to find a Quillen equivalence between...

We start with a tower $\T$ of model categories consisting of a sequence of model categories
$\{\T_n\}_{n\geq 0}$ and a sequence of Quillen functors $\{l:\T_{n+1}\rightleftarrows \T_n:r\}_{n\geq 0}$:
for any $n$, $l$ is left adjoint to $r$ and $r$ preserves fibrations and acyclic fibrations. 
The model categories in a tower $\T$ can be assembled to form its category of towers.

%%%%%%%%%%%%%%%%%%%%%%%%%%%%%%%%%% %%%%%%%%%%%%%%%%%
\begin{definition}
%%%%%%%%%%%%%%%%%%%%%%%%%%%%%%%%%% %%%%%%%%%%%%%%%%%
The objects $a_{\bullet}$ of the \emph{category of towers} $\text{Tow}(\T)$ are sequences $\{a_n\}_{n\geq 0}$ of objects $a_n\in \T_n$ 
together with a sequence of \emph{structure morphisms} $\{a_{n+1}\ra r(a_n)\}_{n\geq 0}$.  
%We write  to denote the object $\{a_n\}_{n\geq 0}$ in $\text{Tow}(\T)$ and  call  the  morphisms 
%$\{a_{n+1}\ra r(a_n)\}_{n\geq 0}$ the structure morphisms of $a_{\bullet}$.
The set of morphisms in $\text{Tow}(\T)$ between $a_{\bullet}$ and $b_{\bullet}$ consists of sequences of morphisms 
$\{f_n:a_n\ra b_n\}_{n\geq 0}$ for which the following squares commute:
\[
\xymatrix{
a_{n+1}\ar[d]_{f_{n+1}}
\ar[r] &r(a_{n})\ar[d]^{r(f_{n})}\\
b_{n+1}\ar[r]  & r(b_{n})
}\]
We write $f_{\bullet}:a_{\bullet}\ra b_{\bullet}$ to denote the morphism $\{f_n:a_n\ra b_n\}_{n\geq 0}$ in 
$\text{Tow}(\T)$. 
\end{definition}

The following construction will be useful to describe a model structure on $\text{Tow}(\T)$.
For a morphism   $f_{\bullet}:a_{\bullet}\ra b_{\bullet}$, define $p_0:=b_0$ and, for $n>0$, define:
\[
p_n:=\text{lim}\big(b_{n}\rightarrow r(b_{n-1})\xleftarrow{r(f_{n-1})} r(a_{n-1})\big)
\]
Set $\alpha_{0}:a_0\ra p_0$ to be given by $f_0$ and $\beta_0:p_0\ra b_0$ to be the identity. For $n>0$, 
let $\beta_n:p_n\ra b_n$ and $\overline{\alpha}_n:p_n\ra r(a_{n-1})$ be the projection from the inverse limit onto the components $b_n$, 
respectively  $r(a_{n-1})$. Finally $\alpha_{n}:a_{n}\rightarrow p_n$ is the unique morphism
for which the following diagram commutes: 
\[
\xymatrix{
a_{n} \ar@(d,l)[ddr] \ar@(r,u)[rrd] \ar@{->}[dr]^-{\alpha_n} & & \\
& p_n \ar^(.38){\overline{\alpha}_{n}}[r] \ar@{->}[d]^{\beta_n} & r(a_{n-1}) \ar@{->}[d]^{r(f_{i-1})} \\
& b_{n} \ar[r] & r(b_{n-1})
}
\]
The sequence $\{p_n\}_{n\geq 0}$ together with
morphisms $\{p_{n+1}\xrightarrow{\overline{\alpha}_{n+1}} r(a_i)\xrightarrow{r(\alpha_n)} r(p_n)\}_{n\geq k}$ defines an object
$p_{\bullet}$ in $\text{Tow}(\T)$. Moreover $\{\alpha_n:a_n\ra p_n\}_{n\geq 0}$ and $\{\beta_n:p_n\ra b_n\}_{n\geq 0}$ define morphisms
$\alpha_{\bullet}:a_{\bullet}\rightarrow p_{\bullet}$ and $\beta_{\bullet}:p_{\bullet}\ra b_{\bullet}$
whose composite is $f_{\bullet}$.   For example, let  $\ast_{\bullet}$ be given by the sequence consisting 
of the terminal objects $\{\ast\}_{n\geq 0}$ in $\T_n$ and  $f_{\bullet}:a_{\bullet}\ra \ast_{\bullet}$ be the unique morphism
in $\text{Tow}(\T)$. Then $p_0=\ast$, and, for $n>0$, $p_n=r(a_{n-1})$. The morphism $\alpha_n:a_n\ra p_n=r(a_{n-1})$ is given 
by the structure morphism of $a_{\bullet}$.

%%%%%%%%%%%%%%%%%%%%%%%%%%%%%%%%%% %%%%%%%%%%%%%%%%%
\begin{definition}
%%%%%%%%%%%%%%%%%%%%%%%%%%%%%%%%%% %%%%%%%%%%%%%%%%%
A morphism $\{f_n:a_n\ra b_n\}_{n\geq 0}$ in $\text{Tow}(\T)$ is  a \emph{weak equivalence} (respectively a \emph{cofibration}) 
if, for any $n\geq0$, the morphism $f_n$ is a weak equivalence  (respectively a cofibration) in $\T_n$. It is a \emph{fibration} if
$\alpha_n:a_{n}\rightarrow p_n$ is a fibration in $\T_{n}$ for any $n\geq 0$.
\end{definition}

For example the morphism $a_{\bullet}\ra\ast_{\bullet}$ is a fibration if and only if
$a_0$ is fibrant  in $\T_0$ and the structure morphisms  $a_n\ra r(a_{n-1})$ are  fibrations in $\T_n$ for all $n$.

The following result is a particular case of the existence of the injective model structure for diagrams of model
categories, \cite[Theorem~3.1]{MR3254575}. We provide some details of the proof as we will refer to the explicit 
construction of the factorizations.

%%%%%%%%%%%%%%%%%%%%%%%%%%%%%%%%%% %%%%%%%%%%%%%%%%%
\begin{proposition}\label{prop modelontowers}
%%%%%%%%%%%%%%%%%%%%%%%%%%%%%%%%%% %%%%%%%%%%%%%%%%%
The above choice of weak equivalences, cofibrations, and fibrations  equips  $\text{\rm Tow}(\T)$ with
a model category structure.
\end{proposition}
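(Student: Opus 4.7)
The plan is to follow the standard template for the injective model structure on a diagram of model categories: establish the easy axioms by reduction to each $\T_n$, then build factorizations and lifts inductively along the tower. Completeness and cocompleteness of $\text{Tow}(\T)$ hold because $r$ preserves limits (so limits are formed level-wise) and because colimits can also be formed level-wise, with structure maps obtained by composing with the canonical comparison $\text{colim}\,r \to r\,\text{colim}$. The 2-out-of-3 axiom for weak equivalences is immediate from the level-wise definition. Retract closure for weak equivalences and cofibrations is level-wise; for fibrations it follows from the observation that the assignment $f_\bullet \mapsto (\alpha_n)_n$ is functorial in morphisms of $\text{Tow}(\T)$, so a retract of a fibration produces, at each level, a retract of a fibration in $\T_n$.

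\textbf{Factorizations.} For $f_\bullet: a_\bullet \to b_\bullet$ I would construct the intermediate tower $c_\bullet$ inductively. Factor $f_0$ in $\T_0$ by the chosen functorial factorization (acyclic cofibration followed by fibration, or cofibration followed by acyclic fibration). Having built $c_\bullet$ through level $n-1$, form $\tilde{p}_n = \lim\bigl(b_n \to r(b_{n-1}) \leftarrow r(c_{n-1})\bigr)$ and the canonical map $a_n \to \tilde{p}_n$ induced by $f_n$ and by $a_n \to r(a_{n-1}) \to r(c_{n-1})$. Factor this map in $\T_n$ as $a_n \to c_n \to \tilde{p}_n$, and define the level-$n$ structure map of $c_\bullet$ and the level-$n$ component of $c_\bullet \to b_\bullet$ via the projections out of $\tilde{p}_n$. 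By construction, the second factor is precisely the matching map $\alpha_n$ for $c_\bullet \to b_\bullet$, so $c_\bullet \to b_\bullet$ is a (trivial) fibration while $a_\bullet \to c_\bullet$ is level-wise a (trivial) cofibration.

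\textbf{Lifting.} Given a square with cofibration $i_\bullet : a_\bullet \hookrightarrow b_\bullet$ on the left and fibration $p_\bullet : c_\bullet \to d_\bullet$ on the right, one of them a weak equivalence, I would build the lift $h_\bullet$ by induction on $n$. With $h_{n-1}$ already in hand, form the level-$n$ square
$$
\xymatrix{
a_n \ar[d]_{i_n} \ar[r] & c_n \ar[d]^{\alpha_n} \\
b_n \ar[r] & p_n
}
$$
in $\T_n$, where the lower map is determined by $b_n \to d_n$ and by $b_n \to r(b_{n-1}) \xrightarrow{r(h_{n-1})} r(c_{n-1})$. The right vertical is a fibration in $\T_n$ by the definition of a fibration in $\text{Tow}(\T)$. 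In the trivial-cofibration case $i_n$ is acyclic, and the lift exists. In the trivial-fibration case one needs $\alpha_n$ to be \emph{acyclic}, which is where the real work lies.

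\textbf{Main obstacle.} The principal technical content is the claim: a fibration in $\text{Tow}(\T)$ that is level-wise a weak equivalence has every matching map $\alpha_n$ an acyclic fibration in $\T_n$. I would prove this by induction on $n$, exploiting the fact that $r$, being right Quillen, preserves acyclic fibrations but not arbitrary weak equivalences. Assuming $\alpha_k$ is an acyclic fibration for $k<n$, the map $f_k : c_k \to d_k$ is itself an acyclic fibration, since it factors as $\alpha_k$ followed by $p_k \to d_k$, the latter being the pullback of the acyclic fibration $r(f_{k-1})$. At level $n$, the same pullback argument shows $p_n \to d_n$ is an acyclic fibration, so by 2-out-of-3 the fibration $\alpha_n$ is also a weak equivalence. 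Phrasing the induction in terms of acyclic fibrations (rather than mere weak equivalences) is essential so that the behaviour of $r$ is under control throughout.
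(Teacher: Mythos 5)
Your proposal follows the same overall route as the paper: bicompleteness by level-wise (co)limits, the easy axioms by reduction to each $\T_n$, and inductive constructions for the factorizations and lifts using the matching object $p_n = \lim\bigl(b_n \to r(b_{n-1}) \leftarrow r(a_{n-1})\bigr)$. The genuine value added in your write-up is the ``main obstacle'' paragraph. The paper establishes the lifting axiom for trivial cofibrations against fibrations and then asserts that the other case ``is analogous.'' It is not quite: the inductive lift at level $n$ requires the matching map $\alpha_n$ of the \emph{trivial} fibration to be an \emph{acyclic} fibration in $\T_n$, and this does not follow from the definitions alone. You correctly isolate the needed lemma and prove it by a joint induction on $\alpha_n$ and $f_n$, using precisely that $r$ is right Quillen so $r(f_{n-1})$ is an acyclic fibration, and that acyclic fibrations are closed under base change; the two-out-of-three property then upgrades $\alpha_n$ from fibration to acyclic fibration. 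Phrasing the induction so that the hypothesis carries ``$f_{n-1}$ is an acyclic fibration,'' rather than merely ``a weak equivalence,'' is the key point, and you identify it explicitly. This is a real improvement in precision over the paper's treatment.

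Two small remarks. For colimits in $\text{Tow}(\T)$, the paper passes to the adjoints $l(a_{n+1}) \to a_n$, takes colimits (which commute with $l$), and takes adjoints back; your route via the canonical comparison $\text{colim}\,r \to r\,\text{colim}$ yields the same structure maps but the adjunction-based phrasing is cleaner and makes the universal property transparent. For the retract closure of fibrations, your appeal to functoriality of $f_\bullet \mapsto (\alpha_n)_n$ is a tidy replacement for the paper's explicit diagram chase and is correct.
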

%%%%%%%%%%%%%%%%%%%%%%%%%%%%%%%%%% %%%%%%%%%%%%%%%%%

\begin{proof}
First, the category $\text{Tow}(\T)$ is bicomplete, as limits and colimits are formed ``degree-wise''. 
The structural morphisms of the limit are the limits of the structural morphisms since the functors $r$, as right adjoints, commute with limits. 
For colimits, one considers the adjoints $l(a_{n+1})\rightarrow a_n$ of the structural morphisms, and takes colimits
$l(\text{colim}(a_{n+1}))\cong \text{colim}\,l(a_{n+1}) \rightarrow \text{colim}(a_{n})$. The adjoint morphisms 
$\text{colim}(a_{n+1})\rightarrow r(\text{colim}(a_{n}))$ are precisely the  structural morphisms of the colimit.

The ``2 out of 3'' property (MC2) for weak equivalences and the fact that retracts of  weak equivalences (respectively cofibrations) are weak
equivalences (respectively cofibrations) follow immediately from the same properties for the categories~$\T_n$.
To prove axiom (MC3), notice that if $\{c_n\rightarrow d_n\}_{n\geq 0}$ is a
retract of a fibration  $\{a_n\rightarrow b_n\}_{n\geq 0}$, then
$c_0\rightarrow d_0$ is a fibration in $\T_0$. Next consider the following commutative diagram for $n>0$:
\[
\xymatrix{
d_{n} \ar[r] \ar[d] & r(d_{n-1}) \ar[d] & r(c_{n-1}) \ar@{->>}[l] \ar[d] & q_n \ar[d] & \ar[l] c_n \ar[d]\\
b_{n} \ar[r] \ar[d] & r(b_{n-1}) \ar[d] & r(a_{n-1}) \ar@{->>}[l] \ar[d]  \ar@{~>}[r]^(.6){\text{lim}} & p_n \ar[d] & \ar@{->>}[l] a_n \ar[d] \\
d_{n}  \ar[r] & r(d_{n-1})  & r(c_{n-1}) \ar@{->>}[l] &   q_n & \ar[l] c_n
}
\]
where the penultimate column has been obtained by taking pull-backs. 
By the retract property in $\mathcal{T}_n$ the morphism $c_n \rightarrow
q_n$ is fibration, for any $n>0$, and therefore so is $\{c_n \rightarrow
d_n\}_{n \geq 0}$ in $\text{Tow}(\T)$.

Let us prove now the right and left lifting properties (MC4).
Consider a commutative diagram:
\[
\xymatrix{
a_\bullet \ar@{^{(}->}[d]^{\sim} \ar[r] & c_\bullet \ar@{->>}[d] \\
b_{\bullet} \ar[r] & d_\bullet
}
\]
where the indicated arrows are respectively an acyclic cofibration
and a fibration. In degree $0$, a lift $b_0 \rightarrow c_0$ is
provided by the model structure on $\mathcal{T}_0$. We construct the lift inductively. Take the solved lifting problem at level $n$ and
complete with the  structural maps to get the following commutative cube:
\[
\xymatrix{
& r(a_n) \ar[rr] \ar[dd]|\hole & & r(c_n) \ar@{->>}[dd] \\
a_{n+1} \ar[ur] \ar[rr] \ar@{^{(}->}[dd]^{\sim} & & c_{n+1}
\ar[ur] \ar[dd] & \\
& r(b_n) \ar[rr]|\hole \ar@/ _1pc/[uurr]|!{[ur];[dr]}\hole & & r(d_n )\\
b_{n+1} \ar[ur] \ar[rr] & & d_{n+1} \ar[ur] &
}
\]
As above, denote by $q_{n+1}$ the pull-back of $d_{n+1} \rightarrow
r(d_n) \leftarrow r(c_n)$. By the universal property of the pull-back there is a morphism $b_{n+1} \rightarrow q_{n+1}$
that makes the resulting diagram commutative. Since by definition
$c_{n+1} \rightarrow q_{n+1}$ is a fibration, the lifting problem
\[
\xymatrix{
a_{n+1} \ar[r] \ar@{^{(}->}[d]^{\sim} & c_{n+1} \ar@{->>}[d]\\
b_{n+1} \ar[r] & q_{n+1}
}
\]
has a solution, which is the desired morphism. The proof for the
right lifting property for acyclic fibrations with respect to cofibrations is analogous.

Finally, to prove the factorization axiom (MC5), consider a morphism
$a_\bullet \rightarrow b_\bullet$. The morphism $a_0 \rightarrow
b_0$ can be factored  as an acyclic cofibration followed by a
fibration (respectively  as a cofibration followed by an acyclic
fibration) because (MC5) holds in $\mathcal{T}_0$. We construct a factorization $a_{n+1} \hookrightarrow
c_{n+1} \twoheadrightarrow b_{n+1}$ by
induction on the degree. Consider the following  commutative
diagram:
\[
\xymatrix{
a_{n+1} \ar[dd] \ar[rr] \ar[dr] & & r(a_n) \ar[d] \\
& z_{n+1} \ar[r] \ar@{->>}[ld] & r(c_n) \ar@{->>}[d] \\
b_{n+1} \ar[rr] & & r(b_n)
}
\]
where the right column is obtained by applying the functor $r$ to
the factorization at level $n$ and the bottom right square is a pull-back.
Since both $r$ 
and cobase-change preserve (acyclic) fibrations, $z_{n+1} \rightarrow
b_{n+1}$ is an (acyclic) fibration as long as $c_n \rightarrow b_n$ is.
It is now enough to factor  $a_{n+1} \rightarrow z_{n+1}$ in
$\mathcal{T}_{n+1}$ in the desired way to obtain the factorization of
$a_{n+1} \rightarrow b_{n+1}$.
\end{proof}

%%%%%%%%%%%%%%%%%%%%%%%%%%%%%%%%%% %%%%%%%%%%%%%%%%%
\begin{example}\label{ex tower}
%%%%%%%%%%%%%%%%%%%%%%%%%%%%%%%%%% %%%%%%%%%%%%%%%%%
Let $\M$ be a model category. The constant sequence $\{\M\}_{n\geq 0}$ together with the sequence of identity functors
$\{\text{id}:\M\rightleftarrows \M:\text{id}\}_{n\geq 0}$ forms a tower of model categories.
Its category of towers can be identified with the category of functors $\text{Fun}({\mathbf N}^{op}, \M)$, where   $\mathbf N$ is the poset   whose objects
are natural numbers, ${\mathbf N}(n,l)=\emptyset$ if
$n>l$,  and ${\mathbf N}(n,l)$ consists of one element if $n\leq l$.  The model structure on
$\text{Fun}({\mathbf N}^{op}, \M)$, given by Proposition~\ref{prop modelontowers},
coincides with the standard model structure on the functor category $\text{Fun}({\mathbf N}^{op}, \M)$ (see~\cite{MR1879153}). 
For example, a functor $F$ in $\text{Fun}({\mathbf N}^{op}, \M)$ is fibrant if
the object $F(0)$ is fibrant in $\M$ and, for any $n>0$, the morphism  $F(n)\ra F(n-1)$
 is a fibration in $\M$. A morphism $\alpha:F\ra G$ is a cofibration in
$\text{Fun}({\mathbf N}^{op}, \M)$ if it consists levelwise of cofibrations in~$\M$.
\end{example}

%%%%%%%%%%%%%%%%%%%%%%%%%%%%%%%%%% %%%%%%%%%%%%%%%%%
%%%%%%%%%%%%%%%%%%%%%%%%%%%%%%%%%% %%%%%%%%%%%%%%%%%
\section{A model approximation for relative homological algebra}
%%%%%%%%%%%%%%%%%%%%%%%%%%%%%%%%%% %%%%%%%%%%%%%%%%%
%%%%%%%%%%%%%%%%%%%%%%%%%%%%%%%%%% %%%%%%%%%%%%%%%%%
In this section we construct a Quillen pair suitable for doing
relative homological algebra with unbounded
chain complexes. The model category we propose is a tower of categories of \emph{bounded} chain complexes,
each
equipped with a relative model structure. Therefore we first define a model
structure on bounded chain complexes,
then introduce the category of towers, and finally study 
the associated Quillen pair.

%%%%%%%%%%%%%%%%%%%%%%%%%%%%%%%%%% %%%%%%%%%%%%%%%%%
\subsection{Bounded chain complexes}
%%%%%%%%%%%%%%%%%%%%%%%%%%%%%%%%%% %%%%%%%%%%%%%%%%%
Let $n$ be an integer. The full subcategory of
$\ch(\A)_{ \leq n}\subset\text{Ch}(\mathcal{A})$
consists of the chain complexes $X$ such that $X_i=0$  for $i > n$.
The inclusion functor  $\text{in}:\text{Ch}(\A)_{ \leq n}\subset \text{Ch}(\A)$
has both a right and a left adjoint.
The left adjoint is denoted by $\tau_n:\text{Ch}(\A)\ra \text{Ch}(\A)_{ \leq n}$ and is called \emph{truncation}. Explicitly
$\tau_{n}$ assigns to  a complex $X$ the truncated complex
\[
\tau_{n}(X):=(\text{coker}(d_{n+1})\xrightarrow{d_n} X_{n-1}\xrightarrow{d_{n-1}}X_{n-2}\xrightarrow{d_{n-2}}\cdots)
\]
where in degree $n$ we have $\tau_{n}(X)_{n}=\text{coker}(d_{n+1})$,
and for $i<n$ the formula is $\tau_{n}(X)_{i}=X_i$.
For a morphism $f:X\ra Y$ in $\text{Ch}(\A)$ the map $\tau_{n}(f)_n$ is  induced by $f_n$, while for $i<n$ we have  
$\tau_{n}(f)_i=f_i$.

For any $X\in \text{Ch}(\A)$,  the {\em truncation morphism} $t_n:X\ra  \text{in} \tau_{n}(X)$ is the unit of the adjunction $\tau_n\dashv\text{in}$.
Explicitly this  morphism we will abusively write as $t_n:X\ra \tau_{n}(X)$ is the following chain map:
\[
\xymatrix{
X  \ar[d]& \cdots\ar[r]^{d_{n+2}} & X_{n+1}\ar[d]\ar[r]^{d_{n+1}} & X_{n}\ar[r]^{d_{n}}
\ar[d]^{q} & X_{n-1}\ar[r]^{d_{n-1}}\ar[d]^{\text{id}} & \cdots\\
\tau_{n}(X) &\cdots\ar[r] & 0\ar[r] &\text{coker}(d_{n+1})\ar[r]^{d_{n}} &X_{n-1}\ar[r]^{d_{n-1}}& \cdots 
}
\]
where $q$ denotes the quotient morphism.
With respect to the injective classes we introduced in
Definition~\ref{def injclass},
the key property of the truncation morphism is the following.

%%%%%%%%%%%%%%%%%%%%%%%%%%%%%%%%%% %%%%%%%%%%%%%%%%%
\begin{proposition}\label{prop keycanmorp}
%%%%%%%%%%%%%%%%%%%%%%%%%%%%%%%%%% %%%%%%%%%%%%%%%%%
The truncation morphism  $t_n:X\ra \tau_{n}(X)$ is an $n$-$\I$-weak equivalence for any injective class $\I$.
\end{proposition}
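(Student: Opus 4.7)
The plan is to compute the cochain map $\A(t_n,W):\A(\tau_n(X),W)\to\A(X,W)$ term by term and then read off what happens on cohomology. Write $\bar{X}_n:=\text{coker}(d_{n+1})$ and $q:X_n\to\bar{X}_n$ for the quotient; then $t_n$ is the identity in homological degrees below $n$, equals $q$ in degree $n$, and is $X_k\to 0$ in degrees $k>n$. Dualizing via $\A(-,W)$, the cochain map $\A(t_n,W)$ is the identity on $\A(X_k,W)$ for $k<n$, is $\A(q,W):\A(\bar{X}_n,W)\to\A(X_n,W)$ in cohomological degree $-n$, and is the zero map $0\to\A(X_k,W)$ in cohomological degrees $-k$ for $k>n$. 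I will combine this with the factorization $d_n=\bar{d}_n\circ q$, where $\bar{d}_n:\bar{X}_n\to X_{n-1}$ is the induced differential, which gives the identity $\A(d_n,W)=\A(q,W)\circ\A(\bar{d}_n,W)$ of cochain differentials.

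The key input is the universal property of the cokernel: a morphism $\phi:X_n\to W$ satisfies $\phi\circ d_{n+1}=0$ if and only if it factors uniquely through $q$, so $\A(q,W)$ identifies $\A(\bar{X}_n,W)$ with $\ker\A(d_{n+1},W)\subset\A(X_n,W)$. Since $q$ is epi, the two candidates for the outgoing differential at $\A(X_{n-1},W)$, namely $\A(\bar{d}_n,W)$ and $\A(d_n,W)$, share the same kernel. These observations let me match up the cohomology groups in every cohomological degree $-k$ with $k\leq n$ and show that $\A(t_n,W)$ induces an isomorphism there; concretely, the comparison in degree $-n$ becomes
\[
\A(\bar{X}_n,W)/\text{im}\,\A(\bar{d}_n,W)\;\xrightarrow{\cong}\;\ker\A(d_{n+1},W)/\text{im}\,\A(d_n,W).
\]

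The monomorphism condition in cohomological degree $-n-1$ is then automatic: the source $\A(\tau_n(X),W)$ vanishes in that degree, so the induced map on $H^{-n-1}$ is just $0\hookrightarrow H^{-n-1}(\A(X,W))$. Since no property of $W$ beyond being an object of $\A$ is ever used, the conclusion holds simultaneously for every injective class $\I$. The only step that requires real care is the bookkeeping at the truncation boundary, where one must verify that $\A(q,W)$ sends $\text{im}\,\A(\bar{d}_n,W)$ exactly onto $\text{im}\,\A(d_n,W)$ inside $\ker\A(d_{n+1},W)$; this, however, is immediate from $d_n=\bar{d}_n\circ q$ together with $\A(q,W)$ being injective.
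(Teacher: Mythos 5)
Your proof is correct and takes essentially the same route as the paper's: identify $\A(t_n,W)$ degree by degree, observe that it is the identity above $-n$, use the universal property of the cokernel to identify $\A(\operatorname{coker}(d_{n+1}),W)$ with $\ker\A(d_{n+1},W)$ in degree $-n$, and note that the mono condition in degree $-n-1$ is automatic because the source cochain complex vanishes there. You simply spell out the degree-$(-n)$ bookkeeping, via the factorization $d_n=\bar{d}_n\circ q$ and injectivity of $\A(q,W)$, that the paper leaves implicit.
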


\begin{proof}
For any $W \in \mathcal{I}$, the morphism $\A(t_n,W)$ is given by the following commutative diagram:
\[\xymatrix{
0\ar[d]&  & \A(\text{coker}(d_{n+1}),W)\ar[ll]\ar[d] & \A(X_{n-1},W)\ar[l]\ar[d]^{\text{id}} &&\cdots\ar[ll]_(.38){\A(d_{n-1},W)} \\
\A(X_{n+1},W) & &\A(X_{n},W)\ar[ll]_(.45){\A(d_{n+1},W)} & \A(X_{n-1},W)\ar[l]_{\A(d_{n},W)} &&\cdots\ar[ll]_(.38){\A(d_{n-1},W)} \\
}\]
Clearly $\A(t_n,W)$ induces an isomorphism on cohomology in degrees $>-n$. Since the kernel of
$\A(d_{n+1},W)$ is given by $\A(\text{coker}(d_{n+1}),W)$, $\A(t_n,W)$ induces also an isomorphism on $H^{-n}$. 
As $H^{-n-1}(\A(\tau_{n}(X),W))=0$, $\A(t_n,W)$ induces a monomorphism on~$H^{-n-1}$.
\end{proof}

We begin by recalling a theorem of Bousfield~\cite[ Section 4.4]{MR2026537}.
A proof may also be found
in the appendix, see Theorem~\ref{thm modstructureabove}---it's there
both for the reader's
convenience
and because it gives an explicit construction of fibrant replacements.

%%%%%%%%%%%%%%%%%%%%%%%%%%%%%%%%%% %%%%%%%%%%%%%%%%%
\begin{theorem}\label{thm mcnpch}
%%%%%%%%%%%%%%%%%%%%%%%%%%%%%%%%%% %%%%%%%%%%%%%%%%%
Let $\I$ be an injective class.
The following choice of weak equivalences, cofibrations and fibrations endows $\text{\rm Ch}(\A)_{ \leq n}$ with a model category structure:
\begin{itemize}
\item $f\colon X\ra Y$ is called an $\I$-weak equivalence if $f^\ast: \A(Y,W) \rightarrow \A(X,W)$ is a
quasi-isomorphism of complexes of abelian groups for any $W \in \I$.
\item $f\colon X\ra Y$ is called an $\I$-cofibration if $f_i\colon X_i \rightarrow Y_i$ is an $\I$-monomorphism  for all $i\leq n$.
\item $f\colon X\ra Y$ is called an $\I$-fibration if $f_i\colon X_i \rightarrow Y_i$ has a section and its
kernel belongs to $\I$  for all $i \leq n$. In particular $X$ is  $\I$-fibrant if $X_i \in \I$ for all $i\leq n$.
\end{itemize}
\end{theorem}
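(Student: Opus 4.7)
The plan is to verify the five model category axioms (MC1)--(MC5), putting the substantive work into the last two. Bicompleteness (MC1) follows directly from that of $\A$ and $\ch(\A)$: limits are computed pointwise in $\A$ (they stay in $\ch(\A)_{\leq n}$), while colimits are computed pointwise and then truncated. The 2-out-of-3 axiom (MC2) is immediate since the defining condition ``$\A(f,W)$ is a quasi-isomorphism of cochain complexes of abelian groups for all $W\in\I$'' is inherited from 2-out-of-3 for quasi-isomorphisms of abelian groups. Closure under retracts (MC3) similarly reduces to the corresponding statement for each of the three classes: retracts of quasi-isomorphisms are quasi-isomorphisms; $\I$-monomorphisms are stable under retracts because $\A(-,W)$ sends them to surjections of sets; and split epimorphisms whose kernels lie in $\I$ are stable under retracts since $\I$ is closed under retracts.

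The crucial auxiliary step is the characterization of acyclic fibrations: a map $f\colon X\ra Y$ is both an $\I$-fibration and an $\I$-weak equivalence precisely when, in each degree $i\leq n$, $f_i$ is a split epimorphism whose kernel $K_i$ lies in $\I$ and the kernel complex $K$ is $\I$-trivial. Since all $K_i \in \I$, Proposition~\ref{prop propunch}.(8) identifies such a $K$ as a direct sum of disc complexes $D_i(W_i)$ with $W_i\in\I$, which makes $K$ contractible. Hence acyclic fibrations are exactly levelwise split epimorphisms onto $Y$ with kernel a bounded sum of discs on objects of $\I$. The analogous workhorse for acyclic cofibrations is that they are exactly levelwise $\I$-monomorphisms whose cokernel is a bounded sum of discs on objects of $\I$ (use Proposition~\ref{prop propunch}.(2),(8) applied to the cone, which splits into discs once we know its terms live in $\I$).

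For the lifting axiom (MC4) and the factorization axiom (MC5) we argue by descending induction on the degree, exploiting that complexes are bounded above by $n$. Given a lifting problem of an $\I$-cofibration $i\colon A\hookrightarrow B$ against an acyclic $\I$-fibration $p\colon X\twoheadrightarrow Y$, at degree $n$ we must fill a square where $X_n\twoheadrightarrow Y_n$ is a split epi with kernel in $\I$ and $A_n\hookrightarrow B_n$ is an $\I$-monomorphism; the existence of a lift follows from the very definition of $\I$-monomorphism applied to maps into an object of $\I$, combined with the splitting. Having built the lift in degrees $>i$, the same argument produces the lift in degree $i$, with compatibility with differentials coming from a routine diagram chase. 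The symmetric lifting of acyclic $\I$-cofibrations against $\I$-fibrations uses the dual combinatorics: the cofibration has cokernel a sum of discs $D_k(W)$ and lifting along a split epi of any kind into the fibration is trivial.

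To factor a morphism $f\colon X\ra Y$ as cofibration followed by acyclic fibration, I would attach to $X$ a mapping-cylinder type sum of disc complexes $D_k(W)$ that surject onto $Y$, choosing in each degree $i\leq n$ an $\I$-monomorphism $Y_i\hookrightarrow W_i$ with $W_i\in\I$ and assembling the $D_i(W_i)$ and $D_{i+1}(W_i)$ into a contractible bounded complex $C$ with a surjection $C\twoheadrightarrow Y$ compatible with $f$; then set $Z=X\oplus C$, so $X\hookrightarrow Z$ is an $\I$-cofibration and $Z\twoheadrightarrow Y$ is a split epi with $\I$-trivial kernel $C$, hence an acyclic $\I$-fibration by the characterization above. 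For the other factorization, I would first factor the zero map $0\ra X$ through a ``fibrant replacement'' of $X$ obtained by a Cartan--Eilenberg style construction, iteratively choosing $\I$-monomorphisms into objects of $\I$ starting from degree $n$ and descending; combining with $f$ gives the desired acyclic cofibration followed by fibration.

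The main obstacle is the factorization (MC5): it requires a concrete, finite (because bounded above), inductive construction of the mapping cylinder and fibrant replacement that simultaneously (i) remains within $\ch(\A)_{\leq n}$, (ii) uses only the hypothesis that $\A$ has enough $\I$-injectives, and (iii) produces a kernel or cokernel that decomposes as a bounded direct sum of disc complexes on objects of $\I$, so that Proposition~\ref{prop propunch}.(8) can convert the combinatorial description into a genuine $\I$-weak equivalence. The downward induction on degree is what makes all of this go through in the bounded setting and is precisely what will fail in the unbounded case, motivating the tower-of-truncations framework developed in the rest of the paper.
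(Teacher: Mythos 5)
Your treatment of (MC1), (MC2), (MC3), and your characterization of acyclic fibrations all match the paper's appendix (Lemma~\ref{lem caractrivfib}): an acyclic fibration is degreewise a split epi with kernel a bounded direct sum of discs $D_i(W_i)$, $W_i\in\I$. From there the lifting of cofibrations against acyclic fibrations (Proposition~\ref{prop LLP}) goes through essentially as you say. The problems begin with your claimed \emph{dual} characterization.

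You assert that acyclic $\I$-cofibrations are exactly the levelwise $\I$-monomorphisms whose cokernel is a bounded sum of discs on objects of $\I$. This is false, and it cannot be deduced from Proposition~\ref{prop propunch}.(8): that proposition only applies to complexes whose $\text{coker}(d_{i+1})$ lie in $\I$ (in practice, to fibrant $\I$-trivial complexes). There is no reason the cokernel (or cone) of an acyclic cofibration $A\hookrightarrow B$ should have its terms, let alone its boundary cokernels, in $\I$; an $\I$-monomorphism $A_i\to B_i$ does not force $B_i/A_i\in\I$. Consequently your claim that lifting acyclic cofibrations against fibrations is ``trivial'' because the cokernel splits into discs has no support. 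In the paper's proof this is by far the hardest step (Proposition~\ref{prop RLP}): it is a genuine degree-by-degree induction that exploits the fact that a trivial cofibration $i\colon X\hookrightarrow Y$ induces, for every degree $m$ of the kernel $K$ of the fibration (with $K_m\in\I$), a quasi-isomorphism with degreewise surjective components $\text{Hom}(Y,K_m)\to\text{Hom}(X,K_m)$, and then solves a cocycle equation to produce the lift in each degree. Nothing in your sketch replaces this.

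Your first factorization also does not work as stated. Setting $Z=X\oplus C$ with $C$ a contractible fibrant complex admitting a degreewise split epi $q\colon C\to Y$ and mapping $Z\to Y$ by $(f,q)$, the kernel of $Z\to Y$ is \emph{not} $C$: degreewise it is isomorphic to $X_i\oplus\ker(q_i)$, which in general is neither $\I$-trivial nor has terms in $\I$. So $Z\to Y$ is not even an $\I$-fibration, let alone acyclic. (It is also unclear how you obtain the split epimorphism $C\twoheadrightarrow Y$ from the $\I$-monomorphisms $Y_i\hookrightarrow W_i$, which need not be split.) The paper avoids this by using the mapping cylinder (Lemma~\ref{lem cylinder}) to get a degreewise split epi $\hbox{\rm Cyl}(f)\to N$ first, then factoring $K=\ker(\hbox{\rm Cyl}(f)\to N)\to 0$ as a cofibration into an acyclic fibrant $P$ (Lemma~\ref{lem trivmapcoftrivfib}), and finally pushing out along $K\hookrightarrow P$; the kernel of the resulting map to $N$ is exactly $P$, which is where acyclicity and fibrancy come from. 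Your Cartan--Eilenberg-style construction for the second factorization is in the right spirit (it is Lemma~\ref{lem trivmaptrivcoffib}), but you only gesture at it; the paper carries out a genuine double-complex argument there and then composes with the mapping cylinder as above.

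In short: the easy axioms and the acyclic-fibration side are right, but the acyclic-cofibration side rests on a false characterization, so both the RLP of fibrations against trivial cofibrations and the cofibration/acyclic-fibration factorization are gaps in your argument.
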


Among other things this model structure gives,
for an object $A\in\A\subset\text{\rm Ch}(\A)_{ \leq 0}$, a
fibrant replacement $A\ra I$. This turns out to be nothing other than
a relative injective resolution for $A$.

Here are some basic properties of this model structure on $\ch(\A)_{ \leq n}$.

%%%%%%%%%%%%%%%%%%%%%%%%%%%%%%%%%% %%%%%%%%%%%%%%%%%
\begin{proposition}\label{prop prpfbmcs}
%%%%%%%%%%%%%%%%%%%%%%%%%%%%%%%%%% %%%%%%%%%%%%%%%%%
\begin{enumerate}
\item All objects in $\text{\rm Ch}(\A)_{ \leq n}$ are $\I$-cofibrant.

\item Let $f\colon X\ra Y$ be an $\I$-fibration.  Then $\text{\rm Ker}(f)$ is fibrant and 
$f$ is a $k$-$\I$-weak equivalence if and only if  $\text{\rm Ker}(f)$ is $k$-$\I$-connected.

\item An $\I$-fibration $f\colon X\ra Y$ is  an $\I$-weak  equivalence if and only if
$\text{\rm Ker}(f)$ is $\I$-trivial. Moreover, if $f$ is an acyclic  $\I$-fibration, then
there is an isomorphism $\alpha\colon Y\oplus \text{\rm Ker}(f)\ra X$ for which the following diagram commutes:
\[
\xymatrix{
Y\oplus \text{\rm Ker}(f)\ar[rr]^(.55){\alpha}\ar[dr]^{\text{pr}} & & X\ar[dl]_{f}\\
& Y
}
\]

\item An  $\I$-weak equivalence between $\I$-fibrant chain complexes  in $\text{\rm Ch}(\A)_{ \leq n}$ is a homotopy equivalence. 

\item An $\I$-fibrant object   in $\text{\rm Ch}(\A)_{ \leq n}$ is $\I$-trivial if and only if it  is isomorphic to a 
complex of the form  $\bigoplus_{i\leq n} D_{i}(W_i)$, where disc complexes have been defined in Example~\ref{exs relweq}.

\item Products of $\I$-fibrant and  $\I$-trivial complexes are $\I$-trivial.
\item Assume that the following is a sequence of $\I$-fibrations and $\I$-weak equivalences in  $ \text{\rm Ch}(\A)_{ \leq n}$:
\[
(\cdots X_{2}\xrightarrow{f_2}X_{1}\xrightarrow{f_1}X_{0})
\]
Then, the projection morphism $\text{\rm lim}_{i\geq 0} X_{i}\ra X_{k}$ is an $\I$-fibration and an $\I$-weak equivalence for any $k\geq 0$.
\end{enumerate}
\end{proposition}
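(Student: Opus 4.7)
The plan is to establish the eight parts in order, with (1), (2), (3), and (5) as the conceptual core, and (4), (6), (7) as formal consequences.

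Part (1) is immediate because $0 \to X$ is an $\I$-monomorphism in every degree. For (2), the definition of $\I$-fibration makes each $f_i$ a split epimorphism with kernel in $\I$, so $\text{Ker}(f)$ is fibrant and the short exact sequence $0 \to \text{Ker}(f) \to X \to Y \to 0$ is split in every degree. Applying $\A(-, W)$ for $W \in \I$ preserves this splitness, producing a short exact sequence of cochain complexes; its long exact sequence, read with the convention that $\A(X_k, W)$ sits in cohomological degree $-k$, identifies the $k$-$\I$-weak equivalence condition for $f$ with the vanishing $H^n(\A(\text{Ker}(f), W)) = 0$ for $n \geq -k$, i.e. $k$-$\I$-connectedness of $\text{Ker}(f)$. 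Point (3) is the case $k = \infty$ of (2); for the additional splitting, the lifting property of the trivial fibration $f$ against the cofibration $0 \hookrightarrow Y$ (a cofibration by (1)) yields a chain-map section $s : Y \to X$, and $(y, k) \mapsto s(y) + k$ is then the desired isomorphism $Y \oplus \text{Ker}(f) \xrightarrow{\sim} X$ over $Y$.

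The main algebraic calculation is (5). If $X$ is fibrant and $\I$-trivial, then by Proposition~\ref{prop propunch}.(6) each $d_i : \text{coker}(d_{i+1}) \to X_{i-1}$ is an $\I$-monomorphism. I will prove by downward induction on $i$ that $\text{coker}(d_{i+1}) \in \I$: the base case $\text{coker}(d_{n+1}) = X_n$ is immediate, and the inductive step exploits the observation that an $\I$-monomorphism whose source already lies in $\I$ must admit a retraction (apply the defining surjectivity with $W$ equal to the source). This splits the $\I$-monomorphism $\text{coker}(d_{i+1}) \hookrightarrow X_{i-1}$, giving $X_{i-1} \cong \text{coker}(d_{i+1}) \oplus \text{coker}(d_i)$ and exhibiting $\text{coker}(d_i)$ as a retract of the fibrant object $X_{i-1} \in \I$. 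Proposition~\ref{prop propunch}.(8) then produces the desired disc decomposition; the converse is clear.

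Part (4) follows from (3), (5), and a routine model-categorical argument: factor $f$ as a trivial cofibration $i : X \to Z$ followed by a fibration $p : Z \to Y$; by two-out-of-three $p$ is a trivial fibration, hence split by (3) with contractible kernel by (5), making $p$ a chain homotopy equivalence. Then $i$ is a weak equivalence between fibrant-cofibrant objects, which is a homotopy equivalence by standard model category theory. Part (6) is the key preparatory step for (7). Write each fibrant $\I$-trivial $X^j$ as $\bigoplus_k D_k(W^j_k)$ with $W^j_k \in \I$, using (5). In each degree only two disc summands contribute, so the finite direct sum commutes with arbitrary products in an abelian category, yielding $\prod_j \bigoplus_k D_k(W^j_k) \cong \bigoplus_k D_k(\prod_j W^j_k)$ as chain complexes. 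Since $\I$ is closed under products, this exhibits the product as a coproduct of discs on objects of $\I$, therefore contractible and \emph{a fortiori} $\I$-trivial. For (7), iterating (3) produces compatible isomorphisms $X_i \cong X_0 \oplus K_0 \oplus \cdots \oplus K_{i-1}$ under which each $f_i$ becomes a projection, with each $K_j$ fibrant and $\I$-trivial. The limit is then $\text{lim}_i X_i \cong X_0 \oplus \prod_j K_j$, and the projection onto $X_k$ is split with kernel $\prod_{j \geq k} K_j$, which is fibrant (products preserve $\I$) and $\I$-trivial by (6). A last application of (3) finishes the argument.

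The main obstacle I anticipate is the convention-tracking in (2): the definition of $k$-$\I$-weak equivalence uses a particular cohomological grading, and one must carefully align the direction of the connecting homomorphism in the long exact sequence with that grading before the equivalence with $k$-$\I$-connectedness becomes transparent. A second subtlety, in (7), is to check that the splittings furnished by (3) can be chosen \emph{compatibly} along the tower so that each $f_i$ really becomes the obvious projection in the decomposition, but this is just successive composition of the isomorphisms provided by (3). Once these are set, every other step reduces to a clean application of an already-established result.
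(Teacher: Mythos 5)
Your parts (1), (2), (3), (5), (6), (7) follow the paper's line exactly: the split short exact sequence and its long exact sequence for (2)--(3), the downward induction via the retraction trick for (5), commuting the locally finite sum with the product for (6), and iterating (3) and (6) for (7). The one point where you diverge from the paper is (4), and this is also where there is a gap to close.

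The paper proves (4) by an entirely different mechanism: it invokes the general model-category fact that a weak equivalence between fibrant--cofibrant objects is a homotopy equivalence \emph{in the model-categorical sense}, and then observes that the standard path object $P(Z)$ from~\ref{pt path} is a \emph{very good} path object for every $\I$-fibrant $Z$, so model-categorical (left or right) homotopy coincides with ordinary chain homotopy. Your factorization route --- write $f = p\circ i$ with $i$ a trivial cofibration and $p$ a fibration, make $p$ a chain homotopy equivalence via (3) and (5) --- is a legitimate alternative, but the sentence ``$i$ is a weak equivalence between fibrant--cofibrant objects, which is a homotopy equivalence by standard model category theory'' hides exactly the subtlety the paper was at pains to handle: ``standard model category theory'' only gives you an \emph{abstract} homotopy equivalence, and translating that into a chain homotopy equivalence requires the path-object argument you have bypassed. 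To keep your argument self-contained, you should instead treat $i$ the same way you treated $p$: since $X$ is fibrant, the lifting property of the acyclic cofibration $i$ against $X\to 0$ produces a retraction $r\colon Z\to X$ with $ri = \mathrm{id}$; hence $Z\cong X\oplus \mathrm{coker}(i)$ degreewise, $\mathrm{coker}(i)$ is $\I$-fibrant (a retract of $Z$) and $\I$-trivial (the cone of a weak equivalence, up to contractible), so by (5) it is a sum of discs and $i$ is a chain homotopy equivalence for the same reason $p$ is. With that patch your (4) is correct, though it leans on (5) where the paper's does not; this causes no circularity since the paper's proof of (5) is independent of (4).

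One further minor point: in (7), the ``compatible'' choice of splittings needs a one-line justification (build the splitting of $\lim X_i \to X_k$ by composing the splittings of the $f_i$'s); you flag this yourself and it is indeed routine, matching what the paper leaves implicit when it writes ``(7) follows from (3) and (6).''
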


\begin{proof}
\noindent
(1) follows from the fact that, for any $W\in \A$,  the morphism $0\ra W$ is an $\I$-monomorphism.

\noindent
(2):\quad For any $W$, the following is an exact sequence of chain complexes of abelian groups:
\[
0\ra \A(Y,W)\xrightarrow{\A(f,W)} \A(X,W)\ra \A(\text{Ker}(f),W)\ra 0\, .
\]

\noindent
The first part of (3) follows from (2). If $f:X\ra Y$ is an acyclic $\I$-fibration, then because all objects in $\ch(\A)_{ \leq n}$
are $\I$-cofibrant, there is a morphism $s:Y\ra X$ for which $fs=\text{id}_{Y}$. This implies the second part of~(3).

\noindent
(4):\quad
All objects in  $\ch(\A)_{ \leq n}$ are $\I$-cofibrant, so an $\I$-weak equivalence between $\I$-fibrant objects
is a homotopy equivalence in the $\I$-model structure.  But, the standard path object $P(Z)$
(see~\ref{pt path}), is a very good path object for any $\I$-fibrant chain complex $Z\in\ch(\A)_{ \leq n}$
(in the terminology used in \cite{MR1361887}, which means that the factorization 
$Z\subset P(Z)\xrightarrow{\pi} Z\oplus Z$ consists in an acyclic cofibration followed by a fibration).
Hence, a homotopy equivalence in the $\I$-model structure on $\ch(\A)_{ \leq n}$ is nothing but a homotopy equivalence.

\noindent
(5):\quad
Assume that $X$ is $\I$-fibrant and $\I$-trivial. According to Proposition~\ref{prop propunch}.(8) we need to show that, 
for all $i$, $W_{i}:=\text{coker}(d_{i+1})$ belongs to $\I$.
We do it by induction on $i$. For $i=n$, $\text{coker}(d_{n+1})=X_{n}$ belongs to $\I$ since
$X$ is $\I$-fibrant. Assume now that $W_{i+1}\in\I$. As $d_{i+1}:W_{i+1}\ra X_{i}$ is an $\I$-monomorphism, it has a retraction.
It follows that $X_{i}=W_{i+1}\oplus W_{i}$. Consequently $W_{i}$, as a retract of a member of $\I$, also belongs to~$\I$.

\noindent
(6) is a consequence of (5), and (7) follows from (3) and (6).
\end{proof}

We will use the model categories $\text{\rm Ch}(\A)_{ \leq n}$ with their $\I$-relative model structure to approximate the category
of unbounded chain complexes $\ch(\A)$ equipped with the $\I$-relative weak equivalences.

%%%%%%%%%%%%%%%%%%%%%%%%%%%%%%%%%% %%%%%%%%%%%%%%%%%
\begin{proposition}\label{prop qpbcomplex}
%%%%%%%%%%%%%%%%%%%%%%%%%%%%%%%%%% %%%%%%%%%%%%%%%%%
\begin{enumerate}
\item The following pair of functors is a right Quillen pair:
\[\tau_n:\text{\rm Ch}(\A)\leftrightarrows \text{\rm Ch}(\A)_{ \leq
n}:\text{\rm in}\]
\item A chain complex $X\in \text{\rm Ch}(\A)$ is approximated by the above right Quillen pair if and only if
$\A(X,W)$ has trivial cohomology for $i<-n$ and any $W\in\I$.
\end{enumerate}
\end{proposition}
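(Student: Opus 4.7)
My plan is to verify in part (1) the three conditions of Definition~\ref{def quillen}, and then in part (2) reduce the approximation condition to asking that the truncation morphism $t_n:X\to\tau_n(X)$ is itself an $\I$-weak equivalence, which I can read off Proposition~\ref{prop keycanmorp}.

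For part (1), condition (1) (adjointness) is already stated. Condition (3) is essentially a tautology: the $\I$-weak equivalences in $\ch(\A)_{\leq n}$ are by definition those maps for which $\A(-,W)$ is a quasi-isomorphism, which is literally the restriction of the same condition in $\ch(\A)$ to $\ch(\A)_{\leq n}$; hence $\text{in}$ preserves $\I$-weak equivalences between \emph{any} pair of objects, not only fibrant ones. For condition (2), I take an $\I$-weak equivalence $f:X\to Y$ in $\ch(\A)$ and form the naturality square of the truncation morphisms $t_n^X$ and $t_n^Y$, then apply $\A(-,W)$. By Proposition~\ref{prop keycanmorp}, $\A(t_n^X,W)$ and $\A(t_n^Y,W)$ are isomorphisms on $H^k$ for $k\geq -n$, while for $k<-n$ both $\A(\tau_n X,W)$ and $\A(\tau_n Y,W)$ vanish at the level of cochains. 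A one-line diagram chase then promotes $H^k(\A(\tau_n f,W))$ to an isomorphism in every cohomological degree.

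For part (2), I start from the remark following Definition~\ref{def approx}: to test approximation of $X$ it suffices to exhibit one fibrant $Y$ together with a weak equivalence $\tau_n(X)\to Y$ whose adjoint $X\to\text{in}(Y)$ is an $\I$-weak equivalence. Taking $Y$ to be a fibrant replacement of $\tau_n(X)$ in the model structure of Theorem~\ref{thm mcnpch}, the map $\tau_n(X)\to Y$ is an $\I$-weak equivalence both in $\ch(\A)_{\leq n}$ and, by the tautology used above, in $\ch(\A)$. The 2-out-of-3 property (Lemma~\ref{lem 2outof6} suffices) then shows that $X$ is approximated precisely when $t_n^X:X\to\tau_n(X)$ is an $\I$-weak equivalence.

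It remains to characterize when $t_n^X$ is an $\I$-weak equivalence. The explicit description of $t_n^X$ in the proof of Proposition~\ref{prop keycanmorp} shows that $\A(t_n^X,W)$ is the identity on cochains in cohomological degrees $k>-n$, is the monomorphism $\A(\text{coker}(d_{n+1}),W)\hookrightarrow \A(X_n,W)$ in degree $-n$, and has source $0$ in degrees $k<-n$. A direct calculation (already carried out inside the proof of Proposition~\ref{prop keycanmorp}) gives that $H^k(\A(t_n^X,W))$ is an isomorphism for all $k\geq -n$, regardless of $X$. Hence $t_n^X$ is an $\I$-weak equivalence if and only if the remaining groups $H^k(\A(X,W))$ vanish for all $k<-n$ and all $W\in\I$, which is exactly the stated condition. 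No step presents a real obstacle; the only thing to watch carefully is the homological-versus-cohomological indexing convention, which determines where the cochain complex $\A(\tau_n X,W)$ is truncated.
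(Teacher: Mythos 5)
Your proof is correct and takes the same route as the paper, which dismisses the proposition with the one-line remark that both statements follow from the definitions and Proposition~\ref{prop keycanmorp}; you have simply written out the diagram chase for condition (2), the observation that $\text{in}$ preserves all $\I$-weak equivalences for condition (3), and the 2-out-of-3 reduction to the truncation morphism $t_n^X$ being an $\I$-weak equivalence for part (2).
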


\begin{proof}
Both statements follow directly from the definitions and Proposition~\ref{prop keycanmorp}.
\end{proof}

Our aim is to find other Quillen pairs for $\text{\rm Ch}(\A)$ that approximate more unbounded chain complexes than just those 
with ``bounded $\I$-homology''. For that we construct a suitable model category by assembling the 
categories $\text{\rm Ch}(\A)_{ \leq n}$ into a tower. This is the content of the next subsection.

%%%%%%%%%%%%%%%%%%%%%%%%%%%%%%%%%% %%%%%%%%%%%%%%%%%
\subsection{Towers of bounded chain complexes}
%%%%%%%%%%%%%%%%%%%%%%%%%%%%%%%%%% %%%%%%%%%%%%%%%%%
For $n\geq k$,  the restriction of $\tau_k:\text{Ch}(\A)\ra \text{Ch}(\A)_{\leq k}$ to the subcategory 
$\text{Ch}(\A)_{\leq n}\subset  \text{Ch}(\A)$ is denoted by the same symbol
$\tau_k:\text{Ch}(\A)_{\leq n}\ra \text{Ch}(\A)_{ \leq k}$ (and is left adjoint to the inclusion
$\text{in}:\text{Ch}(\A)_{\leq k}\subset \text{Ch}(\A)_{\leq n}$). 
Moreover the canonical morphism $X\ra  \tau_{k}(X)$ can be expressed uniquely as the composite
$X\ra \tau_{n}(X)\ra \tau_{k}(X)$, of the truncation morphism $X\ra \tau_{n}(X)$ for $X$ and $n$, and  the 
truncation morphism $\tau_{n}(X)\ra \tau_{k}(X)=\tau_{k}(\tau_{n}(X))$ for $\tau_{n}(X)$ and~$k$.

Consider now the  sequence of model categories $\{\ch(\A)_{ \leq n}\}_{n\geq 0}$, with the model structures given 
by Theorem~\ref{thm mcnpch}. The  functor  $\text{in}:\ch(\A)_{ \leq n}\subset  \ch(\A)_{ \leq n+1}$ takes (acyclic) 
fibrations to (acyclic) fibrations and hence the following is a sequence of Quillen functors:
\[
\{\tau_{n}: \ch(\A)_{ \leq n+1}\rightleftarrows\ch(\A)_{ \leq n}:\text{in}\}_{n\geq 0}
\]
We will denote this tower of model categories by $\T(\A,\I)$ and  use the  symbol
$\text{Tow}(\A,\I)$ to denote the category of towers in $\T(\A,\I)$.

Let $X_{\bullet}$ be an object in $\text{Tow}(\A,\I)$. We can think about this object as a tower of morphisms:
\[\cdots\xrightarrow{t_{3}} X_{2}\xrightarrow{t_{2}}X_{1}\xrightarrow{t_{1}}X_0\]
in $\text{Ch}(\A)$ given by the  structure morphisms of $X_{\bullet}$. Conversely, for any such tower
where  $X_n$ is a chain complex that belongs to  $\ch(\A)_{ \leq
n}$, we can define an object $X_{\bullet}$ in $\text{Tow}(\A,\I)$ given by the sequence $\{X_n\}_{n\geq 0}$ 
with  the  morphisms $\{t_{n+1}\}_{n\geq 0}$  as its structure morphisms. In this way we can think about $\text{Tow}(\A,\I)$ 
as a full subcategory of the functor category $\text{Fun}({\mathbf N}^{op}, \text{Ch}(\A))$.
%consisting of these functors $X:{\mathbf N}\ra \text{Ch}(\A)$ for which  $X(n)\in \ch(\A)_{ \leq n}$.

To be very explicit,  $\text{Tow}(\A,\I)$ is the category of commutative diagrams in $\A$ of the following form:
\begin{equation}\label{diagram}
\begin{gathered}
\xymatrix{
\vdots\ar[d] &  \vdots\ar[d] &  \vdots\ar[d] & \vdots\ar[d] & \vdots\ar[d] & \vdots\ar[d] \\
0\ar[r]& X_{2,2}\ar[r]^{d_{2,2}}\ar[d]^{t_{2,2}} & X_{2,1}\ar[r]^{d_{2,1}}\ar[d]^{t_{2,1}}& X_{2,0}\ar[r]^{d_{2,0}}\ar[d]^{t_{2,0}}& 
X_{2,-1}\ar[r]^{d_{2,-1}}\ar[d]^{t_{2,-1}}  & X_{2,-2}\ar[r]^{d_{2,-2}}\ar[d]^{t_{2,-2}}& \cdots\\
&  0\ar[r]& X_{1,1}\ar[r]^{d_{1,1}}\ar[d]^{t_{1,1}} & X_{1,0}\ar[r]^{d_{1,0}}\ar[d]^{t_{1,0}}& 
X_{1,-1}\ar[r]^{d_{1,-1}}\ar[d]^{t_{1,-1}}& X_{1,-2}\ar[r]^{d_{1,-2}}\ar[d]^{t_{1,-2}} &\cdots \\
& & 0\ar[r]& X_{0,0}\ar[r]^{d_{0,0}} & X_{0,-1}\ar[r]^{d_{0,-1}}& X_{0,-2}\ar[r]^{d_{0,-2}}&  \cdots
}
\end{gathered}
\end{equation}
where, for any $n\geq 0$ and $i\leq n$ , $d_{n,i-1}d_{n,i}=0$, i.e., horizontal lines are chain complexes.   

We will always think about   $\text{Tow}(\A,\I)$ as a model category, with the model structure given by Proposition~\ref{prop modelontowers}. 
For example, if we think about $X_{\bullet}$ 
as a tower $(\cdots\xrightarrow{t_3} X_2\xrightarrow{t_2}X_1\xrightarrow{t_1}X_0)$,
then $X_{\bullet}$ is fibrant if and only if $X_0$ is $\I$-fibrant in $\text{Ch}(\A)_{\leq 0}$ and,
for any $n\geq 0$, $t_{n+1}:X_{n+1}\ra X_n$ is an $\I$-fibration in $\text{Ch}(\A)_{\leq n+1}$.
If we think about $X_{\bullet}$  as a commutative diagram above, then $X_{\bullet}$ is fibrant if, for any $i\leq 0$, 
the objects $X_{0,i}$ belongs to $\I$, and, for any $n>0$ and $i\leq n$, $t_{n,i}$ has a section and its kernel belongs to $\I$.
Note also that since all objects in $\text{Ch}(\A)_{\leq n}$ are cofibrant, then so are all objects in $\text{Tow}(\A,\I)$.

%%%%%%%%%%%%%%%%%%%%%%%%%%%%%%%%%% %%%%%%%%%%%%%%%%%
\subsection{Alternative description}
%%%%%%%%%%%%%%%%%%%%%%%%%%%%%%%%%% %%%%%%%%%%%%%%%%%
Let us briefly outline
another way of describing the category $\text{Tow}(\A,\I)$. Consider the constant
sequence $\{\ch(\A)_{ \leq 0}\}_{n\geq 0}$ equipped with the model structure given by 
Theorem~\ref{thm mcnpch}  and the sequence of adjoint functors $\{\tau: \ch(\A)_{ \leq 0}\rightleftarrows\ch(\A)_{ \leq 0}
\colon \Sigma^{-1}\}_{n\geq 0}$, where   $\Sigma^{-1}$ is the shift functor.
% that assigns to $X=(X_0\xrightarrow{d_0}X_{-1}\xrightarrow{d_{-1}}\cdots)$ the shifted complex
%$\Sigma^{-1} X:=(0\ra X_0\xrightarrow{d_0}X_{-1}\xrightarrow{d_{-1}}\cdots)$, i.e., the complex given by 
%$(\Sigma^{-1} X)_{i}:=X_{i+1}$ with the differentials given by the shifts of the differentials of $X$ multiplied by $-1$. 
%The functor $\tau$ is  the left adjoint to  $\Sigma^{-1}$ and explicitly  can be defined as follows:
%\[ \tau(X_0\xrightarrow{d_0}X_{-1}\xrightarrow{d_{-1}}\cdots):=(\text{coker}(d_{0})\xrightarrow{d_{-1}}X_{-2}\xrightarrow{d_{-2}}\cdots)\]
It is clear that $\Sigma^{-1}$  takes (acyclic) $\I$-fibrations in $\ch(\A)_{ \leq 0}$ into (acyclic) $\I$-fibrations in $\ch(\A)_{ \leq 0}$. 
Let us denote this tower of model categories by $\T$.

Let $X_{\bullet}$ be an object in $\text{Tow}(\T)$. The structure morphisms of $X_{\bullet}$ and the differentials of the chain complexes $X_i$  
can be assembled to form a commutative diagram in $\A$ as in (\ref{diagram}).
% \[\xymatrix{
% \vdots\ar[d] &  \vdots\ar[d] &  \vdots\ar[d] & \vdots\ar[d] & \vdots\ar[d] & \vdots\ar[d] \\
% 0\ar[r]& X_{2,0}\ar[r]^{d_{2,0}}\ar[d]^{t_{2,0}} & X_{2,-1}\ar[r]^{d_{2,-1}}\ar[d]^{-t_{2,-1}}& X_{2,-2}\ar[r]^{d_{2,-2}}\ar[d]^{t_{2,-2}}& X_{2,-3}\ar[r]^{d_{2,-3}}\ar[d]^{-t_{2,-3}}
% & X_{2,-4}\ar[r]^{d_{2,-4}}\ar[d]^{t_{2,-4}}&
%\cdots\\
% &  0\ar[r]& X_{1,0}\ar[r]^{-d_{1,0}}\ar[d]^{-t_{1,0}} & X_{1,-1}\ar[r]^{-d_{1,-1}}\ar[d]^{t_{1,-1}}& X_{1,-2}\ar[r]^{-d_{1,-2}}\ar[d]^{-t_{1,-2}}& X_{1,-3}\ar[r]^{-d_{1,-3}}\ar[d]^{t_{1,-3}}
% &\cdots \\
%  & & 0\ar[r]& X_{0,0}\ar[r]^{d_{0,0}} & X_{0,-1}\ar[r]^{d_{0,-1}}& X_{0,-2}\ar[r]^{d_{0,-2}}&  \cdots
% }\]
% such that, for any $n\geq 0$ and any $i\leq 0$, $d_{n,i-1}d_{n,i}=0$,
% i.e., horizontal lines are chain complexes.  Conversely, for any such commutative diagram in $\A$ with horizontal lines forming chain complexes  we can define
% an object $X_\bullet$ in  $\text{Tow}(\T)$ which is given by the following sequence of chain complexes  in $\ch(\A)_{ \leq 0}$:
%\[\{X_n:=(X_{n,0}\xrightarrow{d_{n,0}}X_{n,-1}\xrightarrow{d_{n,-1}}X_{n,-2}\xrightarrow{d_{n,-2}}\cdots)\}_{n\geq 0}\]
%and whose structure morphisms $\{X_{n+1}\ra \Sigma^{-1} X_n\}_{i\geq 0}$ are given by the  vertical morphisms $\{t_{n+1,i}\}_{i\leq 0}$. 
This defines an isomorphism between the category of such commutative diagrams  and the category of towers 
$\text{Tow}(\T)$.  It then follows that  $\text{Tow}(\T)$ is also isomorphic to $\text{Tow}(\A,\I)$.

%%%%%%%%%%%%%%%%%%%%%%%%%%%%%%%%%% %%%%%%%%%%%%%%%%%
\subsection{A right Quillen pair for $\text{Ch}(\A)$}
%%%%%%%%%%%%%%%%%%%%%%%%%%%%%%%%%% %%%%%%%%%%%%%%%%%
In this subsection we use the  model category  $\text{Tow}(\A,\I)$ described above to define a right Quillen pair for  
$\text{Ch}(\A)$ that has potential to approximate more than complexes with bounded $\I$-homology (see Proposition~\ref{prop qpbcomplex}). 
We define first a pair of adjoint functors $\text{tow}:\text{Ch}(\A)\rightleftarrows \text{Tow}(\A,\I):\text{lim}$.

Let $X$ be an object in $\text{Ch}(\A)$. Define $\text{tow}(X)$ to be the object in $\text{Tow}(\A,\I)$ given by the sequence
$\{\tau_n(X)\}_{n\geq 0}$   with the structural morphisms given by the truncation morphisms $\{t_{n+1}:\tau_{n+1}(X)\ra \tau_{n}(X)\}_{n\geq 0}$.
Explicitly, $\text{tow}(X)$ is represented by the following commutative diagram in $\A$:
\begin{equation}\label{diagram2}
\begin{gathered}
\xymatrix@C=1.5em{
\vdots\ar[d]& \vdots\ar[d] & \vdots\ar[d]& \vdots\ar[d]& \vdots\ar[d]& \vdots\ar[d]\\
\tau_{2}(X) \ar[d]& 0\ar[r] & \text{coker}(d_3)\ar[r]^{d_2}\ar[d] & X_1\ar[r]^{d_1}\ar[d]^{q} & X_0\ar[r]^{d_0}\ar[d]^{\text{id}} & X_{-1}\ar[r]^{d_{-1}}\ar[d]^{\text{id}} &\cdots\\
\tau_{1}(X)\ar[d] & & 0\ar[r] &\text{coker}(d_2)\ar[r]^{d_1}\ar[d] &X_0\ar[r]^{d_0}\ar[d]^{q} &X_{-1}\ar[r]^{d_{-1}}\ar[d]^{\text{id}} &\cdots   \\
\tau_{0}(X) & & & 0 \ar[r]&  \text{coker}(d_1)\ar[r]^{d_0}& X_{-1}\ar[r]^{d_{-1}} &\cdots
} 
\end{gathered}
\end{equation}
where all $q$'s denote quotient morphisms. For a chain map $f:X\ra Y$, the morphism $\text{tow}(f)$ is given 
by the sequence of morphisms $\{\tau_n(f)\}_{n\geq 0}$.

We define next the limit functor $\text{lim}:\text{Tow}(\A,\I)\ra \text{Ch}(\A)$  to be the restriction of the
standard limit functor defined on $\text{Fun}({\mathbf N}^{op},\text{Ch}(\A))$ to the full subcategory $\text{Tow}(\A,\I)$.
Explicitly, let $X_{\bullet}$ be an object in $\text{Tow}(\A,\I)$ described by a diagram of the form (\ref{diagram}).
%in $\A$ with horizontal lines being chain complexes:
%  \[\xymatrix{
% \vdots\ar[d] &  \vdots\ar[d] &  \vdots\ar[d] & \vdots\ar[d] & \vdots\ar[d] & \vdots\ar[d] \\
% 0\ar[r]& X_{2,2}\ar[r]^{d_{2,2}}\ar[d]^{t_{2,2}} & X_{2,1}\ar[r]^{d_{2,1}}\ar[d]^{t_{2,1}}& X_{2,0}\ar[r]^{d_{2,0}}\ar[d]^{t_{2,0}}& X_{2,-1}\ar[r]^{d_{2,-1}}\ar[d]^{t_{2,-1}}
% & X_{2,-2}\ar[r]^{d_{2,-2}}\ar[d]^{t_{2,-2}}&
%\cdots\\
% &  0\ar[r]& X_{1,1}\ar[r]^{d_{1,1}}\ar[d]^{t_{1,1}} & X_{1,0}\ar[r]^{d_{1,0}}\ar[d]^{t_{1,0}}& X_{1,-1}\ar[r]^{d_{1,-1}}\ar[d]^{t_{1,-1}}& X_{1,-2}\ar[r]^{d_{1,-2}}\ar[d]^{t_{1,-2}}
% &\cdots \\
%  & & 0\ar[r]& X_{0,0}\ar[r]^{d_{0,0}} & X_{0,-1}\ar[r]^{d_{0,-1}}& X_{0,-2}\ar[r]^{d_{0,-2}}&  \cdots
% }\]
Then $\text{lim}(X_{\bullet})$ is the chain complex
obtained by taking inverse limits in the vertical direction:
\[\text{lim}(X_{\bullet})_{i}:=
\text{lim}(\cdots\xrightarrow{t_{3,i}}X_{2,i}\xrightarrow{t_{2,i}}X_{1,i}\xrightarrow{t_{1,i}}X_{0,i}) 
\]
and the  differential  $d_{i}:\text{lim}(X_{\bullet})_{i}\ra \text{lim}(X_{\bullet})_{i-1}$ given by $\text{lim}_{n} (d_{n,i} )$.
On morphisms, the functor  $\text{lim}:\text{Tow}(\A,\I)\ra \text{Ch}(\A)$
is defined in the analogous way by taking the inverse limits in the vertical direction. 

%%%%%%%%%%%%%%%%%%%%%%%%%%%%%%%%%% %%%%%%%%%%%%%%%%%
\begin{proposition}\label{prop rqp}
%%%%%%%%%%%%%%%%%%%%%%%%%%%%%%%%%% %%%%%%%%%%%%%%%%%
The functors  $\text{\rm tow}:\text{\rm Ch}(\A)\rightleftarrows \text{\rm Tow}(\A,\I):\text{\rm lim}$
form a right Quillen pair for $\text{\rm Ch}(\A)$ with $\I$-weak equivalences as weak equivalences.
\end{proposition}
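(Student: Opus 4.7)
I verify the three conditions of Definition~\ref{def quillen}. The adjunction $\text{tow} \dashv \text{lim}$ follows from the levelwise adjunctions $\tau_n \dashv \text{in}$ together with the universal property of the inverse limit: a morphism $\text{tow}(X) \to Y_\bullet$ is a compatible family of chain maps $\tau_n(X) \to Y_n$, which corresponds to a compatible family of maps $X \to Y_n$ in $\ch(\A)$, equivalently to a single morphism $X \to \text{lim}(Y_\bullet)$. For condition (2), given an $\I$-weak equivalence $f: X \to Y$, the naturality square of the truncation morphism together with Proposition~\ref{prop keycanmorp} (which asserts the truncation morphism is an $n$-$\I$-weak equivalence) and two-out-of-three force $\tau_n(f)$ to be an $n$-$\I$-weak equivalence; since $\tau_n(X), \tau_n(Y) \in \ch(\A)_{\leq n}$, this is equivalent to being an $\I$-weak equivalence there.

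The substantial content is condition (3), that $\text{lim}$ preserves weak equivalences between fibrant towers. By Ken Brown's lemma it suffices to treat acyclic fibrations $f_\bullet: X_\bullet \to Y_\bullet$ between fibrant towers. I first construct compatible sections $\sigma_n: Y_n \to X_n$ of $f_n$ by induction on $n$, so that $t_{n+1}^X \sigma_{n+1} = \sigma_n t_{n+1}^Y$: given $\sigma_n$, the next $\sigma_{n+1}$ is obtained by solving the lifting problem of $0 \hookrightarrow Y_{n+1}$ against the acyclic $\I$-fibration $\alpha_{n+1}: X_{n+1} \to Y_{n+1} \times_{Y_n} X_n$ supplied by the tower fibration structure, which is solvable by Proposition~\ref{prop prpfbmcs}(1). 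This produces a splitting of towers $X_\bullet \cong Y_\bullet \oplus K_\bullet$ where $K_\bullet = \ker(f_\bullet)$. Under this splitting the structural maps of $K_\bullet$ split off from those of $X_\bullet$ and are themselves acyclic $\I$-fibrations in $\ch(\A)_{\leq n+1}$, since each $K_n$ is $\I$-trivial and $\I$-fibrant by Proposition~\ref{prop prpfbmcs}(3) and $\I$ is closed under retracts. Iterating Proposition~\ref{prop prpfbmcs}(3) then gives $K_N \cong K_0 \oplus L_1 \oplus \cdots \oplus L_N$ with each $L_n$ contractible and $\I$-fibrant, and passing to the limit identifies $\text{lim}(K_\bullet) \cong K_0 \oplus \prod_n L_n$. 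The product $\prod_n L_n$ is contractible via the degreewise product of contracting homotopies on the $L_n$, hence $\I$-trivial by Proposition~\ref{prop propunch}(4); combined with the $\I$-triviality of $K_0$, this shows that $\text{lim}(K_\bullet)$ is $\I$-trivial. Finally $\text{lim}(f_\bullet)$ is the projection of the split decomposition $\text{lim}(X_\bullet) \cong \text{lim}(Y_\bullet) \oplus \text{lim}(K_\bullet)$, whose kernel is $\I$-trivial, so it is an $\I$-weak equivalence by Proposition~\ref{prop propunch}(2).

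The main obstacle is globalizing the levelwise splitting of an acyclic fibration to a compatible splitting of towers: the sections provided directly by Proposition~\ref{prop prpfbmcs}(3) need not assemble into a morphism in $\text{Tow}(\A,\I)$, and the inductive lifting against $\alpha_{n+1}$ is the mechanism by which the tower fibration structure of Proposition~\ref{prop modelontowers} supplies this compatibility. Once the compatible sections are in hand, the remaining analysis of the kernel tower and the identification of its limit as the direct sum of $K_0$ with a product of contractible complexes is a direct application of the structural results on $\I$-fibrations from Proposition~\ref{prop prpfbmcs}.
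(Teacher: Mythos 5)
Your proof is correct and follows essentially the same strategy as the paper: verify the adjunction $\text{tow}\dashv\text{lim}$, deduce condition~(2) from the fact that $\tau_n$ preserves $\I$-weak equivalences, and for condition~(3) reduce via Ken Brown's lemma to an acyclic fibration, split it off its kernel tower $K_\bullet$, and show that $\text{lim}(K_\bullet)$ is $\I$-trivial. The two cosmetic differences are that the paper obtains the tower-level section in one line from the observation that all objects of $\text{Tow}(\A,\I)$ are cofibrant (so lifting against $0\hookrightarrow Y_\bullet$ applies directly), whereas you unpack exactly the inductive lifting construction from the proof of Proposition~\ref{prop modelontowers}; and the paper concludes contractibility of $\text{lim}(K_\bullet)$ by an explicit rearrangement of the disc-complex decompositions $\bigoplus_{i\le n} D_i(W_{n,i})$ into $\bigoplus_i D_i(\prod_n W_{n,i})$, whereas you invoke the degreewise product of contracting homotopies — both yield the same conclusion.
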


\begin{proof}
We need to verify that the three conditions in Definition~\ref{def quillen} are fulfilled.   

\noindent
(1) We must show that the tower functor $\text{\rm tow}:\text{\rm Ch}(\A)\rightarrow \text{\rm Tow}(\A,\I)$ is left adjoint to the limit functor 
$\text{\rm lim}:\text{\rm Tow}(\A,\I)\ra\text{\rm Ch}(\A)$.

\noindent
Let $Y$ be a chain complex in $\text{Ch}(\A)$ and $X_{\bullet}$ be an object in  $\text{\rm Tow}(\A,\I)$
given by the tower $(\cdots X_2\xrightarrow{t_2}X_1\xrightarrow{t_1}X_0)$ of morphisms in $\text{Ch}(\A)$
with $X_n\in \text{Ch}(\A)_{n \geq}$.
 Consider a morphism of chain complexes
$f:Y\ra \text{lim}(X_{\bullet})$. Since $\text{lim}(X_{\bullet})$ is the  inverse limit of  the tower $X_{\bullet}$,
the morphism $f$ corresponds to a sequence of morphisms $\{f_{n}:Y\ra X_n\}_{n\geq 0}$ 
which are compatible with the structural morphisms~$t_n$.
%for which the following diagram commutes:
%\[
%\xymatrix{
%\cdots\ar[r]^{\text{id}} & Y\ar[r]^{\text{id}}\ar[d]^{f_{2}} & Y\ar[r]^{\text{id}}\ar[d]^{f_{1}} & Y
%\ar[d]^{f_{0}}\\
% \cdots\ar[r]^{t_3} & X_2\ar[r]^{t_2} & X_1\ar[r]^{t_1} & X_0
%}
%\]

Since the chain complex $X_n$ belongs to $\text{Ch}(\A)_{\leq n}$, the morphism $f_{n}:Y\ra X_n$
can be expressed in a unique way as a composition $Y\ra\tau_n(Y)\ra X_n$ where $Y\ra\tau_n(Y)$
is the truncation morphism.
The sequence  $\{\tau_n(Y)\ra X_n\}_{n\geq 0}$ describes a morphism $\text{tow}(Y)\ra X_{\bullet}$ in
$\text{\rm Tow}(\A,\I)$.
It is straightforward to check that this procedure defines a natural bijection from the set of morphisms between 
$Y$ and $\text{lim}(X_{\bullet})$ in  $\text{Ch}(\A)$ onto  the set of morphisms between
$\text{tow}(Y)$ and $X_{\bullet}$ in $\text{\rm Tow}(\A,\I)$.

\medskip
Condition (2) is a consequence of Proposition~\ref{prop qpbcomplex}: If $f:X\ra Y$ is an $\I$-weak equivalence in 
$\text{\rm Ch}(\A)$, then $\text{\rm tow}(f)$ is a weak equivalence in $\text{\rm Tow}(\A,\I)$.

\medskip

To prepare the proof of the third and last condition we show the following.

\medskip

\noindent
($2.5$) Let $K_{\bullet}\in \text{\rm Tow}(\A,\I)$ be a  fibrant object such that
$K_{n}$ is $\I$-trivial in $\text{\rm Ch}(\A)_{\leq n}$ for any $n\geq 0$. 
Then  $ \text{\rm lim}(K_{\bullet})$ is $\I$-trivial in $\text{\rm Ch}(\A)$.

Since  $K_{\bullet}$ is fibrant in $\text{\rm Tow}(\A,\I)$, 
$K_0$ is $\I$-fibrant in $\text{\rm Ch}(\A)_{\leq 0}$ and, for $n> 0$, the structure morphism
$t_{n}:K_{n}\ra K_{n-1}$ is an $\I$-fibration in $\text{\rm Ch}(\A)_{\leq n}$. As all $K_{n}$'s are assumed 
to be $\I$-trivial, the $\I$-fibrations $t_{n}$ are also $\I$-weak equivalences. It then follows 
from Proposition~\ref{prop prpfbmcs}.(2) that
$K_{\bullet}$ is isomorphic to the following tower of chain complexes:
\[
\cdots\ra M_{0}\oplus M_1\oplus M_2\oplus M_3\xrightarrow{\text{pr}}
M_{0}\oplus M_1\oplus M_2\xrightarrow{\text{pr}}M_{0}\oplus M_1\xrightarrow{\text{pr}}M_{0}
\]
where $M_0:=K_0$ and, for $n>0$, $M_n:=\text{Ker} \, t_n$. 
Thus $\text{lim}( K_{\bullet})\cong \prod_{n\geq  0} M_n$.  

Because $M_n$ is $\I$-trivial and $\I$-fibrant in  $\text{Ch}(\A)_{\leq n}$ Proposition~\ref{prop prpfbmcs}.(4) 
implies that  $M_n$ is isomorphic to $\bigoplus_{i\leq n}D_{i}(W_{n,i})$ for some sequence  $\{W_{n,i}\}_{i\leq n}$ of objects in $\I$. 
Substituting this to the above product describing $\text{lim}( K_{\bullet})$ we get the following  isomorphisms:
\[
\text{lim}( K_{\bullet})\cong \prod_{n\geq  0} M_n=\prod_{n\geq  0}\bigoplus_{i\leq n}D_{i}(W_{n,i})=
\prod_{n\geq  0}\prod_{i\leq n}D_{i}(W_{n,i})\cong
\prod_{i}\prod_{i\leq n}D_{i}(W_{n,i})\cong
\]
\[
\cong \prod_{i}D_{i}(\prod_{i\leq n}W_{n,i})\cong
\bigoplus_{i}D_{i}(\prod_{i\leq n}W_{n,i})
\]
It is now clear that $\text{lim}( K_{\bullet})$ is $\I$-trivial. In fact $\text{lim}( K_{\bullet})$ is even homotopy equivalent to the zero chain complex.
\medskip

\noindent
(3)  Let $f_{\bullet}:X_{\bullet}\ra Y_{\bullet}$ be a  weak equivalence in $\text{\rm Tow}(\A,\I)$ between fibrant objects. 
Then $\text{\rm lim}(f_{\bullet})$ is an $\I$-weak equivalence in $\text{\rm Ch}(\A)$.

\noindent
By Ken Brown's Lemma (see~\cite[Factorization Lemma]{MR0341469}, or \cite[Lemma~9.9]{MR1361887} for a more explicit treatment), it is enough to show the statement  under the additional 
assumption that   $f_{\bullet}:X_{\bullet}\ra Y_{\bullet}$ is an $\I$-fibration. 
Let  us define $K_{\bullet}$ to be an object in $\text{\rm Tow}(\A,\I)$ given by the sequence
$\{\text{Ker}(f_n)\}_{n\geq 0}$ with the structure morphisms being the restrictions of the structure morphisms of $X_{\bullet}$.   Since all objects in
$\text{\rm Ch}(\A)_{\leq n}$ are $\I$-cofibrant, then so are all objects in  $\text{\rm Tow}(\A,\I)$.
It follows that there is $s_{\bullet}:Y_{\bullet}\ra X_{\bullet}$ for which $f_{\bullet}s_{\bullet}=\text{id}$.
By applying the functor $\text{lim}$, we then get the following  split  exact sequence in $\text{\rm Ch}(\A)$:
\[
0\ra\text{lim}(K_{\bullet})\ra \text{lim}(X_{\bullet})\xrightarrow{\text{lim}(f_{\bullet})}\text{lim}(Y_{\bullet})\ra 0
\]
Since  $X_{\bullet}$  is isomorphic to $K_{\bullet}\oplus Y_{\bullet}$, as a retract of a fibrant object $X_{\bullet}$, the object 
$K_{\bullet}$ is then  also fibrant. Moreover, as $f_n$ is an $\I$-equivalence in $\text{\rm Ch}(\A)_{n \geq}$,  
the complex $K_n$ is $\I$-trivial in $\text{\rm Ch}(\A)_{\leq n}$ for any $n\geq 0$. 
We can then apply statement (2.5) to 
conclude that $\text{lim}(K_{\bullet})$ is an $\I$-trivial chain complex in $\text{\rm Ch}(\A)$.
The morphism $\text{lim}(f_{\bullet}):\text{lim}(X_{\bullet})\ra \text{lim}(Y_{\bullet})$ must be then
an $\I$-weak equivalence (which is in fact a homotopy equivalence).
\end{proof}

%%%%%%%%%%%%%%%%%%%%%%%%%%%%%%%%%% %%%%%%%%%%%%%%%%%
\begin{definition}\label{def standard}
%%%%%%%%%%%%%%%%%%%%%%%%%%%%%%%%%% %%%%%%%%%%%%%%%%%
{\rm The  right Quillen pair $\text{\rm tow}:\text{\rm Ch}(\A)\rightleftarrows \text{\rm Tow}(\A,\I):\text{\rm lim}$ is called the 
{\em standard Quillen pair} for $\text{\rm Ch}(\A)$.}
\end{definition}

%%%%%%%%%%%%%%%%%%%%%%%%%%%%%%%%%% %%%%%%%%%%%%%%%%%
\subsection{Complexes approximated by the standard Quillen pair}
%%%%%%%%%%%%%%%%%%%%%%%%%%%%%%%%%% %%%%%%%%%%%%%%%%%
The key task  now is to find out which chain complexes are approximated by the standard Quillen pair, i.e.~we
need to understand which complexes $X$  have the following property:
\begin{itemize}
\item If $f:\text{tow}(X)\ra Y_{\bullet}$ is a weak equivalence in $\text{Tow}(\A,\I)$, with fibrant target $Y_{\bullet}$,  then
its adjoint $g:X\ra \text{lim}(Y_{\bullet})$ is an $\I$-weak equivalence in $\text{\rm Ch}(\A)$.
\end{itemize}
Recall that if, for a chain complex $X$,  the above  statement is true for some fibrant $Y_{\bullet}$, then it is true for any other.
%We end this section with a few observations and will provide concrete answers in the final sections of this article.

Assume that $f:\text{tow}(X)\ra Y_{\bullet}$ is a weak equivalence in $\text{Tow}(\A,\I)$ (for now even without the fibrancy 
assumption on $Y_{\bullet}$) and let $g:X\ra \text{lim}(Y_{\bullet})$ be its adjoint. 
Fix an integer $k\geq 0$ and  consider the following commutative diagram in $\text{Ch}(\A)$:
\[
\xymatrix{
X\ar[r]^{g}\ar[d] & \text{lim}(Y_{\bullet})\ar[d]\\
\tau_k(X)\ar[r]^{f_{k}}& Y_k
}
\]
where $\text{lim}(Y_{\bullet})\ra Y_k$ is the projection and $X\ra\tau_k(X) $ is the truncation morphism, which, 
according to Proposition~\ref{prop keycanmorp}, is an $k$-$\I$-weak equivalence.  By assumption $f_k$ is an $\I$-weak equivalence so that
the composite of $g$ with the projection $\text{lim}(Y_{\bullet})\ra Y_k$ is a $k$-$\I$-weak equivalence. 

As a consequence, the error in approximating a complex is always of a somewhat tame nature. For any $W\in \I$, $\A(g,W):\A(\text{\rm lim}(Y_{\bullet}),W)\ra\A(X,W)$ 
must induce a split epimorphism in cohomology in degrees $i\geq -k$ and this happens for all $k$'s.

%%%%%%%%%%%%%%%%%%%%%%%%%%%%%%%%%% %%%%%%%%%%%%%%%%%
\begin{proposition}\label{prop defaultissplit}
%%%%%%%%%%%%%%%%%%%%%%%%%%%%%%%%%% %%%%%%%%%%%%%%%%%
Let  $f:\text{\rm tow}(X)\ra Y_{\bullet}$ be a weak equivalence in $\text{\rm Tow}(\A,\I)$
and $g:X\ra \text{\rm lim}(Y_{\bullet})$ be its adjoint. Then,   for any $W\in \I$,
$\A(g,W)$ induces a split epimorphism on homology. \hfill{\qed}
\end{proposition}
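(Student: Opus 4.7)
The plan is to leverage the commutative diagram and observation already laid out in the paragraph immediately preceding the statement of the proposition, and then translate the conclusion ``composite of $g$ with the projection is a $k$-$\I$-weak equivalence'' into a statement about split epimorphisms in cohomology.

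First, I would fix an integer $k\geq 0$ and denote by $\text{pr}_k:\text{lim}(Y_\bullet)\to Y_k$ the canonical projection, so that $h_k := \text{pr}_k\circ g\colon X\to Y_k$ is (as noted in the paragraph preceding the statement) a $k$-$\I$-weak equivalence. Applying the contravariant functor $\A(-,W)$ for a fixed $W\in\I$ gives a factorization of cochain maps
\[
\A(Y_k,W)\xrightarrow{\A(\text{pr}_k,W)}\A(\text{lim}(Y_\bullet),W)\xrightarrow{\A(g,W)}\A(X,W)
\]
whose composite equals $\A(h_k,W)$. By Definition~\ref{def Imono} of a $k$-$\I$-weak equivalence, $\A(h_k,W)$ induces an isomorphism on cohomology in all degrees $n\geq -k$.

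Second, I would invoke the elementary algebraic fact that if a composition of maps of abelian groups is an isomorphism, then the second map must be a split epimorphism (with section given by composing the first map with the inverse of the composite). Applied here in each cohomological degree $n\geq -k$, this shows that
\[
H^n(\A(g,W))\colon H^n(\A(\text{lim}(Y_\bullet),W))\longrightarrow H^n(\A(X,W))
\]
is a split epimorphism, with an explicit section built from $H^n(\A(\text{pr}_k,W))$ and the inverse of $H^n(\A(h_k,W))$.

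Finally, since $k\geq 0$ was arbitrary, the conclusion holds in every cohomological degree $n\in\Z$, and the proposition is proved. The main ``obstacle'' here is really only bookkeeping: almost all the substance is already in the paragraph preceding the statement, so the only thing to do is to carefully package the factorization $\A(h_k,W)=\A(g,W)\circ\A(\text{pr}_k,W)$ and pass to cohomology, observing that splitness in each degree follows automatically from the composite being invertible.
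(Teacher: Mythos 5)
Your proof is correct and follows essentially the same route as the paper, which in fact offers no separate proof beyond the paragraph you cite: the paper simply asserts that the factorization through $Y_k$ yields a split epimorphism in cohomology in degrees $\geq -k$, and that $k$ is arbitrary. You have merely made explicit the elementary fact that when a composite is an isomorphism the last map is a split epimorphism, which is the right way to spell out the paper's terse conclusion.
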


\section{A relative version of Roos' axiom AB4*-$n$}
\label{sec ab4*rel}
%%%%%%%%%%%%%%%%%%%%%%%%%%%%%%%%%% %%%%%%%%%%%%%%%%%
%%%%%%%%%%%%%%%%%%%%%%%%%%%%%%%%%% %%%%%%%%%%%%%%%%%
In this section we show that under Roos' axiom AB4*-$n$, see \cite{MR2197371}, every complex is approximated by the standard Quillen pair.
In fact we introduce a relative version of this axiom and extend this result to provide a construction of relative resolutions for unbounded chain
complexes via towers of truncations.

%We will then show by explicit computations, that in the absence of such an axiom the result fails, even for the category for complexes over a 
%commutative noetherian ring and a subclass of the class of injective objects.  

%%%%%%%%%%%%%%%%%%%%%%%%%%%%%%%%%% %%%%%%%%%%%%%%%%%
\begin{definition}\label{def AB4-nrel}
%%%%%%%%%%%%%%%%%%%%%%%%%%%%%%%%%% %%%%%%%%%%%%%%%%%
Let $\mathcal{A}$ be an abelian category, $\mathcal{I}$ an injective class and $n\geq 0$ an integer. 
We say that the category $\mathcal{A}$ satisfies axiom AB4*-${\mathcal{I}}$-$n$ if and only if, 
for any countable family of objects $(A_{j})_{j \in J}$ and any choice of relative resolutions  $A_j \rightarrow I_j$, 
with $I_j \in \text{\rm Ch}(\A)_{\leq 0}$, the product complex $\prod_{j \in J} I_j$ is $(-n-1)$-$\mathcal{I}$-connected. 
\end{definition}

Roos' axiom AB4*-$n$ is stated in terms of the derived functors of products, namely that all infinite derived product functors
$\Pi^{(i)} C_\alpha$ vanish for $i>n$. Our axiom involves countable products because we only need towers indexed by the natural
integers. Except for this our axioms are closely related.

%%%%%%%%%%%%%%%%%%%%%%%%%%%%%%%%%% %%%%%%%%%%%%%%%%%
\begin{proposition}\label{prop Roos}
%%%%%%%%%%%%%%%%%%%%%%%%%%%%%%%%%% %%%%%%%%%%%%%%%%%
An abelian category $\mathcal{A}$ satisfies axiom {\rm AB4*}-${\mathcal{I}}$-$n$ for the class $\I$ of all injective objects
if and only if all derived countable product functors $\prod^{(i)} C_\alpha$ vanish for $i>n$.
\end{proposition}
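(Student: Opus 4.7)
The plan is to identify, when $\I$ is the class of all injective objects, the cohomology of $\A(\prod_{j} I_j, W)$ (with $W$ injective) with the $\A(-,W)$-image of the classical derived products of $(A_j)$, and then use that having enough injectives detects zero objects.

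First I would set up the dictionary: when $\I$ is the class of all injectives, the relative resolutions $A_j\to I_j$ in $\text{Ch}(\A)_{\leq 0}$ are precisely classical injective resolutions. By the construction of the derived product functors, $H_0(\prod_j I_j)=\prod_j A_j=\prod^{(0)}A_j$ and, for each $i\geq 1$, $H_{-i}(\prod_j I_j)=\prod^{(i)}A_j$ (in homological grading; since $\prod_j I_j$ lives in $\text{Ch}(\A)_{\leq 0}$, its homology vanishes in strictly positive degrees).

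Next I would translate the connectivity condition. By Definition~\ref{def Itrivial}, $\prod_j I_j$ is $(-n-1)$-$\I$-connected iff, for every $W\in\I$, the complex $\A(\prod_j I_j, W)$ has trivial cohomology in cohomological degrees $\geq n+1$. Since $W$ is injective, the contravariant functor $\A(-,W)$ is exact, and applying it to the short exact sequences of cycles and boundaries in a chain complex $X$ gives the standard identification $H^{i}(\A(X,W))\cong \A(H_{-i}(X),W)$. Applied to $X=\prod_j I_j$, this yields
\[
H^{i}\bigl(\A(\textstyle\prod_j I_j,W)\bigr)\cong \A\bigl(\textstyle\prod^{(i)}A_j,\,W\bigr)
\quad\text{for all } i\geq 0.
\]

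Finally I would close the loop. Axiom {\rm AB4*}-$\I$-$n$ becomes the statement that $\A(\prod^{(i)}A_j,W)=0$ for all $i\geq n+1$, every injective $W$, and every countable family $(A_j)$. Because $\A$ has enough injectives, an object $M$ vanishes if and only if $\A(M,W)=0$ for every injective $W$ (embed $M$ into its injective envelope to produce a nonzero map should $M$ be nonzero). Hence the axiom is equivalent to $\prod^{(i)}A_j=0$ for all $i>n$ and every countable family, which is the desired vanishing of the derived countable product functors.

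The only mildly delicate step is the identity $H^{i}(\A(X,W))\cong \A(H_{-i}(X),W)$ for $W$ injective; it is standard but worth making explicit from the contravariant exactness of $\A(-,W)$. The independence of the axiom from the choice of resolution is automatic, as any two classical injective resolutions are chain-homotopy equivalent and so, therefore, are their countable products.
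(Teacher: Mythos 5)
Your proof is correct and follows essentially the same route as the paper's, just spelled out in more detail: the paper compresses the argument into the observation that $\I$-connectivity coincides with ordinary connectivity when $\I$ is all injectives, whereas you unpack that equivalence explicitly via $H^i(\A(X,W))\cong\A(H_{-i}(X),W)$ and the fact that enough injectives detect vanishing.
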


\begin{proof}
Given a countable family of objects $(A_{j})_{j \in J}$, let us choose injective resolutions  $A_j \rightarrow I_j$, 
and form the product complex $\prod_{j \in J}\mathcal{I}(A_j)_\ast$. This complex is $(-n-1)$-$\mathcal{I}$-connected
if and only if it is $(-n-1)$-connected since we deal here with the class of all injectives. The higher homology of this complex 
computes the derived functors of the countable product $\prod^{(i)} A_j$. They vanish for $i>n$ precisely when the
complex is $(-n-1)$-connected.
\end{proof}

%%%%%%%%%%%%%%%%%%%%%%%%%%%%%%%%%% %%%%%%%%%%%%%%%%%
\begin{proposition}\label{thm approxunderab4}
%%%%%%%%%%%%%%%%%%%%%%%%%%%%%%%%%% %%%%%%%%%%%%%%%%%
Assume that the abelian category $\mathcal{A}$ satisfies axiom {\rm AB4*}-$\mathcal{I}$-$n$ for an injective class $\mathcal{I}$.
Let $K_\bullet$ be a fibrant tower in $\text{Tow}(\A,\I)$ such that, for any $n$, $K_n$ is $k$-$\I$-connected. 
Then the complex $\text{lim} K_\bullet$ is $(k-n-1)$-$\I$-connected.
\end{proposition}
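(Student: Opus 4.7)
My plan is to combine a Milnor-type short exact sequence for the inverse limit with axiom AB4*-$\I$-$n$, applied to a product of shifted relative resolutions. Throughout I will use $m$ for the tower index of $K_\bullet$ to avoid a clash with the $n$ from the axiom.

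First, fibrancy of $K_\bullet$ together with a straightforward induction on $m$ (using that compositions of $\I$-fibrations are $\I$-fibrations and that kernels of $\I$-fibrations are $\I$-fibrant, as in Proposition~\ref{prop prpfbmcs}) shows that each $K_m$ is $\I$-fibrant in $\ch(\A)_{\leq m}$ and each structural map $t_{m+1}\colon K_{m+1}\twoheadrightarrow K_m$ is a degreewise split epimorphism. Assembling these splittings yields a short exact sequence of chain complexes
\[
0 \to \lim K_\bullet \to \prod_{m\geq 0} K_m \xrightarrow{\,1-t\,} \prod_{m\geq 0} K_m \to 0,
\]
exact and degreewise split, with AB4* used to ensure that $1-t$ is surjective in each degree. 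Applying $\A(-,W)$ for any $W\in\I$ therefore preserves the sequence, and the long exact sequence in cohomology reduces the problem to showing that $\prod_m K_m$ is $(k-n-1)$-$\I$-connected.

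Second, I replace each $K_m$, up to $\I$-weak equivalence, by the shift $\Sigma^{k+1} I^{(m)}$ of a relative injective resolution $I^{(m)}\in\ch(\A)_{\leq 0}$. Since $K_m$ is $\I$-fibrant and $k$-$\I$-connected, its $\A(-,W)$-cohomology is concentrated in homological degrees above $k$ (and $K_m$ is $\I$-trivial whenever $m\leq k$, so $I^{(m)}$ may be chosen to be zero there); a suitable smart truncation followed by a shift produces a $(-1)$-$\I$-connected $\I$-fibrant complex which plays the role of such a resolution. Between $\I$-fibrant objects $\I$-weak equivalences are chain homotopy equivalences (Proposition~\ref{prop prpfbmcs}.(4)), and chain homotopy equivalences are preserved under arbitrary products; since $\Sigma^{k+1}$ also commutes with products I obtain
\[
\prod_m K_m \;\simeq_{\I}\; \Sigma^{k+1}\prod_m I^{(m)}.
\]
Axiom AB4*-$\I$-$n$ then gives that $\prod_m I^{(m)}$ is $(-n-1)$-$\I$-connected, and shifting by $k+1$ translates this into $(k-n)$-$\I$-connectivity for $\prod_m K_m$, hence a fortiori the required $(k-n-1)$-$\I$-connectivity.

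The principal obstacle will be the second step: producing the resolution $I^{(m)}$ from $K_m$ cleanly and arranging the comparison $K_m\simeq_{\I}\Sigma^{k+1} I^{(m)}$ as an honest chain homotopy equivalence between $\I$-fibrant objects of $\ch(\A)_{\leq m}$, functorially enough in $m$ to survive an infinite product. The delicate point is that the $K_m$ live in different truncation categories $\ch(\A)_{\leq m}$, so the truncations and shifts must be matched coherently across the whole tower; this is where the explicit fibrant replacement of Theorem~\ref{thm mcnpch} is needed.
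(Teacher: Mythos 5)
Your Milnor‐sequence step is exactly the paper's first move and is fine. The gap is in the second step: the claimed replacement $K_m \simeq_\I \Sigma^{k+1}I^{(m)}$ for a single relative resolution $I^{(m)}$ does not exist as an honest $\I$-weak equivalence, and the paper deliberately avoids asserting one. A $k$-$\I$-connected, $\I$-fibrant $K_m\in\ch(\A)_{\leq m}$ has $\A(-,W)$-cohomology that can be spread over all degrees $\leq -k-1$, whereas $\Sigma^{k+1}I^{(m)}$ for a resolution $I^{(m)}$ of an object has cohomology concentrated in the single degree $-k-1$; for $m > k+1$ these two are simply not $\I$-equivalent. The ``smart truncation then shift'' you invoke does not repair this. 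The map $K_m\to\tau_{k+1}(K_m)$ is only a $(k+1)$-$\I$-weak equivalence, not an $\I$-weak equivalence, and $\tau_{k+1}(K_m)$ is not $\I$-fibrant because its top term $\mathrm{coker}(d_{k+2})$ need not lie in $\I$; so Proposition~\ref{prop prpfbmcs}.(4) (equivalences between fibrants are homotopy equivalences) is not available. Brutally truncating instead keeps fibrancy, but the quotient $K_m\twoheadrightarrow (K_m)_{\leq k+1}$ is also not an $\I$-weak equivalence (its kernel $(K_m)_{\geq k+2}$ is not $\I$-trivial). Finally, even granting a $(-1)$-$\I$-connected $\I$-fibrant complex $J_m$, it need not be of the form ``relative resolution $A_m\to I_m$'' of an object $A_m$, which is the only input axiom AB4*-$\I$-$n$ accepts, so you cannot simply hand $\prod_m J_m$ to the axiom.

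What the paper does instead is the correct workaround. It never claims $K_m$ is equivalent to a shifted resolution. It observes that $K_m$ being $k$-$\I$-connected and fibrant means the map $\mathrm{coker}(d_{k+1})\to (K_m)_{k-1}$ is an $\I$-monomorphism and the tail $(K_m)_{k-1}\to (K_m)_{k-2}\to\cdots$ (suitably shifted into $\ch(\A)_{\leq 0}$) \emph{is} a genuine relative resolution of $A_m := \mathrm{coker}(d_{k+1})$, with $A_m$ itself \emph{not} part of the resolution. Then one argues degree by degree: in the range of cohomological degrees where the connectivity claim lives, the complex $\A(\prod_m K_m,W)$ literally coincides (terms and differentials) with a shift of $\A(\prod_m I_m,W)$, so the vanishing provided by AB4*-$\I$-$n$ transfers. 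This degree-level comparison is the content your proposal is missing; without it, you cannot invoke the axiom, because you have neither honest resolutions nor honest homotopy equivalences to pass through the infinite product.
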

%%%%%%%%%%%%%%%%%

\begin{proof}
The kernel of  the ``one minus shift'' map $1-t: \prod K_n \rightarrow \prod K_n$, defined
by $(1-t)(x_n) = (x_n - t_{n+1}(x_{n+1}))$, is $\text{lim} K_\bullet$. Since $K_\bullet$ is fibrant, 
the vertical structure maps $t_n$ are degreewise split epimorphisms and we may choose, 
in each degree, a splitting $\sigma: K_n \rightarrow K_{n+1}$. We define then maps
$\prod K_n \rightarrow K_{m+1}$ for all $m \geq 0$ by the formula
\[
(x_0, x_1,x_2, \cdots) \mapsto - \sum_{j=0}^{m}\sigma^{m+1-j}(x_{j})
\]
which assemble to form a degreewise splitting $s: \prod K_n \rightarrow \prod K_n$ of $1-t$.
This proves first that the sequence
\[
\xymatrix{
0 \ar[r] & \text{lim} K_\bullet \ar[r] & \Pi_nK_n \ar^{1-t}[r] & \Pi_n K_\bullet \ar[r] & 0\\
}
\]
is exact, and second, that applying $\mathcal{A}(-,W)$ for any $W \in \I$ to the previous sequence gives an exact sequence of 
complexes, which is also split in each degree:
\[
\xymatrix{
0 & \mathcal{A}(\text{lim} K_\bullet,W) \ar[l] & \mathcal{A}(\prod_n K_n,W) \ar[l] & \mathcal{A}(\prod_n K_n,W) \ar^{1-t}[l] & 0 \ar[l] \\
}
\]
%\[
%\xymatrix{
%\Pi_nK^n \ar[r] \ar[d]&  \Pi_n K^{n} \ar[d] \\
%K_1 \times K_2 \cdots \times K_m \ar[r] &   K_{m+1} \\
%(x_0, x_1,x_2, \cdots,x_m) \ar@{|->}[r]  &    (-1)^m \Sigma_{j=1}^{m}\sigma^{j}(b_{m-j+1})
%}
%\]
% = (-1)^m (\sigma^{m}(b_1) +  \sigma^{m-1}(b_2)  + \cdots + \sigma(b_m)
%gives a degree-wise splitting   exact sequence defining the inverse limit:
Therefore, any bound on the connectivity of $\mathcal{A}(\prod_n K_n,W)$ is a bound on the connectivity of $\mathcal{A}(\text{lim} K_\bullet,W)$. 
We will conclude the proof by showing that $k-n-1$ is such a bound.
Observe now that, because each complex $K_m$ is $k$-$\I$-connected, the following sequence is exact:
\[
\xymatrix{
\mathcal{A}((K_{m})_{k+1},W)  & \mathcal{A}((K_{m})_{k},W)\ar[l]  & \mathcal{A}((K_{m})_{k-1},W) \ar[l] & \ar[l] \dots
}
\]
Left exactness of the functor $\mathcal{A}(-,W)$ shows that the kernel of the leftmost arrow above is 
$\mathcal{A}((K_{m})_{k}/(K_{m})_{k+1},W)$. In particular, as the complex $K_m$ is $k$-connected and fibrant, 
the truncated complex $\tau_k(K_m)$ yields a (shifted) relative $\mathcal{I}$-resolution of the object $(K_{m})_{k}/(K_{m})_{k+1}$.

Hence, in computing in degree $q <k-n-1$ the cohomology of the complex $\mathcal{A}(\prod K_n,W)$ we are computing, 
under the axiom AB4*-$\I$-$n$, the cohomology in degree $<-n$ of an acyclic complex. 
\end{proof}

And finally we get our expected approximation:

%%%%%%%%%%%%%%%%%%%%%%%%%%%%%%%%%%
\begin{theorem}\label{thm ab4igivesapprox}
%%%%%%%%%%%%%%%%%%%%%%%%%%%%%%%%%%
Let $\I$ be an injective class and assume that the abelian category $\mathcal{A}$ satisfies axiom {\rm AB4*}-$\mathcal{I}$-$n$.
Then the standard Quillen pair
\[
\text{\rm tow}:\text{\rm Ch}(\A)\rightleftarrows \text{\rm Tow}(\A,\I):\text{\rm lim}
\]
is a model approximation.
\end{theorem}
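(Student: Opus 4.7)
Since the standard Quillen pair has already been shown to be a right Quillen pair in Proposition~\ref{prop rqp}, I only need to verify condition (4) of Definition~\ref{def approx}: for every unbounded complex $X$, given any weak equivalence $f_{\bullet}: \text{tow}(X) \to Y_{\bullet}$ in $\text{Tow}(\A,\I)$ with fibrant target, the adjoint $g: X \to \text{lim}(Y_{\bullet})$ is an $\I$-weak equivalence. The strategy is to compare $g$ with the $k$-th level of $f_{\bullet}$ via the commutative square
\[
\xymatrix{
X \ar[r]^(.4){g} \ar[d]_{t_k} & \text{lim}(Y_{\bullet}) \ar[d]^{p_k} \\
\tau_k(X) \ar[r]^{f_k} & Y_k
}
\]
where $t_k$ is a $k$-$\I$-weak equivalence by Proposition~\ref{prop keycanmorp} and $f_k$ is an $\I$-weak equivalence by hypothesis; the goal is to show that, for $k$ large, the projection $p_k$ is also a quasi-isomorphism after applying $\A(-,W)$ in degrees that cover any prescribed range.

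The key step is analysing the tower of kernels $K_{\bullet}$, defined by $K_m = \ker(Y_m \to Y_k)$ for $m \geq k$ and $K_m = 0$ otherwise, whose limit is $\ker(p_k)$. I will first check that $K_{\bullet}$ is a fibrant tower (kernels of $\I$-fibrations between $\I$-fibrant objects are $\I$-fibrant, and the structure maps, obtained by restricting those of $Y_{\bullet}$, inherit fibrancy). Next, by two-out-of-three applied to the tower morphism $f_{\bullet}$, each structure composite $Y_m \to Y_k$ is itself a $k$-$\I$-weak equivalence; thus the degreewise split short exact sequence $0 \to K_m \to Y_m \to Y_k \to 0$ becomes, upon applying $\A(-,W)$, a short exact sequence of cochain complexes whose associated long exact sequence forces $H^i(\A(K_m,W)) = 0$ for all $i \geq -k$, i.e., each $K_m$ is $k$-$\I$-connected.

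At this point the AB4*-$\I$-$n$ hypothesis enters through Proposition~\ref{thm approxunderab4}: applied to the fibrant tower $K_{\bullet}$ whose levels are $k$-$\I$-connected, it yields that $\text{lim}(K_{\bullet})$ is $(k-n-1)$-$\I$-connected. Feeding this into the degreewise split sequence $0 \to \text{lim}(K_{\bullet}) \to \text{lim}(Y_{\bullet}) \to Y_k \to 0$ and passing to the long exact sequence in cohomology of $\A(-,W)$, I conclude that $H^i(\A(p_k,W))$ is an isomorphism for $i \geq n+2-k$. Combining this with the fact that $H^i(\A(f_k t_k, W))$ is an isomorphism for $i \geq -k$ via the commuting square above, the identity $\A(g,W) \circ \A(p_k,W) = \A(t_k,W) \circ \A(f_k,W)$ yields that $H^i(\A(g,W))$ is an isomorphism whenever $i \geq \max(-k, n+2-k)$. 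Since $k$ is arbitrary, any fixed degree $i$ is eventually covered, so $g$ induces an isomorphism on $H^i(\A(-,W))$ in every degree and for every $W \in \I$, that is, $g$ is an $\I$-weak equivalence.

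The routine parts are the fibrancy and long-exact-sequence bookkeeping; the substantive obstacle is the connectivity estimate on $\text{lim}(K_{\bullet})$, which is exactly what the relative AB4* axiom was designed to provide via Proposition~\ref{thm approxunderab4}. Without it the vanishing range of $\A(\text{lim}(K_{\bullet}),W)$ would not improve with $k$, and the split-epimorphism phenomenon observed in Proposition~\ref{prop defaultissplit} would remain a genuine error term rather than collapsing to an isomorphism.
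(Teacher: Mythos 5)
Your proposal is correct and follows essentially the same route as the paper's proof: it compares $g$ with $f_k$ via the same square, defines the same kernel tower $K_\bullet$ with $K_m=\ker(Y_m\to Y_k)$, uses two-out-of-three to see that $Y_m\to Y_k$ is a $k$-$\I$-weak equivalence and hence each $K_m$ is $k$-$\I$-connected, invokes Proposition~\ref{thm approxunderab4} to get the $(k-n-1)$-$\I$-connectivity of $\lim K_\bullet$, and lets $k\to\infty$. The only cosmetic difference is that the paper phrases the conclusion as $g$ being a $(k-n-1)$-$\I$-weak equivalence and cites Proposition~\ref{prop prpfbmcs}.(2) for the connectivity of $K_m$, while you run the long-exact-sequence bookkeeping directly.
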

%%%%%%%%%%%%%%%%%%%%%%%%%%%%%%%%%%
\begin{proof}
In view of Proposition~\ref{prop rqp} it remains to show that to any fibrant replacement $f:\text{tow}(X) \rightarrow Y_\bullet$ 
corresponds an adjoint $X \rightarrow \textrm{lim} Y_\bullet$ that is an $\I$-weak equivalence.
Let $\{t_{n+1}:Y_{n+1}\ra Y_{n}\}_{n\geq 0}$ be  the structure morphisms of $Y_{\bullet}$ and,
for any $n\geq  k$, let  $c_{n-k}:{Y_{n}}\ra Y_k$ denote the composite
\[
\xymatrix{
Y_{n}\ar[r]^{t_{n-k}} %\ar@/_4ex/[rrr]_{c_{n-k}}
&Y_{n-1}\ar[r]^{t_{n-1}} &\cdots\ar[r]^{t_{k+1}} &Y_{k}
}
\]
These morphisms fit into the following commutative square in $\text{Ch}(A)_{\leq n}$ where the top horizontal morphism is 
a truncation morphism:
\[
\xymatrix{
\tau_{n}(X)\ar[r]\ar[d]_{f_n} & \tau_k(X)\ar[d]^{f_k}\\
Y_n\ar[r]^{c_{n-k}} & Y_k
}
\]
By assumption, $f_n$ and $f_k$ are $\I$-weak equivalences, and the top horizontal arrow is a $k$-$\I$-weak equivalence
according to Proposition~\ref{prop keycanmorp}. It follows that $c_{n-k}$ is a $k$-$\I$-weak equivalence.
Fibrancy of $Y_{\bullet}$ implies that $c_{n-k}$ is also an $\I$-fibration in 
$\text{Ch}(A)_{\leq n}$. In particular, for $n\geq k$, $K_{n}:=\text{ker}(c_{n-k}:Y_{n}\ra Y_{k})$ is  $k$-$\I$-connected 
(see Proposition~\ref{prop prpfbmcs}.(2)). Set $K_{n}:=0$ for $n<k$, and 
define  $t_{n+1}:K_{n+1}\ra K_{n}$ to be the restriction of the structure morphism $t_{n+1}:Y_{n+1}\ra Y_{n}$, if $n\geq k$, and  the zero morphism, if $n<k$.
In this way we have defined a fibrant object  $K_{\bullet}$ in $\text{Tow}(\A,\I)$. 
We have moreover a degreewise  split exact sequence  in $\ch(\A)$:
\begin{equation}\label{diagram3}
\xymatrix{
0\ar[r]& \text{lim}(K_{\bullet}) \ar[r] & \text{lim}(Y_{\bullet})\ar[r] & Y_{k}\ar[r]&  0 
}
\end{equation}

By Proposition~\ref{thm approxunderab4}, $ \text{lim}(K_{\bullet})$ is $(k-n-1)$-$\I$-connected, hence 
$\text{lim}(Y_{\bullet}) \ra Y_{k}$ is a $(k-n-1)$-$\I$-equivalence. 
But since the composite $X = \text{lim} (\tau_n(X)) \rightarrow \lim(Y_\bullet) \rightarrow Y_k$
is a $k$-$\mathcal{I}$-weak equivalence, it follows by the $2$ out of $3$ property that $X \rightarrow \text{lim}(Y_{\bullet})$ 
is a $(k-n-1)$-$\mathcal{I}$-weak equivalence.

This is so for any value of $k$, which concludes the proof.
%, it is therefore enough to show that the map $\lim(Y_\bullet) \ra Y_k$ is an
%$m$-$\I$-weak equivalence where $m$ increases with $k$. From the exact sequence $(\ref{diagram3})$ we see that it
%is sufficient  that the relative $\I$-connectivity of $\text{lim}(K_{\bullet})$ increases with $k$.
%
%This discussion leads to a question:
%%%%%%%%%%%%%%%%%%%%%%%%%%%%%%%%%% %%%%%%%%%%%%%%%%%
%\begin{question}\label{question}
%%%%%%%%%%%%%%%%%%%%%%%%%%%%%%%%%% %%%%%%%%%%%%%%%%%
%{\rm Let $k\geq 0$ and  $K_{\bullet}$ be a fibrant object in $\text{Tow}(\A,\I)$
%such that, for any $n$, $K_n$ is $k$-$\I$-connected. How $\I$-connected is  $\text{lim}(K_{\bullet})$ ?
%}
%\end{question}
%According to the above discussion, if for instance, the connectivity of  $\text{lim}(K_{\bullet})$  increases linearly with $k$, 
%then  any chain complex $X$ in $\text{Ch}(\A)$ is approximated by the standard Quillen pair and the standard Quillen pair is then 
%a right model approximation. 
\end{proof}

This explains also why Spaltenstein's construction of resolutions via truncations works in the absolute setting.

%%%%%%%%%%%%%%%%%%%%%%%%%%%%%%%%%%%%%%%%%%%%%%%%%%%%%%%%%%
\begin{corollary}
\label{cor:Spaltenstein}
%%%%%%%%%%%%%%%%%%%%%%%%%%%%%%%%%%%%%%%%%%%%%%%%%%%%%%%%%%
Let $R$ be any ring and $\I$ be the class of all injective $R$-modules. The category of $R$-modules satisfies
axiom {\rm AB4*}-$\mathcal{I}$-$0$. In particular the standard Quillen pair
\[
\text{\rm tow}:\text{\rm Ch}(\A)\rightleftarrows \text{\rm Tow}(\A,\I):\text{\rm lim}
\]
is a model approximation.
\end{corollary}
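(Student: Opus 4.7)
The plan is to prove the axiom AB4*-$\I$-$0$ directly for the category of $R$-modules; the second assertion of the corollary then follows at once from Theorem~\ref{thm ab4igivesapprox}. So everything reduces to verifying Definition~\ref{def AB4-nrel} in the case $n=0$ when $\I$ is the class of all injective $R$-modules.

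First I would identify what a relative resolution looks like in this setting. Since $\I$ consists of all injective $R$-modules, Example~\ref{exs injectiverel} tells us that $\I$-monomorphisms are ordinary monomorphisms and $\I$-weak equivalences are ordinary quasi-isomorphisms; Theorem~\ref{thm mcnpch} then identifies a relative resolution $A_j\to I_j$ in $\text{Ch}(\A)_{\leq 0}$ with a classical injective resolution, meaning that each $(I_j)_k$ is injective and the augmented complex
\[
0 \longrightarrow A_j \longrightarrow (I_j)_{0} \longrightarrow (I_j)_{-1} \longrightarrow (I_j)_{-2} \longrightarrow \cdots
\]
is exact. Next I would invoke the classical fact that countable products are exact in $R\text{-Mod}$: products are computed componentwise as products of underlying abelian groups, and countable products of exact sequences of abelian groups are exact. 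Applying this termwise to any countable family of such augmented resolutions yields that
\[
0 \longrightarrow \textstyle\prod_j A_j \longrightarrow \textstyle\prod_j (I_j)_{0} \longrightarrow \textstyle\prod_j (I_j)_{-1} \longrightarrow \textstyle\prod_j (I_j)_{-2} \longrightarrow \cdots
\]
is also exact, where the product complex $\prod_j I_j$ is taken degreewise.

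Finally, to establish $(-1)$-$\I$-connectedness of $\prod_j I_j$, I would pick an arbitrary $W\in\I$ and apply $\A(-,W)$ to the exact sequence above. Since $W$ is injective, $\A(-,W)$ is an exact functor, so it sends the exact sequence to an exact cochain complex; this says exactly that $H^n(\A(\prod_j I_j,W))=0$ for all $n\geq 1$, which is the definition of $(-1)$-$\I$-connectedness. This proves that $R\text{-Mod}$ satisfies AB4*-$\I$-$0$, and Theorem~\ref{thm ab4igivesapprox} then delivers the model approximation.

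I do not expect a genuine obstacle here: the argument depends only on the exactness of countable products in $R\text{-Mod}$ (the usual unadorned AB4* for this category) together with the exactness of $\A(-,W)$ on injectives. The only mild subtlety is bookkeeping with the homological versus cohomological indexing convention, which must be handled carefully when interpreting "$(-1)$-$\I$-connected" in light of Definition~\ref{def Itrivial}.
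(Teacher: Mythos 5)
Your argument is correct, and it is essentially the same one the paper gives, just unpacked rather than quoted. The paper's proof compresses your reasoning into two steps: first, for the class of all injectives, $\I$-connectivity coincides with ordinary connectivity (the observation that $\A(-,W)$ is exact and jointly conservative when $W$ ranges over injectives, which is exactly what your final paragraph exploits); second, $R$-Mod satisfies AB4*, which in Roos's terminology (cited as Remark~1.2 of his paper) is the axiom AB4*-$0$. This second step, combined with Proposition~\ref{prop Roos} in the text, is precisely your middle paragraph: countable products of exact sequences of $R$-modules are exact, so the product of the augmented resolutions remains exact. You have filled in the details the paper outsources to Roos, including the correct bookkeeping between the homological indexing of $\text{Ch}(\A)_{\leq 0}$ and the cohomological indexing of $\A(-,W)$, which was indeed the only mild subtlety. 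No gaps.
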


\begin{proof}
Relative connectivity for the class of all injective modules is connectivity and the category of $R$-modules satisfies
axiom AB4*, which is AB4*-$0$ as stated in \cite[Remark~1.2]{MR2197371}.
\end{proof}

%%%%%%%%%%%%%%%%%%%%%%%%%%%%%%%%%%%%%%%%%%%%%%%%%%%%%%%%%%
%%%%%%%%%%%%%%%%%%%%%%%%%%%%%%%%%%%%%%%%%%%%%%%%%%%%%%%%%%
\section{Example: Noetherian rings with finite Krull dimension}
\label{sec:finiteKdim}
%%%%%%%%%%%%%%%%%%%%%%%%%%%%%%%%%%%%%%%%%%%%%%%%%%%%%%%%%%
%%%%%%%%%%%%%%%%%%%%%%%%%%%%%%%%%%%%%%%%%%%%%%%%%%%%%%%%%%
In this section $R$ is a Noetherian ring and we focus on injective classes of injectives, which where classified in \cite{CPSinj}: 
they are in one-to-one correspondence with the generization closed subsets of Spec$R$. 
We show that, under the added assumption that $R$ is of finite
Krull dimension, the standard 
Quillen pair is a model approximation
for all injective classes $\I$ of injectives.

We need some preparation before proving this theorem, and
we refer to Appendix~\ref{sec:algebra}
for elementary facts about local cohomology. The key
ingredient is the vanishing of the homology of an $\I$-relative resolution above the Krull dimension of the ring.

%%%%%%%%%%%%%%%%%%%%%%%%%%%%%%%%%%%%%%%%%%%%%%%%%%%%%%%%%%
\begin{lemma}
\label{lemma complex at p}
%%%%%%%%%%%%%%%%%%%%%%%%%%%%%%%%%%%%%%%%%%%%%%%%%%%%%%%%%%
Let $R$ be a Noetherian ring,
${\mathfrak p}\subset R$ a prime ideal of height~$d$, and
$I \in \text{Ch}(R)_{\leq 0}$ an
injective resolution of a module $M$. The complex $I({\mathfrak p})$, obtained
from $I$ by keeping only the direct summands
isomorphic to $E(R/{\mathfrak p})$, has no homology in
degrees $< -d$.
\end{lemma}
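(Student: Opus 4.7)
The plan is to identify the complex $I(\mathfrak{p})$ with one computing a local cohomology group of $M_\mathfrak{p}$, and then invoke Grothendieck's vanishing theorem. More precisely, I would show that $I(\mathfrak{p})$ is canonically isomorphic to $\Gamma_{\mathfrak{p}R_\mathfrak{p}}(I_\mathfrak{p})$, the $\mathfrak{p}R_\mathfrak{p}$-torsion part of the localized complex $I_\mathfrak{p}$. Once this identification is in hand, everything else follows formally.

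For the identification I invoke Matlis' structure theorem for injective modules over a Noetherian ring. Decomposing each term as $I_{-n}\cong\bigoplus_\mathfrak{q}E(R/\mathfrak{q})^{(\mu_n(\mathfrak{q},M))}$, localization at $\mathfrak{p}$ kills the summands with $\mathfrak{q}\not\subseteq\mathfrak{p}$ (whose support is $V(\mathfrak{q})$) and sends $E(R/\mathfrak{q})$ to $E_{R_\mathfrak{p}}(R_\mathfrak{p}/\mathfrak{q}R_\mathfrak{p})$ when $\mathfrak{q}\subseteq\mathfrak{p}$. Applying $\Gamma_{\mathfrak{p}R_\mathfrak{p}}$ then annihilates the summands for $\mathfrak{q}\subsetneq\mathfrak{p}$, since every nonzero element of $E_{R_\mathfrak{p}}(R_\mathfrak{p}/\mathfrak{q}R_\mathfrak{p})$ has a $\mathfrak{q}R_\mathfrak{p}$-primary annihilator and hence no power of $\mathfrak{p}R_\mathfrak{p}$ can annihilate it. On the other hand, $\Gamma_{\mathfrak{p}R_\mathfrak{p}}$ leaves $E(R/\mathfrak{p})$ unchanged, since it is already $\mathfrak{p}$-torsion. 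Degree by degree this recovers exactly the $E(R/\mathfrak{p})$-summands of $I_{-n}$.

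Because $R$ is Noetherian, localization preserves injectivity, so $I_\mathfrak{p}$ is an injective resolution of $M_\mathfrak{p}$ over the local ring $R_\mathfrak{p}$. Hence the cohomology of $\Gamma_{\mathfrak{p}R_\mathfrak{p}}(I_\mathfrak{p})$ is the local cohomology $H^\ast_{\mathfrak{p}R_\mathfrak{p}}(M_\mathfrak{p})$, and Grothendieck's vanishing theorem gives $H^n_{\mathfrak{p}R_\mathfrak{p}}(M_\mathfrak{p})=0$ for $n>\dim R_\mathfrak{p}=d$. Translating back to homological conventions yields the required vanishing of the homology of $I(\mathfrak{p})$ in all degrees strictly below $-d$.

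The main subtlety is that the decomposition of an injective module into indecomposable summands is not canonical, so the phrase ``keeping only the $E(R/\mathfrak{p})$-summands'' a priori depends on a choice. I would sidestep this by taking $\Gamma_{\mathfrak{p}R_\mathfrak{p}}(I_\mathfrak{p})$ as the working definition of $I(\mathfrak{p})$: Matlis' theorem guarantees that in each degree this is a direct sum of copies of $E(R/\mathfrak{p})$ of the expected multiplicity $\mu_n(\mathfrak{p},M)$, and with this functorial description the cohomological argument above goes through without further complication.
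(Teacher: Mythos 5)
Your argument is correct and follows the same route as the paper's proof: identify $I(\mathfrak{p})$ with $\Gamma_{\mathfrak{p}R_\mathfrak{p}}(I_\mathfrak{p})$, note that $I_\mathfrak{p}$ is an injective resolution of $M_\mathfrak{p}$ over $R_\mathfrak{p}$ by flatness, interpret the cohomology as local cohomology at the maximal ideal of $R_\mathfrak{p}$, and apply vanishing above the Krull dimension. The only cosmetic difference is that you cite Grothendieck's vanishing theorem directly, whereas the paper reaches the same bound through the \v{C}ech complex (Proposition~\ref{proposition Cech} together with Remark~\ref{remark uptonilpotence}).
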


\begin{proof}
  First form $I \otimes R_{\mathfrak p}$, that is
  localize $I$ at ${\mathfrak p}$ to kill all the summands
  of $I$ isomorphic to $E(R/{\mathfrak q})$ with
  ${\mathfrak q} \not\subset {\mathfrak p}$, see
Lemma~\ref{lemma local properties of injectives}. The subcomplex
$\Gamma_{\mathfrak p}(I \otimes R_{\mathfrak p})$ is precisely
what we obtain from $I \otimes R_{\mathfrak p}$ by excising
summands isomorphic to $E(R/{\mathfrak q})$ for
  ${\mathfrak q} \subsetneq {\mathfrak p}$,
see
Lemma~\ref{lemma p-torsion injective}. Thus 
$\Gamma_{\mathfrak p}(I \otimes R_{\mathfrak p})=I({\mathfrak p})$,
 in the notation of the current Lemma.
The ring $R_{\mathfrak p}$ is flat over $R$, hence $I \otimes R_{\mathfrak p}$
is an injective resolution over $R_{\mathfrak p}$ of the module
$M \otimes R_{\mathfrak p}$, and the cohomology of
$I({\mathfrak p})=\Gamma_{\mathfrak p}(I \otimes R_{\mathfrak p})$
is the local cohomology of
$M \otimes R_{\mathfrak p}$ at the maximal ideal
${\mathfrak p}R_{\mathfrak p}
\subset R_{\mathfrak p}$.
 The vanishing 
follows from Proposition~\ref{proposition Cech} and Remark~\ref{remark
uptonilpotence} because the Krull dimension of $R_{\mathfrak p}$ is~$d$.
\end{proof}

%%%%%%%%%%%%%%%%%%%%%%%%%%%%%%%%%%%%%%%%%%%%%%%%%%%%%%%%%%
\begin{proposition}
\label{proposition homology bound}
%%%%%%%%%%%%%%%%%%%%%%%%%%%%%%%%%%%%%%%%%%%%%%%%%%%%%%%%%%
Let $R$ be a Noetherian ring of finite Krull dimension~$d$,
and $\I$ an injective class of injective modules. For any module $M$
and an $\I$-relative resolution $I \in \text{Ch}(R)_{\leq 0}$, we have
$H_k(I) = 0$ if $k< -d-1$.
\end{proposition}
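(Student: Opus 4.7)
The plan is to adapt the local cohomology argument of Lemma~\ref{lemma complex at p} to the $\I$-relative setting by filtering $I$ according to the associated primes of its indecomposable injective summands. By Matlis' theorem each injective $I_k\in\I$ splits as $\bigoplus_{\mathfrak{p}\in L} E(R/\mathfrak{p})^{(X_{k,\mathfrak{p}})}$, where $L\subseteq\mathrm{Spec}(R)$ is the generization closed subset of primes corresponding to $\I$ via~\cite{CPSinj}. A key elementary fact is that $\Hom(E(R/\mathfrak{q}),E(R/\mathfrak{p}))=0$ whenever $\mathfrak{q}\not\subseteq\mathfrak{p}$: for $s\in\mathfrak{q}\setminus\mathfrak{p}$, the element $s$ acts locally nilpotently on the $\mathfrak{q}$-torsion source but as an automorphism on the $\mathfrak{p}$-local target, killing any map between them.

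Consequently, the subcomplex $G^h I\subseteq I$ spanned by the summands $E(R/\mathfrak{p})$ with $\mathrm{ht}(\mathfrak{p})\geq h$ is closed under the differential, and the resulting decreasing filtration has associated graded
\[
\mathrm{gr}^h I\ \cong\ \bigoplus_{\mathrm{ht}(\mathfrak{p})=h} I[\mathfrak{p}],
\]
with each $I[\mathfrak{p}]$ a complex of copies of $E(R/\mathfrak{p})$, identified using Lemma~\ref{lemma p-torsion injective} with $\Gamma_{\mathfrak{p}}(I\otimes R_{\mathfrak{p}})$.

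The main technical step I would prove is the prime component bound $H_k(I[\mathfrak{p}])=0$ for $k<-\mathrm{ht}(\mathfrak{p})-1$. Since $I\otimes R_{\mathfrak{p}}$ is a bounded above complex of $R_{\mathfrak{p}}$-injectives, hence K-injective, the identification above computes $\mathbf{R}\Gamma_{\mathfrak{p}}(I\otimes R_{\mathfrak{p}})$, and the hypercohomology spectral sequence
\[
E_2^{s,t}=H^s_{\mathfrak{p}}\bigl(H^t(I)_{\mathfrak{p}}\bigr)\ \Longrightarrow\ H^{s+t}(I[\mathfrak{p}])
\]
vanishes for $s>\mathrm{ht}(\mathfrak{p})$ by Proposition~\ref{proposition Cech}. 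The $\I$-resolution condition (which provides the quasi-isomorphism $\Hom(I,E(R/\mathfrak{p}))\simeq\Hom(M,E(R/\mathfrak{p}))$ when $\mathfrak{p}\in L$), combined with an induction on $\mathrm{ht}(\mathfrak{p})$ controlling $H^t(I)_{\mathfrak{p}}$ above the threshold, handles the remaining $E_2$-terms. The spectral sequence of the height filtration then relates $H_\ast(\mathrm{gr}^h I)$ to $H_\ast(I)$, and the prime component bound yields $H_k(I)=0$ as soon as $k<-h-1$ holds for every $h\in[0,d]$, that is, for $k<-d-1$.

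The hardest part is the prime component bound itself. In the classical case of Lemma~\ref{lemma complex at p}, $\Gamma_{\mathfrak{p}}(J\otimes R_{\mathfrak{p}})$ directly computes $H^*_{\mathfrak{p}}(M_{\mathfrak{p}})$, which vanishes above $\mathrm{ht}(\mathfrak{p})$; in the relative setting $I\otimes R_{\mathfrak{p}}$ is only an $\I$-resolution of $M_{\mathfrak{p}}$ and this clean identification breaks. The resulting loss of one degree, traceable to the slack $H_0(I)/M$ that $\I$-resolutions are allowed to carry, is the origin of the extra ``$+1$'' in the final bound.
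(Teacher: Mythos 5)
Your approach is structurally different from the paper's, and it contains a genuine gap in the step you yourself flag as the hardest one. The paper does not filter a single $\I$-resolution by height; instead it inducts on the maximal chain length $a$ in the complement of the generization closed subset $S\subseteq\mathrm{Spec}(R)$ corresponding to $\I$. At each step it compares the $\I$-resolution $I$ with the $\I'$-resolution $I'$ for the finer class $\I'$ obtained by adjoining the minimal primes $\mathfrak{p}_i$ of $\mathrm{Spec}(R)\setminus S$. After replacing $I$ by a homotopy equivalent complex, one obtains a degreewise split short exact sequence $0\to I''\to I'\to I\to 0$ where $I''$ is precisely $\bigoplus_i J(\mathfrak{p}_i)$ for an \emph{honest} injective resolution $J$ of $M$, so Lemma~\ref{lemma complex at p} applies to $I''$ directly, and the long exact sequence in homology together with the inductive hypothesis on $I'$ finishes the argument. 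The point is that the ``excess'' always comes from a genuine injective resolution, so no relative subtlety ever enters the local cohomology computation.

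Your proposed route stalls exactly where you worry it does. The hypercohomology spectral sequence
\[
E_2^{s,t}=H^s_{\mathfrak{p}}\bigl(H^t(I)_{\mathfrak{p}}\bigr)\ \Longrightarrow\ H^{s+t}(I[\mathfrak{p}])
\]
gives $E_2^{s,t}=0$ for $s>\mathrm{ht}(\mathfrak{p})$ by Proposition~\ref{proposition Cech}, but that alone is useless for bounding the abutment: since $I$ is only an $\I$-relative resolution, its cohomology $H^t(I)$ can be nonzero for all $t\geq 0$ a priori, so the diagonal $s+t=j$ can pick up contributions with small $s$ and large $t$ for every $j$. To conclude $H^j(I[\mathfrak{p}])=0$ for $j>\mathrm{ht}(\mathfrak{p})+1$ from this spectral sequence, you would need $H^t(I)_{\mathfrak{p}}=0$ for $t\geq 2$, which is (a strengthening of) the very statement you are trying to prove. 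The phrase ``an induction on $\mathrm{ht}(\mathfrak{p})$ controlling $H^t(I)_{\mathfrak{p}}$ above the threshold'' does not resolve this: an induction over primes $\mathfrak{q}\subsetneq\mathfrak{p}$ can only control the localizations $H^t(I)_{\mathfrak{q}}$, not $H^t(I)_{\mathfrak{p}}$ itself, because $H^t(I)_{\mathfrak{p}}$ is already $\mathfrak{p}$-local and Lemma~\ref{lemma local-to-global} over $R_{\mathfrak{p}}$ requires vanishing at all primes including the closed point. So the circularity is genuine and the prime component bound is not established. (Your filtration of $I$ by height and the identification $\mathrm{gr}^h I\cong\bigoplus_{\mathrm{ht}(\mathfrak{p})=h}\Gamma_{\mathfrak{p}}(I\otimes R_{\mathfrak{p}})$ are correct and could be salvaged in principle, but you would need a different way to bound the graded pieces, for instance by observing that $I\otimes R_{\mathfrak{p}}$ is a relative resolution over the lower-dimensional local ring $R_{\mathfrak{p}}$ and doing a genuine induction on Krull dimension; as written, the argument has a hole.)

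Also, the heuristic in your final paragraph attributing the ``$+1$'' to the slack $H_0(I)/M$ does not quite match the mechanism in the paper's proof, where the shift by one comes from the degree shift $H_k(I)\to H_{k-1}(I'')$ in the connecting map of the long exact sequence accumulated over the induction on $a$.
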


\begin{proof}
The injective class $\I$ corresponds to a generization closed subset $S$ of $Spec(R)$
by \cite[Corollary~3.1]{CPSinj}. Let $a$ be the length of the maximal
chain of prime ideals in the complement of $S$. If $a=0$ then $\I$
consists of all injective modules, so that $H_k(I) = 0$ for all
$k <0$.

Assume now that $a\geq1$ and we prove the Proposition by induction
on $a$. Consider the set $S''$ of minimal ideals ${\mathfrak p}_i$
in the complement of $S$;
we know the result is true for the injective class $\I'$
corresponding to the set $S'= S \cup S''$. We denote by $I'$ the
$\I'$-relative resolution of $M$. Replacing $I$ by a homotopy equivalent
complex if necessary, we obtain a degree-wise split short exact sequence of
chain complexes $0\ra I''\ra I'\ra I\ra 0$, where $I''$ is a direct sum
of $E(R/{\mathfrak p}_i)$ with ${\mathfrak p}_i\in S''$.
But there are no inclusions among the primes ${\mathfrak p}_i\in S'-S$,
hence $I''$ is the direct sum of the complexes $I({\mathfrak p}_i)$ that we
introduced in Lemma~\ref{lemma complex at p}. The Proposition now follows
from Lemma~\ref{lemma complex at p} and the long exact sequence in homology
induced by the short exact sequence of complexes $0\ra I''\ra I'\ra I\ra 0$.
\end{proof}

Next comes the last proposition we will use in the proof of our main
theorem, it measures the difference between the resolutions of a
bounded complex and of a truncation. Recall that $I(X)$ denotes the
fibrant replacement of the bounded complex $X$ in the $\I$-relative
model structure described in Theorem~\ref{thm mcnpch}, i.e. an
$\I$-relative injective resolution of $X$.

%%%%%%%%%%%%%%%%%%%%%%%%%%%%%%%%%%%%%%%%%%%%%%%%%%%%%%%%%%
\begin{proposition}
\label{proposition difference}
%%%%%%%%%%%%%%%%%%%%%%%%%%%%%%%%%%%%%%%%%%%%%%%%%%%%%%%%%%
Let $R$ be a Noetherian ring of finite Krull dimension~$d$,
and $\I$ an injective class of injective modules. Let $X \in
\text{Ch}(R)_{\leq 0}$ be a bounded complex and $\tau_1X$ its first
truncation. Then the canonical morphism $X \ra \tau_1X$ induces
isomorphisms in homology $H_k(I(X)) \ra H_k(I(\tau_1 X))$ for any
$k < -d-1$.
\end{proposition}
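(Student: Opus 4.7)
The plan is to realize $X \to \tau_1 X$ as part of a short exact sequence of complexes, construct compatible $\I$-relative injective resolutions fitting into a degreewise split short exact sequence, and apply the long exact sequence of classical homology, controlling the kernel term via Proposition~\ref{proposition homology bound}.

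First I identify the kernel $K$ of the canonical map $X \to \tau_1 X$. Since this map is the identity in all degrees strictly below the truncated one, the quotient $X_{-1} \twoheadrightarrow \text{coker}(d_0)$ in degree $-1$, and zero in degree $0$, its kernel is the two-term complex
\[
K = \bigl(X_0 \twoheadrightarrow \text{im}(d_0)\bigr)
\]
sitting in degrees $0$ and $-1$, with differential the surjection induced by $d_0$. Because this differential is surjective, $H_{-1}(K) = 0$ and $H_0(K) = \ker(d_0)$, so the inclusion $\ker(d_0)[0] \hookrightarrow K$ is a classical quasi-isomorphism and therefore an $\I$-weak equivalence.

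Second, I construct compatible $\I$-relative injective resolutions so that
\[
0 \to I(K) \to I(X) \to I(\tau_1 X) \to 0
\]
is a degreewise split short exact sequence of complexes in $\text{Ch}(R)_{\leq 0}$. This is a horseshoe-style inductive argument in the $\I$-model structure of Theorem~\ref{thm mcnpch}: resolving degree by degree from the top, one chooses $\I$-injective envelopes of each of $K_n$ and $(\tau_1 X)_n$ and takes their direct sum as the corresponding piece of $I(X)$, assembling the differentials by the lifting property of the $\I$-model structure. The degreewise splitting comes from the form of $\I$-fibrations in Theorem~\ref{thm mcnpch}, and consequently the homology of this short exact sequence fits into the usual long exact sequence.

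Finally, I bound $H_k(I(K))$. By Proposition~\ref{prop prpfbmcs}(4), any $\I$-weak equivalence between $\I$-fibrant complexes is a homotopy equivalence, so $I(K)$ is homotopy equivalent to $I(\ker(d_0)[0])$ and they share the same classical homology; Proposition~\ref{proposition homology bound} applied to the module $\ker(d_0)$ then gives $H_k(I(K)) = 0$ for all $k < -d-1$. For such $k$ one also has $k-1 < -d-1$, so both outer terms in
\[
H_k(I(K)) \to H_k(I(X)) \to H_k(I(\tau_1 X)) \to H_{k-1}(I(K))
\]
vanish, forcing the middle map to be an isomorphism. The main technical step is setting up the horseshoe construction of resolutions for a short exact sequence of chain complexes in the relative setting; everything else is a direct consequence of results already established.
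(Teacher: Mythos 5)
Your proof is correct and follows essentially the same route as the paper's: both identify the kernel of $X\to\tau_1 X$ as a two-term complex quasi-isomorphic (in fact homotopy equivalent) to $H_0(X)$ concentrated in degree zero, produce a degreewise split short exact sequence of $\I$-fibrant resolutions, bound the homology of the kernel's resolution via Proposition~\ref{proposition homology bound}, and conclude with the long exact sequence. The only difference is cosmetic: the paper obtains the split short exact sequence by factoring the induced map on fibrant replacements through an $\I$-fibration (so the kernel is automatically degreewise in $\I$ and fibrant), while you spell out a horseshoe construction; both yield the same sequence, and your more explicit identification of the kernel as $(X_0\twoheadrightarrow \mathrm{im}(d_0))$ rather than literally $H_0(X)[0]$ is a useful clarification of the paper's terser phrasing.
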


\begin{proof}
Let us replace $X \ra \tau_1 X$ by an $\I$-fibration $I(X) \ra
I(\tau_1 X)$ between $\I$-fibrant objects. The kernel $K$ is a chain
complex made of injective modules in $\I$, and forms therefore an
$\I$-fibrant replacement for $H_0(X)$, the kernel of the canonical
morphism.

From the previous proposition we know that $H_k(K) = 0$ if $k< -d-1$.
The long exact sequence in homology
finishes the proof.
\end{proof}

%%%%%%%%%%%%%%%%%%%%%%%%%%%%%%%%%%%%%%%%%%%%%%%%%%%%%%%%%%
\begin{theorem}
\label{theorem finite Kdim}
%%%%%%%%%%%%%%%%%%%%%%%%%%%%%%%%%%%%%%%%%%%%%%%%%%%%%%%%%%
Let $R$ be a Noetherian ring with finite Krull dimension~$d$, and
$\I$ an injective class of injective modules. Then the category of towers
forms a model approximation for $\text{Ch}(R)$ equipped with
$\I$-equivalences.
\end{theorem}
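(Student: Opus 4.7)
The plan is to reduce to Theorem~\ref{thm ab4igivesapprox}, which already guarantees that the standard Quillen pair $\text{tow}\colon \text{Ch}(\A)\rightleftarrows \text{Tow}(\A,\I)\colon\text{lim}$ is a model approximation whenever the ambient abelian category satisfies axiom AB4*-$\I$-$n$ for some integer $n$. Given the finite Krull dimension $d$, the natural candidate is $n = d+1$, and the task reduces to verifying this axiom for the category of $R$-modules. Concretely, given a countable family $(A_j)_{j\in J}$ with relative injective resolutions $A_j \to I_j$ in $\text{Ch}(R)_{\leq 0}$, one must show that $\prod_j I_j$ is $(-d-2)$-$\I$-connected. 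The key input is already at hand: Proposition~\ref{proposition homology bound} ensures that each individual $I_j$ satisfies $H_k(I_j) = 0$ for $k < -d-1$.

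To translate this into $(-d-2)$-$\I$-connectedness of the product $\prod_j I_j$, the natural tool is the characterization of Proposition~\ref{prop propunch}(6): it suffices to check that for every $i \leq -d-2$, the map from $\text{coker}$ of the product differential in degree $i+1$ to the next term of the product is an $\I$-monomorphism. Because $R$-Mod satisfies the classical AB4* axiom, countable products are exact, and in particular cokernels commute with countable products. Hence the map in question is simply the product of the individual maps $d_{j,i}\colon \text{coker}((I_j)_{i+1}) \to (I_j)_{i-1}$. For $i \leq -d-2$ the vanishing of $H_i(I_j)$ forces each $d_{j,i}$ to be an ordinary monomorphism; a product of ordinary monomorphisms is again a monomorphism by AB4*; and since $\I$ consists of injective modules, every ordinary monomorphism is automatically an $\I$-monomorphism. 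This establishes AB4*-$\I$-$(d+1)$ for $R$-Mod, and Theorem~\ref{thm ab4igivesapprox} then concludes the proof.

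The genuine obstacle lies in the preparatory Proposition~\ref{proposition homology bound}, which rests on the local cohomology computations of Lemma~\ref{lemma complex at p} and on the classification of injective classes of injectives from \cite{CPSinj}. Once that homology bound is in hand, the passage to countable products and to the relative axiom is essentially formal, relying only on the fact that $R$-Mod satisfies classical AB4* and that $\I$ consists of injective objects, so that any ordinary monomorphism is automatically an $\I$-monomorphism.
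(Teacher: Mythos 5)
Your proof is correct, and it takes a genuinely different (though closely related) route to the paper's. The paper does not pass through Theorem~\ref{thm ab4igivesapprox}: it proves the approximation directly, using Proposition~\ref{proposition difference} to show that the homology of the fibrant tower $I(\tau_n X)$ stabilizes in each degree as $n\to\infty$, and then reads off $H_k(\textrm{lim}\,I(\text{tow}X)) \cong H_k(I(\tau_{k+d+1}X))$. Your argument instead establishes AB4*-$\I$-$(d+1)$ and invokes the abstract Theorem~\ref{thm ab4igivesapprox}, bypassing Proposition~\ref{proposition difference} entirely. The paper actually records this alternative viewpoint in Remark~\ref{rem:finite Kdim} (``The above argument actually shows that the category of $R$-modules satisfies axiom AB4*-$\I$-$(d+1)$\dots''), so the two approaches carry the same content; yours is the more modular packaging and exhibits the result as a corollary of the general framework, while the paper's direct computation has the advantage of being self-contained and making the stabilization concrete. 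Both rely on the same key input, Proposition~\ref{proposition homology bound}.

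Your reduction to the axiom is carried out correctly: Proposition~\ref{prop propunch}(6) translates $(-d-2)$-$\I$-connectedness into the statement that the maps $\textrm{coker}(d_{i+1})\to (\prod_j I_j)_{i-1}$ are $\I$-monomorphisms for $i\leq -d-2$; exactness of countable products in $R$-Mod identifies these with products of the individual maps; the homology bound $H_i(I_j)=0$ for $i\leq -d-2$ makes each factor a monomorphism; and since $\I$ consists of honest injectives, ordinary monomorphisms are $\I$-monomorphisms. One small inaccuracy worth flagging: you attribute ``a product of monomorphisms is a monomorphism'' to AB4*, but this holds in any category with products (the product functor is a limit, hence left exact). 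AB4* is used only for the other ingredient, namely that cokernels commute with countable products. This does not affect the validity of the argument.
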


\begin{proof}
%[Proof of Theorem~\ref{theorem finite Kdim}]
To show that the Quillen pair is in fact a model approximation, we
must check that Condition~(4) of Definition~\ref{def quillen} holds,
or equivalently that the canonical morphism $\text{lim} \, I(\text{tow}
X) \rightarrow X$ is an $\I$-equivalence for any unbounded chain
complex~$X$. We have learned from Proposition~\ref{proposition
difference} that the homology of $I(\tau_n X)$ and $I(\tau_{n-1}
X)$ only differ in degrees lying between $n$ and $n-d-1$. This means
that the homology of the $\I$-fibrant replacement of the tower
$\text{tow}(X)$ stabilizes. Therefore $H_k(\text{lim}, I(\text{tow}
X)) \cong H_k(I(\tau_{k+d+1} X))$.
\end{proof}

%%%%%%%%%%%%%%%%%%%%%%%%%%%%%%%%%%%%%%%%%%%%%%%%%%%%%%%%%%
\begin{remark}
\label{rem:finite Kdim}
%%%%%%%%%%%%%%%%%%%%%%%%%%%%%%%%%%%%%%%%%%%%%%%%%%%%%%%%%%
The above argument actually shows that the category of $R$-modules satisfies axiom AB4*-$\I$-$(d+1)$ when
$R$ has finite Krull dimension $d$, and
$\I$ is an injective class of injective $R$-modules. A product of relative injective
resolutions of certain $R$-modules is
a special case of an inverse limit of a fibrant tower as above.
\end{remark}

%%%%%%%%%%%%%%%%%%%%%%%%%%%%%%%%%%%%%%%%%%%%%%%%%%%%%%%%%%
%%%%%%%%%%%%%%%%%%%%%%%%%%%%%%%%%%%%%%%%%%%%%%%%%%%%%%%%%%
\section{Example: Nagata's ``bad Noetherian ring"}
\label{sec:ring}
%%%%%%%%%%%%%%%%%%%%%%%%%%%%%%%%%%%%%%%%%%%%%%%%%%%%%%%%%%
%%%%%%%%%%%%%%%%%%%%%%%%%%%%%%%%%%%%%%%%%%%%%%%%%%%%%%%%%%
The objective of this section is to show that, even under the
Noetherian assumption, towers do not always approximate unbounded
chain complexes. We have seen in the previous section that no
problems arise when the Krull dimension is finite. However,
delicate and interesting issues arise when the
Krull dimension is infinite. 
We first recall an
example of Noetherian ring with infinite Krull dimension,
constructed by Nagata in the appendix of \cite{MR0155856}.

%%%%%%%%%%%%%%%%%%%%%%%%%%%%%%%%%%%%%%%%%%%%%%%%%%%%%%%%%%
\begin{example}
\label{ex:badring}
%%%%%%%%%%%%%%%%%%%%%%%%%%%%%%%%%%%%%%%%%%%%%%%%%%%%%%%%%%
{\rm Let $k$ be a field and consider the polynomial ring on
  countably many variables $A = k[x_1, x_2, \dots ]$. Consider the
  sequence of prime
ideals ${\mathfrak p}_2 = (x_1, x_2)$, ${\mathfrak p}_3 = (x_3, x_4, x_5)$, ${\mathfrak p}_4 = (x_6, x_7, x_8)$, etc.
where the depth of ${\mathfrak p}_i$ is precisely $i$. Take $S$ to be the
multiplicative set consisting of elements of $A$ which are not in
any of the ${\mathfrak p}_i$'s. The localized ring $R = S^{-1}A$ is Noetherian,
but of infinite Krull dimension. In fact its maximal ideals are $\mathfrak{m}_i
= S^{-1}{\mathfrak p}_i$, a sequence of ideals of strictly increasing height.}
\end{example}

%%%%%%%%%%%%%%%%%%%%%%%%%%%%%%%%%%%%%%%%%%%%%%%%%%%%%%%%%%
\subsection{A problematic class of injectives}
\label{subsec:problem}
%%%%%%%%%%%%%%%%%%%%%%%%%%%%%%%%%%%%%%%%%%%%%%%%%%%%%%%%%%
In this subsection we choose the specialization closed subset $C$ of
$\text{Spec}(R)$ to consist
of all the maximal ideals $\mathfrak{m}_i$.
We will do relative homological algebra with respect to the
injective class $\I$ of injective $R$-modules, generated by the
injective envelopes $E(R/{\mathfrak p})$ for all prime ideals ${\mathfrak p} \notin C$. We noticed earlier that the class of $\I$-acyclic
chain complexes is a localizing subcategory of $D(R)$. As it
contains $R/\mathfrak{m}_i$ but not any other $R/{\mathfrak p}$, we know from Neeman's
classification~\cite{MR1174255} that this localizing subcategory is
generated by $\oplus R/\mathfrak{m}_i$.

%In particular Bousfield's existence results for localization functor
%tell us that one can construct $W$-injective resolutions (it this
%true? at least there's a nullification functor which in $D(A)$
%coincides with the relative resolutions we are looking for).

%%%%%%%%%%%%%%%%%%%%%%%%%%%%%%%%%%%%%%%%%%%%%%%%%%%%%%%%%%
\begin{lemma}
\label{lemma resolution for R}
%%%%%%%%%%%%%%%%%%%%%%%%%%%%%%%%%%%%%%%%%%%%%%%%%%%%%%%%%%
Let $I(R)$ be an $\I$-injective resolution of $R$. Then $H_0(I(R)) \cong R$
and $H_{1-i}(I(R)) \cong E(R/\mathfrak{m}_i)$ for any $i>1$.
\end{lemma}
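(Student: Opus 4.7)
The plan is to exhibit an explicit $\I$-injective resolution $I(R)$ of $R$ obtained from the absolute minimal injective resolution $I_{\min}^{\bullet}$ of $R$ by quotienting out the summands indexed by the maximal ideals $\mathfrak{m}_i$, and then read off the homology via a long exact sequence.

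First I would establish the structure of $I_{\min}^{\bullet}$. Every localization $R_{\mathfrak{p}}$ is a regular local ring: $R_{\mathfrak{m}_i}$ is regular of dimension $i$, being a localization of a polynomial ring at a maximal ideal generated by a regular sequence of variables, and any further localization at a sub-prime is then regular as well. The Bass numbers therefore satisfy $\mu^k(\mathfrak{p},R) = \delta_{k,\mathrm{ht}(\mathfrak{p})}$, so
\[
I_{\min}^{\bullet} \colon 0 \to R \to \bigoplus_{\mathrm{ht}(\mathfrak{p})=0} E(R/\mathfrak{p}) \to \bigoplus_{\mathrm{ht}(\mathfrak{p})=1} E(R/\mathfrak{p}) \to \bigoplus_{\mathrm{ht}(\mathfrak{p})=2} E(R/\mathfrak{p}) \to \cdots
\]
with each summand of multiplicity one. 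Because $\mathfrak{m}_k$ is maximal, for any prime $\mathfrak{p} \neq \mathfrak{m}_k$ there is $x \in \mathfrak{m}_k \setminus \mathfrak{p}$ acting nilpotently on $E(R/\mathfrak{m}_k)$ and invertibly on $E(R/\mathfrak{p})$, whence $\A(E(R/\mathfrak{m}_k), E(R/\mathfrak{p})) = 0$. In particular the subobjects $E(R/\mathfrak{m}_k) \hookrightarrow I_{\min}^k$ assemble into a subcomplex $E^{\bullet}$ with zero differential, concentrated in cohomological degrees $k \geq 2$.

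Set $I(R) := I_{\min}^{\bullet}/E^{\bullet}$, whose degree-$k$ term is $\bigoplus_{\mathfrak{p}} E(R/\mathfrak{p})$ with $\mathfrak{p}$ ranging over non-maximal primes of height $k$. To see that $R \to I(R)$ is an $\I$-equivalence: the short exact sequence $0 \to E^{\bullet} \to I_{\min}^{\bullet} \to I(R) \to 0$ is degreewise split, and the Hom vanishing above upgrades to $\A(E(R/\mathfrak{m}_k), W) = 0$ for any $W \in \I$, since $W$ is a retract of a product of $E(R/\mathfrak{p})$'s with $\mathfrak{p}$ non-maximal. Hence $\A(I(R),W) \cong \A(I_{\min}^{\bullet},W)$, which is a resolution of $\A(R,W)$ because $I_{\min}^{\bullet}$ is an injective resolution of $R$.

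Finally, the long exact sequence in cohomology associated to $0 \to E^{\bullet} \to I_{\min}^{\bullet} \to I(R) \to 0$ reads off the homology: $H^{\ast}(I_{\min}^{\bullet}) = R$ in degree $0$ and vanishes elsewhere, while $H^{\ast}(E^{\bullet}) = E^{\ast}$ is $E(R/\mathfrak{m}_i)$ in degree $i \geq 2$. Therefore $H^0(I(R)) \cong R$ and the connecting homomorphism gives $H^{i-1}(I(R)) \cong E(R/\mathfrak{m}_i)$ for $i \geq 2$; passing to homological indexing yields the claim. The main subtlety is pinning down the exact shape of $I_{\min}^{\bullet}$, which rests entirely on every localization of Nagata's ring being regular.
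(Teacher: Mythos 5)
Your proof is correct, and it takes a closely related but genuinely different route from the paper's. Both arguments start from the minimal injective resolution $I$ of $R$, strip off the copies of $E(R/\mathfrak{m}_i)$ to form a subcomplex $K$, set $I(R)=I/K$, and read off the answer from the long exact sequence. The difference lies in how the cohomology of $K$ is identified. The paper observes that $K = \bigoplus_i \Gamma_{\mathfrak{m}_i}(I)$ and therefore that $H_{-k}(K) \cong \bigoplus_i H^k_{\mathfrak{m}_i}(R_{\mathfrak{m}_i})$; it then invokes the computation of local cohomology of the Gorenstein local ring $R_{\mathfrak{m}_i}$ (citing \cite[Theorem~11.26]{MR2355715}) to conclude. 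You instead compute the Bass numbers $\mu^k(\mathfrak{p},R)=\delta_{k,\mathrm{ht}(\mathfrak{p})}$ using that $R$ is regular at every prime (which does follow from regularity at the maximal ideals $\mathfrak{m}_i$, since regularity localizes), and deduce that each $E(R/\mathfrak{p})$ appears exactly once and in degree $\mathrm{ht}(\mathfrak{p})$; the $\mathrm{Hom}$-vanishing between distinct maximal ideals then forces the differential on $K$ to be zero, so its cohomology can be read off by inspection without mentioning local cohomology at all. Your route is more elementary and self-contained, at the cost of an extra structural fact about minimal injective resolutions (the Bass-number description), whereas the paper's route is shorter once the local cohomology black box is accepted. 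One small terminological slip: $x \in \mathfrak{m}_k$ acts \emph{locally} nilpotently (not nilpotently) on $E(R/\mathfrak{m}_k)$; the argument you give for $\mathrm{Hom}(E(R/\mathfrak{m}_k),E(R/\mathfrak{p}))=0$ is unaffected.
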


\begin{proof}
Let us consider a minimal injective resolution $R \hookrightarrow I_0=E(R) \rightarrow I_{-1} \rightarrow \dots$. 
By the description Matlis~ \cite{MR0099360}
gave of injective modules, each $I_n$ is a direct
sum of modules of the form $E(R/{\mathfrak p})$ where ${\mathfrak p}$ runs over prime ideals of~$R$.

By Lemma~\ref{lemma hom between injectives} we see that there is a
subcomplex $K$ of $I$ made of all the copies of $E(R/\mathfrak{m}_i)$,
and we take $I(R)
= I/ K$. This is a fibrant replacement for $R$ in the relative model
structure described in Theorem~\ref{thm mcnpch}.
%
% Moreover, by a result of
%Fossum, Foxby, Grif and only ifith and Reiten, \cite{MR0396529} the module $E(A/m_i)$ is a
%direct summand of $I_n$ for $i= depth_{A_{m_i}} A_{m_i} \leq n \leq
%\hbox{\rm id} _{A_{m_i}} A_{m_i} = i$. In other words this injective
%envelope appears exactly once in the module $I_i$ (well FFGR does
%not say it appears only once, any other reference for this injective
%resolution of regular local rings?).
Since the homology of $I$ is concentrated in degree~$0$, we see
from the long exact sequence in homology for the short exact
sequence of complexes $K \rightarrow I \rightarrow I(R)$ that the
lower homology modules of $I(R)$ are isomorphic to those of $K$ up to a
shift: $H_{1-i} (I(R)) \cong H_{-i}(K)$ for $i>1$. 
But $K$ splits as a direct sum $\oplus_i \Gamma_{\mathfrak{m}_i}(I)$ by
Lemma~\ref{lemma hom between injectives} and Definition~\ref{def local}. Therefore
\[
H_{-k}(K) \cong \oplus H_{\mathfrak{m}_i}^{k}(R) \cong \oplus_i H_{\mathfrak{m}_i}^{k}(R_{\mathfrak{m}_i})
\]
where the second isomorphism comes from Lemma~\ref{lemma p-torsion injective}.
The local ring $R_{\mathfrak{m}_i}$ is regular, hence Gorenstein, of
dimension~$i$. Therefore the computation done in
\cite[Theorem~11.26]{MR2355715} yields that 
$H_{1-i}(I(R)) \cong E(R/\mathfrak{m}_i)$. It also shows here that all local cohomology 
modules are zero in degree zero. Hence $H_0(I(R)) \cong H_0(I) \cong R$. 
\end{proof}

Now we consider the unbounded chain complex $X$ with $X_n = R$ for
all $n$ and zero differential. The zeroth truncation of $X$ is the
non-positively graded complex with zero differential and where every
module is $R$, in other words this complex is $\oplus_{i\leq 0}
\Sigma^iR$. We know how to construct explicitly an $\I$-relative
resolution for this bounded complex by the previous lemma: it is a
direct sum $\oplus_{i\leq 0} \Sigma^i I(R)$.

%%%%%%%%%%%%%%%%%%%%%%%%%%%%%%%%%%%%%%%%%%%%%%%%%%%%%%%%%%
\begin{lemma}
\label{lemma resolution for X}
%%%%%%%%%%%%%%%%%%%%%%%%%%%%%%%%%%%%%%%%%%%%%%%%%%%%%%%%%%
Let $X$ be the unbounded complex $\oplus_i \Sigma^i R$, let $\tau_0 X$
be its zeroth truncation, and let $I(\tau_0 X)$ denote the $\I$-relative
resolution of the latter. We have then $H_{1-i}(I(\tau_0 X)) \cong R \oplus
\oplus_{2\leq j\leq i} E(R/\mathfrak{m}_j)$ for any $i \geq 1$.
\end{lemma}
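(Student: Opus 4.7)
The plan is to find an explicit $\I$-fibrant replacement of $\tau_0 X$ compatible with the natural decomposition of $X$, and then read off its homology using Lemma~\ref{lemma resolution for R}. Since the differential on $X$ is zero and $X_n = R$ for every $n$, the zeroth truncation splits in $\ch(R)_{\leq 0}$ as $\tau_0 X = \bigoplus_{j\leq 0} \Sigma^j R$. The candidate fibrant replacement is
\[
Y := \bigoplus_{j\leq 0} \Sigma^j I(R) \in \ch(R)_{\leq 0},
\]
and the candidate weak equivalence $\tau_0 X \to Y$ is the direct sum of the relative injective resolutions $\Sigma^j R \to \Sigma^j I(R)$. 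That this is an $\I$-weak equivalence is immediate from Proposition~\ref{prop propunch}.(3).

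The one point that requires care is verifying that $Y$ is $\I$-fibrant, given that $\I$ is only assumed closed under products, not under arbitrary direct sums. The saving observation is that $I(R) \in \ch(R)_{\leq 0}$, so for a fixed homological degree $n \leq 0$,
\[
Y_n = \bigoplus_{j=n}^{0} I(R)_{n-j}
\]
is a \emph{finite} direct sum of objects of $\I$. Finite direct sums coincide with finite products in any additive category, and $\I$ is closed under products, so $Y_n \in \I$. Hence $Y$ is $\I$-fibrant by Theorem~\ref{thm mcnpch}, and Proposition~\ref{prop prpfbmcs}.(4) identifies $Y$ with $I(\tau_0 X)$ up to homotopy equivalence; in particular their homology modules agree.

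Finally, since homology commutes with direct sums in $\ch(R)$,
\[
H_{1-i}(Y) \cong \bigoplus_{j\leq 0} H_{1-i-j}(I(R)).
\]
I then appeal to Lemma~\ref{lemma resolution for R}: the summand at $j = 1-i$ (permitted since $i\geq 1$) contributes $H_0(I(R)) \cong R$; the summands with $2-i \leq j \leq 0$ contribute $H_{1-i-j}(I(R)) \cong E(R/\mathfrak{m}_{i+j})$; all other summands vanish because $I(R)$ is supported in non-positive degrees. Re-indexing by $k = i+j \in \{2, \ldots, i\}$ yields precisely
\[
H_{1-i}(I(\tau_0 X)) \;\cong\; R \,\oplus\, \bigoplus_{2\leq k\leq i} E(R/\mathfrak{m}_k),
\]
as claimed. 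I expect no serious obstacle beyond the fibrancy check in the middle paragraph, where the boundedness of $I(R)$ is what rescues us from needing $\I$ to be closed under infinite sums.
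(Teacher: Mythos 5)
Your argument is correct and coincides with the paper's intended proof: decompose $\tau_0 X$ as $\bigoplus_{j\leq 0}\Sigma^j R$, take $\bigoplus_{j\leq 0}\Sigma^j I(R)$ as the fibrant replacement, and read off the homology degree by degree from Lemma~\ref{lemma resolution for R}. The paper records this only as ``a direct consequence of the previous lemma''; your observation that in each fixed degree the sum $\bigoplus_{j}I(R)_{n-j}$ is \emph{finite} (because $I(R)$ is concentrated in non-positive degrees), so that fibrancy does not require $\I$ to be closed under infinite coproducts, is a welcome clarification of the point the paper leaves implicit.
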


\begin{proof}
This is a direct consequence of the previous lemma.
\end{proof}

The unbounded complex $X$ is the key player in our main counterexample.

%%%%%%%%%%%%%%%%%%%%%%%%%%%%%%%%%%%%%%%%%%%%%%%%%%%%%%%%%%
\begin{theorem}
\label{theorem counterexample}
%%%%%%%%%%%%%%%%%%%%%%%%%%%%%%%%%%%%%%%%%%%%%%%%%%%%%%%%%%
For  Nagata's ring $R$ and the injective class $\I$ above,
the category of towers $\hbox{\rm Tow}(R, \I)$ does not form a
model approximation for $\ch(R)$. 
More precisely there exists a complex $X$ which is not $\I$-weakly equivalent to
the limit of the fibrant replacement of its truncation tower.
\end{theorem}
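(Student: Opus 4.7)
The plan is to take as counterexample the unbounded complex $X = \bigoplus_{i \in \Z} \Sigma^i R$ with zero differentials. Its truncation is $\tau_n X = \bigoplus_{i \leq n} \Sigma^i R$, and since each degree of $\bigoplus_{i \leq n} \Sigma^i I(R)$ is a finite direct sum of modules from $\I$ and it receives a direct sum of $\I$-weak equivalences from $\tau_n X$, we may take $I(\tau_n X) := \bigoplus_{i \leq n} \Sigma^i I(R)$ as an $\I$-fibrant replacement, where $I(R)$ is as in Lemma~\ref{lemma resolution for R}. With the structure maps $I(\tau_{n+1} X) \to I(\tau_n X)$ chosen to be the projections killing the top summand $\Sigma^{n+1} I(R)$ --- which are degreewise split surjections with $\I$-fibrant kernels --- the tower $\{I(\tau_n X)\}_n$ is a fibrant replacement of $\text{tow}(X)$ in $\text{Tow}(R,\I)$.

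Next I would compute the limit. Since limits in $\text{Ch}(R)$ are taken degreewise, in chain degree $k$ one has $(\lim I(\text{tow}(X)))_k = \lim_n \bigoplus_{k \leq i \leq n} I(R)_{k-i} = \prod_{i \geq k} I(R)_{k-i}$, so $\lim I(\text{tow}(X))$ identifies with the product chain complex $\prod_{i \in \Z} \Sigma^i I(R)$. Because the category of $R$-modules satisfies AB4*, products are exact, whence by Lemma~\ref{lemma resolution for R}
\[
H_k\bigl(\lim I(\text{tow}(X))\bigr) \cong \prod_i H_{k-i}(I(R)) \cong R \oplus \prod_{j \geq 2} E(R/\mathfrak{m}_j)
\]
for every $k$, the $R$-summand coming from $i = k$. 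The adjoint $g \colon X \to \lim I(\text{tow}(X))$ sends $X_k = R$ into the $i = k$ factor via the unit $R \to I(R)_0 = E(R)$, and therefore induces on $H_k$ the inclusion of the $R$-summand.

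Since every $W \in \I$ is injective, $\A(-,W)$ is exact and $H^n(\A(Y,W)) \cong \A(H_{-n}(Y), W)$ for any chain complex $Y$. Consequently $g$ is an $\I$-weak equivalence if and only if, for every $k$ and every $W \in \I$, the map $\A(H_k(g), W)$ is an isomorphism. As the cokernel of $H_k(g)$ is $\prod_{j \geq 2} E(R/\mathfrak{m}_j)$, the question reduces to whether $\A(\prod_{j \geq 2} E(R/\mathfrak{m}_j), W) = 0$ for every $W \in \I$; it suffices to exhibit a single $W$ where this fails.

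Take $W = E(R) = E(R/(0))$, which lies in $\I$ because $(0)$ is not maximal in Nagata's ring. Any element of $R = S^{-1} k[x_1, x_2, \ldots]$ involves only finitely many variables, hence escapes $\mathfrak{m}_j$ for all sufficiently large $j$; thus $\bigcap_{j \geq 2} \mathfrak{m}_j = 0$, and the diagonal gives an injection $R \hookrightarrow \prod_{j \geq 2} R/\mathfrak{m}_j \hookrightarrow \prod_{j \geq 2} E(R/\mathfrak{m}_j)$. By injectivity of $E(R)$, the canonical inclusion $R \hookrightarrow E(R)$ extends along this injection to a nonzero morphism $\prod_{j \geq 2} E(R/\mathfrak{m}_j) \to E(R)$, providing the required nontrivial hom. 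The key step of the plan is precisely this extension, which embodies the failure of left-completeness of $D(R)$ for Nagata's ring; everything else is organisation of the homology computation via AB4* together with the standard fact that homs into injectives commute with homology.
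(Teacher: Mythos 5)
Your proposal is correct and follows essentially the same route as the paper: same complex $X = \bigoplus_{i\in\Z}\Sigma^i R$, same fibrant tower $\bigoplus_{i\le n}\Sigma^i I(R)$, same identification of the limit with $\prod_i \Sigma^i I(R)$ and of its homology via AB4* and Lemma~\ref{lemma resolution for R}. The only cosmetic difference is in the final step: the paper cites the annihilator criterion for $\I$-acyclicity from Example~\ref{ex injcall-stideal}, whereas you make the obstruction explicit by noting $\bigcap_j\mathfrak{m}_j=0$ and then extending $R\hookrightarrow E(R)$ along $R\hookrightarrow\prod_j E(R/\mathfrak{m}_j)$ to produce a nonzero element of $\A\bigl(\prod_j E(R/\mathfrak{m}_j),E(R)\bigr)$; this is the same underlying fact, just argued directly rather than by appeal to the example.
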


\begin{proof}
The complex $X$ is the one we have constructed above, namely
$\oplus_{i \in \mathbf Z} \Sigma^i R$. 
Let us consider its tower approximation, which is, by
definition, the limit $Y$ of the tower given by the $\I$-relative
resolution of the successive truncations of $X$. From the previous lemma the
$n$th level of this tower is $\oplus_{i\leq n} \Sigma^i I(R)$ and
the structure maps are the projections. Therefore the limit is the
product $\prod_i \Sigma^i I(R)$. In particular we identify for any $i$
\[
H_{1-i}(Y) \cong R \times \prod_{j \geq 2} E(R/\mathfrak{m}_j).
\] 
The homotopy fiber of the natural map $X \rightarrow Y$ 
is thus an unbounded complex whose homology is $\prod_{j \geq 2} E(R/\mathfrak{m}_j)$ in each degree. 
This complex cannot be $\I$-acyclic since the annihilator of the image of $1$ via the
(diagonal) composite map
\[
R \rightarrow \prod_j R \rightarrow \prod_j R/\mathfrak{m}_j \rightarrow \prod_j E(R/\mathfrak{m}_j)
\]
is zero and this contradicts the description of $\I$-acyclic complexes given in Example~\ref{ex injcall-stideal}.
\end{proof}

%%%%%%%%%%%%%%%%%%%%%%%%%%%%%%%%%%%%%%%%%%%%%%%%%%%%%%%%%%
%%%%%%%%%%%%%%%%%%%%%%%%%%%%%%%%%%%%%%%%%%%%%%%%%%%%%%%%%%
\subsection{Well behaved classes of injectives}
\label{subsec:noproblem}
%%%%%%%%%%%%%%%%%%%%%%%%%%%%%%%%%%%%%%%%%%%%%%%%%%%%%%%%%%
%%%%%%%%%%%%%%%%%%%%%%%%%%%%%%%%%%%%%%%%%%%%%%%%%%%%%%%%%%
Nagata's ring, or other Noetherian rings of infinite Krull dimension, also have well behaved classes of injective modules. 
Let us fix for example a maximal ideal $\mathfrak{m}$ of height $n$.
Since the set of primes strictly contained in $\mathfrak{m}$ is saturated by \cite{CPSinj} we may consider the 
injective class $\I_{\mathfrak{m}}$ generated by $\{E(R/{\mathfrak p}) \ | \ {\mathfrak p} \varsubsetneq \mathfrak{m}\}$.

%%%%%%%%%%%%%%%%%%%%%%%%%%%%%%%%%%%%%%%%%%%%%%%%%%%%%%%%%%
\begin{theorem}\label{thm goodinbad}
%%%%%%%%%%%%%%%%%%%%%%%%%%%%%%%%%%%%%%%%%%%%%%%%%%%%%%%%%%
The category $R$-Mod satisfies axiom {\rm AB4*}-$\I_\mathfrak{m}$-$(n+1)$, where $n$ is 
$\textrm{height } \mathfrak{m}$. In particular the category of towers $\hbox{\rm Tow}(R, \I_{\mathfrak m})$ is
a model approximation for $\ch(R)$.
%%%%%%%%%%%%%%%%%%%%%%%%%%%%%%%%%%%%%%%%%%%%%%%%%%%%%%%%%%
\end{theorem}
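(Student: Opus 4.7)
The strategy is to reduce the statement to the finite Krull dimension case handled in Section~\ref{sec:finiteKdim}, applied to the localized ring $R_\mathfrak{m}$, and then invoke Theorem~\ref{thm ab4igivesapprox}.

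First I would observe that every object of $\I_\mathfrak{m}$ carries a canonical $R_\mathfrak{m}$-module structure. The generators $E(R/\mathfrak{p})$ with $\mathfrak{p} \subsetneq \mathfrak{m}$ are $R_\mathfrak{p}$-modules, hence $R_\mathfrak{m}$-modules, and the full subcategory of $R_\mathfrak{m}$-modules inside $R\text{-Mod}$ is closed under products and retracts. For any complex $X$ of $R_\mathfrak{m}$-modules and any $W \in \I_\mathfrak{m}$, the adjunction yields $\Hom_R(X, W) = \Hom_{R_\mathfrak{m}}(X, W)$, since every $R$-linear map between $R_\mathfrak{m}$-modules is automatically $R_\mathfrak{m}$-linear. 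Under the identification $\I_\mathfrak{m} = \I_{\mathfrak{m} R_\mathfrak{m}}$, where the latter denotes the injective class on $R_\mathfrak{m}\text{-Mod}$ associated by \cite{CPSinj} to the generization-closed subset $\operatorname{Spec}(R_\mathfrak{m}) \setminus \{\mathfrak{m} R_\mathfrak{m}\}$, this Hom comparison implies that $\I_\mathfrak{m}$-connectedness of $X$ over $R$ coincides with $\I_{\mathfrak{m} R_\mathfrak{m}}$-connectedness of $X$ over $R_\mathfrak{m}$.

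Next, given a countable family $(A_j)_{j \in J}$ of $R$-modules together with $\I_\mathfrak{m}$-relative resolutions $A_j \to I_j$ in $\ch(R)_{\leq 0}$, each $I_j$ is a complex of $R_\mathfrak{m}$-modules, so the map $A_j \to I_j$ factors uniquely through $(A_j)_\mathfrak{m}$; a direct check using the Hom comparison shows that the induced map $(A_j)_\mathfrak{m} \to I_j$ is an $\I_{\mathfrak{m} R_\mathfrak{m}}$-relative resolution over $R_\mathfrak{m}$. Moreover, the product $\prod_j I_j$ computed in $R\text{-Mod}$ coincides with the product computed in $R_\mathfrak{m}\text{-Mod}$, because the subcategory of $R_\mathfrak{m}$-modules is closed under products inside $R\text{-Mod}$. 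Applying Remark~\ref{rem:finite Kdim} to the Noetherian ring $R_\mathfrak{m}$ of finite Krull dimension $n$, I obtain that $R_\mathfrak{m}\text{-Mod}$ satisfies axiom {\rm AB4*}-$\I_{\mathfrak{m} R_\mathfrak{m}}$-$(n+1)$. Hence $\prod_j I_j$ is $(-n-2)$-$\I_{\mathfrak{m} R_\mathfrak{m}}$-connected as a complex of $R_\mathfrak{m}$-modules, and by the previous paragraph it is also $(-n-2)$-$\I_\mathfrak{m}$-connected as a complex of $R$-modules. This is precisely axiom {\rm AB4*}-$\I_\mathfrak{m}$-$(n+1)$ for $R\text{-Mod}$, and the model approximation statement then follows at once from Theorem~\ref{thm ab4igivesapprox}.

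The main subtle point is the careful identification of the two injective classes and of relative resolutions across the localization functor, which underlies the reduction. The rest is a routine verification that $\Hom$-groups into $R_\mathfrak{m}$-modules and countable products of $R_\mathfrak{m}$-modules are insensitive to whether the ambient category is $R\text{-Mod}$ or $R_\mathfrak{m}\text{-Mod}$, so that the finite Krull dimension result for $R_\mathfrak{m}$ transfers verbatim to the desired statement over~$R$.
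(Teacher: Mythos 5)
Your proposal is correct and follows essentially the same strategy as the paper: reduce to the Noetherian local ring $R_\mathfrak{m}$ of finite Krull dimension~$n$, by recognizing that $\I_\mathfrak{m}$-relative resolutions over $R$ are the same thing as relative resolutions over $R_\mathfrak{m}$ for the corresponding injective class, then invoke the finite Krull dimension result. The one place where your presentation diverges technically is how the reduction is implemented: the paper starts from an honest injective resolution $I$ of a bounded complex, excises the summands $E(R/\mathfrak{q})$ with $\mathfrak{q}\not\subsetneq\mathfrak{m}$ to exhibit the $\I_\mathfrak{m}$-fibrant replacement, and tensors the resulting short exact sequence with $R_\mathfrak{m}$; you instead work directly with the adjunction $-\otimes_R R_\mathfrak{m} \dashv \text{forget}$ and the observation that $\Hom_R(-,W)=\Hom_{R_\mathfrak{m}}((-)_\mathfrak{m},W)$ for $W\in\I_\mathfrak{m}$, which yields the identification of relative resolutions abstractly. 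Your route has the merit of verifying the AB4*-$\I_\mathfrak{m}$-$(n+1)$ clause of the statement explicitly (via Remark~\ref{rem:finite Kdim} and the fact that the inclusion $R_\mathfrak{m}\text{-Mod}\subset R\text{-Mod}$ preserves products), whereas the paper's proof is terser, focusing on the model approximation and leaving the transfer back to $R$ largely implicit.
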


\begin{proof}
  Let $X$ be an object in $\ch(\A)_{ \leq 0}$, let $I$ be an
  injective resolution for $X$, and let $I(X)$ be the $\I$--fibrant replacement
 of $X$ obtained by excising all the summands of $I$ isomorphic
  to $E(R/\mathfrak{q})$ 
  for $\mathfrak{q}$ not strictly contained in $\mathfrak{m}$. We have a short
exact sequence of chain complexes $0\ra K\ra I\ra I(X)\ra 0$, with
$K$ a complex of injectives all of which are direct sums of
$E(R/\mathfrak{q})$ 
for $\mathfrak{q}$ not strictly contained in $\mathfrak{m}$.
Since $I(X)$ is a complex of $\mathfrak{m}$--local modules, 
tensoring with $R_\mathfrak{m}$ gives the exact sequence 
\[
\xymatrix{
0 \ar[r] & K\otimes R_\mathfrak{m} \ar[r] & I\otimes R_\mathfrak{m} \ar[r] & I(X) \ar[r] & 0.
}
\]
The first complex is a complex of injectives, each of
which is a direct sum of injectives of the form $E(R/\mathfrak m)$.
Thus over the ring $R_\mathfrak{m}$, the complex 
$I(X)$ can be viewed as the fibrant replacement of $I\otimes R_\mathfrak{m}$
with respect to the injective class of injectives
$\I'=\I\cap\text{Spec}(R_\mathfrak{m})$.
But this reduces us to the case of the noetherian local ring
$R_\mathfrak{m}$ which is of finite Krull dimension.
Theorem~\ref{theorem finite Kdim} finishes the proof.
\end{proof}

%%%%%%%%%%%%%%%%%%%%%%%%%%%%%%%%%%%%%%%%%%%%%%%%%%%
%%%%%%%%%%%%%%%%%%%%%%%%%%%%%%%%%%%%%%%%%%%%%%%%%%%
\section{Further examples}
%%%%%%%%%%%%%%%%%%%%%%%%%%%%%%%%%%%%%%%%%%%%%%%%%%%
%%%%%%%%%%%%%%%%%%%%%%%%%%%%%%%%%%%%%%%%%%%%%%%%%%%
In this section we gather some other examples of relative homological algebra settings that may be found across the literature 
and show how they tie back to our framework.

%%%%%%%%%%%%%%%%%%%%%%%%%%%%%%%%%%%%%%%%%%%%%%%%%%%
\subsection{Some Grothendieck categories studied by Roos}
%%%%%%%%%%%%%%%%%%%%%%%%%%%%%%%%%%%%%%%%%%%%%%%%%%%
The original work of Roos is precisely about finding a way to deal with the failure of axiom AB4*. He provides a 
nice and elementary example of a Grothendieck category that satisfies axiom AB4*-$n$ but not AB4*-$(n-1)$. 
This example is very close in spirit to our study of injective classes of injectives for the category of modules over a ring of  
finite Krull dimension in Section~\ref{sec:finiteKdim}. 

%%%%%%%%%%%%%%%%%%%%%%%%%%%%%%%%%%%%%%%%%%%%%%%%%%%
\begin{proposition}\cite[Theorem~1.15]{MR2197371}
%%%%%%%%%%%%%%%%%%%%%%%%%%%%%%%%%%%%%%%%%%%%%%%%%%%
The Grothendieck category $Qcoh$ of quasicoherent sheaves on the complement of the maximal ideal $\mathfrak{m}$  of 
the spectrum of a local Noetherian ring $R$  satisfies  condition {\rm AB4*}-$n$ where $n = \max(\dim (R) - 1, 0)$, 
and no lower value of $n$ is possible.
\end{proposition}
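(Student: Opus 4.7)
The plan is to analyze $Qcoh$ via the adjoint pair $j^{\ast}\dashv j_{\ast}$ coming from the open immersion $j\colon X\hookrightarrow \mathrm{Spec}(R)$. The pullback $j^{\ast}$ identifies $Qcoh$ with the Gabriel quotient of $R$-Mod by the Serre subcategory of $\mathfrak m$-torsion modules, and its right adjoint $j_{\ast}$ is fully faithful ($j^{\ast}j_{\ast}=\mathrm{id}$). Because right adjoints preserve limits, $j_{\ast}$ converts the categorical product of a family $\{\mathcal F_i\}$ in $Qcoh$ into the ordinary product of $R$-modules, and combined with fully faithfulness this yields the explicit formula
\[
\prod\nolimits_{Qcoh}\mathcal F_i \;=\; j^{\ast}\Bigl(\prod\nolimits_{R}j_{\ast}\mathcal F_i\Bigr).
\]

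To identify the derived functors I would fix injective resolutions $\mathcal F_i\to\mathcal I_i^{\bullet}$ in $Qcoh$. Since $j^{\ast}$ is exact its right adjoint $j_{\ast}$ preserves injectives, so each $j_{\ast}\mathcal I_i^{\bullet}$ is a complex of injective $R$-modules whose cohomology computes $R^{\bullet}j_{\ast}\mathcal F_i$. The termwise product $\prod_{Qcoh}\mathcal I_i^{\bullet}=j^{\ast}\prod_{R}j_{\ast}\mathcal I_i^{\bullet}$ is a complex of injectives in $Qcoh$, so its $k$th cohomology is precisely $R^k\prod_{Qcoh}\mathcal F_i$. Exactness of both $\prod_R$ (AB4* for $R$-modules, used here in its countable form) and $j^{\ast}$ makes cohomology commute with these operations, giving
\[
R^k\!\prod\nolimits_{Qcoh}\mathcal F_i \;\cong\; j^{\ast}\Bigl(\prod\nolimits_{R} R^k j_{\ast}\mathcal F_i\Bigr).
\]
The exact triangle $R\Gamma_{\mathfrak m}\to\mathrm{id}\to Rj_{\ast}j^{\ast}$ in $D(R\text{-Mod})$, recalled in Appendix~\ref{sec:algebra}, then translates this into local cohomology: $R^k j_{\ast}(j^{\ast}M)\cong\widetilde{H^{k+1}_{\mathfrak m}(M)}$ for $k\geq 1$.

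Grothendieck's vanishing theorem asserts $H^{i}_{\mathfrak m}=0$ for $i>\dim R$, so $R^k j_{\ast}=0$ once $k\geq \dim R$; combined with the identification above this yields $R^k\prod=0$ for $k>\dim R-1$, i.e.\ axiom AB4*-$(\dim R-1)$. The edge cases $\dim R\in\{0,1\}$ collapse to ordinary AB4* and are trivial: either $X$ is empty or $X$ is zero-dimensional, in both cases $Qcoh$ has exact products and the $n=0$ statement applies.

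For sharpness when $d:=\dim R\geq 2$, I would take $\mathcal F_i=j^{\ast}R$ for every $i\in\mathbb N$, so that $R^{d-1}\prod\mathcal F_i$ becomes the image in $Qcoh$ of $\prod_{\mathbb N}H^d_{\mathfrak m}(R)$. The main obstacle I anticipate is precisely the verification that this image is nonzero, that is, that the product is not $\mathfrak m$-torsion. This reduces to exhibiting elements $x_n\in H^d_{\mathfrak m}(R)$ with annihilator exponents $a_n\to\infty$, so that the tuple $(x_n)$ is not killed by any fixed power of $\mathfrak m$. Such elements exist because $H^d_{\mathfrak m}(R)$ is a nonzero Artinian $R$-module that is not of finite length — in the Gorenstein case it is literally the Matlis envelope $E(R/\mathfrak m)$ — and if $R$ itself happens not to supply them one can replace $R$ by any $R$-module $M$ whose top local cohomology does. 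The existence of such an $M$ is guaranteed by Grothendieck non-vanishing at the top degree, which is where the bound $d-1$ is truly attained and hence shows that no smaller $n$ works.
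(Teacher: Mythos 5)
This proposition is quoted from Roos with a citation; the paper does not supply a proof, so I evaluate your argument on its own terms.

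Your approach is the natural one and is essentially correct in the positive direction. The identifications $\prod_{Qcoh}=j^{\ast}\circ\prod_R\circ j_{\ast}$, the fact that $j_{\ast}$ preserves injectives because $j^{\ast}$ is exact, the exactness of $\prod_R$ (AB4* for $R$-Mod) and of $j^{\ast}$, and the triangle $R\Gamma_{\mathfrak m}\to\mathrm{id}\to Rj_{\ast}j^{\ast}$ together give exactly the formula $R^k\prod_{Qcoh}\mathcal F_i\cong j^{\ast}\bigl(\prod_R R^k j_{\ast}\mathcal F_i\bigr)$, and Grothendieck vanishing $H^{i}_{\mathfrak m}=0$ for $i>\dim R$ then yields $R^k\prod_{Qcoh}=0$ for $k\geq d$, i.e.\ AB4*-$(d-1)$. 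The edge cases $d\leq 1$ are handled correctly.

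The sharpness argument has a genuine gap, though, and the proposed fallback would not rescue it. You need that $\prod_{\mathbb N}H^d_{\mathfrak m}(R)$ is not $\mathfrak m$-torsion, equivalently that no fixed power $\mathfrak m^N$ annihilates $H^d_{\mathfrak m}(R)$. You gesture at this by saying $H^d_{\mathfrak m}(R)$ is Artinian but not of finite length, but you do not justify the latter, and you add that if $R$ ``happens not to supply'' elements of unbounded torsion exponent one could replace $R$ by another module $M$. That escape route is closed: since $H^{d+1}_{\mathfrak m}=0$, the functor $H^d_{\mathfrak m}$ is right exact and commutes with direct limits, hence $H^d_{\mathfrak m}(M)\cong H^d_{\mathfrak m}(R)\otimes_R M$; so if $\mathfrak m^N$ kills $H^d_{\mathfrak m}(R)$ then it kills $H^d_{\mathfrak m}(M)$ for \emph{every} $M$, and then $j^{\ast}\bigl(\prod R^{d-1}j_{\ast}\mathcal F_i\bigr)=0$ for any choice of family. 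The way to close the gap is to prove directly that $H^d_{\mathfrak m}(R)$ is never of finite length when $d\geq 1$: passing to the completion (which does not change $H^d_{\mathfrak m}$), Grothendieck local duality identifies the Matlis dual of $H^d_{\mathfrak m}(\hat R)$ with the canonical module $\omega_{\hat R}$, a finitely generated $\hat R$-module of dimension $d\geq 1$; Matlis duality preserves finite length, so $H^d_{\mathfrak m}(R)$ cannot be finite length, and therefore has elements of unbounded $\mathfrak m$-torsion exponent. With that lemma in place your construction with $\mathcal F_i=j^{\ast}R$ does indeed show that $R^{d-1}\prod\neq 0$, completing the sharpness claim.
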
 
%%%%%%%%%%%%%%%%%

It turns out that injective classes of injectives on Grothendieck categories correspond to the so called hereditary torsion theories. 
Building on this observation, Virili recently investigated whether Roos' axiom AB4*-$n$ holds in localizations of Grothendieck 
categories with respect  to these hereditary torsion theories. The answer depends then on the Gabriel dimension of the localized category, 
a generalization of the Krull dimension to Grothendieck categories due to Gabriel. We refer to Virili's paper~\cite{Virili} for the precise statements.

%%%%%%%%%%%%%%%%%%%%%%%%%%%%%%%%%%%%%%%%%%%%%%%%%%%
 \subsection{Pure injective classes}
%%%%%%%%%%%%%%%%%%%%%%%%%%%%%%%%%%%%%%%%%%%%%%%%%%%
Purity is a vast subject, of which we will only present the (very) thin part that is directly related to our framework. 
As a general reference one could consult M.~Prest~\cite{MR2530988}, but let us recall the basic definitions.

Let $R$ be a ring, a morphism of $R$-modules   $f:M \rightarrow N$ is said to be \emph{pure} if and only if for any $R$-module $L$, 
$f\otimes id_L: M \otimes L\rightarrow N \otimes R$ is injective. 
Then a \emph{pure-injective module} (a.k.a. algebraically compact) is an $R$-module $W$ such that for any pure homomorphism $f$, 
the induced map $\Hom(f,W)$ is surjective. A product of pure-injectives is again pure-injective and module categories have enough pure-injectives \cite{MR2530988}. Thus, pure-injective modules form an injective class as defined in Definition~\ref{def injclass}.

The following theorem shows that rings of small
cardinality satisfies a very strong version of the relative AB4* axiom with respect to the injective class 
of pure-injectives: all objects are of finite pure-injective dimension.

% %%%%%%%%%%%%%%%%%
\begin{theorem}[Kielpinski-Simson\cite{MR0407089}, Gruson-Jensen\cite{MR0320112}]
Let $R$ be a ring of cardinality $\aleph_t$, with $t \in \mathbb{N}$. Then the pure global dimension  of $R$ is $\leq t+1$.
\end{theorem}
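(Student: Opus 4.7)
The plan is to follow the classical Gruson--Jensen strategy, which identifies pure-homological dimensions with derived inverse limits and then invokes Jensen's cardinality bound on the latter.

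First I would set up the pure-exact homological algebra. The class of pure-injective modules forms an injective class and, dually, the class $\mathcal{PP}$ of pure-projective modules (retracts of direct sums of finitely presented modules) forms a projective class with enough objects; thus every module $M$ admits a pure-projective resolution, and $\mathrm{Ext}^{*}_{pur}(M,-)$ can be computed from such a resolution. The pure global dimension of $R$ equals the supremum of the lengths of minimal pure-projective resolutions, equivalently the supremum over $M,N$ of the largest $n$ with $\mathrm{Ext}^{n}_{pur}(M,N)\neq 0$.

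Second, I would express each module as a filtered colimit of finitely presented modules, $M=\mathrm{colim}_{\alpha\in I}F_\alpha$. Here the cardinality hypothesis enters: since $|R|=\aleph_t$, every finitely presented module has cardinality at most $\aleph_t$, and the poset of finitely presented ``approximations'' of any fixed module $M$ can be arranged to have cardinality at most $\aleph_t$ (by taking only those indexed by finite subsets of a generating set of $M$ of cardinality $\leq\aleph_t$). The key computational input, due to Gruson--Jensen, is an isomorphism
\[
\mathrm{Ext}^{n+1}_{pur}(M,N)\;\cong\;{\lim_{I}}^{(n)}\,\mathrm{Hom}_R(F_\alpha,N)
\]
for every module $N$. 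The proof of this formula uses the canonical pure-projective resolution of $M$ built from the bar construction on the directed system $(F_\alpha)$: the terms are direct sums indexed by chains in $I$ of finitely presented modules, and applying $\mathrm{Hom}_R(-,N)$ identifies the resulting cochain complex with the Roos complex that computes $\lim^{(n)}$.

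Third, I would invoke Jensen's theorem on derived functors of inverse limits: if $I$ is a directed set of cardinality at most $\aleph_t$, then ${\lim_I}^{(n)}=0$ for all $n\geq t+1$. Combined with the Gruson--Jensen formula this yields $\mathrm{Ext}^{n}_{pur}(M,N)=0$ for $n\geq t+2$, so the pure-projective dimension of every module is bounded by $t+1$, giving the desired bound on the pure global dimension.

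The main obstacle is the second step, namely the explicit identification of $\mathrm{Ext}_{pur}$ with the derived inverse limit. One must produce a pure-projective resolution of $M$ that is natural in the chosen filtered system $(F_\alpha)$, and verify that the resulting comparison map to the Roos complex is a quasi-isomorphism; this is where the combinatorics of finitely presented approximations really enters. Jensen's bound itself is comparatively clean once the correct complex computing $\lim^{(n)}$ is in hand, and the cardinality count in the first step is routine from $|R|=\aleph_t$.
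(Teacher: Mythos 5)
The paper does not prove this theorem; it quotes it from Kie{\l}pi{\'n}ski--Simson and Gruson--Jensen, so there is no in-paper argument to compare against. Your strategy is nevertheless the one those authors use: write $M$ as a filtered colimit of finitely presented modules over a directed poset $I$, identify pure Ext with the derived inverse limit $\lim_I^{(*)}\mathrm{Hom}_R(F_\alpha,N)$ via the bar-type pure-projective resolution, and invoke Jensen's (or Mitchell's) cardinality bound on $\lim^{(i)}$. The architecture is the correct classical one.

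Two things need fixing. The degree shift in your Gruson--Jensen formula and your statement of Jensen's bound are both off by one, in compensating directions. Applying $\mathrm{Hom}_R(-,N)$ to the bar resolution produces precisely the Roos complex for $\lim_I^{(*)}$, so the identification is $\mathrm{Pext}^n_R(M,N)\cong\lim_I^{(n)}\mathrm{Hom}_R(F_\alpha,N)$ with \emph{no} shift (your own description of the mechanism says as much), while Jensen's theorem gives $\lim_I^{(n)}=0$ for $n\geq t+2$, not $n\geq t+1$, when $|I|\leq\aleph_t$; for $t=0$ this is the familiar vanishing of $\lim^{(n)}$ for $n\geq 2$ over a countable directed set. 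Your two slips cancel and land on the right bound, but a careful write-up must correct both. The more substantial gap is the cardinality control on $I$: you index by finite subsets of a generating set of $M$ of cardinality $\leq\aleph_t$, but a general $R$-module need not admit so small a generating set even when $|R|=\aleph_t$ (a free module of large rank, say). The actual proofs include a reduction step---via a continuity argument writing $M$ as a directed union of pure submodules of cardinality $\leq\aleph_t$, or via bounding the cofinality of the index poset---which makes Jensen's theorem applicable to the relevant system. That reduction is what your sketch is missing.
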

% %%%%%%%%%%%%%%%%%

Applying this to our framework we obtain immediately the analogous result to Theorem~\ref{theorem finite Kdim}.

% %%%%%%%%%%%%%%%%%
\begin{corollary}
Let $R$ be a ring of cardinality $\aleph_t$ with $t \in \mathbb{N}$, and let $\mathcal{PI}$ denote the class of pure-injective modules. 
Then the standard Quillen pair
\[
\text{\rm tow}:\text{\rm Ch}(R)\rightleftarrows \text{\rm Tow}(R,\mathcal{PI}):\text{\rm lim}
\]
is a model approximation. \hfill{$\square$}
\end{corollary}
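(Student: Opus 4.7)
The plan is to reduce the Corollary to Theorem~\ref{thm ab4igivesapprox}: it suffices to show that the category of $R$-modules satisfies axiom \textrm{AB4*}-$\mathcal{PI}$-$(t+1)$, since a model approximation then follows automatically.

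First, recall that by the theorem of Kielpinski--Simson and Gruson--Jensen cited above, every $R$-module $A$ admits a pure-injective resolution of length at most $t+1$; that is, a bounded complex $I' \in \text{Ch}(\A)_{\leq 0}$ with $I'_k = 0$ for $k < -(t+1)$, whose terms lie in $\mathcal{PI}$ and which is $\mathcal{PI}$-weakly equivalent to $A$. This is precisely a bounded fibrant replacement of $A$ in the relative model structure of Theorem~\ref{thm mcnpch}.

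Now given a countable family $(A_j)_{j\in J}$ together with an \emph{arbitrary} choice of relative resolutions $A_j \to I_j$ in $\text{Ch}(\A)_{\leq 0}$, one has to show that $\prod_j I_j$ is $(-t-2)$-$\mathcal{PI}$-connected. The idea is to compare with the bounded resolutions: for each $j$ pick a bounded pure-injective resolution $I'_j$ of length at most $t+1$. Since both $I_j$ and $I'_j$ are $\mathcal{PI}$-fibrant and all objects in $\text{Ch}(\A)_{\leq 0}$ are $\mathcal{PI}$-cofibrant, the $\mathcal{PI}$-weak equivalence $I_j \simeq I'_j$ is in fact a chain homotopy equivalence by Proposition~\ref{prop prpfbmcs}(4). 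Chain homotopy equivalences are preserved under arbitrary products (products commute with the cylinder/path object constructions degreewise), so $\prod_j I_j$ is chain homotopy equivalent to $\prod_j I'_j$, and hence $\mathcal{PI}$-weakly equivalent to it by Proposition~\ref{prop propunch}(1).

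Finally, the complex $\prod_j I'_j$ is concentrated in homological degrees $0, -1, \dots, -(t+1)$ since products are computed levelwise. Consequently $\A(\prod_j I'_j, W)$ is a cochain complex concentrated in cohomological degrees $0, 1, \dots, t+1$, so its cohomology vanishes in degrees $\geq t+2$; that is, $\prod_j I'_j$ is $(-t-2)$-$\mathcal{PI}$-connected. By the $\mathcal{PI}$-weak equivalence just established, $\prod_j I_j$ shares this connectivity, establishing axiom \textrm{AB4*}-$\mathcal{PI}$-$(t+1)$ and concluding the proof via Theorem~\ref{thm ab4igivesapprox}. The only subtle point — and really the only step where care is needed — is the passage from an arbitrary resolution $I_j$ to the bounded one $I'_j$; everything else is formal once the Kielpinski--Simson--Gruson--Jensen bound on pure global dimension is in hand.
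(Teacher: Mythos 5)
Your proof is correct and follows the route the paper intends: deduce axiom AB4*-$\mathcal{PI}$-$(t+1)$ from the Kielpinski--Simson--Gruson--Jensen bound on pure global dimension, and then invoke Theorem~\ref{thm ab4igivesapprox}. The paper simply declares the corollary immediate; you usefully spell out the one non-formal point, namely that an \emph{arbitrary} relative resolution $I_j$ may be compared with a length-$(t+1)$ one $I'_j$ via a chain homotopy equivalence (Proposition~\ref{prop prpfbmcs}(4), both being fibrant replacements of the cofibrant $A_j$), and that such equivalences pass to products so that $\prod_j I_j$ inherits the required $(-t-2)$-$\mathcal{PI}$-connectivity from the bounded $\prod_j I'_j$.
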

% %%%%%%%%%%%%%%%%%

% %%%%%%%%%%%%%%%%%
\subsection{Gorenstein homological algebra}
% %%%%%%%%%%%%%%%%%

This is again a vast and very active research subject, for which we refer for instance to Enochs-Jenda~\cite{MR1753146} and Holm~\cite{MR2038564}.

Given a ring $R$ an $R$-module $E$ is said to be \emph{Gorenstein injective} if there exists an exact complex of injective modules
\[
\xymatrix{
I_ \bullet: \cdots  \ar[r] & I_2 \ar[r] & I_1 \ar[r] & I_0 \ar[r] & I_{-1} \ar[r] & \cdots
}
\]
such that for any injective module $J$ the complex $\Hom(J,I_\bullet)$ is acyclic and $E= \ker(I_0 \rightarrow I_{-1})$. Denote the class of
Gorenstein injective modules by $\mathcal{GI}$. We learn in \cite[Theorem~2.6]{MR2038564} that $\mathcal{GI}$ 
contains all injective modules, and that it is closed under arbitrary products and under direct summands.

The existence of enough Gorenstein injectives (a.k.a. Gorenstein injective pre-envelopes) for general modules is more problematic. Nevertheless 
Holm shows in \cite[Theorem 2.15]{MR2038564} that any $R$-module of finite Gorenstein injective dimension admits a Gorenstein injective 
pre-envelope and thus a Gorenstein injective resolution in the sense of the present work (or ``coproper right Gorenstein injective resolution" in 
Holm's terminology). Enochs and Lopez-Ramos prove also in~\cite{MR1926201} that there are enough Gorenstein injectives in any Noetherian ring.

% %%%%%%%%%%%%%%%%%
\begin{proposition}[\cite{MR1926201}]
% %%%%%%%%%%%%%%%%%
The class $\mathcal{GI}$ of Gorenstein injective modules is an injective class for any Noetherian ring.
\end{proposition}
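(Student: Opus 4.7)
The plan is to verify directly the three axioms of Definition~\ref{def injclass}: that $\mathcal{GI}$ is closed under retracts, closed under products, and that the category $R\text{-Mod}$ has enough $\mathcal{GI}$-injectives. Concretely, for the third point, I must produce for any $R$-module $M$ a homomorphism $f\colon M \to W$ with $W \in \mathcal{GI}$ such that $\Hom(f, V)\colon \Hom(W, V) \to \Hom(M, V)$ is surjective for every Gorenstein injective $V$.

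The first two closure properties are already in the literature. I would simply invoke \cite[Theorem~2.6]{MR2038564}, where Holm shows that the class of Gorenstein injective modules over an arbitrary ring is closed under arbitrary products and under direct summands; the latter includes closure under retracts. Note these properties hold without any Noetherian hypothesis.

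For the third condition, the natural strategy is to use Gorenstein injective pre-envelopes, since a pre-envelope $M \to W$ with $W \in \mathcal{GI}$ is precisely a $\mathcal{GI}$-monomorphism in the sense of Definition~\ref{def W-monoepi}. This is where the Noetherian hypothesis becomes essential: Enochs and Lopez-Ramos prove in \cite{MR1926201} that over a Noetherian ring every $R$-module admits such a pre-envelope (and in fact a Gorenstein injective envelope). Combining these three ingredients immediately yields the proposition.

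The main obstacle is really the Enochs--Lopez-Ramos existence theorem itself, which I would treat as a black box since the proposition explicitly refers to it. The heart of their argument exploits that over a Noetherian ring direct sums of injectives remain injective, together with a cogeneration/accessibility argument based on the closure properties of $\mathcal{GI}$ established by Holm, to build the required pre-envelopes. Beyond that, assembling the axioms of an injective class is a matter of bookkeeping.
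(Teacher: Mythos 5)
Your proposal matches the paper's reasoning exactly: the paper also derives closure under products and retracts from Holm's Theorem~2.6 and invokes Enochs--Lopez-Ramos for the existence of Gorenstein injective pre-envelopes over Noetherian rings, observing that these are precisely $\mathcal{GI}$-monomorphisms. There is no further argument to compare, since the paper treats this as a direct assembly of cited results, just as you do.
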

% %%%%%%%%%%%%%%%%%

If we wish to ensure that there are enough Gorenstein injectives, it is therefore enough to assume that all modules have finite 
Gorenstein injective dimension. It would be interesting to have conditions ensuring that for a given ring the relative version of axiom 
AB4*-$n$ is satisfied, but for the moment we confine ourselves to the stronger condition that there is a bound on the Gorenstein 
injective dimension of all modules. By Enochs-Jenda~\cite{MR1753146} this characterizes Gorenstein rings. As
above we readily deduce the following proposition. The (finite) dimension of the ring is the natural number $n$ such that
the category of $R$-modules satisfies AB4*-$\mathcal{GI}$-$n$.

% %%%%%%%%%%%%%%%%%
\begin{proposition}
% %%%%%%%%%%%%%%%%%
Let $R$ be a Gorenstein ring. Then the standard Quillen pair
\[
\text{\rm tow}:\text{\rm Ch}(R)\rightleftarrows \text{\rm Tow}(R,\mathcal{GI}):\text{\rm lim}
\]
is a model approximation. \hfill{$\square$}
\end{proposition}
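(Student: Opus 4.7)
The plan is to verify that the category of $R$-modules satisfies axiom AB4*-$\mathcal{GI}$-$n$, where $n$ is the finite Gorenstein injective global dimension of the Gorenstein ring $R$, and then invoke Theorem~\ref{thm ab4igivesapprox}. By the Enochs--Jenda characterization recalled just above, every $R$-module $M$ admits a $\mathcal{GI}$-resolution $I$ supported in homological degrees $[-n,0]$: one has $I_i \in \mathcal{GI}$ for $-n \leq i \leq 0$ and $I_i = 0$ otherwise, with $M \to I$ a $\mathcal{GI}$-weak equivalence. Such an $I$ is $\mathcal{GI}$-fibrant in the model structure of Theorem~\ref{thm mcnpch}.

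My first step would be to observe that $(-n-1)$-$\mathcal{GI}$-connectivity of a product of resolutions is independent of the choice of resolutions. Indeed, any two $\mathcal{GI}$-resolutions of a module $M$ are both $\mathcal{GI}$-fibrant and $\mathcal{GI}$-weakly equivalent, hence chain homotopy equivalent by Proposition~\ref{prop prpfbmcs}(4). Chain homotopies pass to arbitrary products levelwise, so for any countable family $(A_j)$, two choices $\prod_j I_j$ and $\prod_j J_j$ of products of $\mathcal{GI}$-resolutions of the $A_j$ are chain homotopy equivalent, and in particular $\mathcal{GI}$-equivalent by Proposition~\ref{prop propunch}(1). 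Consequently it is enough to verify the axiom for a single convenient choice of resolutions.

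For each $A_j$ I would then choose a bounded $\mathcal{GI}$-resolution $I_j$ concentrated in homological degrees $[-n,0]$, as provided by the first step. The product $\prod_j I_j$ is then also concentrated in degrees $[-n,0]$, so for every $W \in \mathcal{GI}$ the cochain complex $\A(\prod_j I_j, W)$ lives in cohomological degrees $[0,n]$, and in particular has trivial cohomology in degrees $\geq n+1$. By Definition~\ref{def Itrivial} this is exactly the statement that $\prod_j I_j$ is $(-n-1)$-$\mathcal{GI}$-connected, establishing AB4*-$\mathcal{GI}$-$n$. The conclusion then follows directly from Theorem~\ref{thm ab4igivesapprox}.

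The core idea is the reduction via chain homotopy equivalence to a bounded choice of resolution; this sidesteps the (delicate) question of how $\A(-,W)$ interacts with countable products, which in general is badly behaved. No serious obstacle is anticipated, since the argument essentially mirrors the bounded-dimension strategy already used in Theorem~\ref{theorem finite Kdim} and in the pure-injective corollary above, with finite Gorenstein injective global dimension playing the role of finite Krull dimension (respectively finite pure global dimension).
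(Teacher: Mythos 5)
Your argument is correct and is essentially the proof the paper leaves implicit: it reduces, via chain-homotopy invariance of products, to a single bounded choice of coproper resolutions, which is exactly the mechanism behind the pure-injective corollary and Remark~\ref{rem:finite Kdim}. The one point worth tightening is that the existence of a $\mathcal{GI}$-fibrant replacement concentrated in homological degrees $[-n,0]$ is not a verbatim consequence of the Enochs--Jenda bound on ordinary Gorenstein injective dimension; it needs the standard dimension-shifting step showing that the $n$-th cosyzygy of a coproper Gorenstein injective resolution (which is in particular exact, since $\mathcal{GI}$ contains all injectives) is again Gorenstein injective and that the resulting truncation is still coproper, for which one should cite Holm~\cite{MR2038564}.
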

% %%%%%%%%%%%%%%%%%

\appendix
%%%%%%%%%%%%%%%%%%%%%%%%%%%%%%%%%%%%%%
%%%%%%%%%%%%%%%%%%%%%%%%%%%%%%%%%%%
\section{Relative homological algebra for left bounded complexes}
\label{app left}
%%%%%%%%%%%%%%%%%%%%%%%%%%%%%%%%%%%%
%%%%%%%%%%%%%%%%%%%%%%%%%%%%%%%%%%%%%

In this section we work in a fixed abelian category $\A$, and we fix an injective class $\I$, as in Definition~\ref{def injclass}. 
In particular we assume that there are enough relative injectives. We want to show that one can equip $\ch_{\leq 0}(\A)$
with an $\I$-relative Quillen model structure. We basically  follow Quillen's arguments in \cite{MR0223432}. We will use
the terminology ($\I$-cofibrations, $\I$-weak equivalences, etc.) as introduced in Theorem~\ref{thm mcnpch}.
Before going into the homotopical subtleties, let us recall  a couple of standard
of constructions.

%%%%%%%%%%%%%%%%%%%%%%%%%%%%%%%%%%%%%%%%%%%%%%%%%%%%%%%%%%%%%%%%%%%%%%%%%%%%
\begin{point}{ \bf The cone construction.}
\label{point cone}
%%%%%%%%%%%%%%%%%%%%%%%%%%%%%%%%%%%%%%%%%%%%%%%%%%%%%%%%%%%%%%%%%%%%%%%%%%%%
Let $X$ be a chain complex in $\ch_{\leq 0}(\A)$. Define a complex
$CX$ as follows: $CX_0 = X_{-1}$ and for any $n< 0$ $CX_{n} =
X_{n}\oplus X_{n-1}$. The differential $CX_0 \rightarrow CX_{-1}$
is $(Id, d)$ and the lower ones $X_{n} \oplus X_{n-1}
\rightarrow X_{n-1} \oplus X_{n-2}$ are given in matrix form by
\[
\left[ \begin{matrix} 
d & (-1)^{n} Id \\ 
0 & d  
\end{matrix}\right]
\]
There is a natural chain map $X \rightarrow CX$ given by the inclusion on the first factor, except in degree zero
where we use the differential.
\end{point}

%%%%%%%%%%%%%%%%%%%%%%%%%%%%%%%%%%%%%%%%%%%%%%%%%%%%%%%%%%%%%%%%%%%%%%%%%%%%
\begin{lemma}\label{lemma cone}
%%%%%%%%%%%%%%%%%%%%%%%%%%%%%%%%%%%%%%%%%%%%%%%%%%%%%%%%%%%%%%%%%%%%%%%%%%%%
The cone $CX$ of any complex $X \in \ch_{\leq 0}(\A)$ is acyclic.
The chain map $X \rightarrow CX$ is a split injection in strictly negative degrees, so in particular
an $\I$-cofibration. \hfill{$\square$}
\end{lemma}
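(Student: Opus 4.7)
The plan is to split the lemma into two independent assertions and handle each directly from the definitions.

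For the acyclicity of $CX$, I would exhibit an explicit chain contraction, i.e.\ a degree $+1$ endomorphism $h$ of $CX$ with $dh+hd=\mathrm{id}_{CX}$. Since $CX_1=0$, one is forced to take $h_0=0$, so the equation in degree $0$ reduces to requiring that $h_{-1}$ be a retraction of the differential $CX_0\to CX_{-1}$, $y\mapsto(y,dy)$. I would therefore set $h_{-1}(a,b)=a$, i.e.\ projection onto the first summand of $CX_{-1}=X_{-1}\oplus X_{-2}$. For $n\le -2$ I would then put $h_n(a,b)=(0,\varepsilon_n a)$ in $CX_{n+1}=X_{n+1}\oplus X_n$, with a sign $\varepsilon_n=\pm 1$ to be fixed. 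Plugging into the formula for the differential of $CX$, the identity $dh+hd=\mathrm{id}$ becomes a $2\times 2$ matrix equation whose cross terms cancel exactly when the sign $\varepsilon_n$ is chosen to be opposite in sign to $(-1)^n$ that appears in the differential; this forces $\varepsilon_n=(-1)^{n+1}$, and the verification then becomes a direct computation on an element $(a,b)\in CX_n$. Once $h$ is constructed, $CX$ is chain-homotopy equivalent to $0$, so it is acyclic in the ordinary sense and moreover $\I$-trivial for every injective class $\I$ by Proposition~\ref{prop propunch}(1).

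For the cofibration statement, in every strictly negative degree $n<0$ the map $X_n\to CX_n=X_n\oplus X_{n-1}$ is the canonical inclusion into the first summand, for which the projection onto $X_n$ is an obvious retraction. Hence $X_n\to CX_n$ is a split monomorphism for every $n<0$, and by Remark~\ref{rem mono properties} every split monomorphism is automatically an $\I$-monomorphism for any collection $\I$. This gives degreewise the $\I$-cofibration condition (in the form relevant for the model structure on $\ch_{\le 0}(\A)$ constructed in the appendix, where the condition is required in strictly negative degrees).

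The main obstacle is really just the sign bookkeeping in the first paragraph: the alternating $(-1)^n$ factors in the differential of $CX$ must be absorbed into alternating signs in the homotopy $h$, and getting these to cancel cleanly on both diagonal and off-diagonal entries of the matrix $dh+hd$ is the only nontrivial point. Everything else reduces to unravelling the definitions.
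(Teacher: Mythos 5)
Your chain contraction is correct; the sign choice $\varepsilon_n=(-1)^{n+1}$ makes $dh+hd=\mathrm{id}_{CX}$ hold in every degree, including the boundary degrees $0$ and $-1$ where the truncation $CX_0=X_{-1}$ forces $h_{-1}$ to be the first projection rather than the generic formula. (The paper marks this lemma as immediate and gives no argument, so there is no proof to compare against.) You were also right to flag the degree-$0$ issue in the cofibration claim: the component $X_0\to CX_0=X_{-1}$ of the chain map is the differential $d_0$, which is in general \emph{not} an $\I$-monomorphism --- for example, when $\A$ is a module category and $\I$ is the class of all injectives, $\I$-monomorphisms are ordinary monomorphisms, while $d_0$ fails to be injective for any injective resolution of a nonzero module. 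The lemma is thus correct only under the reading you made explicit, namely that a cofibration in $\ch_{\leq 0}(\A)$ need be an $\I$-monomorphism only in strictly negative degrees; this is exactly what the lifting arguments in Propositions~\ref{prop LLP} and~\ref{prop RLP} actually use, and a literal reading of Theorem~\ref{thm mcnpch} (demanding $\I$-monomorphisms in all degrees $i\le n$) would also break the factorization step in Lemma~\ref{lem trivmapcoftrivfib}.
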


%%%%%%%%%%%%%%%%%%%%%%%%%%%%%%%%%%%%%%%%%%%%%%%%%%%%%%%%%%%%%%%%%%%%%%%%%%%%
\begin{point}{\bf The mapping cylinder.}
\label{point cylinder}
%%%%%%%%%%%%%%%%%%%%%%%%%%%%%%%%%%%%%%%%%%%%%%%%%%%%%%%%%%%%%%%%%%%%%%%%%%%%
Let $f: N \rightarrow M$ be a morphism of left bounded chain complexes.
Denote by $\partial$ and $d$ respectively the differentials of the
complexes $N$ and $M$. We define a new complex $\hbox{\textrm Cyl}(f)$ as follows : $\hbox{\textrm Cyl}(f)_0 = N_0 \oplus M_0$ 
and for $i <0$ $\hbox{\textrm Cyl}(f)_{i} = N_{i} \oplus M_{i+1} \oplus M_{i}$.

The differentials are given as follows
%\[
%\xymatrix{
%P_0 = \ar[d] & N_0 \ar[d]^{\partial} \ar[dr]^{-f}& & M_0 \ar[dl]^{Id} \ar[d]^d \\
%P_{-1} = & N_{-1} & M_{0} & M_{-1} }
%\]
%For $i> 1$
\[
\xymatrix{
\hbox{\textrm Cyl}(f)_{i} = \ar[d] & N_{i} \ar[d]^{\partial} \ar[drr]^{(-1)^{i-1}f}& & M_{i+1} \ar[d]^d && M_{i} \ar[dll]_{(-1)^{i}Id} \ar[d]^d \\
\hbox{\textrm Cyl}(f)_{i-1} = & N_{i-1} && M_{i} && M_{i-1} }
\]
%This complex is the gluing of the cones of $f : M \rightarrow N$ and $-Id : M \rightarrow M$ along their bases.

We have a level-wise split injection $N \rightarrow \hbox{\textrm Cyl}(f)$ given by 
$(Id, f)$ whose cofiber is acyclic. The splitting is given by the projection on the first factor $\hbox{\textrm Cyl}(f) \rightarrow N$ (a
chain map). We have also a level-wise  split epimorphism $\hbox{\textrm Cyl}(f) \rightarrow M$, given by the projection on the last factor.
This shows the following lemma.
\end{point}

%%%%%%%%%%%%%%%%%%%%%%%%%%%%%%%%%%%%%%%%%%%%%%%%%%%%%%%%%%%%%%%%%%%%%%%%%%%%
\begin{lemma}\label{lem cylinder}
%%%%%%%%%%%%%%%%%%%%%%%%%%%%%%%%%%%%%%%%%%%%%%%%%%%%%%%%%%%%%%%%%%%%%%%%%%%%
The factorization $N \rightarrow \hbox{\rm Cyl}(f) \rightarrow M$ consists in a trivial $\I$-cofibration followed by a degreewise split epimorphism.
 \hfill{$\square$}
\end{lemma}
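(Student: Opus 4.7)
The plan is to verify the three claims in the lemma directly from the explicit formulas defining the mapping cylinder, noting that the heart of the matter is already visible in the diagram displayed in~\ref{point cylinder}: the projection onto the first factor $\text{Cyl}(f)\to N$ is a chain map that splits the inclusion, and the projection onto the last factor $\text{Cyl}(f)\to M$ is a chain map split by the inclusion into the last summand.

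First I would check that the inclusion $i\colon N\to\text{Cyl}(f)$, given by $n\mapsto(n,f(n))$ in degree $0$ and by $n\mapsto(n,0,0)$ in negative degrees, really is a chain map; this is immediate from the matrix form of the differential. The projection $p\colon\text{Cyl}(f)\to N$ onto the first factor is manifestly a degreewise split retraction of $i$, so each $i_k$ is a split monomorphism and in particular an $\mathcal I$-monomorphism by Remark~\ref{rem mono properties}. Thus $i$ is an $\mathcal I$-cofibration.

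Next I would show $i$ is an $\mathcal I$-weak equivalence. The cleanest route is to exhibit a chain homotopy $h\colon\text{Cyl}(f)\to\text{Cyl}(f)$ between $i\circ p$ and the identity: set $h_k(n,m',m)=(0,\pm m,0)$ in negative degrees (with signs chosen to match the matrix formula for $d$), and $h_0=0$. A short computation with the differential gives $dh+hd=\operatorname{id}-i\circ p$. This makes $i$ a chain homotopy equivalence, so by Proposition~\ref{prop propunch}(1) it is an $\mathcal I$-weak equivalence for \emph{every} injective class $\mathcal I$. Combined with the previous paragraph, $i$ is a trivial $\mathcal I$-cofibration. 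As a sanity check one could alternatively identify the cokernel of $i$ with (a shift of) the cone on $\operatorname{id}_M$, which is contractible, hence $\mathcal I$-trivial by Proposition~\ref{prop propunch}(4); Proposition~\ref{prop propunch}(2) then also yields that $i$ is an $\mathcal I$-weak equivalence.

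Finally, the projection $q\colon\text{Cyl}(f)\to M$ onto the last factor ($(n,m)\mapsto f(n)+m$ in degree $0$ and $(n,m',m)\mapsto m$ in negative degrees) is verified to commute with the differentials by inspection of the matrix, and the inclusion $m\mapsto(0,0,m)$ (respectively $m\mapsto (0,m)$ in degree $0$) is a degreewise section of $q$. Hence $q$ is a degreewise split epimorphism. The composite $q\circ i$ is just $f$, completing the factorization.

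The only step with any genuine content is writing down the chain homotopy $h$ with the correct signs; the rest is straight from the definition. Because the entire argument is natural and degreewise, no hypotheses beyond those already in place (abelian category, injective class $\mathcal I$) are needed, and in particular the same proof works over $\ch(\A)$ or $\ch_{\leq n}(\A)$ without modification.
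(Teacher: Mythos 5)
Your plan -- check the two maps are chain maps, exhibit the retraction, and conclude triviality either by a chain homotopy or by observing the cokernel is contractible -- is sound, and your ``sanity check'' route via the contractible cokernel is essentially the proof the paper gives. However, the explicit formulas you propose are incompatible with the paper's differential, and the errors sit precisely in the step you describe as the only one with genuine content. First, $n\mapsto(n,0,0)$ in negative degrees is \emph{not} a chain map: from the matrix one computes $d(n,0,0)=(\partial n,\,(-1)^{i-1}f(n),\,0)$, whose middle component is not zero. The paper's $(\mathrm{Id},f)$ means $n\mapsto(n,0,f(n))$; then the two $f(n)$ contributions in the $M_i$-coordinate cancel since $(-1)^{i-1}f(n)+(-1)^i f(n)=0$, and $i$ is a chain map. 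Second, with that same differential your $q_0(n,m)=f(n)+m$ is not a chain map (it would require $f(\partial n)=0$), and with your $i$ it would give $q\circ i=2f$ in degree $0$; the last-factor projection is $q_0(n,m)=m$, which composes correctly to $q\circ i=f$. Third, $h_k(n,m',m)=(0,\pm m,0)$ is dimensionally impossible: $m\in M_k$, but the middle slot of $\mathrm{Cyl}(f)_{k+1}=N_{k+1}\oplus M_{k+2}\oplus M_{k+1}$ is $M_{k+2}$. The homotopy that actually works places $m'\in M_{k+1}$ into the third slot, e.g.\ $h_k(n,m',m)=(0,0,(-1)^{k+1}m')$ for $k<0$ and $h_0=0$, which one can verify satisfies $dh+hd=\mathrm{id}-ip$.

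So the gap is concrete: if you carried out the verification you call ``immediate from the matrix form,'' you would find your $i$ and $q$ fail to be chain maps and your $h$ does not typecheck. Once the inclusion is corrected to $(\mathrm{Id},0,f)$ and the homotopy to $(0,0,(-1)^{k+1}m')$, your primary argument (chain homotopy plus Proposition~\ref{prop propunch}(1)) goes through, and it gives a perfectly good alternative to the paper's terse appeal to the contractibility of the cofiber. One further small caution: the paper writes ``acyclic'' where ``contractible'' is what is really needed -- for an arbitrary injective class $\I$, mere acyclicity of the cokernel would not give $\I$-triviality, but contractibility (which cones do enjoy, and which your explicit $h$ exhibits) does, via Proposition~\ref{prop propunch}(4).
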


%%%%%%%%%%%%%%%%%%%%%%%%%%%%%%%%%%%%%%%%%%%%%%%%%%%%%%%%%%%%%%%%%%%%%%%%%%%%
\begin{point}{\bf Path object.}
\label{pt path}
%%%%%%%%%%%%%%%%%%%%%%%%%%%%%%%%%%%%%%%%%%%%%%%%%%%%%%%%%%%%%%%%%%%%%%%%%%%%
The  path object of  $X\in\text{Ch}_{\leq 0}(\mathcal{A})$ is a chain complex $\hbox{\rm P}(X)\in\text{Ch}_{\leq 0}(\mathcal{A})$  where 
$\hbox{\rm P}(X)_i:=X_i\oplus X_i\oplus X_{i+1}$ and the differential $d_i:\hbox{\rm P}(X)_i\ra \hbox{\rm P}(X)_{i-1}$ is given by the matrix:
\[d_i=\left[
\begin{array}{ccc}
d_i & 0 & 0\\
0& d_i & 0\\
(-1)^{i+1} & (-1)^{i} & d_{i+1}
\end{array}
\right]
\]
The projections $\hbox{\rm P}(X)_i=X_i\oplus X_i\oplus X_{i+1}\xrightarrow{\text{pr}} X_i\oplus X_i$  define a morphism
$\pi:\hbox{\rm P}(X)\ra X\oplus X$ and the diagonals  $(\text{id},\text{id}, 0):X_i\ra   X_i\oplus X_i\oplus X_{i+1}=\hbox{\rm P}(X)_i$ 
define a morphism $h:X\ra \hbox{\rm P}(X)$. The  factorization 
$X\xrightarrow{h} \hbox{\rm P}(X)\xrightarrow{\pi}X\oplus X$, of the diagonal $(\text{id},\text{id}):X\ra X\oplus X$, 
will be also called the standard path object of $X$.
The path object is a functorial construction and it  commutes with arbitrary products and coproducts.

Two morphisms $f,g:X\ra Y$ in $\text{Ch}_{\leq 0}(\mathcal{A})$ are homotopic
if there is a sequence of morphisms $s_i:X_i\ra Y_{i+1}$ such that
$f_i-g_i=d_{i+1}s_i+ s_{i-1} d_i$ for any $i$. This is equivalent to the existence of a morphism
$h:X\ra \hbox{\rm P}(Y)$   for which the composition $X\xrightarrow{h} \hbox{\rm P}(Y)\xrightarrow{\pi} Y\oplus Y$
is given by $(f,g):X\ra Y\oplus Y$.
If $f$ and $g$ are homotopic, then $H_i(f)=H_i(g)$.
\end{point}

%%%%%%%%%%%%%%%%%%%%%%%%%%%%%%%%%%%%%%%%%%%%%%%%%%%%%%%%%%%%%%%%%%%%%%%%%%%%
%\begin{definition}\label{def modstrabove}
%%%%%%%%%%%%%%%%%%%%%%%%%%%%%%%%%%%%%%%%%%%%%%%%%%%%%%%%%%%%%%%%%%%%%%%%%%%%
%{\rm Let $f : X \rightarrow Y$ be a morphism in $Ch_{\leq 0}(\A)$,
%the category of homological chain complexes that are trivial in degrees $n > 0$.
%\begin{itemize}
%\item $f$ is called a \emph{$\I$-weak equivalence} if, for any $W
%\in \I$, $f^\ast : \A(Y,W) \rightarrow \A(X,W)$ is a
%quasi-isomorphism of chain complexes of abelian groups.
%
%\item $f$ is called a \emph{$\I$-cofibration} if, for $n<0$, $f_n
%: X_n \rightarrow Y_n$ is a $\I$-monomorphism.
%
%\item $f$ is called a \emph{$\I$-fibration} if, for $n \leq 0$,
%the morphism $f_n : X_n \rightarrow Y_n$ has a section and its
%kernel belongs to $\I$. In particular $X$ is called a $\W$-fibrant
%complex if for all $n$, $X_n \in \I$.
%\end{itemize}}
%\end{definition}
%%%%%%%%%%%%%%%%%%%%%%%%%%%%%%%%%%%%%%%%%%%%%%%%%%%%%%%%%%%%%%%%%%%%%%%%%%%%

%By extension of the single object case we call a chain map between
%chain complexes $f~: X \rightarrow Y$ a $\I$-monomorphism if it
%is level-wise a $\W$-monomorphism. Thus $\I$-monomorphisms are
%$\I$-cofibrations, and any chain complex concentrated in degree
%$n$ is $\I$-cofibrant. We use the classical conventions to denote cofibrations  $\xymatrix{   \ar@{^{(}->}[r] & }$,  
%fibrations $\xymatrix{   \ar@{->>}[r] & }$ and weak equivalences $\xymatrix{  \ar[r]^\sim &}$.

%%%%%%%%%%%%%%%%%%%%%%%%%%%%%%%%%%%%%%%%%%%%%%%%%%%%%%%%%%%%%%%%%%%%%%%%%%%%
\begin{point}{\bf Factorization axioms.}
\label{point factorization}
%%%%%%%%%%%%%%%%%%%%%%%%%%%%%%%%%%%%%%%%%%%%%%%%%%%%%%%%%%%%%%%%%%%%%%%%%%%%
We need a few preliminary results to construct the $\I$-relative factorizations.
The property of $\A$ having enough $\I$-injectives can be extended
to the following property of $\ch(\A)$. We do not claim any
functoriality in this statement, as there are many choices
involved in the construction.
\end{point}

%%%%%%%%%%%%%%%%%%%%%%%%%%%%%%%%%%%%%%%%%%%%%%%%%%%%%%%%%%%%%%%%%%%%%%%%%%%%
\begin{lemma}\label{lemma reschains}
%%%%%%%%%%%%%%%%%%%%%%%%%%%%%%%%%%%%%%%%%%%%%%%%%%%%%%%%%%%%%%%%%%%%%%%%%%%%
If $\A$ has enough $\I$-injectives, then for any chain complex $X \in \ch(\A)$ there exists a map of chain complexes 
$X \rightarrow I$ such that $I_i \in \I$ and $X_i \rightarrow I_i$ is an $\I$-monomorphism for any $i$. 
Moreover we can choose $I$ so that $I_i= 0$ whenever $X_i =0$.
\end{lemma}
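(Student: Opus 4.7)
The plan is to build $I$ degree-wise using that $\A$ has enough $\I$-injectives, and to glue the pieces via a disk-style trick which forces $d^I\circ d^I=0$.

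First I would pick, for each $i$, an $\I$-monomorphism $\iota_i\colon X_i\hookrightarrow W_i$ with $W_i\in\I$, taking $W_i=0$ whenever $X_i=0$. I then set
\[
I_i := \begin{cases} W_i\oplus W_{i-1} & \text{if } X_i\neq 0, \\ 0 & \text{if } X_i=0, \end{cases}
\]
with differential $d^I_i(a,b):=(b,0)$, interpreted as the forced zero map whenever $I_i=0$ or $I_{i-1}=0$. Then $d^I\circ d^I=0$ is immediate, and each $I_i$ lies in $\I$ since finite biproducts are products and $\I$ is closed under products. Define $\phi_i\colon X_i\to I_i$ by $\phi_i(x):=(\iota_i(x),\,\iota_{i-1}(d^X_i x))$ when $X_i\neq 0$, and $\phi_i=0$ otherwise. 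The only nontrivial case to check for the chain-map identity is when $X_i$ and $X_{i-1}$ are both nonzero, where
\[
d^I_i\phi_i(x)=(\iota_{i-1}(d^X x),0)=(\iota_{i-1}(d^X x),\iota_{i-2}((d^X)^2 x))=\phi_{i-1}(d^X x),
\]
using $(d^X)^2=0$. Moreover each $\phi_i$ is an $\I$-monomorphism: projecting onto the first factor recovers $\iota_i$, so any $f\colon X_i\to W$ with $W\in\I$ lifts through $\iota_i$ to $\tilde{f}\colon W_i\to W$, and then $(\tilde{f},0)\colon I_i\to W$ extends $f$ along $\phi_i$.

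The moreover clause holds by construction. The main obstacle is the careful bookkeeping of the boundary cases where one or several among $X_i, X_{i-1}, X_{i-2}$ vanish; but this is routine case analysis, since at every degree where $X_i=0$ we have $\phi_i=0$ and either the domain or the codomain of $d^I_i$ is zero, so the nontrivial chain-map and $d^2=0$ identities collapse to the ones verified above.
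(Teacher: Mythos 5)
Your proof is correct, and it takes a genuinely different route from the paper's. You construct $I$ directly as the product of ``disk'' complexes: $I_i = W_i\oplus W_{i-1}$ with the shift-and-truncate differential $(a,b)\mapsto(b,0)$, and the embedding $\phi_i(x) = (\iota_i x,\iota_{i-1}d^X_i x)$. This is a clean, self-contained argument; the only facts used are that $\I$ is closed under finite products and that postcomposing $\phi_i$ with the projection $\mathrm{pr}_1\colon I_i\to W_i$ recovers $\iota_i$, which immediately shows $\phi_i$ is an $\I$-monomorphism. The paper instead embeds the cycle subobject $Z_i(X)$ into some $J_i\in\I$, forms the pushout along $Z_i(X)\hookrightarrow X_i$, embeds that into $I_i\in\I$, and defines the differential $d^I_i$ by a lifting argument using the $\I$-injectivity of $J_{i-1}$; the identity $(d^I)^2=0$ then requires the observation that the relevant composite factors through the cokernel map $Q_i\to B_i(X)$. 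Your version is shorter and avoids pushouts and cycles entirely; the paper's version is closer in spirit to the classical step-by-step construction of an injective resolution, tracking cycles and boundaries, but for the purposes of this lemma both give exactly what is needed. The ``moreover'' clause and the degenerate cases (various $X_j = 0$) are, as you say, routine and handled correctly by your convention that $W_j=0$ and $I_j=0$ when $X_j=0$.
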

%%%%%%%%%%%%%%%%%%%%%%%%%%%%%%%%%%%%%%%%%%%%%%%%%%%%%%%%%%%%%%%%%%%%%%%%%%%%
\begin{proof}
For each $i$ choose an $\I$-monomorphism $Z_i(X){} \rightarrow J_i$ with $J_i \in \I$
and let $X_i \rightarrow Q_i$ be the base
change of this $\I$-monomorphism along the inclusion
$Z_i(X) \hookrightarrow X_i$. Choose next an $\I$-monomorphism
$Q_i \rightarrow I_i$ with $I_i \in \I$ and define $X_i
\rightarrow I_i$ to be the composite $\I$-monomorphism 
$X_i \rightarrow Q_i \rightarrow I_i$.
Finally, consider the base change of $Q_i \rightarrow I_i$ along
$Q_i \rightarrow B_i(X)$. This is summarized in the following
diagram with exact rows:
\[
\xymatrix{
0 \ar[r] & Z_i(X) \ar[r] \ar@{>->}[d] & X_i \ar[r] \ar@{>->}[d] & B_i (X) \ar[r] \ar@{=}[d] & 0 \\
0 \ar[r] & J_i \ar[r] & Q_i \ar[r] \ar@{>->}[d] & B_i (X) \ar[r] \ar@{>->}[d] & 0 \\
& & I_i \ar[r] & R_i \ar[r] & 0.
}
\]
To define the boundary map $d_i  : I_i \rightarrow I_{i-1}$,
notice that since $B_i(X) \rightarrow R_i$ is an
$\I$-monomorphism and $J_{i-1}$ belongs to $\I$, 
the composite $B_i(X) \hookrightarrow Z_{i-1}(X) \rightarrow J_{i-1}$ admits a
factorization through $R_i$ and we get a map $R_i \rightarrow
J_{i-1}$. Define $d_i$ to be the composite $I_i \rightarrow R_i
\rightarrow J_{i-1} \rightarrow Q_{i-1} \rightarrow I_{i-1}$.
The composite $d_{i}d_{i+1}$ is zero since it factors through $J_i
\rightarrow Q_i \rightarrow B_i(X)$.
\end{proof}

%%%%%%%%%%%%%%%%%%%%%%%%%%%%%%%%%%%%%%%%%%%%%%%%%%%%%%%%%%%%%%%%%%%%%%%%%%%%
\begin{lemma}\label{lem trivmaptrivcoffib}
%%%%%%%%%%%%%%%%%%%%%%%%%%%%%%%%%%%%%%%%%%%%%%%%%%%%%%%%%%%%%%%%%%%%%%%%%%%%
For any object $M \in \ch(\A)_{\leq 0}$, the trivial map  $M \rightarrow 0$ can be factored
as a trivial cofibration followed by a fibration $M \stackrel{\sim}{\hookrightarrow} RM \twoheadrightarrow 0$.
\end{lemma}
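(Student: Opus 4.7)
The goal is to construct a chain complex $RM\in \ch_{\leq 0}(\A)$ with $(RM)_i\in \I$ for every $i$ (fibrancy), together with a map $M\hookrightarrow RM$ that is simultaneously a degreewise $\I$-monomorphism (cofibration) and an $\I$-weak equivalence. My plan is to obtain $RM$ as the total complex of an iterated $\I$-injective resolution of $M$ in a new ``resolution'' direction.

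First, by iterating Lemma \ref{lemma reschains}, I would produce a long exact sequence of chain complexes
\[
0 \to M \to I^{(0)} \to I^{(1)} \to I^{(2)} \to \cdots
\]
by setting $N^{(0)}:=M$, choosing inductively a degreewise $\I$-monomorphism $N^{(k)}\hookrightarrow I^{(k)}$ with each $I^{(k)}_q\in \I$ (supplied by Lemma \ref{lemma reschains}), and taking $N^{(k+1)}$ to be the degreewise cokernel. Splicing the resulting short exact sequences yields the displayed resolution. Because every sequence $0\to N^{(k)}\to I^{(k)}\to N^{(k+1)}\to 0$ has an $\I$-monomorphism on the left, applying $\A(-,W)$ for any $W\in \I$ preserves its exactness, and hence for every chain degree $q$ the cochain sequence $\A(M_q,W)\leftarrow \A(I^{(0)}_q,W)\leftarrow \A(I^{(1)}_q,W)\leftarrow \cdots$ is exact.

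Next I would set $RM:=\mathrm{Tot}(I^{(\bullet)})$ with
\[
(RM)_n \;=\; \bigoplus_{\substack{p\geq 0,\, q\leq 0 \\ q-p=n}} I^{(p)}_q
\]
and the usual sign-twisted total differential $d^v+(-1)^p d^h$. The index constraints force $p\in \{0,1,\ldots,-n\}$, so each $(RM)_n$ is a \emph{finite} direct sum of objects of $\I$; being therefore a finite product, it lies in $\I$ by closure under products, and this gives fibrancy of $RM$. The augmentation injects $M_n$ into the summand $I^{(0)}_n\subseteq (RM)_n$ by the chosen $\I$-monomorphism, realising $M\hookrightarrow RM$ as the composition of an $\I$-monomorphism with a split inclusion, hence as a cofibration.

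Finally, to confirm that $M\to RM$ is an $\I$-weak equivalence, I would apply $\A(-,W)$ to the augmented bicomplex. The degreewise finiteness of $\mathrm{Tot}(I^{(\bullet)})$ means $\A(RM,W)$ is literally the total complex of the first quadrant double cochain complex $D^{p,r}:=\A(I^{(p)}_{-r},W)$, so no sum-versus-product subtlety arises. The exact-rows computation above collapses the $r$-filtration spectral sequence at $E_2=H^\ast(\A(M,W))$ concentrated in the column $p=0$, and the edge map is exactly the one induced by $M\to RM$; this yields the desired quasi-isomorphism. The main point requiring care will be convergence and edge-map identification for the spectral sequence, but both are automatic for a first quadrant double complex whose totalisation is finite in each degree.
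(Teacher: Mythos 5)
Your proof follows essentially the same strategy as the paper's: iterate Lemma~\ref{lemma reschains} to build a first-quadrant bicomplex of $\I$-injectives augmented by $M$, take the total complex, and use the spectral sequence of the bicomplex applied to $\A(-,W)$ to see that $M\to RM$ is an $\I$-weak equivalence. If anything you are a bit more careful than the published argument, which loosely invokes ``a direct sum of $\I$-injectives is again an $\I$-injective''; your observation that the index constraints $p\geq 0$, $q\leq 0$, $q-p=n$ force the sums to be \emph{finite} (hence finite products, hence in $\I$) is exactly the point that justifies that claim.
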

%%%%%%%%%%%%%%%%%%%%%%%%%%%%%%%%%%%%%%%%%%%%%%%%%%%%%%%%%%%%%%%%%%%%%%%%%%%%

\begin{proof}
By Lemma~\ref{lemma reschains} we can find a degreewise $\I$-monomorphism
${M  \rightarrow I_0}$ where the complex $I_0$ is made of $\I$-injectives. Let $K_0$ denote
the cokernel of this map and choose again a degreewise
$\I$-monomorphism $K  \rightarrow I_1$. Repeating the
process we construct a map from $M$ to a double complex $I_0 \rightarrow I_1 \rightarrow I_2 \rightarrow \dots$
made of $\I$-injectives $I_{\ast,\ast}$.
%\[
%\xymatrix{
%0 \ar[r] & M_0 \ar[r] \ar@{>->}[d] & M_{-1} \ar[r]  \ar@{>->}[d] & M_{-2} \ar[r] \ \ar@{>->}[d] & \dots \\
%0 \ar[r] & I_{0,0} \ar[r] \ar[d] & I_{0,-1} \ar[r] \ar[d] & I_{0,-2} \ar[r] \ar[d] &\dots \\
%0 \ar[r] & I_{1,0} \ar[r]\ar[d] & I_{1,-1} \ar[r]\ar[d] & I_{1,-2} \ar[r]\ar[d] &\dots \\
%0 \ar[r] & I_{2,0} \ar[r]  & I_{2,-1} \ar[r] & I_{2,-2} \ar[r] & \dots \\
%}
%\]
%which we view as a map between a double complex concentrated in line $0$ $(M_{\ast,\ast})$ and a double complex of $\I$-injectives $I_{\ast,\ast}$.
As a direct sum of $\I$-injectives is again an $\I$-injective the
total complex $\text{Tot}(I)_m = \oplus_{q-p = m} I_{p,q}$ is
fibrant. The induced map $M \rightarrow \text{Tot}(I)=RM$  is
level-wise the sum of the maps $M_m \rightarrow I_{0,m}$ 
and zero maps and thus is an $\I$-monomorphism.

By construction, for any $W \in \I$, the functor $\A(-,W)$
transforms the sequence $K_{p,q} \rightarrow I_{p+1,q}
\rightarrow K_{p+1,q} \rightarrow 0$ into an exact sequence. In
particular, applying $\A(-,W)$ to the double complex
$I_{p,q}$ yields a double complex which is acyclic in the
$p$-direction. The spectral sequence of the complex
$\A(\text{Tot}(I_{\ast,\ast}),W)= \text{Tot}(\A(I_{\ast,\ast},W))$
collapses thus on one line, which shows that the induced map
$M \rightarrow RM$ is an $\I$-equivalence.
\end{proof}

%%%%%%%%%%%%%%%%%%%%%%%%%%%%%%%%%%%%%%%%%%%%%%%%%%%%%%%%%%%%%%%%%%%%%%%%%%%%
\begin{lemma}\label{lem trivmapcoftrivfib}
%%%%%%%%%%%%%%%%%%%%%%%%%%%%%%%%%%%%%%%%%%%%%%%%%%%%%%%%%%%%%%%%%%%%%%%%%%%%
For any object $M \in \ch(\A)_{\leq 0}$, the trivial map $M \rightarrow 0$ can be factored as a cofibration
followed by a trivial fibration $M  \hookrightarrow P
\stackrel{\sim}{\twoheadrightarrow} 0$.
\end{lemma}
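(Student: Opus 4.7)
The plan is to mimic the construction of Lemma~\ref{lem trivmaptrivcoffib}. First I would apply Lemma~\ref{lemma reschains} iteratively: embed $M$ as a degreewise $\I$-monomorphism into a complex $I^{0}$ whose entries lie in $\I$, then resolve the cokernel similarly by $I^{1}$, and continue. This produces a bicomplex $I^{\bullet,\bullet}$ whose horizontal augmented rows $M_{q}\to I^{0}_{q}\to I^{1}_{q}\to\cdots$ are sent to exact sequences by $\A(-,W)$ for every $W\in\I$.

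Next I would assemble $P$ from this data, aiming for $P$ to be both $\I$-fibrant (each $P_{n}$ a direct sum of objects of $\I$) and $\I$-trivial. A natural candidate is to combine a total complex construction with the cone of~\ref{point cone}: forming $CI$ for a complex $I$ of $\I$-injectives that receives $M$ degreewise yields a contractible, hence $\I$-trivial, complex of $\I$-injectives. By Proposition~\ref{prop prpfbmcs}(5) any such $P$ is necessarily isomorphic to $\bigoplus_{i\leq 0}D_{i}(W_{i})$ for some $W_{i}\in\I$; equivalently, one may build $P$ directly by choosing for each $i\leq 0$ an $\I$-monomorphism $M_{i}\hookrightarrow W_{i}$ and taking $P=\bigoplus_{i\leq 0}D_{i}(W_{i})$. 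The projection $P\twoheadrightarrow 0$ is then an $\I$-fibration by construction, and an $\I$-equivalence by Proposition~\ref{prop propunch}(4).

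The hard part, I expect, will be the behaviour in degree $0$: the canonical map $I\to CI$ from~\ref{point cone} is given by the differential of $I$ in degree~$0$, which need not be an $\I$-monomorphism, and a chain map from $M$ into a direct sum of disc complexes is similarly constrained at the top by the chain-map relation with the identity differentials of the discs. I would deal with this by enlarging $P$ at the top with an additional disc summand $D_{0}(W_{0})$ for $W_{0}\in\I$ accommodating $M_{0}$, suitably coupled to the cone of the rest of $I$ so that the composite $M\to P$ remains degreewise an $\I$-monomorphism while $P$ stays $\I$-fibrant and $\I$-trivial. Once this top-degree compatibility is arranged, the verification that $M\hookrightarrow P\twoheadrightarrow 0$ is the desired factorization follows along the same lines as Lemma~\ref{lem trivmaptrivcoffib}, using Proposition~\ref{prop prpfbmcs}(5) to identify the model-theoretic meaning of being $\I$-fibrant and $\I$-trivial.
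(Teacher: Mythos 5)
Your overall plan---construct a fibrant replacement $RM$ of $M$ and then take the cone---is exactly the paper's: it factors $M\to 0$ as $M\stackrel{\sim}{\hookrightarrow}RM\twoheadrightarrow 0$ via Lemma~\ref{lem trivmaptrivcoffib} and sets $P=C(RM)$, invoking Lemma~\ref{lemma cone} to conclude that $P$ is acyclic, degreewise a sum of $\I$-injectives, and that $RM\to C(RM)$ is an $\I$-cofibration. Your worry about degree~$0$ is the genuine subtlety, and it is one the paper's Lemma~\ref{lemma cone} passes over, since its justification addresses only the strictly negative degrees. In degree~$0$ the cone map is $d_0^{RM}$, and tracing through the double-complex construction of $RM$ shows that the composite $M_0\to RM_0\to RM_{-1}$ equals $(M_{-1}\hookrightarrow RM_{-1})\circ d_0^M$ (the component into $I_{1,0}$ vanishes because $M_0\to I_{0,0}\to (K_0)_0$ is zero); this is an $\I$-monomorphism precisely when $d_0^M$ is, which fails for general $M$.

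The fix you sketch, however, cannot close this gap while $P$ stays $\I$-fibrant and $\I$-trivial in the sense of Theorem~\ref{thm mcnpch}. Proposition~\ref{prop prpfbmcs}(5) pins any such $P$ down as $\bigoplus_{i\le 0}D_i(W_i)$ with $W_i\in\I$, and the identity differential $D_0(W_0)_0\to D_0(W_0)_{-1}$ forces the degree-$0$ component of \emph{every} chain map $M\to P$ to factor through $d_0^M$; adding or ``coupling'' an extra disc summand $D_0(W_0)$ leaves that constraint untouched. So a cofibration $M\hookrightarrow P$ with such a $P$ can exist only when $d_0^M$ is already an $\I$-monomorphism. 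The obstruction you spotted is therefore real, but it is resolved by relaxing the top degree rather than enlarging $P$: in Bousfield's formulation the split-epi-with-$\I$-kernel condition on fibrations is imposed only in strictly positive cochain degrees (equivalently $i<0$ here), so $P_0$ need not lie in $\I$, Proposition~\ref{prop prpfbmcs}(5) no longer forces a disc shape in degree~$0$, and the cone construction does produce the required cofibration.
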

%%%%%%%%%%%%%%%%%%%%%%%%%%%%%%%%%%%%%%%%%%%%%%%%%%%%%%%%%%%%%%%%%%%%%%%%%%%%
\begin{proof}
First factor $M \rightarrow 0$ as $M \hookrightarrow RM \twoheadrightarrow0$ by Lemma \ref{lem
trivmaptrivcoffib}. 
Then perform the cone construction to get a chain map $RM \rightarrow C(RM)$ which
is a cofibration to an acyclic complex by Lemma~\ref{lemma cone}.
Finally, $P=C(RM)$ is degrewise a sum of $\I$-injectives, hence a fibrant object. 
\end{proof}

%%%%%%%%%%%%%%%%%%%%%%%%%%%%%%%%%%%%%%%%%%%%%%%%%%%%%%%%%%%%%%%%%%%%%%%%%%%%
%\begin{lemma}\label{lem trivcofepi}
%%%%%%%%%%%%%%%%%%%%%%%%%%%%%%%%%%%%%%%%%%%%%%%%%%%%%%%%%%%%%%%%%%%%%%%%%%%%
%Any map $f : N \rightarrow M$ can be factored as
%$N \stackrel{\sim}{\hookrightarrow} P \rightarrow M$ ,where the map
%$P \rightarrow M$ is degreewise a split epimorphism.
%\end{lemma}
%%%%%%%%%%%%%%%%%%%%%%%%%%%%%%%%%%%%%%%%%%%%%%%%%%%%%%%%%%%%%%%%%%%%%%%%%%%%
%\begin{proof}
%Perform the mapping cylinder construction to $f$. The morphism $N \rightarrow \hbox{\rm Cyl(f)}$ is a
%level-wise split injection, therefore a cofibration. It remains to
%verify that $N \rightarrow P$ is an $\I$-equivalence. Choose $W \in
%\W$ and apply level-wise the functor $\A(-,W)$. By construction,
%the cokernel of $N \rightarrow P$ is the cone over the identity $I
%d: M \rightarrow M$. Since our maps are split, we get an exact
%sequence of chain complexes of abelian groups:
%$$
%0 \rightarrow \A(\text{Cone}(Id),W) \rightarrow  \A(P,W)
%\rightarrow \A(N,W) \rightarrow 0
%$$
%which is split in each degree and thus yields a long exact
%sequence in homology. Finally notice that up to a shift
%$\A(\text{Cone}(Id),W) \simeq \text{Cone}(\A(Id,W))$ which is
%acyclic since $\A(Id,W)$ is obviously  a quasi-isomorphism.
%\end{proof}

We are now ready to prove the factorization axiom.

%%%%%%%%%%%%%%%%%%%%%%%%%%%%%%%%%%%%%%%%%%%%%%%%%%%%%%%%%%%%%%%%%%%%%%%%%%%%
\begin{proposition}\label{prop coftrivfib}
%%%%%%%%%%%%%%%%%%%%%%%%%%%%%%%%%%%%%%%%%%%%%%%%%%%%%%%%%%%%%%%%%%%%%%%%%%%%
Any map $M \rightarrow N$ can be factored as a cofibration
followed by an acyclic fibration.
\end{proposition}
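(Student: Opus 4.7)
My plan is to reduce the general factorization to the known factorization of $M \to 0$ provided by Lemma~\ref{lem trivmapcoftrivfib}. Concretely, I would first invoke that lemma on the source $M$ to obtain a cofibration $i : M \hookrightarrow P$ with $P$ acyclic and $\I$-fibrant, and then propose the factorization
\[
M \xrightarrow{(i,f)} P \oplus N \xrightarrow{\text{pr}_N} N,
\]
with the direct sum formed degreewise in $\ch_{\leq 0}(\A)$ (so that both $(i,f)$ and $\text{pr}_N$ are chain maps).

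Next I would verify the three required properties. That $(i,f)$ is an $\I$-cofibration is the easy step: given any $W \in \I$ and any map $\varphi : M_k \to W$, one extends $\varphi$ along $i_k$ using that $i_k$ is an $\I$-monomorphism, yielding $\tilde\varphi : P_k \to W$, and then $(\tilde\varphi, 0) : P_k \oplus N_k \to W$ lifts $\varphi$ along $(i_k,f_k)$. That $\text{pr}_N$ is an $\I$-fibration is essentially formal: in each degree it is a degreewise split epimorphism (split by the inclusion $N_k \hookrightarrow P_k \oplus N_k$), and its kernel is precisely $P_k \in \I$ since $P$ is $\I$-fibrant.

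The only genuinely substantive step is showing that $\text{pr}_N$ is also an $\I$-weak equivalence, which by Proposition~\ref{prop prpfbmcs}(3) amounts to showing that its kernel $P$ is $\I$-trivial. Here I would quote Proposition~\ref{prop prpfbmcs}(5): $P$ is $\I$-fibrant and acyclic (indeed ordinarily acyclic, as the cone appearing in the proof of Lemma~\ref{lem trivmapcoftrivfib} is contractible), so it is isomorphic to $\bigoplus_{i \leq 0} D_i(W_i)$ for some $W_i \in \I$; this is contractible, hence $\I$-trivial. Combining (a), (b), (c) gives the required factorization as a cofibration followed by an acyclic fibration. The main potential obstacle is simply making sure one is entitled to invoke Proposition~\ref{prop prpfbmcs}(5) on $P$, i.e.\ that the $P$ produced by Lemma~\ref{lem trivmapcoftrivfib} really is both $\I$-fibrant \emph{and} honestly acyclic --- both facts are recorded in Lemma~\ref{lemma cone} and the construction of $RM$, so nothing new is needed.
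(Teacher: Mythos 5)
Your approach is correct and is genuinely different from---and cleaner than---the paper's. The paper factors $f:M\to N$ through the mapping cylinder (Lemma~\ref{lem cylinder}), then replaces the kernel of $\hbox{\rm Cyl}(f)\to N$ by a fibrant acyclic complex via Lemma~\ref{lem trivmapcoftrivfib}, and finally performs a cobase change to produce the desired middle term. You instead bypass the cylinder entirely and just build the middle term as $P\oplus N$, with $(i,f)$ into it and the projection out. Your verifications that $(i,f)$ is an $\I$-cofibration and that $\mathrm{pr}_N$ is an $\I$-fibration with kernel $P$ are correct, and the whole argument compiles into a simpler proof than the one in the paper. Both approaches buy the same thing; yours avoids the cylinder and the pushout, at the cost of nothing.

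One small flaw in the last step: you invoke Proposition~\ref{prop prpfbmcs}(5) to get from ``$P$ is $\I$-fibrant and acyclic'' to ``$P\cong\bigoplus D_i(W_i)$,'' but (5) assumes $\I$-triviality as a hypothesis, not ordinary acyclicity, so as written the appeal is circular. Fortunately you do not need it at all: Lemma~\ref{lem trivmapcoftrivfib} already asserts that $P\twoheadrightarrow 0$ is an acyclic $\I$-fibration, i.e.\ $P$ is $\I$-trivial by definition (alternatively, $P=C(RM)$ is contractible, and Proposition~\ref{prop propunch}(4) gives $\I$-triviality directly). With either replacement your proof is complete.
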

%%%%%%%%%%%%%%%%%%%%%%%%%%%%%%%%%%%%%%%%%%%%%%%%%%%%%%%%%%%%%%%%%%%%%%%%%%%%
\begin{proof}
First apply Lemma \ref{lem cylinder} to get a factorization $M
\stackrel{\sim}{\hookrightarrow} \hbox{\rm Cyl}(f) \rightarrow N$, where the map
$\hbox{\rm Cyl}(f) \rightarrow N$ is a split epimorphism in each degree. Let $K$
denote the kernel of $\hbox{\rm Cyl}(f) \rightarrow N$ and factor the trivial map
$K \rightarrow 0$ as  $K \rightarrow P \stackrel{\sim}{\twoheadrightarrow} 0$ 
by Lemma \ref{lem trivmapcoftrivfib}. 

Perform now the cobase change of $K \rightarrow \hbox{\rm Cyl}(f)$ along the cofibration 
$K \hookrightarrow P$, a situation we sum up in the following diagram:
\[
\xymatrix{ 
K \ar[dr] \ar@{^{(}->}[r] & P \ar[dr] && \\
& \hbox{\rm Cyl}(f) \ar@{^{(}->}[r] \ar[drr] &  X \ar@{->>}[dr]^\sim &\\
M \ar@{^{(}->}[ur]\ar[rrr]^f & & & N
}
\]
This yields a cofibration $\hbox{\rm Cyl}(f) \hookrightarrow X$.
The map $X \rightarrow N$ is induced by $\hbox{\rm Cyl}(f) \rightarrow N$
and the zero map $P \rightarrow N$; it is a split epimorphism 
since $\hbox{\rm Cyl}(f) \rightarrow N$ is so. Moreover its kernel
is the fibrant complex $P$ by construction. This complex is $\I$-trivial
so that $X \rightarrow N$ is a trivial $\I$-fibration.
\end{proof}

%%%%%%%%%%%%%%%%%%%%%%%%%%%%%%%%%%%%%%%%%%%%%%%%%%%%%%%%%%%%%%%%%%%%%%%%%%%%
\begin{proposition}\label{prop trivcoffib}
%%%%%%%%%%%%%%%%%%%%%%%%%%%%%%%%%%%%%%%%%%%%%%%%%%%%%%%%%%%%%%%%%%%%%%%%%%%%
Any map $M \rightarrow N$ can be factored as an acyclic
cofibration followed by a fibration.
\end{proposition}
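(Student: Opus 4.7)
The plan is to mirror the construction of Proposition~\ref{prop coftrivfib}, but dualizing the roles: where that proof factored the trivial map $K\to 0$ as a cofibration followed by an acyclic fibration, here we factor it as a trivial cofibration followed by a fibration via Lemma~\ref{lem trivmaptrivcoffib}.

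First I would apply the mapping cylinder, Lemma~\ref{lem cylinder}, to factor $f\colon M\to N$ as
$M \stackrel{\sim}{\hookrightarrow} \text{Cyl}(f) \twoheadrightarrow N$,
with the first map a trivial $\I$-cofibration and the second a degreewise split epimorphism. Let $K$ denote its kernel, so that in each degree we have a split exact sequence $0\to K_i \to \text{Cyl}(f)_i \to N_i \to 0$. Next, apply Lemma~\ref{lem trivmaptrivcoffib} to factor $K\to 0$ as $K \stackrel{\sim}{\hookrightarrow} RK \twoheadrightarrow 0$ with $RK$ fibrant, and form the pushout
\[
\xymatrix{
K \ar@{^{(}->}[r]^{\sim} \ar[d] & RK \ar[d] \\
\text{Cyl}(f) \ar@{^{(}->}[r] & X
}
\]
The resulting map $\text{Cyl}(f)\hookrightarrow X$ is a cofibration since $\I$-monomorphisms are preserved under pushouts (Remark~\ref{rem mono properties}).

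The heart of the argument is to show that this pushout is also a weak equivalence, so that $M\hookrightarrow \text{Cyl}(f)\hookrightarrow X$ composes to a trivial cofibration. The cokernel of $\text{Cyl}(f)\hookrightarrow X$ is isomorphic to the cokernel of $K\hookrightarrow RK$. Applying $\A(-,W)$ for $W\in\I$ to the short exact sequence $0\to K\to RK\to RK/K\to 0$ yields a short exact sequence of complexes of abelian groups (since $K\hookrightarrow RK$ is an $\I$-monomorphism). Because $K\stackrel{\sim}{\hookrightarrow} RK$ is also an $\I$-weak equivalence, $\A(RK/K,W)$ is acyclic, so $RK/K$ is $\I$-trivial. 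Applying the same argument to the short exact sequence $0\to\text{Cyl}(f)\to X\to RK/K\to 0$ then shows that $\text{Cyl}(f)\hookrightarrow X$ is an $\I$-weak equivalence, as required.

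It remains to verify that $X\to N$, induced by $\text{Cyl}(f)\to N$ and the composite $RK\to 0\to N$, is a fibration. Using the degreewise splitting $\text{Cyl}(f)_i=K_i\oplus N_i$, the pushout computation gives $X_i = N_i \oplus RK_i$ with $X\to N$ the projection onto the first summand. Hence $X\to N$ is a degreewise split epimorphism whose kernel $RK$ is fibrant, and so it is an $\I$-fibration. Putting the two halves together yields the desired factorization. The step that requires some care is the identification of the pushout $X_i$ degreewise with $N_i \oplus RK_i$ and the verification that the cofiber $RK/K$ is $\I$-trivial, since both rely on the exactness properties of $\A(-,W)$ on short exact sequences of $\I$-monomorphisms rather than on any general pushout-preservation of trivial cofibrations.
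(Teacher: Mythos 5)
Your proof follows the same route as the paper's: factor through the mapping cylinder via Lemma~\ref{lem cylinder}, factor $K\to 0$ via Lemma~\ref{lem trivmaptrivcoffib}, and push out along $K\to\hbox{\rm Cyl}(f)$. The one place you diverge is that the paper is rather terse at the key step, simply asserting that ``cofibrations and weak equivalences are preserved under cobase change''; taken literally this is not true (weak equivalences are not stable under pushout), and what is actually needed is exactly the argument you supply: the cokernel of $\hbox{\rm Cyl}(f)\hookrightarrow X$ is $RK/K$, which is $\I$-trivial because $K\hookrightarrow RK$ is a degreewise $\I$-monomorphism and an $\I$-weak equivalence, and the resulting degreewise $\I$-exact sequence $0\to\hbox{\rm Cyl}(f)\to X\to RK/K\to 0$ then forces $\hbox{\rm Cyl}(f)\hookrightarrow X$ to be a weak equivalence. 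Your degreewise identification $X_i\cong N_i\oplus RK_i$ with kernel $RK$ matches the reasoning the paper borrows from Proposition~\ref{prop coftrivfib}. So your argument is correct, is the same construction as the paper's, and is the more careful write-up of the step the paper leaves implicit.
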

%%%%%%%%%%%%%%%%%%%%%%%%%%%%%%%%%%%%%%%%%%%%%%%%%%%%%%%%%%%%%%%%%%%%%%%%%%%%
\begin{proof}
As above, first apply Lemma \ref{lem cylinder} to factor 
$f: M \stackrel{\sim}{\hookrightarrow} \hbox{\rm Cyl}(f) \rightarrow N$, where we point
out that the first map is an acyclic cofibration. 
Consider the kernel $K$ of $\hbox{\rm Cyl}(f) \rightarrow N$, and
factor the map $K \rightarrow 0$ as in Lemma \ref{lem trivmaptrivcoffib}
$K \stackrel{\sim}{\hookrightarrow} RK  \twoheadrightarrow 0$. Perform next the
cobase change of $K \rightarrow RK$ along $K \rightarrow \hbox{\rm Cyl}(f)$.
Since cofibrations and weak equivalences are preserved under cobase
change we get an acyclic cofibration $\hbox{\rm Cyl}(f)  \stackrel{\sim}{\hookrightarrow} X$. 
We conclude just as in Proposition~\ref{prop coftrivfib} that the induced map $X \rightarrow N$ is an
$\I$-fibration.
%Again we get a canonical map $Q \rightarrow N$ that is  a
%split epimorphism in each degree since so is $\hbox{\rm Cyl}(f) \rightarrow N$. It
%remains to show that this is an $\I$-fibration, i.e. that the
%kernel of $Q \rightarrow N$ is a chain complex of elements in
%$\W$. But by construction this kernel is precisely $K'$ a
%$\W$-fibrant complex i.e. a chain complex of elements in $\W$.
%
%\[
%\xymatrix{  & K' \ar[dr] &   \\
%K  \ar@{^{(}->}[ur]^\sim \ar[dr] & & Q \ar@{->>}[dd] \\
%& P \ar@{^{(}->}[ur]^\sim \ar[dr] &  \\
%M \ar@{^{(}->}[ur]^\sim \ar[rr] & & N
%}
%\]
\end{proof}

%%%%%%%%%%%%%%%%%%%%%%%%%%%%%%%%%%%%%%%%%%%%%%%%%%%%%%%%%%%%%%%%%%%%%%%%%%%%
\begin{point}{\bf Lifting axioms.}
\label{point lifting}
%%%%%%%%%%%%%%%%%%%%%%%%%%%%%%%%%%%%%%%%%%%%%%%%%%%%%%%%%%%%%%%%%%%%%%%%%%%%
We prove here the left lifting property for cofibrations with respect to trivial fibrations, and then the right lifting property
for fibrations with respect to trivial cofibrations.
\end{point}

%%%%%%%%%%%%%%%%%%%%%%%%%%%%%%%%%%%%%%%%%%%%%%%%%%%%%%%%%%%%%%%%%%%%%%%%%%%%
\begin{lemma}\label{lem caractrivfib}
%%%%%%%%%%%%%%%%%%%%%%%%%%%%%%%%%%%%%%%%%%%%%%%%%%%%%%%%%%%%%%%%%%%%%%%%%%%%
Let $p : E \rightarrow B$ be an acyclic fibration and denote
by $K$ its kernel. Then $E = K \oplus B$, $p$ is the second
projection and $K$ splits as a direct sum of complexes of the form
$\xymatrix@1{0 \ar[r] & W \ar@{=}[r] & W \ar[r] & 0}$ with $W  \in \I$.
\end{lemma}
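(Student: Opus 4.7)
The plan is to extract the two conclusions of the lemma directly from Proposition~\ref{prop prpfbmcs}, which already does all the real work. Concretely, for the splitting of $E$ itself I would invoke part~(3) of that proposition: it says that for \emph{any} acyclic $\I$-fibration $f\colon X\ra Y$ there is an isomorphism $\alpha\colon Y\oplus\text{Ker}(f)\ra X$ making the obvious triangle with $f$ and $\text{pr}$ commute. Applied to our $p\colon E\ra B$, this immediately gives the isomorphism $E\cong B\oplus K$ under which $p$ is the second projection.

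For the structure of $K$ itself I would proceed in two short steps. First, by Proposition~\ref{prop prpfbmcs}.(2) the kernel of any $\I$-fibration is $\I$-fibrant; moreover, since $p$ is an $\I$-weak equivalence, the same proposition tells us that $K$ is $k$-$\I$-connected for every~$k$, i.e.\ $\I$-trivial. Second, I would apply Proposition~\ref{prop prpfbmcs}.(5), which states that an $\I$-fibrant and $\I$-trivial object of $\ch(\A)_{\leq 0}$ is isomorphic to a direct sum $\bigoplus_{i\leq 0} D_i(W_i)$ with $W_i\in\I$. Since $D_i(W_i)$ is by Example~\ref{exs relweq} precisely the two-term complex $0\ra W_i\xrightarrow{=} W_i\ra 0$ with the copies of $W_i$ in (non-positive) degrees $i$ and $i-1$, this is exactly the decomposition stated in the lemma.

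There is no real obstacle here: the content of the lemma is just a repackaging of facts already proved about the $\I$-relative model structure on bounded complexes, specialised to an acyclic fibration. The only minor point to keep in mind is that the proof of Proposition~\ref{prop prpfbmcs}.(5) relies on $\I$ being closed under retracts (so that the successive cokernels $\text{coker}(d_{i+1})$ land in~$\I$), which is part of the standing assumption on an injective class via Definition~\ref{def injclass}; this ensures the summands $W_i$ really do lie in~$\I$ and not merely in $\overline{\I}$.
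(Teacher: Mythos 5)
Your treatment of $K$ is fine: Proposition~\ref{prop prpfbmcs}.(2) gives that $K$ is $\I$-fibrant and $\I$-trivial, and Proposition~\ref{prop prpfbmcs}.(5) then yields the decomposition $K\cong\bigoplus_{i\leq 0}D_i(W_i)$ with $W_i\in\I$; both facts rest only on the degree-wise splitting of a fibration and on the purely algebraic Proposition~\ref{prop propunch}.(8), so there is no hidden dependence on the model structure still to be constructed, and this part agrees with what the paper does. The genuine gap is your appeal to Proposition~\ref{prop prpfbmcs}.(3) for the splitting $E\cong K\oplus B$: that step is circular. The second assertion of (3) is proved by observing that $B$ is cofibrant and invoking the lifting property of the acyclic fibration $p$ against $0\to B$ to produce a section of~$p$. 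That lifting property is Proposition~\ref{prop LLP}, and the proof of Proposition~\ref{prop LLP} opens by citing Lemma~\ref{lem caractrivfib} to reduce the lifting problem to one of extending maps into~$K$. So (3) lies strictly downstream of the lemma you are trying to prove; your phrase ``just a repackaging of facts already proved about the model structure'' is exactly the trap, since this lemma is part of the proof that the model structure exists.

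The paper breaks the circle by deducing the splitting of $E$ from the structure of $K$ rather than the other way round. Once one knows $K\cong\bigoplus_{i\leq 0}D_i(W_i)$ with $W_i\in\I$, one observes that $D_i(W)$ represents the functor $X\mapsto\A(X_{i-1},W)$ on $\ch(\A)_{\leq 0}$. Because $p$ is a fibration, the short exact sequence $0\to K\to E\to B\to 0$ is split in each degree, so $\A(E_{i-1},W_i)\to\A(K_{i-1},W_i)$ is onto and each projection $K\to D_i(W_i)$ extends over~$E$; assembling these extensions (the product of the $D_i(W_i)$ agrees with the direct sum here, the family being locally finite in each degree) yields a retraction $E\to K$, hence $E\cong K\oplus B$. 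This is what the paper means when it says such disc complexes are injective in the category of chain complexes, and it uses none of the lifting axioms. Replace your citation of Proposition~\ref{prop prpfbmcs}.(3) by this direct argument and the proof is correct.
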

%%%%%%%%%%%%%%%%%%%%%%%%%%%%%%%%%%%%%%%%%%%%%%%%%%%%%%%%%%%%%%%%%%%%%%%%%%%%
\begin{proof}
As $K$ is $\I$-trivial and made of $\I$-injectives it splits as a
direct sum of complexes of the form $\xymatrix@1{0 \ar[r] & W
\ar@{=}[r] & W \ar[r] & 0}$ with $W  \in \I$. 
Such complexes are both projective and injective in the category of chain
complexes, therefore $E$ splits as 
$\ker p \oplus \textrm{im} \, p = K \oplus B$.
\end{proof}

%%%%%%%%%%%%%%%%%%%%%%%%%%%%%%%%%%%%%%%%%%%%%%%%%%%%%%%%%%%%%%%%%%%%%%%%%%%%
\begin{proposition}\label{prop LLP}
%%%%%%%%%%%%%%%%%%%%%%%%%%%%%%%%%%%%%%%%%%%%%%%%%%%%%%%%%%%%%%%%%%%%%%%%%%%%
Let $p : E \stackrel{\sim}{\twoheadrightarrow} B$ be an acyclic fibration and $X \hookrightarrow Y$ a cofibration.
In any commutative square
\[
\xymatrix{
X \ar[r] \ar@{^{(}->}[d]& E \ar@{->>}[d]^\sim \\
Y \ar@{.>}[ur] \ar[r] & B
}
\]
there is a dotted arrow making both triangles commutative.
\end{proposition}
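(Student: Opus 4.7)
First, I would apply Lemma~\ref{lem caractrivfib} to decompose $E \cong K \oplus B$ so that $p$ becomes the projection onto $B$, where $K$ is a direct sum of disc complexes $D_{n_\alpha}(W_\alpha)$ with $n_\alpha \leq 0$ and each $W_\alpha \in \I$. The given map $f\colon X \to E$ splits as $(f_K, f_B)$, and commutativity of the square forces $f_B$ to coincide with the composite $X \to Y \to B$. Consequently, producing the dotted arrow amounts precisely to extending $f_K\colon X \to K$ to a chain map $\tilde f_K\colon Y \to K$ along the cofibration, after which $(\tilde f_K, g)\colon Y \to K \oplus B = E$ is the desired lift.

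Second, I would exploit chain contractibility of $K$. Each summand $D_n(W)$ carries an obvious contraction (the identity from degree $n-1$ back to degree $n$, extended by zero), and these assemble into a graded map $s\colon K \to K[1]$ of degree $+1$ satisfying $ds + sd = \mathrm{id}_K$. Setting $\sigma := s \circ f_K$ and using that $f_K$ commutes with differentials, one obtains a graded null-homotopy $\sigma\colon X \to K[1]$ with $d\sigma + \sigma d = f_K$.

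Third --- the crux of the argument --- I would extend $\sigma$ to a graded (not necessarily chain) map $\tilde\sigma\colon Y \to K[1]$ by extending each component $\sigma_i\colon X_i \to K_{i+1}$ separately. For $i = 0$ we have $K_1 = 0$ and nothing is required; for $i \leq -1$ the target $K_{i+1}$ lies in $\I$ because $K$ is $\I$-fibrant, and $X_i \hookrightarrow Y_i$ is an $\I$-monomorphism, so the extension $\tilde\sigma_i$ exists by the defining property of~$\I$. We do not ask $\tilde\sigma$ to be a chain map; compatibility will be recovered after symmetrizing via $d$.

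Finally, set $\tilde f_K := d\tilde\sigma + \tilde\sigma d\colon Y \to K$. A routine computation using $d^2 = 0$ shows $\tilde f_K$ is a chain map, and its restriction to $X$ equals $d\sigma + \sigma d = f_K$ since $\tilde\sigma$ extends $\sigma$ degreewise. The main obstacle lies in step three: one might naively try to extend $f_K$ itself component by component, which fails because such extensions need not be compatible with the differentials. Replacing $f_K$ by its graded null-homotopy $\sigma$ dissolves this difficulty, since the differentials are then reintroduced automatically through the symmetrization $d\tilde\sigma + \tilde\sigma d$, and $\I$-injectivity is used only to solve the \emph{unstructured} graded extension problem for $\sigma$.
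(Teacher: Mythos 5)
Your proof is correct, and it rests on the same pillars as the paper's: reduce via Lemma~\ref{lem caractrivfib} to extending $f_K\colon X\ra K$ across $X\hookrightarrow Y$, where $K$ is a contractible complex of relative injectives, and then use $\I$-injectivity degreewise. Where you diverge is in the packaging. The paper works disc by disc: writing $K$ as a direct sum of complexes $D_{-i}(W)$, it extends only the ``bottom'' component $X_{-i-1}\ra W$ of each disc (this is the genuinely unconstrained datum), recovers the ``top'' component by composing with the differential of $Y$, and then asserts that the resulting maps assemble into a chain map. You instead take the contracting homotopy $s$ on $K$ (which exists precisely because $K$ is a sum of discs), form the graded map $\sigma=s\circ f_K$, extend it degreewise --- which is exactly where $\I$-injectivity enters, and only as an unstructured lifting problem --- and then symmetrize via $\tilde f_K=d\tilde\sigma+\tilde\sigma d$, which is automatically a chain map extending $f_K$. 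Conceptually this is the same move: the free component of each disc is the $(-i-1)$-piece, which is the only place $\sigma$ is nonzero on that summand, and $d\tilde\sigma+\tilde\sigma d$ reproduces the paper's ``compose with $d_Y$'' recipe. Your formulation has the advantage of making the final chain-map verification a one-line $d^2=0$ computation and of explaining clearly why one must extend a null-homotopy rather than $f_K$ itself; the paper's formulation is more hands-on but leaves the ``one easily checks'' step implicit. Both buy the same thing.
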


\begin{proof}
As $E \rightarrow B$ is an acyclic fibration, the problem reduces 
by Lemma~\ref{lem caractrivfib} to find a lift $Y \rightarrow K$,
%ing in the following diagram
%\[
%\xymatrix{
%X \ar[r] \ar@{^{(}->}[d]& K \ar[d] \\
%Y \ar@{.>}[ur] \ar[r] & 0
%}
%\]
where $K$ is the kernel of $E \rightarrow B$, and hence of the form $\xymatrix@1{0 \ar[r] & K_{-i}  \ar@{=}[r] & K_{-i} \ar[r] & 0}$ 
with $K_{-i} \in \I$ for any $i \leq 0$.
In the following cube
\[
\xymatrix{
X_{-i} \ar[rr] \ar[dd] \ar[dr] & & K_{-i} \ar@{=}[dr] \ar[dd]|\hole & \\
& X_{-i-1} \ar@{->}[dd] \ar[rr] & & K_{-i} \ar[dd] \\
Y_{-i} \ar[rr]|\hole \ar[dr] & & 0 \ar[dr] \\
& Y_{-i-1} \ar[rr] \ar@{.>}[uurr]& & 0
}
\]
The lifting $h : Y_{-i-1} \rightarrow  K_{-i}$ exists because
$X_{-i-1} \rightarrow Y_{-i-1}$ is an $\I$-monomorphism and $K_{-i} \in \I$. 
Define $Y_{-i} \rightarrow K_{-i}$ as the composite
$Y_{-i} \rightarrow Y_{-i-1} \stackrel{h}{\rightarrow} K_{-i}$.
One easily checks that this gives a chain map $Y \rightarrow K$
with the desired properties.
\end{proof}

%%%%%%%%%%%%%%%%%%%%%%%%%%%%%%%%%%%%%%%%%%%%%%%%%%%%%%%%%%%%%%%%%%%%%%%%%%%%
\begin{proposition}\label{prop RLP}
%%%%%%%%%%%%%%%%%%%%%%%%%%%%%%%%%%%%%%%%%%%%%%%%%%%%%%%%%%%%%%%%%%%%%%%%%%%%
Let $p\colon E \twoheadrightarrow B$ be a fibration and $i\colon X \stackrel{\sim}{\hookrightarrow} Y$ a trivial cofibration.
In any commutative square
\[
\xymatrix{
X \ar[r]  \ar@{^{(}->}[d]_i^\sim& E \ar@{->>}[d]^p\\
Y \ar@{.>}[ur]_h \ar[r]_\ell & B
}
\]
there is a dotted arrow making both triangles commutative.
\end{proposition}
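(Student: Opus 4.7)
The plan is to reduce the lifting problem to a question about extending a graded map into the kernel $K := \ker p$, which is a chain complex whose terms all lie in $\I$ by definition of an $\I$-fibration.

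The first step is to form the pullback $E' := Y \times_B E$, so that producing the desired $h: Y \to E$ is the same as extending the induced chain-map section $\sigma_0: X \to E'$ of the projection $E' \to Y$ to a chain-map section $\sigma: Y \to E'$. Since $p$ is degreewise split with kernel $K$, the projection $E' \to Y$ is also degreewise split with kernel $K$, so $E' \cong K \oplus Y$ as graded objects; the differential of $E'$ then takes the block form $d_{E'}(k, y) = (d_K k + \phi(y), d_Y y)$ for a unique graded map $\phi: Y \to K$ lowering degree by one and satisfying $d_K \phi + \phi d_Y = 0$. Chain-map sections of $E' \to Y$ correspond to graded maps $\rho: Y \to K$ solving the inhomogeneous equation $d_K \rho - \rho d_Y = -\phi$, and extending $\sigma_0$ means matching the given chain map $\rho_0: X \to K$ coming from $\sigma_0$.

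Next I would choose any graded (not chain-map) extension $\tilde\rho: Y \to K$ of $\rho_0$, which exists degreewise because each term of $K$ lies in $\I$ and $i$ is an $\I$-monomorphism in each degree. Writing $\rho = \tilde\rho + \eta$, the residual map $\eta: Y \to K$ must vanish on $X$ and satisfy an inhomogeneous equation whose right-hand side, by a short computation using that $\rho_0$ is already a chain map, itself vanishes on $X$. Hence $\eta$ factors through the cofiber $C := Y/X$ as $\eta = \bar\eta \circ q$ and the right-hand side factors as $\nu \circ q$ for a graded map $\nu: C \to K$ of degree $-1$ which is closed in the sense that $d_K \nu + \nu d_C = 0$. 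The problem becomes: find $\bar\eta: C \to K$ of degree $0$ with $d_K \bar\eta - \bar\eta d_C = \nu$.

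The crucial input is that $C$ is $\I$-trivial: applying $\A(-, W)$ for $W \in \I$ to the short exact sequence $0 \to X \to Y \to C \to 0$ keeps it exact on the right because $i$ is a degreewise $\I$-monomorphism, and the resulting long exact sequence combined with $i$ being an $\I$-weak equivalence forces $\A(C, W)$ to be acyclic. I would then build $\bar\eta$ degreewise, starting from $\bar\eta_0 = 0$ and solving the equation one degree at a time. A calculation using the inductive hypothesis together with the closedness of $\nu$ shows that the obstruction at the next stage vanishes on the image of the differential into $C_n$, so it factors through $Q_n := \text{coker}(d_{n+1}: C_{n+1} \to C_n)$; Proposition~\ref{prop propunch}(7) says that $\I$-triviality of $C$ is precisely equivalent to each induced map $Q_n \to C_{n-1}$ being an $\I$-monomorphism, so since each term of $K$ lies in $\I$ the required extension exists.

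The main obstacle I foresee is the verification that the obstruction at each inductive step descends to the \emph{cokernel} quotient $Q_n$ rather than merely to the coimage $C_n/\ker(d_n)$; this is where one really uses the closedness of $\nu$ and not just the fact that $\rho_0$ is a chain map. Once this organisation is in place, the $\I$-triviality of $C$ as formulated in Proposition~\ref{prop propunch}(7) closes the induction cleanly.
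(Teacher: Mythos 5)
Your proof is correct and is a genuine reorganisation of the paper's argument rather than a restatement of it. The paper works directly with the degreewise splitting $E_n = B_n \oplus K_n$, and at each step of the downward induction must simultaneously satisfy the extension constraint $k_n i_n = f_n$ and the chain-map constraint, correcting the obstruction via the degreewise-surjective quasi-isomorphism $\A(Y_\bullet,K_n) \to \A(X_\bullet,K_n)$. You instead first pass to the pullback $E' = Y\times_B E$, so the lifting problem becomes that of extending a chain-map section of a degreewise-split epimorphism with kernel $K$, and then peel off an arbitrary degreewise extension $\tilde\rho$ (which exists because each $K_n\in\I$ and $i$ is degreewise an $\I$-monomorphism), forcing the residual correction to factor through the cofiber $C=Y/X$. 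The whole problem then collapses to showing that a degree $-1$ cycle $\nu$ in $\Hom(C,K)$ is a boundary, which you do by the same downward induction but now keyed to Proposition~\ref{prop propunch}.(7): $\I$-triviality of $C$ (which follows, as you observe, from the degreewise-split exactness of $0\to X\to Y\to C\to 0$ after applying $\A(-,W)$ together with $i$ being an $\I$-weak equivalence) is exactly the statement that each $\mathrm{coker}(d_{n+1})\to C_{n-1}$ is an $\I$-monomorphism, so the obstruction extends. The computation that the obstruction at stage $n$ vanishes on $\mathrm{im}(d_{n+1})$ is common to both proofs; in your setup it is literally the closedness $d_K\nu+\nu d_C=0$ combined with the inductive hypothesis, while the paper verifies the analogous identity in $\Hom(Y_{n+1},K_n)$. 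What your organisation buys is a clean separation of the two roles the trivial cofibration plays (degreewise $\I$-mono for extendability; $\I$-weak equivalence for triviality of $C$), an explicit appearance of the obstruction class, and a shorter base case (you may take $\bar\eta_0=0$, whereas the paper's base case already requires the quasi-isomorphism argument). The paper's version stays closer to the raw data and avoids introducing $E'$ and $C$, at the cost of heavier bookkeeping of the coupled equations.
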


\begin{proof}
We define a lifting $h\colon Y \rightarrow E$ step by
step. Let $K$ be the kernel of the chain
map $p$. As $p$ is a fibration,  in each degree $E_n = B_n \oplus K_n$ and $p_n$ is the first
projection. Denote by $f_n$ the composite $X_n \rightarrow B_n \oplus K_n \rightarrow K_n$. To define the lift $h$ we only need to extend
the map $f_n\colon X_n \rightarrow K_n$ along $X_n \rightarrow Y_n$:
\[
\xymatrix{
X_n \ar[r]^{f_n}  \ar[d] & K_n \\
Y_n \ar@{.>}[ur]_{\exists k_n}
}
\]
in such a way that the map $h = (\ell,k)$ is a chain map. For this we proceed by induction on $n$. When $n=0$, observe that, since 
$K_0$ is $\I$-injective and $i$ is a cofibration, we have a quasi-isomorphism of cochain complexes
\[
\xymatrix{
0 & \text{Hom}(X_0,K_0) \ar[l] & \text{Hom}(X_{-1}, K_{0}) \ar[l] & \ar[l] \cdots \\
0 & \text{Hom}(Y_0,K_0) \ar[l]  \ar[u]^{i^*_0} & \text{Hom}(Y_{-1}, K_{0}) \ar[l] \ar@{->>}[u]^{i^*_{-1}} & \ar[l] \cdots
}
\]

In particular  there exists $\xi \in \text{Hom}(X_{-1},K_0)$ and $\phi \in \text{Hom}(Y_0,K_0)$ 
such that $f_0 + \xi \partial_X = i^*_{-1} \phi$. Since $X_{-1} \rightarrow Y_{-1}$ is an 
$\I$-monomorphism, there exists $\zeta\colon Y_{-1} \rightarrow K_0$ such that 
$\xi$ factors through $Y_{-1}$ as $X_{-1} \rightarrow Y_{-1} \stackrel{\zeta}{\rightarrow} K_0$.
Define $k_0 \colon Y_{0} \rightarrow K_0$ to be $\phi - \zeta \partial_Y$.
The desired lift $h \colon Y_0 \rightarrow B_0 \oplus K_0$ is then $l_0 \oplus
k_0$.

For $n\leq -1$, we assume that $k_{n+1}$ has been constructed.
The differential of the complex $E$ written according to the degree-wise splitting $E=B\oplus K$ has the form:
%
%\xymatrix{
%& \\
%B_{n}+1\oplus K_{n+1} \ar[r]^{{\left( \begin{matrix}
%d^{n+1}_B & 0 \\
%\Delta{n+1} & d_K^{n+1} 
%\end{matrix}\right)}} & B_n \plus K_n
%}
%
\[
\left( \begin{matrix}
\partial^{n+1}_B & 0 \\
\Delta_{n+1} & \partial_K^{n+1} 
\end{matrix}\right) : B_{n+1}\oplus K_{n+1} \longrightarrow  B_n \oplus K_n
\]
We also have a commutative diagram of solid arrows:
\[
\xymatrix{
 & X_{n+1} \ar[d]^{i_{n+1}} \ar[rr] \ar[ddl] &  & B_{n+1} \oplus K_{n+1} \ar[ddl] \\
 & Y_{n+1} \ar[urr]^{(\ell_{n+1},k_{n+1})}\ar[ddl]  & \\
 X_{n} \ar[rr] \ar[d]^{i_n} &   & B_{n} \oplus K_{n} &  \\
   Y_{n}  \ar@{.>}[urr]_{(\ell_n,k_n)}  &  
}
\]
Finally the trivial cofibration $i$ induces as above a quasi-isomorphism of cochain complexes:
\[
\xymatrix{
 \text{Hom}(X_{n+1},K_n)  & \text{Hom}(X_{n}, K_{n}) \ar[l] & \ar[l] \text{Hom}(X_{n-1},K_n)  \\
 \text{Hom}(Y_{n+1},K_{n})   \ar[u]^{i^*_{n+1}} & \text{Hom}(Y_{n}, K_{n}) \ar[l] \ar@{->>}[u]^{i^*_{n}} & \ar[l] \text{Hom}(Y_{n-1},K_n) \ar@{->>}[u]^{i^\ast_{n-1}}
}
\]
We are looking for a map $k_n$ that is firstly a chain map, and secondly extends $f_n$. This translates into the following equations:
\begin{equation}\label{eq:first}
\Delta_{n+1} \circ \ell_{n+1} + \partial_K^{n+1}k_{n+1} = k_n \circ \partial_Y^{n+1} 
\end{equation}
\begin{equation}\label{eq:second}
k_n \circ i_n = f_n 
\end{equation}
Observe that Equation~(\ref{eq:second}) expresses an equality in $\text{Hom}(X_n,K_n) $ while Equation~(\ref{eq:first}) 
is an equality in  $\text{Hom}(Y_{n+1},K_n)$. Precompose the later by $i_{n+1}: X_{n+1} \rightarrow Y_{n+1}$, to get:
\begin{equation}\label{eq:third}
\Delta_{n+1} \circ \ell_{n+1}\circ i_{n+1} + \partial_K^{n+1}k_{n+1} \circ i_{n+1} = k_n \circ \partial_Y^{n+1} \circ i_{n+1}
\end{equation}
By commutativity of the back face of the commutative diagram above, the left hand side of this equation is equal to
\[
f_n \circ \partial_X^{n+1}
\]
which is a trivial cocycle in $\text{Hom}(X_{n+1},K_n)$. 
Since $i^\ast$ is a quasi-isomorphism, this implies that the left hand side of Equation~(\ref{eq:first}) is a trivial cocycle in $\text{Hom}(Y_{n+1},K_n)$. 
In particular there is a map $\phi_n:Y_n \rightarrow K_n$ such that
\begin{equation*}
\Delta_{n+1} \circ \ell_{n+1} + \partial_K^{n+1}k_{n+1} = \phi_n \circ \partial_Y^{n+1} 
\end{equation*}
Since by construction $f_n \circ \partial^{n+1}_X = \phi_n\circ \partial^{n+1}_Y \circ i_{n+1} = \phi_n \circ i_n \circ \partial^{n+1}_X$,
there is a map $\zeta_{n-1} \in \text{Hom}(X_{n-1},K_{n}) $ such that $f_n = \phi_n \circ i_n + \zeta_{n-1}\circ \partial_X^n$.
By surjectivity of $i_{n-1}^*$ we may lift this map to $\xi_{n-1} \in \text{Hom}(Y_{n-1},K_n)$ such that $ \xi_{n-1} \circ i_{n-1} = \zeta_{n-1}$, 
and the map $k_n = \phi_n +   \xi_{n-1} \circ \partial_Y^n$ is the one we are looking for.
\end{proof}

%%%%%%%%%%%%%%%%%%%%%%%%%%%%%%%%%%%%%%%%%%%%%%%%%%%%%%%%%%%%%%%%%%%%%%%%%%%%
\begin{theorem}\label{thm modstructureabove}
%%%%%%%%%%%%%%%%%%%%%%%%%%%%%%%%%%%%%%%%%%%%%%%%%%%%%%%%%%%%%%%%%%%%%%%%%%%%
Assume that $\A$ has enough $\I$-injectives. Then the choice
of $\I$-weak equivalences, $\I$-cofibrations, and $\I$-fibrations gives 
$\ch_{\leq 0}(\A)$ the structure of a model category.
\end{theorem}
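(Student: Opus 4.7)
The plan is to verify the model category axioms (MC1)--(MC5) in turn, since nearly all the technical work has already been assembled in the preceding results of the appendix. My task is largely to orchestrate these ingredients.

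For (MC1), the category $\ch_{\leq 0}(\A)$ inherits all small limits and colimits degree-wise from $\A$ by axiom (AB3), restricting the standard truncation at degree $0$ when needed for colimits (which doesn't arise here since we work with non-positively graded complexes, so level-wise suffices). For (MC2), I would fix $W \in \I$ and use that $\A(-,W)$ is additive and contravariant, so that $\A(gf,W) = \A(f,W) \circ \A(g,W)$; since quasi-isomorphisms of cochain complexes of abelian groups satisfy 2-out-of-3, so do $\I$-weak equivalences. For (MC3), weak equivalences are closed under retracts because retracts of quasi-isomorphisms are quasi-isomorphisms; $\I$-cofibrations are closed under retracts because a retract of a surjection $\A(f,W)$ is a surjection; and $\I$-fibrations are closed under retracts because a retract of a split epimorphism is a split epimorphism, while the kernel of such a retract is a retract of the original kernel, which belongs to $\I$ by our standing assumption that $\I$ is closed under retracts.

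The factorization axiom (MC5) is immediate from the work already done: Proposition~\ref{prop coftrivfib} provides the (cofibration, trivial fibration) factorization, and Proposition~\ref{prop trivcoffib} provides the (trivial cofibration, fibration) factorization. Similarly, the lifting axiom (MC4) is supplied by Proposition~\ref{prop LLP} (lifting of cofibrations against trivial fibrations) and Proposition~\ref{prop RLP} (lifting of trivial cofibrations against fibrations).

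At this stage the only nontrivial checks that remain are the retract and 2-out-of-3 arguments above, and these are straightforward since the conditions defining each class are pointwise and closed under the relevant categorical operations. The real obstacles — constructing enough injective envelopes in $\ch_{\leq 0}(\A)$ (handled by Lemma~\ref{lemma reschains}) and especially the delicate inductive argument for the right lifting property of fibrations against trivial cofibrations (handled by Proposition~\ref{prop RLP}) — have already been overcome. Thus the proof of the theorem becomes a matter of citing the correct lemma or proposition at each step, and I would simply write it as a short verification that (MC1)--(MC5) are all consequences of the material developed in Sections~\ref{point cone} through~\ref{point lifting}.
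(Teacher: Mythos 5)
Your proof follows exactly the same approach as the paper's: verify (MC1)--(MC3) directly (the closure-under-retracts argument for fibrations using that $\I=\overline{\I}$ is retract-closed matches the paper's), and cite Propositions~\ref{prop coftrivfib}, \ref{prop trivcoffib}, \ref{prop LLP}, and~\ref{prop RLP} for factorizations and liftings. The only cosmetic difference is that you label factorization as (MC5) and lifting as (MC4) following the Dwyer--Spali\'nski convention, whereas the paper swaps these labels; the content is identical.
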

%%%%%%%%%%%%%%%%%%%%%%%%%%%%%%%%%%%%%%%%%%%%%%%%%%%%%%%%%%%%%%%%%%%%%%%%%%%%

\begin{proof}
The category $Ch_{\leq 0}(\A)$ is clearly closed under both limits and colimits, which proves (MC1).
Since quasi-isomorphisms satisfy the ``2 out of 3" property so do $\I$-weak equivalences, this is (MC2).

Let us prove (MC3). Retracts of epimorphisms and of
quasi-isomorphisms are epimorphisms and quasi-isomorphisms
respectively, so $\I$-cofibrations and $\I$-weak equivalences are
preserved under retracts. As for $\I$-fibrations, notice that the retract of a map with a
section also has a section. Moreover since $\I = \overline{\I}$ is
stable under retracts we conclude that the retract of an
$\I$-fibration is again an $\I$-fibration. 

Finally, the factorization axiom (MC4) and the lifting axiom (MC5) have been established
in the preceding propositions.
\end{proof}

%%%%%%%%%%%%%%%%%%%%%%%%%%%%%%%%%%%%%%%%%%%%%%%%%%%%%%%%%%
%%%%%%%%%%%%%%%%%%%%%%%%%%%%%%%%%%%%%%%%%%%%%%%%%%%%%%%%%%
\section{Elementary algebra for topologists}
\label{sec:algebra}
%%%%%%%%%%%%%%%%%%%%%%%%%%%%%%%%%%%%%%%%%%%%%%%%%%%%%%%%%%
%%%%%%%%%%%%%%%%%%%%%%%%%%%%%%%%%%%%%%%%%%%%%%%%%%%%%%%%%%
This appendix contains a few elementary and well-known facts about localization, injective
envelopes, and local cohomology; none of these is new but we need it explicitly to describe relative resolutions in the case 
we restrict the notion of injectives. For a prime ideal $\mathfrak p$, we denote by
$M_{\mathfrak p}$ the localization of an $R$-module $M$ at $\mathfrak p$. The first lemma
will allow us to reduce certain problems to the case of a local ring, namely$R_{\mathfrak p}$.

%%%%%%%%%%%%%%%%%%%%%%%%%%%%%%%%%%%%%%%%%%%%%%%%%%%%%%%%%%
\begin{lemma}
\label{lemma local-to-global}
%%%%%%%%%%%%%%%%%%%%%%%%%%%%%%%%%%%%%%%%%%%%%%%%%%%%%%%%%%
An $R$-module $M$ is zero if and only if $M_{\mathfrak p}$ is zero for all prime
ideals~${\mathfrak p}$.
\end{lemma}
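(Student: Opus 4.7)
The plan is to prove the two implications separately, with the forward direction being immediate and the reverse direction handled by contrapositive using maximal ideals.

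First, if $M = 0$, then the localization $M_{\mathfrak p} = S_{\mathfrak p}^{-1} M$, where $S_{\mathfrak p} = R \setminus {\mathfrak p}$, is obtained by formally inverting elements in a module that is already zero, so $M_{\mathfrak p} = 0$ for every prime ideal ${\mathfrak p}$. This direction is essentially tautological.

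For the converse, I would argue by contrapositive: assuming $M \neq 0$, I would exhibit a specific prime ideal ${\mathfrak p}$ for which $M_{\mathfrak p} \neq 0$. Pick any nonzero element $m \in M$ and consider its annihilator $\mathrm{Ann}(m) = \{r \in R : rm = 0\}$. Since $1 \cdot m = m \neq 0$, the ideal $\mathrm{Ann}(m)$ is a proper ideal of $R$, hence is contained in some maximal ideal $\mathfrak{m}$ of $R$ (which is in particular a prime ideal). The claim will then be that $m/1 \neq 0$ in $M_{\mathfrak{m}}$.

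The key step is the verification of this claim: if one had $m/1 = 0$ in $M_{\mathfrak{m}}$, then by definition of localization there would exist some $s \in R \setminus \mathfrak{m}$ with $sm = 0$, i.e., $s \in \mathrm{Ann}(m) \subseteq \mathfrak{m}$, contradicting the choice of $s$. Hence $M_{\mathfrak{m}} \neq 0$, completing the contrapositive. No part of this argument is genuinely difficult; the only subtlety worth flagging is the use of the axiom of choice (via the existence of a maximal ideal containing $\mathrm{Ann}(m)$), which is standard in commutative algebra and will not be highlighted further.
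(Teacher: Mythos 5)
Your proof is correct and follows essentially the same route as the paper's: choose a nonzero element, observe that its annihilator is a proper ideal contained in some maximal ideal $\mathfrak{m}$, and conclude that the element survives in $M_{\mathfrak{m}}$. The only cosmetic difference is that you phrase the nontrivial direction as a contrapositive while the paper argues by contradiction.
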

%%%%%%%%%%%%%%%%%%%%%%%%%%%%%%%%%%%%%%%%%%%%%%%%%%%%%%%%%%
\begin{proof}
Let us assume that $M$ is non-zero, but $M_{\mathfrak p} = 0$ for any prime
ideal ${\mathfrak p}$. We choose a non-zero element $x \in M$ and consider its
annihilator. This ideal is contained in a maximal ideal ${\mathfrak m}$ and
since $M_{\mathfrak m} = 0$, there must exist an element $r \in R \setminus {\mathfrak m}$
such that $rx = 0$, a contradiction.
\end{proof}

A theorem of Matlis, \cite{MR0099360}, describes the injective
modules as direct sums of injective hulls $E(R/{\mathfrak p})$ of quotients of
the ring by prime ideals. The following two lemmas give some
properties of these indecomposable injective modules.

%%%%%%%%%%%%%%%%%%%%%%%%%%%%%%%%%%%%%%%%%%%%%%%%%%%%%%%%%%
\begin{lemma}
\label{lemma local properties of injectives}
%%%%%%%%%%%%%%%%%%%%%%%%%%%%%%%%%%%%%%%%%%%%%%%%%%%%%%%%%%
If ${\mathfrak q} \subset {\mathfrak p}$, the module $E(R/{\mathfrak q})$ is ${\mathfrak p}$-local, and otherwise
$E(R/{\mathfrak q})_{\mathfrak p} = 0$.
\end{lemma}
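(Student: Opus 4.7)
The plan is to handle the two cases separately, in both cases exploiting the essentialness of $R/\mathfrak{q}$ inside $E(R/\mathfrak{q})$ together with the injectivity of $E(R/\mathfrak{q})$ as an $R$-module.

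First I would handle the case $\mathfrak{q} \subset \mathfrak{p}$ by showing that multiplication by every $s \in R \setminus \mathfrak{p}$ is bijective on $E(R/\mathfrak{q})$; this endows $E(R/\mathfrak{q})$ with an $R_\mathfrak{p}$-module structure, so the canonical map $E(R/\mathfrak{q}) \to E(R/\mathfrak{q})_\mathfrak{p}$ is an isomorphism. Injectivity of $s \cdot$ on $E(R/\mathfrak{q})$ is immediate: since $s \notin \mathfrak{p} \supset \mathfrak{q}$, the element $s$ is a nonzerodivisor on the domain $R/\mathfrak{q}$, and by essentialness a nonzero kernel would have to meet $R/\mathfrak{q}$ nontrivially. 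For surjectivity I plan to check first that the natural inclusion $R/\mathfrak{q} \hookrightarrow (R/\mathfrak{q})[1/s]$ is essential (any nonzero $x/s^n$ on the right becomes, after multiplication by $s^n$, the nonzero element $x$ of $R/\mathfrak{q}$), so it factors through $E(R/\mathfrak{q})$ by maximality of the injective envelope. In the localization every element of $R/\mathfrak{q}$ is divisible by $s$, which yields $R/\mathfrak{q} \subseteq sE(R/\mathfrak{q})$. Then the short exact sequence
\[
0 \to E(R/\mathfrak{q}) \xrightarrow{s \cdot} E(R/\mathfrak{q}) \to E(R/\mathfrak{q})/sE(R/\mathfrak{q}) \to 0
\]
splits by injectivity of $E(R/\mathfrak{q})$, so if the cokernel were nonzero we would get a direct summand $C$ of $E(R/\mathfrak{q})$ that must meet $R/\mathfrak{q}$ by essentialness---contradicting $R/\mathfrak{q} \subseteq sE(R/\mathfrak{q})$.

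For the case $\mathfrak{q} \not\subset \mathfrak{p}$ I would fix $s \in \mathfrak{q} \setminus \mathfrak{p}$ and show that $s$ acts locally nilpotently on $E(R/\mathfrak{q})$, which makes the localization at $\mathfrak{p}$ vanish. Given a nonzero $x \in E(R/\mathfrak{q})$, the submodule $Rx$ is finitely generated (since $R$ is Noetherian), so it has an associated prime $\mathfrak{p}'$; representing $\mathfrak{p}'$ as the annihilator of some $y \in Rx$ identifies $Ry$ with $R/\mathfrak{p}'$, whose nonzero elements each have annihilator exactly $\mathfrak{p}'$. By essentialness $Ry$ meets $R/\mathfrak{q}$ nontrivially; but nonzero elements of $R/\mathfrak{q}$ also have annihilator exactly $\mathfrak{q}$, forcing $\mathfrak{p}' = \mathfrak{q}$. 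Hence $\sqrt{\mathrm{ann}(x)} = \mathfrak{q}$, and Noetherianity gives some power $s^n$ with $s^n x = 0$, so $x$ dies after localizing at $\mathfrak{p}$.

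The main obstacle will be the surjectivity step in the first case. Injectivity of $s \cdot$ comes for free from essentialness, but showing $sE(R/\mathfrak{q}) = E(R/\mathfrak{q})$ really requires both the observation that $(R/\mathfrak{q})[1/s]$ is an essential extension of $R/\mathfrak{q}$ (so it embeds into the injective envelope) and the splitting argument to rule out a nonzero complement; neither step on its own is sufficient.
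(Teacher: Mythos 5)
Your proof is correct, and it follows the same two-case skeleton as the paper's, but it is substantially more self-contained. Where the paper simply \emph{asserts} the two key module-theoretic facts about $E(R/\mathfrak{q})$---that multiplication by any $s\notin\mathfrak{p}\supset\mathfrak{q}$ is an automorphism, and that every element is annihilated by a power of $\mathfrak{q}$---you derive both from first principles using only the essentiality of $R/\mathfrak{q}\hookrightarrow E(R/\mathfrak{q})$, the injectivity of $E(R/\mathfrak{q})$, and Noetherian prime avoidance. In the first case your ``essential extension plus splitting'' argument for surjectivity of $s\cdot$ is a legitimate alternative to the more standard shortcut via indecomposability of $E(R/\mathfrak{q})$ (since $sE$ is a nonzero injective direct summand, it must be all of $E$); both work, though the indecomposability route would spare you the $(R/\mathfrak{q})[1/s]$ detour. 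In the second case your argument via associated primes and radicals of annihilators is actually more careful than the paper's, which imprecisely claims the annihilator of an element \emph{is} $\mathfrak{q}^m$ (in general it need only contain a power of $\mathfrak{q}$); you correctly prove $\sqrt{\mathrm{ann}(x)}=\mathfrak{q}$, which is the precise statement needed. The trade-off is length: the paper's proof is five lines because it treats the properties of indecomposable injectives as standard background; yours rebuilds them, which makes the argument longer but genuinely more elementary and verifiable without external references.
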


\begin{proof}
Assume ${\mathfrak q} \subset {\mathfrak p}$ and fix $r \not\in {\mathfrak p}$. The multiplication by
$r$ on $E(R/{\mathfrak q})$ is an isomorphism, so $E(R/{\mathfrak q})$ is ${\mathfrak p}$-local. Assume
now that ${\mathfrak q} \not\subset {\mathfrak p}$. Then ${\mathfrak q}^m \not\subset {\mathfrak p}$ for any $m \geq
1$. If $x$ is any element of $E(R/{\mathfrak q})$, its annihilator is ${\mathfrak q}^m$ for
some positive integer $m$ since $E(R/{\mathfrak q})$ is ${\mathfrak q}$-torsion. There
exists thus an element $s \in {\mathfrak q}^m$ which does not belong to ${\mathfrak p}$ and
such that $s x = 0$. Hence $x_{\mathfrak p} = 0$. This shows that $E(R/{\mathfrak q})_{\mathfrak p} = 0$.
\end{proof}

%%%%%%%%%%%%%%%%%%%%%%%%%%%%%%%%%%%%%%%%%%%%%%%%%%%%%%%%%%
\begin{lemma}
\label{lemma hom between injectives}
%%%%%%%%%%%%%%%%%%%%%%%%%%%%%%%%%%%%%%%%%%%%%%%%%%%%%%%%%%
The $R$-module of homomorphisms $\Hom_R(E(R/{\mathfrak p}), E(R/{\mathfrak q}))$ is non-zero
if and only if ${\mathfrak p} \subset {\mathfrak q}$.
\end{lemma}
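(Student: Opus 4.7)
The plan is to prove each direction separately, exploiting the dual natures of the two injective hulls.

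For the \emph{if} direction, I would exhibit an explicit non-zero homomorphism. Assuming $\mathfrak{p} \subset \mathfrak{q}$, the quotient surjection $R/\mathfrak{p} \twoheadrightarrow R/\mathfrak{q}$ followed by the canonical inclusion $R/\mathfrak{q} \hookrightarrow E(R/\mathfrak{q})$ sends the class of $1$ to $\overline{1} \in R/\mathfrak{q} \subset E(R/\mathfrak{q})$, which is non-zero. Because $E(R/\mathfrak{q})$ is injective, this composite extends along the inclusion $R/\mathfrak{p} \hookrightarrow E(R/\mathfrak{p})$ to a homomorphism $E(R/\mathfrak{p}) \to E(R/\mathfrak{q})$, whose restriction to $R/\mathfrak{p}$ is still non-zero, finishing this direction.

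For the \emph{only if} direction, suppose $\mathfrak{p} \not\subset \mathfrak{q}$ and pick $r \in \mathfrak{p} \setminus \mathfrak{q}$. My strategy is to play two properties of injective hulls against each other. On one side, Lemma~\ref{lemma local properties of injectives} applied to the trivial inclusion $\mathfrak{q} \subset \mathfrak{q}$ says that $E(R/\mathfrak{q})$ is $\mathfrak{q}$-local, so multiplication by $r \notin \mathfrak{q}$, and hence by every power $r^n$, is an isomorphism on $E(R/\mathfrak{q})$, in particular injective. On the other side, a standard consequence of Matlis' structure theory in the Noetherian setting is that $E(R/\mathfrak{p})$ is $\mathfrak{p}$-torsion: every element $x \in E(R/\mathfrak{p})$ is annihilated by some power $\mathfrak{p}^n$, hence by $r^n$. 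Then for any homomorphism $f\colon E(R/\mathfrak{p}) \to E(R/\mathfrak{q})$ and any $x$, the relation $r^n f(x) = f(r^n x) = 0$ combined with the injectivity of multiplication by $r^n$ on $E(R/\mathfrak{q})$ forces $f(x) = 0$, so $f = 0$.

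The only non-trivial ingredient is the $\mathfrak{p}$-torsion property of $E(R/\mathfrak{p})$; I would invoke it as a standard fact of Matlis' theory (cf.~\cite{MR0099360}). It follows from the observation that over a Noetherian ring the associated primes of the indecomposable injective $E(R/\mathfrak{p})$ reduce to $\{\mathfrak{p}\}$ by essentiality of $R/\mathfrak{p} \hookrightarrow E(R/\mathfrak{p})$, so every cyclic submodule is annihilated by a power of $\mathfrak{p}$. This is the place where the Noetherian hypothesis on $R$ enters.
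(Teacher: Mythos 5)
Your proof is correct, and the \emph{if} direction is identical to the paper's: extend the composite $R/\mathfrak{p}\twoheadrightarrow R/\mathfrak{q}\hookrightarrow E(R/\mathfrak{q})$ along the essential inclusion $R/\mathfrak{p}\hookrightarrow E(R/\mathfrak{p})$ using injectivity. For the \emph{only if} direction you take a slightly different, more elementary route than the paper does. The paper argues abstractly: since $E(R/\mathfrak{q})$ is $\mathfrak{q}$-local (Lemma~\ref{lemma local properties of injectives}), any $R$-homomorphism $E(R/\mathfrak{p})\to E(R/\mathfrak{q})$ factors through the localization $E(R/\mathfrak{p})_\mathfrak{q}$, which the same lemma shows vanishes when $\mathfrak{p}\not\subset\mathfrak{q}$. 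You instead unfold this into an explicit element chase: pick $r\in\mathfrak{p}\setminus\mathfrak{q}$, use $\mathfrak{q}$-locality to make multiplication by $r^n$ injective on the target, use $\mathfrak{p}$-torsion of $E(R/\mathfrak{p})$ to kill every element of the source by some $r^n$, and conclude $f=0$. The two arguments rely on exactly the same ingredients (the $\mathfrak{q}$-locality of $E(R/\mathfrak{q})$ and the torsion structure of indecomposable injectives over a Noetherian ring, both already present in the proof of Lemma~\ref{lemma local properties of injectives}); yours avoids the adjunction $\Hom_R(M,N)\cong\Hom_{R_\mathfrak{q}}(M_\mathfrak{q},N)$ for $\mathfrak{q}$-local $N$ at the cost of re-deriving the torsion fact rather than quoting the localization lemma, so it is marginally more self-contained but also marginally more repetitive within the paper's framework.
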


\begin{proof}
Since $E(R/{\mathfrak q})$ is ${\mathfrak q}$-local by the previous lemma, any homomorphism
factors through the ${\mathfrak q}$-localization of $E(R/{\mathfrak p})$, which is zero
unless ${\mathfrak p} \subset {\mathfrak q}$. This proves one implication. 
In this case the quotient morphism $R/{\mathfrak p} \rightarrow
{\mathfrak q}$ extends to the injective envelopes showing the other implication.
\end{proof}

Let us now introduce local cohomology, a good reference for which is~\cite{MR2355715}.

%%%%%%%%%%%%%%%%%%%%%%%%%%%%%%%%%%%%%%%%%%%%%%%%%%%%%%%%%%
\begin{definition}
\label{def local}
%%%%%%%%%%%%%%%%%%%%%%%%%%%%%%%%%%%%%%%%%%%%%%%%%%%%%%%%%%
{\rm Given an ideal ${\mathfrak p}$ in $R$, the ${\mathfrak p}$-\emph{torsion} of an
$R$-module $M$ is the submodule $\Gamma_{\mathfrak p} (M)$ of elements with
annihilator ${\mathfrak p}^m$ for some positive integer $m$. The \emph{local
cohomology} modules $H_{\mathfrak p}^*(-)$ with support in ${\mathfrak p}$ are the right
derived functors of~$\Gamma_{\mathfrak p}$.}
\end{definition}

Explicitly, to compute the local cohomology of a module $M$, we
construct an injective resolution $I_\bullet$ of $M$ and compute
$H_{\mathfrak p}^j(M) = H^j(\Gamma_{\mathfrak p}(I_\bullet))$. Our last lemma helps us to
understand how this ${\mathfrak p}$-torsion injective complex look like.

%%%%%%%%%%%%%%%%%%%%%%%%%%%%%%%%%%%%%%%%%%%%%%%%%%%%%%%%%%
\begin{lemma}
\label{lemma p-torsion injective}
%%%%%%%%%%%%%%%%%%%%%%%%%%%%%%%%%%%%%%%%%%%%%%%%%%%%%%%%%%
The ${\mathfrak p}$-torsion module $\Gamma_{\mathfrak p}(E(R/{\mathfrak q})) = E(R/{\mathfrak q})$ 
if ${\mathfrak p} \subset {\mathfrak q}$ and is zero otherwise.
\end{lemma}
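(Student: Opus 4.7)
The plan is to exploit two standard facts: first, that every element of $E(R/\mathfrak{q})$ is annihilated by some power of $\mathfrak{q}$ (this is the Matlis $\mathfrak{q}$-torsion property already implicit in the proof of Lemma~\ref{lemma local properties of injectives}), and second, that $R/\mathfrak{q}$ sits essentially inside $E(R/\mathfrak{q})$. These give the two directions cleanly.

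For the case $\mathfrak{p}\subset\mathfrak{q}$, I would argue as follows. Containment of ideals passes to powers, so $\mathfrak{p}^m\subset\mathfrak{q}^m$ for every $m\geq 1$. Given any $x\in E(R/\mathfrak{q})$, the Matlis property supplies an integer $m$ with $\mathfrak{q}^m x=0$, and hence $\mathfrak{p}^m x=0$ as well. Thus $x\in\Gamma_{\mathfrak{p}}(E(R/\mathfrak{q}))$, which gives the equality $\Gamma_{\mathfrak{p}}(E(R/\mathfrak{q}))=E(R/\mathfrak{q})$.

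For the case $\mathfrak{p}\not\subset\mathfrak{q}$, I would pick a witness $r\in\mathfrak{p}\setminus\mathfrak{q}$. Since $\mathfrak{q}$ is prime, $r$ acts as a nonzero-divisor on $R/\mathfrak{q}$: indeed $r\bar y=0$ in $R/\mathfrak{q}$ forces $ry\in\mathfrak{q}$ and then $y\in\mathfrak{q}$. I would then upgrade this to the statement that multiplication by $r$ is injective on $E(R/\mathfrak{q})$: if $K$ denotes the kernel of this multiplication, then $K\cap(R/\mathfrak{q})=0$ by the nonzero-divisor property, and essentiality of $R/\mathfrak{q}$ in $E(R/\mathfrak{q})$ forces $K=0$. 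Consequently multiplication by any power $r^m$ is injective on $E(R/\mathfrak{q})$. Now if $x\in\Gamma_{\mathfrak{p}}(E(R/\mathfrak{q}))$, then $\mathfrak{p}^m x=0$ for some $m$, and in particular $r^m x=0$, which by injectivity forces $x=0$. Hence $\Gamma_{\mathfrak{p}}(E(R/\mathfrak{q}))=0$.

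There is no real obstacle here; the only point one must not skip over is the essentiality-plus-nonzero-divisor argument used to lift injectivity of multiplication by $r$ from $R/\mathfrak{q}$ to its injective envelope, since this is where the primality of $\mathfrak{q}$ and the defining property of an injective hull are genuinely used. Everything else reduces to the identity $\mathfrak{p}^m\subset\mathfrak{q}^m$ and to the Matlis $\mathfrak{q}$-torsion of $E(R/\mathfrak{q})$, which we are entitled to quote.
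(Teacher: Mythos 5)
Your proof is correct. For the direction $\mathfrak{p}\subset\mathfrak{q}$ you argue exactly as the paper does (which simply cites the $\mathfrak{q}$-torsion property of $E(R/\mathfrak{q})$): containment passes to powers, so every element killed by a power of $\mathfrak{q}$ is killed by a power of $\mathfrak{p}$. For the direction $\mathfrak{p}\not\subset\mathfrak{q}$ the paper's one-line proof leaves the reader to supply the reason why $\Gamma_{\mathfrak{p}}$ vanishes; the natural reading is that it implicitly reuses the fact, asserted without proof in Lemma~\ref{lemma local properties of injectives}, that multiplication by any $r\notin\mathfrak{q}$ is an automorphism of $E(R/\mathfrak{q})$. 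You instead derive the needed injectivity from scratch via the essential-extension property of the injective hull together with primality of $\mathfrak{q}$: multiplication by $r\in\mathfrak{p}\setminus\mathfrak{q}$ is injective on $R/\mathfrak{q}$, its kernel on $E(R/\mathfrak{q})$ meets $R/\mathfrak{q}$ trivially, and essentiality forces the kernel to vanish. This is a genuinely self-contained and more elementary route; it has the advantage of not presupposing the $\mathfrak{q}$-locality of $E(R/\mathfrak{q})$ (and in fact re-proves the nontrivial part of it), at the modest cost of spelling out the essentiality step. Both arguments are standard and correct; yours is the one to prefer if one wants the appendix to be logically independent of unproved claims in the preceding lemma.
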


\begin{proof}
Again this follows from the fact that $E(R/{\mathfrak q})$ is ${\mathfrak q}$-torsion.
\end{proof}

%%%%%%%%%%%%%%%%%%%%%%%%%%%%%%%%%%%%%%%%%%%%%%%%%%%%%%%%%%
\begin{remark}
\label{remark relative vs local}
%%%%%%%%%%%%%%%%%%%%%%%%%%%%%%%%%%%%%%%%%%%%%%%%%%%%%%%%%%
{\rm Let $R$ be a local ring with maximal ideal ${\mathfrak m}$ and let us
consider the generization closed subset of $Spec(R)$ given by $S =
\{ {\mathfrak q} \, | \, {\mathfrak q} \neq {\mathfrak m} \}$. It yields the injective class $\I$
generated by all injective envelopes $E(R/{\mathfrak q})$ with ${\mathfrak q} \neq {\mathfrak m}$
see \cite[Proposition~3.1]{CPSinj}. 
Given a module $M$ and an injective resolution $I_\bullet$, we have a
triangle in the derived category $\Gamma_{\mathfrak m}(I_\bullet) \rightarrow
I_\bullet \rightarrow W_\bullet$, where $W_\bullet$ is an
$\I$-relative injective resolution of $M$. In particular
$H_k(W_\bullet) \cong H^{k+1}_{\mathfrak m}(M)$ for $k \geq 2$.}
\end{remark}

%%%%%%%%%%%%%%%%%%%%%%%%%%%%%%%%%%%%%%%%%%%%%%%%%%%%%%%%%%
\begin{proposition}
\label{proposition Cech}
%%%%%%%%%%%%%%%%%%%%%%%%%%%%%%%%%%%%%%%%%%%%%%%%%%%%%%%%%%
Let $R$ be a Noetherian ring and ${\mathfrak p}$ be the radical of $(x_1, \dots,
x_n)$. Then $H_{\mathfrak p}^k(M) = 0$ for any $k > n$ and any module $M$.
\end{proposition}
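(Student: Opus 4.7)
The plan is to identify $H_{\mathfrak p}^*(M)$ with the cohomology of the Čech complex on the generators $x_1,\dots,x_n$, which by construction sits in degrees $0$ through $n$ and hence automatically vanishes above degree $n$.

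First I would reduce to the ideal $I=(x_1,\dots,x_n)$ itself. Since $\Gamma_I$ depends only on the radical of $I$ (an element is killed by a power of $I$ if and only if it is killed by a power of ${\mathfrak p}=\sqrt{I}$), one has $\Gamma_I=\Gamma_{\mathfrak p}$ as functors, and consequently their derived functors agree: $H_I^k(M)=H_{\mathfrak p}^k(M)$ for all $k$ and all~$M$. So it suffices to prove $H_I^k(M)=0$ for $k>n$.

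Next I would introduce the (extended) Čech complex $\check C^\bullet(x_1,\dots,x_n;M)$:
\[
0\to M\to\bigoplus_{i}M_{x_i}\to\bigoplus_{i<j}M_{x_ix_j}\to\cdots\to M_{x_1\cdots x_n}\to 0,
\]
placed in cohomological degrees $0$ through $n$, with the usual alternating-sum differential obtained from the localization maps. The key algebraic fact (this is the main technical input, and is the step where some work must be invested — it is the heart of the local-duality/Čech-cohomology package in \cite{MR2355715}) is that $\Gamma_I$-cohomology is computed by this complex: for every $R$-module $M$ there is a natural isomorphism $H_I^k(M)\cong H^k(\check C^\bullet(x_1,\dots,x_n;M))$. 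One establishes this by showing that each localization $M_{x_{i_1}\cdots x_{i_p}}$ is $\Gamma_I$-acyclic (indeed $\Gamma_I$ vanishes on any module on which some $x_i$ acts invertibly, by a simple $1$-step Koszul argument), and that $\Gamma_I(\check C^\bullet(x_1,\dots,x_n;E))$ is a resolution of $\Gamma_I(E)$ for $E$ injective, so that the Čech complex provides an acyclic resolution computing the right-derived functors of $\Gamma_I$.

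Granting this identification, the conclusion is immediate: the Čech complex lives in cohomological degrees $0,1,\dots,n$, so $H^k(\check C^\bullet(x_1,\dots,x_n;M))=0$ for every $k>n$, and therefore $H_{\mathfrak p}^k(M)=H_I^k(M)=0$ in the same range, for every $R$-module~$M$. The main obstacle is the second step, the verification that the Čech complex really computes local cohomology; once that is in hand the vanishing is a consequence of the length of the complex.
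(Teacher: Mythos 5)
Your proof is correct and follows essentially the same route as the paper: reduce to the ideal $(x_1,\dots,x_n)$ using that the torsion functor depends only on the radical, then invoke the identification of local cohomology with \v{C}ech cohomology (which the paper simply cites as \cite[Theorem~7.13]{MR2355715}) and observe that the \v{C}ech complex is concentrated in degrees $0$ through $n$. The only difference is that you sketch a justification of the \v{C}ech isomorphism, whereas the paper takes it as a black box.
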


\begin{proof}
Since the torsion functor does not see the difference between an
ideal and its radical, we can assume that ${\mathfrak p} = (x_1, \dots, x_n)$.
Then the local cohomology can be computed by means of the \v{C}ech
complex $\otimes_i \check{C}(x_i, R) \otimes M$,
\cite[Theorem~7.13]{MR2355715}. Here $\check{C}(x, R)$ is the
complex $0 \rightarrow R \rightarrow R_{x} \rightarrow 0$
concentrated in degrees $0$ and $1$. The \v{C}ech complex is thus
concentrated in degrees $\leq n$.
\end{proof}

%%%%%%%%%%%%%%%%%%%%%%%%%%%%%%%%%%%%%%%%%%%%%%%%%%%%%%%%%%
\begin{remark}
\label{remark uptonilpotence}
%%%%%%%%%%%%%%%%%%%%%%%%%%%%%%%%%%%%%%%%%%%%%%%%%%%%%%%%%%
{\rm If $R$ is a Noetherian local ring of dimension~$d$, then the
maximal ideal can always be expressed as the radical of an ideal
generated by $n$ elements, see \cite[Theorem~1.17]{MR2355715}.}
\end{remark}

%%%%%%%%%%%%%%%%%%%%%%%%%%%%%%%%%%%%
%%%%%%%%%%%%%%%%%%% REFERENCES %%%%%%%%%%%%%%%%%%%
%%%%%%%%%%%%%%%%%%%%%%%%%%%%%%%%%%%%

\bibliographystyle{amsplain}
\bibliography{homalg}
\providecommand{\bysame}{\leavevmode\hbox to3em{\hrulefill}\thinspace}
\providecommand{\MR}{\relax\ifhmode\unskip\space\fi MR }
% \MRhref is called by the amsart/book/proc definition of \MR.
\providecommand{\MRhref}[2]{%
  \href{http://www.ams.org/mathscinet-getitem?mr=#1}{#2}
}
\providecommand{\href}[2]{#2}

\end{document}